\numberwithin{equation}{section}
\numberwithin{figure}{section}
\newtheorem {theo} {Theorem} [section]
\newtheorem {prop} [theo] {Proposition}
\newtheorem {cory} [theo] {Corollary}
\newtheorem {lem} [theo] {Lemma}
\newtheorem {defi} [theo] {Definition}
\newtheorem*{theo1}{Theorem~A}
\newtheorem*{theo2}{Theorem~B}
\theoremstyle{definition}
\newtheorem{example}{Example}
\newtheorem {obs} [theo] {Remark}
\def\sideremark#1{\ifvmode\leavevmode\fi\vadjust{\vbox to0pt{\vss 
    \hbox to 0pt{\hskip\hsize\hskip1em           
 \vbox{\hsize2cm\tiny\raggedright\pretolerance10000
 \noindent #1\hfill}\hss}\vbox to8pt{\vfil}\vss}}}%
\subjclass[2010]{37C15, 37F75}
\keywords{analytic invariants, Ecalle-Voronin moduli, almost regular germs, Gevrey expansions, Cauchy-Heine integrals, transseries}
\newcommand{\xyL}[1]{%
	\xydef@\xymatrixrowsep@{#1}
} 
\newcommand{\xyC}[1]{%
	\xydef@\xymatrixcolsep@{#1}
} 
\begin{document}

\title[Realization of analytic moduli for parabolic Dulac germs]{Realization of analytic moduli\\ for parabolic Dulac germs}
\author{P. Marde\v si\'c$^{1}$, M. Resman$^{2}$}

\begin{abstract} In a previous paper \cite{prvi} we have determined analytic invariants, that is, moduli of analytic classification, for parabolic generalized Dulac germs. This class contains parabolic \emph{Dulac} (almost regular) germs, that appear as first return maps of hyperbolic polycycles. Here we solve the problem of realization of these moduli.
\end{abstract}

\maketitle

\noindent \emph{Acknowledgement}. This research was supported by the Croatian Unity Through Knowledge Fund (UKF) \emph{My first collaboration grant} project no. 16, by the Croatian Science Foundation (HRZZ) grants UIP-2017-05-1020 and PZS-2019-02-3055 from \emph{Research Cooperability} funded by the European Social Fund, and by the EIPHI Graduate School (contract ANR-17-EURE-0002). The UKF grant fully supported the $6$-month stay of $^2$ at Universit\' e de Bourgogne in 2018.

\section{\label{sec:introduction}Introduction and main definitions}








\emph{Dulac germs}, called \emph{almost regular} germs in \cite{ilyalim}, appear as first return maps on transversals to hyperbolic polycycles in planar analytic vector fields, see e.g. \cite{ilyalim} or \cite{roussarie}. From the viewpoint of cyclicity of planar vector fields, the most interesting case is the case of Dulac germs tangent to the identity. Using notation similar as in the case of $1$-dimensional analytic diffeomorphisms, we call such germs which are not roots of the identity \emph{parabolic} Dulac germs. 

In \cite{prvi}, we have described the \emph{Ecalle-Voronin-like} moduli of analytic classification for a bigger class of parabolic generalized Dulac germs. Parabolic \emph{generalized Dulac germs} defined in \cite{prvi} are a class of germs, including parabolic Dulac germs, that admit a particular type of transserial power-logarithmic asymptotic expansion, called the \emph{generalized Dulac asymptotic expansion}. They are, like Dulac germs, defined on a \emph{standard quadratic domain}: a universal covering of $\mathbb C$ punctured at the origin with a prescribed decreasing radius as the absolute value of the argument increases. Their moduli are given as a doubly infinite sequence of pairs of germs of diffeomorphisms fixing the origin, having a symmetry property with respect to the positive real axis and a rate of decrease of radii of convergence adapted to the standard quadratic domain of definition. Similarly as in the well-known case of analytic parabolic germs treated in \cite{voronin}, it was shown in \cite{prvi} and \cite{mrrz2} that the formal class of a  generalized Dulac germ is described by three parameters, but the normalizing change diverges and defines analytic functions only on overlapping attracting and repelling sector-like domains called \emph{petals}. There are countably many petals filling the standard quadratic domain and the comparison of normalizing changes on their intersections, together with the formal class of the germ, gives its \emph{modulus of analytic classification}. 

As a continuation of \cite{prvi}, in this paper we describe the space of moduli, that is, we solve the problem of realization of moduli  of analytic classification in the class of parabolic generalized Dulac germs. For each formal class and a double sequence of germs of diffeomorphisms fixing the origin with controlled radii of convergence, we construct an analytic germ defined on a standard quadratic domain realizing them. 

However, on a \emph{big} standard quadratic domain we did not succeed in attributing a \emph{unique} power-logarithmic transserial asymptotic expansion to the constructed germ. Transseries are indexed by ordinals, which can either be successor ordinals or limit ordinals. The definition of a transserial asymptotic expansion of a certain type is dependent on the choice of the summation method at limit ordinal steps. This choice is called a \emph{section function} in \cite{MRRZ2Fatou}. To ensure uniqueness of the asymptotic expansion, we should be able to make a canonical choice of the section function. See \cite{MRRZ2Fatou} for more details on the problem of well-defined transserial asymptotic expansions and the notion of section functions. 

Moreover, we prove that, on a \emph{smaller} linear domain, there exists a parabolic generalized Dulac germ of a given formal type which realizes the given sequence of diffeomorphisms as its analytic moduli. On this smaller domain we are able to choose a canonical method of summation on limit ordinal steps, a \emph{Gevrey-type} sum, corresponding to the definition of the generalized Dulac asymptotic expansion requested in the definition of generalized Dulac germs.

In both constructions we use a \emph{Cauchy-Heine} integral construction as in e.g. \cite{loray2}, motivated by the realization of analytic moduli for saddle nodes in \cite{teyssier}. The advantage of the Cauchy-Heine construction over the standard use of uniformization method, as in \cite{voronin}, is that Cauchy-Heine integrals provide the control of power-logarithmic asymptotic expansions. 
\bigskip

Let us first recall briefly main definitions and results from \cite{prvi}. 

\subsection{Main definitions}

Recall from Ilyashenko \cite{ilyalim} the definition of almost regular germs. We call them \emph{Dulac germs} in \cite{prvi} and also here. They are defined on a \emph{standard quadratic domain} $\mathcal R_C$. It is a subset of the Riemann surface of the logarithm, in the logarithmic chart $\zeta=-\log z$ given by: 
\begin{equation}\label{eq:sqd}
\varphi\big(\mathbb C^+\setminus \overline K(0,R)\big),\ \varphi(\zeta)=\zeta+C(\zeta+1)^{\frac{1}{2}},\ C>0,\, R>0.
\end{equation}
Here, $\mathbb C^+=\{\zeta\in\mathbb C:\ \text{Re}(\zeta)>0\}$ and $\overline{K}(0,R)=\{\zeta\in\mathbb C:|\zeta|\leq R\}.$ See Figure~\ref{fig:prva}. In the sequel, we switch between the two variables, the $z$-variable and the $\zeta$-variable, as needed. By abuse, we use the same name \emph{standard quadratic domain} for the domain in the $\zeta$-variable defined by \eqref{eq:sqd} and for its preimage by $\zeta=-\log z$ in the universal covering of $\mathbb C^*$. In $z$-variable we use the notation $\mathcal R_C$, while we use the notation $\widetilde{\mathcal R}_C$ for its image by $\zeta=-\log z$ in the $\zeta$-variable.

\begin{defi}[Definition~2.1. in \cite{prvi}, adapted from \cite{ilyalim}, \cite{roussarie}]\label{def:Dulac} We say that a germ $f$ is a \emph{Dulac germ} if it is
\begin{enumerate}

\item \emph{holomorphic} and bounded on some standard quadratic domain $\mathcal R_C$ and real on $\{z\in\mathcal R_C:\ \mathrm{Arg} (z)=0\}$,

\item admits in $\mathcal R_C$ a \emph{Dulac asymptotic expansion}\footnote{\emph{uniformly} on $\mathcal R_C$, see \cite[Section 24E]{ilya}: for every $\lambda>0$ there exists $n\in\mathbb N$ such that
$$
\big|f(z)-\sum_{i=1}^{n}z^{\lambda_i}P_i(-\log z)\big|=o(z^{\lambda}),
$$
uniformly on $\mathcal R_C$ as $|z|\to 0$. }
\begin{equation}\label{eq:dulac}
\widehat f(z)=\sum_{i=1}^{\infty}z^{\lambda_i} P_i(-\log z),\ c>0,\ z\to 0,
\end{equation}
where $\lambda_i\geq 1$, $i\in\mathbb N$, are strictly positive, finitely generated\footnote{\emph{Finitely generated} in the sense that there exists $n\in\mathbb N$ and $\alpha_1>0,\ldots,\alpha_n>0,$ such that each $\lambda_i$, $i\in\mathbb N$, is a finite linear combination of $\alpha_j$, $j=1,\ldots,n$, with coefficients from $\mathbb Z_{\geq 0}.$ For Dulac maps that are the first return maps of saddle polycycles, $\alpha_j$, $j=1,\ldots,n$, are related to the ratios of hyperbolicity of the saddles.} and strictly increasing to $+\infty$ and $P_i$ is a sequence of polynomials with real coefficients, and $P_1\equiv A$, $A>0$.
\end{enumerate}
Moreover, if a Dulac germ is tangent to the identity, i.e,
$$
f(z)=z+o(z),\ z\in\mathcal R_C,
$$
and if $f^{\circ q}\neq\mathrm{id}$, $q\in\mathbb N$, we call it a \emph{parabolic} Dulac germ.
\end{defi}

By a \emph{germ on a standard quadratic domain} \cite{ilyalim}, we mean an equivalence class of functions that coincide on some standard quadratic domain (for arbitrarily big $R>0$ and $C>0$). 
\begin{figure}[h!]
\includegraphics[scale=0.4]{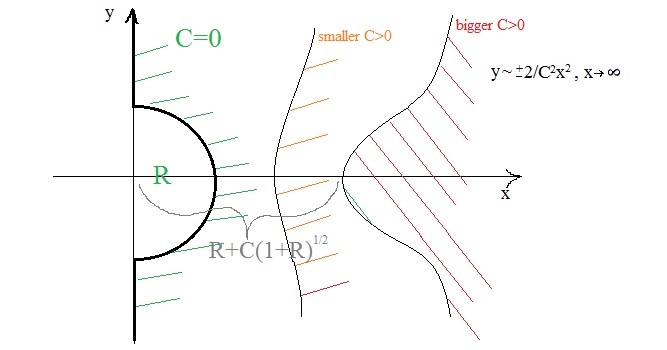}
\caption{Several standard quadratic domains $\widetilde{\mathcal R}_C$, $C>0$, in the logarithmic chart.}\label{fig:prva}
\end{figure}

The radii of the standard quadratic domain in $z$-variable tend to zero with an exponential speed as we increase the level of the Riemann surface. If by $\theta\in\big[(k-1)\pi,(k+1)\pi\big)$ we denote the arguments of the $k$-th level of the surface $\mathcal R_C$, $k\in\mathbb Z,$ and by $\theta_k:=k\pi$, $k\in\mathbb Z,$ then the maximal radii $r(\theta_k)$ by levels $k\in\mathbb Z$ decrease at most at the rate:
\begin{equation*}
Ke^{-D\sqrt{\frac{|k|\pi}{2}}},\ |k|\to\infty,\ \text{for some }D>0,\ K>0.
\end{equation*}

\bigskip

In \cite[Definition 2.3]{prvi}, a larger \emph{parabolic generalized Dulac class} is introduced. It contains parabolic Dulac germs. We repeat the definition of \emph{parabolic generalized Dulac germs} in Definition~\ref{def:gD} below. The Dulac asymptotic expansion requested in Definition~\ref{def:Dulac} of Dulac germs is substituted by a particular \emph{transserial} power-logarithmic asymptotic expansion. 

In this paper, we give realization results for any given sequence of moduli satisfying some uniform bound in the parabolic generalized Dulac class, but for parabolic generalized Dulac germs defined on a smaller domain that we call a \emph{standard linear domain}. Due to technical reasons in Cauchy-Heine construction, on standard quadratic domain we get realization results by germs for which we are unable to prove unicity of the transserial asymptotic expansion after the first three terms. To be able to define the unique transserial asymptotic expansion of a germ of a certain type, we should be able to prescribe a \emph{canonical} way of summation, or \emph{section function} \cite{MRRZ2Fatou}, at limit ordinal steps. In the linear case, the estimates of the Cauchy-Heine integrals give us sufficiently good \emph{Gevrey-like} bounds, and thus a canonical way to attribute the sum, at limit ordinal steps. This canonical choice is the one defining parabolic generalized Dulac germs and expansions, see Definitions~\ref{def:logg} and \ref{def:gD} below. On the other hand, the bounds in our construction on standard quadratic domains are weaker. 

It is important to note that the germ  obtained by Cauchy-Heine construction on a linear domain is not the restriction of the germ  constructed on a bigger quadratic domain, since we apply Cauchy-Heine integrals along different lines of integration, see \eqref{eg:lint} under condition \eqref{eq:imam} for standard quadratic domains or \eqref{eq:imam1} for standard linear domains. For details, see Remarks~\ref{rem:linqua} and \ref{rem:final}.

By \cite{ilya, roussarie}, a standard linear domain is not sufficiently \emph{large} to apply Phragmen-Lindel\" of and get injectivity of the mapping $f\mapsto \widehat f$, where $\widehat f$ is the generalized Dulac asymptotic expansion of $f$. 

\begin{defi}
A standard linear domain $\widetilde{\mathcal R}_{a,b}$, $a>0,\ b\geq 0,$ in the logarithmic chart is a subset of $\mathbb C_+$ given by:
$$
\widetilde{\mathcal R}_{a,b}:=\Big\{\zeta\in\mathbb C_+: \ b-a\mathrm{Re}(\zeta)<\mathrm{Im}(\zeta)<-b+a\mathrm{Re}(\zeta),\ \mathrm{Re}(\zeta)>\frac{b}{a}\Big\}.
$$
\end{defi}
\begin{figure}[h!]
\includegraphics[scale=0.4]{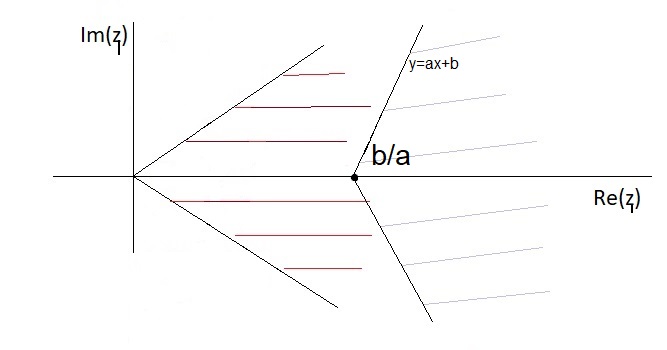}
\caption{Several standard linear domains $\widetilde{\mathcal R}_{a,b}$, $a>0,\ b\geq 0$, in the logarithmic chart.}
\end{figure}
Analogously, by $\mathcal R_{a,b}$ we denote the image by $z=e^{-\zeta}$ of $\widetilde{\mathcal R}_{a,b}$. It is a subset of the Riemann surface of the logarithm. 
\medskip

We recall from \cite{prvi} the definition of the \emph{parabolic generalized Dulac class}. We will call an \emph{$\boldsymbol\ell$-cusp} an open cusp that is the image of an open sector $V$ of positive opening at $0$ by the change of variables $\boldsymbol\ell=-\frac{1}{\log z}$, and we will denote it by $S=\boldsymbol\ell(V)$. See Figure~\ref{figura:fig3}. Any open $\boldsymbol\ell$-cusp $\boldsymbol\ell(V')\subset S$, where $V'\subset V$ is a proper subsector, will be called a \emph{proper $\boldsymbol\ell$-subcusp} of $S$.

\begin{figure}[h!]
\includegraphics[scale=0.2]{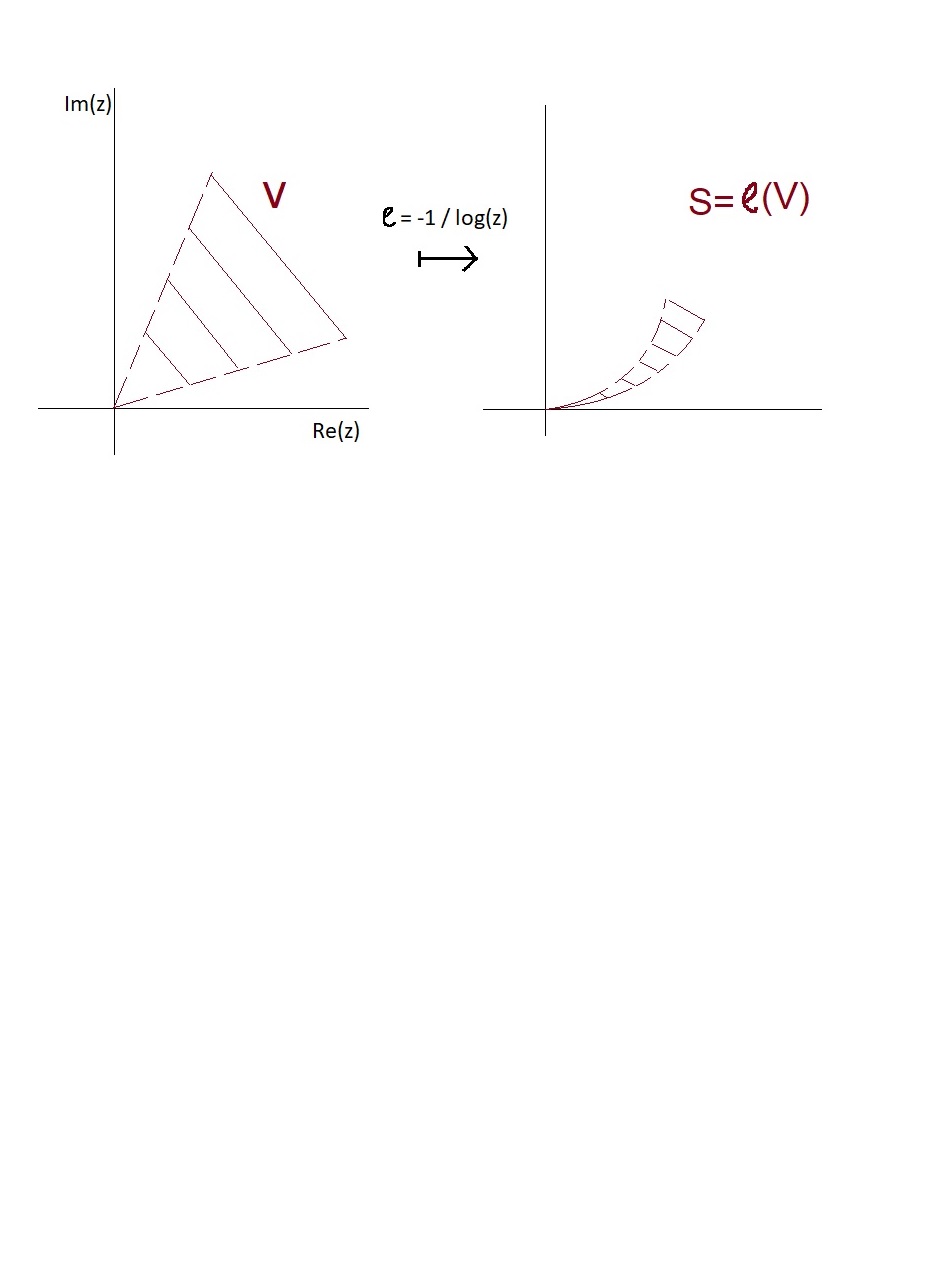}
\vspace{-4.5cm}
\caption{$\boldsymbol\ell$-cusp.}\label{figura:fig3}
\end{figure}
 
\begin{defi}[$\log$-Gevrey asymptotic expansions on $\boldsymbol\ell$-cusps,\ Definition~4.1 in \cite{prvi}]\label{def:logg}
Let $F$ be a germ analytic on an $\boldsymbol\ell$-cusp $S=\boldsymbol\ell(V)$. We say that $F$ admits $\widehat F(\boldsymbol\ell)=\sum_{k=0}^{\infty} a_k \boldsymbol\ell^k$, $a_k\in\mathbb C$, as its \emph{$\log$-Gevrey asymptotic expansion of order $m>0$} if, for every proper $\boldsymbol\ell$-subcusp $S'=\boldsymbol\ell(V')\subset S$, $V'\subset V,$ there exists a constant $C_{S'}>0$ such that, for every $n\in\mathbb N$, $n\geq 2,$ it holds that:
\begin{equation*}\label{eq:lg1}
|F(\boldsymbol\ell)-\sum_{k=0}^{n-1}a_k \boldsymbol\ell^k|\leq C_{S'} \cdot m^{-n}\cdot \log^n n \cdot e^{-\frac{n}{\log n}}|\boldsymbol\ell|^n,\ \boldsymbol\ell\in S'.
\end{equation*}
\end{defi}

\noindent For more details on properties of $\log$-Gevrey classes and for the proof of their closedness to algebraic operations $+,\cdot$ and to differentiation, see \cite[Section 4]{prvi}. We state here just the following variation of the \emph{Watson's lemma} for $\log$-Gevrey expansions, that will be immediately important for the definition and the uniqueness of generalized Dulac expansions. If $\widehat F(\boldsymbol\ell)$ is the $\log$-Gevrey asymptotic expansion of order $m>0$ of a function $F$ analytic on $\boldsymbol\ell$-cusp $S=\boldsymbol\ell(V)$, where $V$ is a sector of opening \emph{strictly bigger} than $\frac{\pi}{m}$, then $F$ is the unique analytic function on $S$ that admits $\widehat F(\boldsymbol\ell)$ as its $\log$-Gevrey asymptotic expansion of order $m$. The proof can be found in \cite[Section 4, Corollary 4.4]{prvi}.
\medskip

We prove in \cite[Proposition 2.2]{prvi} that every parabolic germ $f$ on $\mathcal R_C$ (resp. $\mathcal R_{a,b}$) that satisfies the uniform asymptotics: 
\begin{align*}
\big|f(z)-(z-az^{\alpha}\boldsymbol\ell^m)\big|\leq  c |z^\alpha\boldsymbol\ell^{m+1}|,\ &z\in\mathcal R_{C} \text{ (resp. $\mathcal R_{a,b}$)},\\
 &\alpha>1,\ m\in\mathbb Z,\ a\neq 0,\ c>0,
\end{align*}
has a local \emph{flower-like dynamics} at the \emph{origin}. That is, $\mathcal R_C$ (resp. $\mathcal R_{a,b}$) is a union of countably many overlapping invariant attracting and repelling \emph{petals}\footnote{A \emph{petal} is a union of sectors, whose openings increase continuously, up to some fixed opening, while their radii decrease, see e.g. \cite{loray2}.} $V_j^+$ resp. $V_j^-$, $j\in\mathbb Z$, centered at directions $a^{-\frac{1}{\alpha-1}}$ resp. $(-a)^{-\frac{1}{\alpha-1}}$, and of opening $\frac{2\pi}{\alpha-1}$. 

The dynamics in the $\zeta$-variable on a standard quadratic domain $\widetilde{\mathcal R}_C$ is shown on Figure~\ref{fig:dyn}. The \emph{sectors} of opening $\theta>0$ in the $z$-variable become \emph{horizontal strips} of width $\theta>0$ in the $\zeta$-variable. Analogously, the images of petals of opening $\frac{2\pi}{\alpha-1}$ in the $z$-variable are open sets tangentially approaching strips of width $\frac{2\pi}{\alpha-1}$, as $\mathrm{Im}(\zeta)\to +\infty$, in the $\zeta$-variable. We denote them in the $\zeta$-variable by $\tilde V_j^+$ and $\tilde V_j^-$, $j\in\mathbb Z$, see Figure~\ref{fig:dyn}. By abuse,  in the $\zeta$-variable, we also call them \emph{petals}.

\begin{figure}[h!]
\includegraphics[scale=0.2]{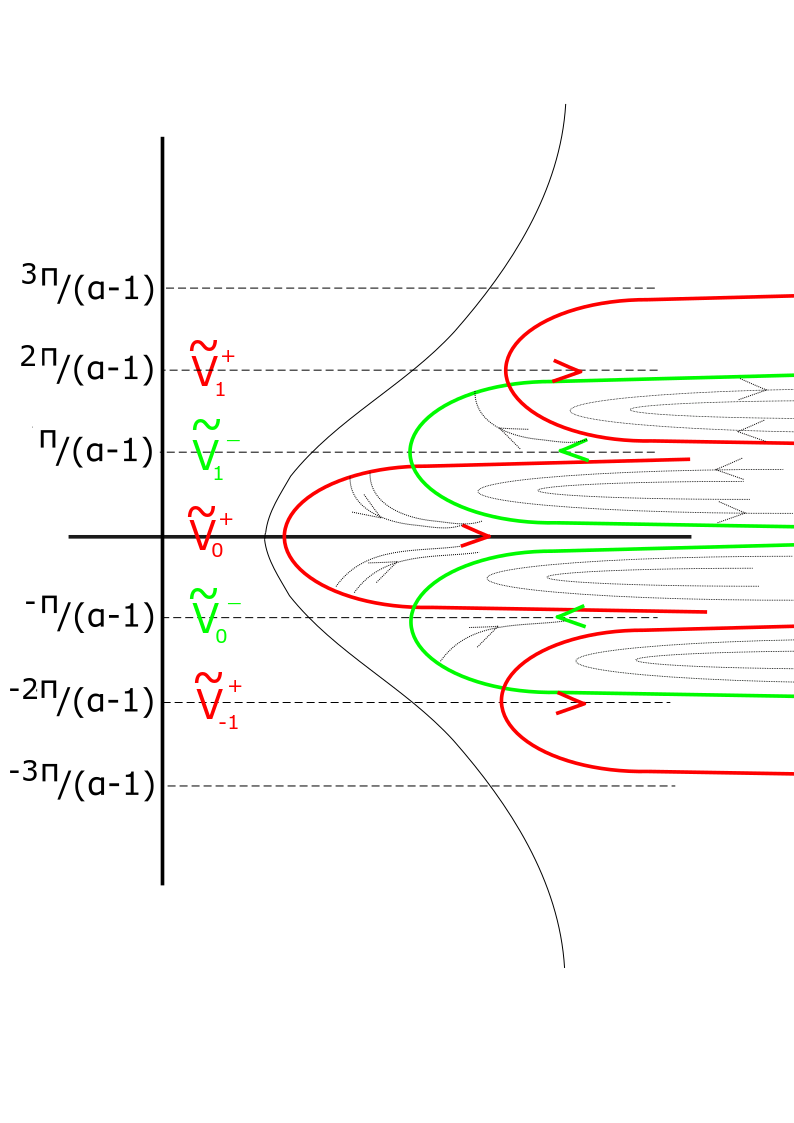}
\vspace{-1cm}
\caption{The outline of the dynamics of a generalized Dulac germ on petals $\tilde V_j^{\pm}$, $j\in\mathbb Z$, along a standard quadratic domain $\widetilde{\mathcal R}_C$ in the logarithmic chart, case $a>0$ in \eqref{eq:uniest}, \cite[Figure 3.1]{prvi}.}\label{fig:dyn}
\end{figure}

\begin{defi}[Parabolic generalized Dulac germs, Definition 2.3 in \cite{prvi}]\label{def:gD} We say that a parabolic germ $f$, analytic on a standard quadratic domain $\mathcal R_C$ $($or: standard linear domain $\mathcal R_{a,b})$,  that maps $\{\mathrm{arg}(z)=0\}\cap\mathcal R_C$ $($resp. $\{\mathrm{arg}(z)=0\}\cap\mathcal R_{a,b})$ to itself, satisfying \begin{align}\label{eq:uniest}|f(z)-z+az^{\alpha}\boldsymbol\ell^m|\leq c|z^{\alpha}\boldsymbol\ell^{m+1}|,\ a\neq 0,\ \alpha>1,&\ m\in\mathbb Z, \ c>0,\\ &z\in\mathcal R_C \text{, resp. $z\in\mathcal R_{a,b}$}\nonumber,\end{align}  is a \emph{parabolic generalized Dulac germ} if, on its every invariant petal $V_j^{\pm},\ j\in\mathbb Z$, of opening $\frac{2\pi}{\alpha-1} $, it admits an asymptotic expansion of the form:
\begin{equation*}
f(z)=z+\sum_{i=1}^{n} z^{\alpha_i} R_i^{j,\pm}(\boldsymbol\ell)+o(z^{\alpha_n+\delta_n}), \ \delta_n>0,
\end{equation*}
for every $n\in\mathbb N$, as $z\to 0$ on $V_j^{\pm}$. Here, $\alpha_1=\alpha$, $\alpha_i>1$ are strictly increasing to $+\infty$ and finitely generated, and $R_i^{j,\pm}(\boldsymbol\ell)$ are analytic functions on open cusps $\boldsymbol\ell(V_j^{\pm})$ which admit common \emph{$\log$-Gevrey asymptotic expansions} $\hat R_i(\boldsymbol\ell)$ of order strictly bigger than $\frac{\alpha-1}{2}$, as $\boldsymbol\ell\to 0$:
$$
\hat R_i(\boldsymbol\ell)=\sum_{k=N_i}^{\infty}a_k^i \boldsymbol\ell^k,\ a_k^i\in\mathbb R,\ N_i\in\mathbb Z.
$$
\smallskip

We then say that the transseries $\widehat f$ given by:
\begin{equation}\label{eq:gDexp}
\widehat f(z):=z+\sum_{i=1}^{\infty} z^{\alpha_i} \widehat R_i(\boldsymbol\ell)
\end{equation}
is the unique generalized Dulac asymptotic expansion of $f$. Such $\widehat f$ is called a parabolic \emph{generalized Dulac series}.
\end{defi}
\noindent Note that all coeficients of the expansion are real, due to the invariance of $\mathbb R_+$ under $f$. 

Moreover, we assume in the sequel that $a>0$ in \eqref{eq:uniest}. That is, that $\mathbb R_+\cap\mathcal R_C$ is an attracting direction. If $a<0$, we consider the inverse generalized Dulac germ $f^{-1}$. Indeed, it was proven in \cite[Proposition 8.2]{prvi} that parabolic generalized Dulac germs form a group under composition.
\medskip

A generalized Dulac asymptotic expansion is an asymptotic expansion in the formal class of transseries $\widehat {\mathcal L}(\mathbb R)$. The class of power-logarithmic transseries $\widehat {\mathcal L}(\mathbb R)$ was first introduced in \cite{mrrz2}, as the class of transseries of the form:
$$
\widehat{f}(z)=\sum_{i=1}^{\infty}\sum_{m=N_i}^{\infty} a_{i,m} z^{\alpha_i} \boldsymbol\ell^{m},\ a_{i,m}\in\mathbb R,
$$
where $\alpha_i>0$ are \emph{finitely generated} and strictly increasing to $+\infty$, and $N_i\in\mathbb Z$, $i\in\mathbb N$. Here, $\boldsymbol\ell=\frac{1}{-\log z}$.
As discussed in \cite{MRRZ2Fatou}, an asymptotic expansion of a germ in $\widehat{\mathcal L}(\mathbb R)$ is in general not well-defined, nor unique. The generalized Dulac expansion is a sectional asymptotic expansion (see \cite{MRRZ2Fatou} for precise definition of sections) that becomes unique after a canonical choice of section functions (the summation method) at limit ordinal steps - here, the \emph{$\log$-Gevrey sums} of a certain order. 
\smallskip


The parabolic Dulac (almost regular in \cite{ilyalim}) germs from Definition~\ref{def:Dulac} are trivially parabolic generalized Dulac germs. In that case we have a canonical choice for summation at limit ordinal steps, since $\widehat R_i$ in \eqref{eq:gDexp} are \emph{polynomials} in $\boldsymbol\ell^{-1}$. Polynomial functions in $\boldsymbol\ell^{-1}$ are convergent Laurent series in $\boldsymbol\ell$. 
\medskip

Recall the following \emph{formal classification} result from \cite{mrrz2}, repeated in \cite{prvi} for the case of real coefficients. By a normalizing change of variables $\widehat\varphi\in\widehat{\mathcal L}(\mathbb R)$ of the form $\widehat \varphi(z)=cz+\mathrm{h.o.t.}$\footnote{higher-order terms, lexicographically with respect to orders of monomials}, $c\neq 0$, every parabolic transseries $\widehat f\in\widehat{\mathcal L}(\mathbb R)$ of the form: 
$$
\widehat f(z)=z-az^\alpha\boldsymbol\ell^m+\mathrm{h.o.t.},\ a>0,\ \alpha>1,\ m\in\mathbb Z,
$$
can be reduced to a formal normal form given as a formal time-$1$ map of a vector field:
\begin{align}\begin{split}
\widehat f_0:=\text{Exp}(X_0).\mathrm{id}=z-&z^\alpha\boldsymbol\ell^m+\rho z^{2\alpha-1}\boldsymbol\ell^{2m+1}+\mathrm{h.o.t.},\\
&\text{where }X_0(z)=\frac{-z^\alpha\boldsymbol\ell^{m}}{1+\frac{-\alpha}{2}z^{\alpha-1}\boldsymbol\ell^{m}+\big(\frac{m}{2}+\rho\big) z^{\alpha-1}\boldsymbol\ell^{m+1}}\frac{d}{dz}.\label{eq:fnf}\end{split}
\end{align}
The triple $(\alpha,m,\rho)$, $\alpha>1$, $m\in\mathbb Z$, $\rho\in\mathbb R$, are called the $\widehat{\mathcal L}(\mathbb R)$-\emph{formal invariants} of $\widehat f$. 
\medskip

In \cite{prvi}, we have introduced the notion of \emph{analytic conjugacy} or \emph{analytic equivalence} of parabolic generalized Dulac germs, see \cite[Definition 2.4]{prvi}. We repeat the definition here.
For simplicity, we work here with \emph{normalized} parabolic generalized Dulac germs whose second coefficient is equal to $-1$. Each parabolic generalized Dulac germ of the form $f(z)=z-az^\alpha\boldsymbol\ell^m+o(z^\alpha\boldsymbol\ell^m)$ , $a>0$, can be brought to a parabolic generalized Dulac germ of the form:
\begin{equation}\label{eq:norm}
f(z)=z-z^\alpha\boldsymbol\ell^m+o(z^\alpha\boldsymbol\ell^m),\ \alpha>1,\ m\in\mathbb Z.
\end{equation}
This is done simply by a real homothecy $\varphi(z)=a^{\frac{1}{\alpha-1}}z$, taking $a^{\frac{1}{\alpha-1}}\in\mathbb R_+$, which preserves the invariance of $\mathbb R_+$ in the definition of generalized Dulac germs.

In the case when $a<0$, we work with the inverse parabolic generalized Dulac germ.

\begin{defi}[Analytic equivalence of parabolic generalized Dulac germs, Definition~2.4 from \cite{prvi}] We say that two normalized parabolic generalized Dulac germs $f$ and $g$ of the form \eqref{eq:norm} defined on a standard quadratic domain $\mathcal R_C$  $($or on a standard linear domain $\mathcal R_{a,b}$$)$ are \emph{analytically conjugated} if:
\begin{enumerate}
\item their generalized Dulac asymptotic expansions $\widehat f$ and $\widehat g$ are formally conjugated\footnote{i.e. have the same $\widehat{\mathcal L}(\mathbb R)$-formal invariants $(\alpha,m,\rho),\ \alpha>1,\ m\in\mathbb Z,\ \rho\in\mathbb R.$} in $\widehat{\mathcal L}(\mathbb R)$, and
\item there exists a germ of a diffeomorphism $h(z)=z+o(z)$ of a standard quadratic domain $\mathcal R_C$ $($or a standard linear domain $\mathcal R_{a,b}$$)$, such that:
$$
g=h^{-1}\circ f\circ h, \text{\quad  on } \mathcal R_{C} \text{ $($resp. }\mathcal R_{a,b}).
$$
\end{enumerate}
\end{defi}
\medskip

In \cite[Theorem B]{prvi} we have derived the following result about the moduli of analytic classification for parabolic generalized Dulac germs in the sense of Ecalle, Voronin. For more details, see \cite{prvi}. 

\noindent Let $f$ be a parabolic generalized Dulac germ defined on a standard quadratic (or linear) domain, belonging to $\widehat{\mathcal L}(\mathbb R)$-formal class $(2,m,\rho)$, $m\in\mathbb Z,\ \rho\in\mathbb R$. As in \cite{prvi}, $\alpha=2$ is taken for simplicity. This can be done without loss of generality, since any normalized generalized Dulac germ of the form \eqref{eq:norm} can be brought to the form $f(z)=z-z^2\boldsymbol\ell^m+o(z^2\boldsymbol\ell^m)$, $m\in\mathbb Z$, by the change of variables \begin{equation}\label{eq:cha}z\mapsto (\alpha-1)^{-\frac{m}{\alpha-1}}z^\frac{1}{\alpha-1},\end{equation} analytic on a standard quadratic (i.e. standard linear) domain and depending only on $\alpha$ and $m$. Therefore, two parabolic generalized Dulac germs are analytically conjugated if and only if, after the change of coordinates \eqref{eq:cha}, the corresponding germs are analytically conjugated. For details, see \cite[Proposition 9.1]{prvi}.

Let $(\Psi_{\pm}^j)_{j\in\mathbb Z}$ be the analytic \emph{Fatou coordinates} of $f$ on attracting and repelling petals $V_{\pm}^j$, $j\in\mathbb Z$, along the domain (standard quadratic or standard linear). Recall that a \emph{Fatou coordinate} $\Psi_{\pm}^j$ of a generalized Dulac germ $f$ is an analytic map, defined on the petal $V_{\pm}^j$, $j\in\mathbb Z,$ conjugating the map $f$ on the petal to a translation by $1$: $$\Psi_{\pm}^j\circ f-\Psi_{\pm}^j=1, \text{ on }V_\pm^j,\ j\in\mathbb Z.$$ The existence and the uniqueness, up to an additive constant, of the petal-wise analytic Fatou coordinate of a generalized Dulac germ under some additional assuption on its power-logarithmic asymptotic expansion is proven in \cite[Theorem~A]{prvi}. 

We have proved in \cite[Theorem B]{prvi} that there exists a symmetric (with respect to $\mathbb R_+$-axis) double sequence $(h_{0}^j,h_{\infty}^j)_{j\in\mathbb Z}$ of pairs of analytic germs of diffeomorphism from $\mathrm{Diff}(\mathbb C,0)$, defined on discs of radii $\sigma_j$ bounded from below by:
\begin{equation}\label{eq:fall}
\sigma_j\geq K_1 e^{-Ke^{C\sqrt{|j|}}},\ j\in\mathbb Z,\ \text{for some }K_1,\ K,\ C>0,
\end{equation}
that satisfy:
\begin{align}\begin{split}\label{eq:moduli}
&h_0^j(t):=e^{-2\pi i \Psi_+^{j-1}\circ (\Psi_{-}^{j})^{-1}(-\frac{\log t}{2\pi i})},\ t\in(\mathbb C,0),\\
&h_\infty^j(t):=e^{2\pi i \Psi_-^j\circ (\Psi_{+}^j)^{-1}(\frac{\log t}{2\pi i})},\ t\in(\mathbb C,0),\ j\in\mathbb Z.\end{split}
\end{align}
We have proved that this sequence of pairs of diffeomorphisms and the formal class $(2,m,\rho)$ form a \emph{complete system of analytic invariants} of a parabolic generalized Dulac germ $f$. These diffeomorphisms are called the \emph{horn maps} for $f$. 
\smallskip

As in \cite{prvi}, we say that the sequence of pairs $(h_0^j,h_\infty^j)_{j\in\mathbb Z}$ of analytic germs of diffeomorphisms is \emph{symmetric with respect to $\mathbb R_+$} if the following holds (on the domains of definition of $h_0^j$ and $h_\infty^j$, $j\in\mathbb Z$):
\begin{equation}\label{eq:sim}
\overline{\big(h_{0}^{-j+1}\big)^{-1}(t)}\equiv h_{\infty}^j(\overline t),\ t\in (\mathbb C,0),\ j\in\mathbb Z.
\end{equation}
This symmetry of moduli for parabolic generalized Dulac germs comes from the invariance of $\mathbb R_+$ under $f$, by the Schwarz reflection principle, see \cite[Proposition 9.2]{prvi}.
\smallskip

Note that the lower bound \eqref{eq:fall} comes from the standard quadratic domain of definition of $f$. However, the construction of moduli of analytic classification from \cite[Theorem B]{prvi} goes through in the same way for parabolic generalized Dulac germs defined on smaller standard linear domains. In  the  case that the germ is defined only on a standard \emph{linear} domain, it is easy to see that the radii of definition of its horn maps may decrease quicker. More precisely, they are bounded from below by:
\begin{equation}\label{eq:velradlin}\sigma_j\geq K_1 e^{-Ke^{C|j|}},\ j\in\mathbb Z,\ \text{for some}\ K_1,\ K,\ C>0.\end{equation}
\smallskip

By \emph{horn maps}, we in fact mean the \emph{equivalence classes} of germs, up to the following identifications. Two sequences \begin{equation}\label{eq:hh}(h_{0}^j,h_{\infty}^j;\ \sigma_j)_{j\in\mathbb Z} \text{ and } (k_{0}^j, k_{\infty}^j;\ \breve \sigma_j)_{j\in\mathbb Z}\end{equation} with maximal radii of convergence $\sigma_j$ resp. $\breve \sigma_j$, satisfying lower bounds of the type \eqref{eq:fall} or \eqref{eq:velradlin}, are \emph{equivalent} if there exist sequences $(\beta_j)_{j\in\mathbb Z},\ (\gamma_j)_{j\in\mathbb Z}\in \mathbb C^*$ such that \begin{equation}\label{eq:ekvij}h_0^j(t)=\beta_{j-1}\cdot k_0^j\big(\gamma_j t\big),\ h_\infty^j(t)=\gamma_j\cdot k_\infty^j\big(\beta_j t\big),\ j\in\mathbb Z.\end{equation} 

Additionally, we assume that the sequences of pairs \eqref{eq:hh} are both \emph{symmetric} as in \eqref{eq:sim}, since they represent the moduli of generalized parabolic Dulac germs for which $\mathbb R_+$ is invariant. In this case, the complex sequences $(\gamma_j)_{j\in\mathbb Z}$, $(\beta_j)_{j\in\mathbb Z}$ in equivalence relation \eqref{eq:ekvij} are not arbitrary. Indeed, if such sequences exist, they should, by \eqref{eq:sim} and \eqref{eq:ekvij}, be related to germs of diffeomorphisms $k_0^j$ and $k_\infty^j$, $j\in\mathbb Z,$ by the following: 
\begin{equation}\label{eq:hhb}
\frac{1}{\overline{\gamma_{-j+1}}}\cdot k_\infty^j\big(\frac{1}{\overline{\beta_{-j}}}t\big)=\gamma_j \cdot k_\infty^j(\beta_j t), \ t\in(\mathbb C,0),\ j\in\mathbb Z.
\end{equation}
By basic calculations (comparing the coefficients with each power $t^k$ in the Taylor expansion of \eqref{eq:hhb}), depending on the nature of diffeomorphisms $k_\infty^j$, $j\in\mathbb Z$, the equality \eqref{eq:hhb} is equivalent to the following conditions on the sequences $(\beta_j)_{j\in\mathbb Z}$ and $(\gamma_j)_{j\in\mathbb Z}$:
\begin{enumerate}
\item
$\gamma_j\cdot\overline{\gamma_{-j+1}}=\frac{1}{\overline{\beta_{-j}}\cdot \beta_j}$; for $j\in\mathbb Z$ for which $k_\infty^j$ is linear,
\item
$\beta_j\cdot\overline{\beta_{-j}}=r,\ \gamma_j\cdot\overline{\gamma_{-j+1}}=1/r$, for any $r\in\mathbb C$ such that $r^m=1$; for $j\in\mathbb Z$ for which the non-constant part\footnote{the part obtained by subtracting from $\frac{k_\infty^j}{\mathrm{id}}$ the constant term in its Taylor expansion} of $\frac{k_\infty^j}{\mathrm{id}}$ is a diffeomorphism in the variable $t^m$, for some $m\in\mathbb N$, $m\geq 2$,
\item $\beta_j\cdot\overline{\beta_{-j}}=1,\ \gamma_j\cdot\overline{\gamma_{-j+1}}=1$; for all other $j\in\mathbb Z$ (the \emph{generic case}). 
\end{enumerate}

\section{Main results}  

For simplicity, as in \cite{prvi}, we consider here only parabolic generalized Dulac germs of order $2$ in variable $z$, defined on a standard quadratic domain $\mathcal R_C$, 
$$f(z)=z-az^2 \boldsymbol\ell^{m}+o(z^2\boldsymbol\ell^m),\ a>0,\ m\in\mathbb Z_-.$$ 

More general case $\alpha>1$ can be reduced to the case $\alpha=2$, as discussed above. Also, the realization result for $\alpha>1$ can be concluded in the same way as for $\alpha=2$. The number of petals on each level of the surface of the logarithm depends on $\alpha$.  
\medskip

In this paper, we solve the realization problem in the subset of \emph{prenormalized} parabolic generalized Dulac germs:
\begin{equation*}\label{eq:formpre}
f(z)=z-z^2\boldsymbol\ell^m+\rho z^3\boldsymbol\ell^{2m+1}+o(z^3\boldsymbol\ell^{2m+1}),\ m\in\mathbb Z,\ \rho\in\mathbb R.
\end{equation*}
Note that its formal invariants are $(2,m,\rho)$.

\noindent By Proposition~\ref{prop:pgd} in the Appendix, the sectorial Fatou coordinate of prenormalized germ $f$ is of the form:
$$
\Psi_j^{\pm}=\Psi_\mathrm{nf}+R_j^{\pm},\ \text{ on }V_j^{\pm},
$$
where $\Psi_\mathrm{nf}$ is the Fatou coordinate of the formal normal form $f_0$ of $f$, globally analytic on $\mathcal R_C$, and $R_j^{\pm}=o(1)$, as $z\to 0$, $z\in V_j^{\pm}$, are analytic on petals. Here, the formal normal form $f_0$ of $f$ is an analytic germ on $\mathcal R_C$, given as the time-$1$ map of an analytic vector field on $\mathcal R_C$:
\begin{align}\label{eq:mod}
f_0:=\text{Exp}(X_0).\mathrm{id}=z-&z^2\boldsymbol\ell^m+\rho z^{3}\boldsymbol\ell^{2m+1}+o(z^{3}\boldsymbol\ell^{2m+1}),\\ &X_0(z)=\frac{-z^2\boldsymbol\ell^{m}}{1-z\boldsymbol\ell^{m}+\big(\frac{m}{2}+\rho\big) z\boldsymbol\ell^{m+1}}\frac{d}{dz},\nonumber
\end{align}
see \eqref{eq:fnf} in the case $\alpha=2$.
\medskip

\subsection{Main theorems}\label{subsec:mt}\

Let $$f(z)=z-z^2\boldsymbol\ell^m+o(z^2\boldsymbol\ell^m),\ m\in\mathbb Z,$$ be a parabolic generalized Dulac germ. Let $\Psi_j^{\pm}$, $j\in\mathbb Z$, be its sectorially analytic Fatou coordinates on petals $V_j^{\pm}$, precisely defined in \cite[Theorem A]{prvi}. 

To a sequence of horn maps of $f$, $\big(h_{0}^j,h_\infty^j\big)_{j\in\mathbb Z}$ defined in \cite[Theorem B]{prvi} and in \eqref{eq:moduli}, there naturally corresponds a sequence of exponentially small \emph{cocycles} $(G_{0}^j,G_\infty^j)_{j\in\mathbb Z}$, defined and analytic on intersections $V_0^j$ and $V_\infty^j$ of consecutive petals, such that:
\begin{align*}
&G_0^j(z):=g_0^j(e^{- 2\pi i \Psi_{-}^j(z)}),\ z\in V_0^j,\\
&G_{\infty}^j(z):=g_{\infty}^j(e^{2\pi i \Psi_{+}^j(z)}),\ z\in V_\infty^j.
\end{align*}
Here, $V_0^j:=V_+^{j-1}\cap V_-^j$ and $V_\infty^j:=V_-^j\cap V_+^j$, $j\in\mathbb Z$, see Figure~\ref{fig:dyn}, and $g_{0}^j,\ g_\infty^j,\ j\in\mathbb Z$, are analytic germs at $t\approx 0$, such that \begin{equation}\label{eq:evo}(h_0^j)^{-1}(t)=te^{2\pi i g_0^j(t)},\ h_\infty^j(t)=te^{2\pi i g_\infty^j(t)},\ t\approx 0.\end{equation}
\smallskip

\noindent The following is an equivalent formulation of \eqref{eq:moduli} using $G_{0,\infty}^j$ and $g_{0,\infty}^j$, $j\in\mathbb Z$:
\begin{align*}
&\Psi_+^{j-1}(z)-\Psi_-^j(z)=g_0^j(e^{-2\pi i \Psi_+^{j-1}(z)})=G_0^j(z),\ z\in V_0^j, \\
&\Psi_-^j(z)-\Psi_+^j(z)=g_\infty^j(e^{2\pi i\Psi_+^j(z)})=G_\infty^j(z),\ z\in V_\infty^j,\ \ j\in\mathbb Z.
\end{align*}
\medskip

\begin{prop}[Uniform bounds by levels for horn maps of parabolic generalized Dulac germs on standard linear or quadratic domains]\label{prop:modest} Let $f(z)=z-z^2\boldsymbol\ell^m+\rho z^3\boldsymbol\ell^{2m+1}+o(z^3\boldsymbol\ell^{2m+1}),\ m\in\mathbb Z,\ \rho\in\mathbb R,$ be a prenormalized analytic germ on a standard quadratic or standard linear domain. Assume that there exists a constant $C>0$ such that: 
\begin{equation}\label{eq:fuin}
|f(z)-z+z^2\boldsymbol\ell^m-\rho z^3\boldsymbol\ell^{2m+1}|\leq C|z^3\boldsymbol\ell^{2m+2}|,
\end{equation} on some quadratic or linear subdomain. Let $(h_0^j,h_\infty^j)_{j\in\mathbb Z}$, be a sequence of its horn maps $($constructed in \cite[Theorem A]{prvi}$)$. Let $g_{0,\infty}^j(t),\ j\in\mathbb Z,$ be defined as above in \eqref{eq:evo}. Then the following \emph{uniform} bounds hold $($uniform in $j)$: there exist uniform constants $c_1,\ c_2,\ d_1,\ d_2>0$ such that, equivalently:
\begin{align}
&|h_{0,\infty}^j(t)-t|\leq d_1|t|^2,\ |(h_{0,\infty}^j)'(t)-1|\leq d_2|t|,\ \text{ or }\label{dru}\\
&|g_{0,\infty}^j(t)|\leq c_1|t|,\ |(g_{0,\infty}^j)'(t)|\leq c_2,\ 0<|t|\leq \sigma_j,\ j\in\mathbb Z.\label{prv}
\end{align}
\end{prop}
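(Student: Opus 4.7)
The plan is to reduce the proposition to a uniform-in-$j$ bound on the cocycles $G_{0,\infty}^j$ on petal intersections, and then transport it to the disks of radius $\sigma_j$ via the Fatou-coordinate parametrization. I would begin with the decomposition recalled in Proposition~\ref{prop:pgd} in the Appendix,
\[
\Psi_j^\pm=\Psi_{\mathrm{nf}}+R_j^\pm\quad\text{on } V_j^\pm,
\]
where $\Psi_{\mathrm{nf}}$ is the globally analytic Fatou coordinate of $f_0$ in \eqref{eq:mod} and $R_j^\pm=o(1)$. On the intersection $V_0^j=V_+^{j-1}\cap V_-^j$ both coordinates solve $\Psi\circ f-\Psi=1$, so
\[
G_0^j=\Psi_+^{j-1}-\Psi_-^j=R_+^{j-1}-R_-^j
\]
is $f$-invariant, and analogously for $G_\infty^j$ on $V_\infty^j$.

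The key technical step is to establish the uniform bound
\[
|G_0^j(z)|\leq c\,\bigl|e^{-2\pi i\Psi_+^{j-1}(z)}\bigr|,\qquad z\in V_0^j,
\]
together with its analogue for $G_\infty^j$. For this I would exploit the prenormalized assumption \eqref{eq:fuin}, which gives $f-f_0=O(z^3\boldsymbol\ell^{2m+2})$ with a constant valid throughout the whole domain. Subtracting $\Psi_{\mathrm{nf}}\circ f_0-\Psi_{\mathrm{nf}}=1$ from $\Psi_j^\pm\circ f-\Psi_j^\pm=1$ produces a homological equation for $R_j^\pm$ whose right-hand side is controlled by $\Psi_{\mathrm{nf}}'\cdot(f-f_0)$, of uniform size in the sectorial norm. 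Standard sectorial analysis (as in the classical Ecalle-Voronin case, and used in \cite{prvi}) then yields uniform estimates on each $R_j^\pm$, and on the intersection the exponential flatness in $\Psi_+^{j-1}$ of the difference follows automatically from its $f$-invariance combined with its prescribed growth along orbits. Because the constant in \eqref{eq:fuin} is independent of $j$, the resulting bounds on $G_{0,\infty}^j$ are uniform in $j$ as well.

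Once this bound is in place, the rest of the proof is routine. By the construction of $\sigma_j$ in \cite[Theorem B]{prvi} and in \eqref{eq:fall}, \eqref{eq:velradlin}, the map $z\mapsto t=e^{-2\pi i\Psi_+^{j-1}(z)}$ sends $V_0^j$ onto a punctured disk of radius at least $\sigma_j$, so $g_0^j(t)=G_0^j(z(t))$ inherits the estimate $|g_0^j(t)|\leq c_1|t|$ for $0<|t|\leq\sigma_j$, uniformly in $j$. An identical argument treats $g_\infty^j$. The derivative bound $|(g_{0,\infty}^j)'(t)|\leq c_2$ then follows by a Cauchy estimate applied to the analytic extension $g_{0,\infty}^j(t)/t$ on a disk of radius comparable to $\sigma_j$, using that $|g_{0,\infty}^j(t)/t|\leq c_1$ there. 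Finally, the equivalence of \eqref{prv} and \eqref{dru} is immediate from \eqref{eq:evo}: expanding $e^{\pm 2\pi i g_{0,\infty}^j(t)}=1\pm 2\pi i g_{0,\infty}^j(t)+O(|g_{0,\infty}^j(t)|^2)$ and using $|g_{0,\infty}^j(t)|\leq c_1|t|$ converts the linear estimate on $g$ into the quadratic estimate $|h_{0,\infty}^j(t)-t|\leq d_1|t|^2$, and differentiating gives $|(h_{0,\infty}^j)'(t)-1|\leq d_2|t|$.

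The main obstacle is the uniformity in $j$ of the sectorial bounds on $R_j^\pm$. For a single petal the $o(1)$ behaviour of $R_j^\pm$ and the exponential flatness of $G_{0,\infty}^j$ are by now classical; what requires genuine work is ensuring that the constants do not deteriorate as $|j|\to\infty$, since the geometry of the petals on higher levels of the Riemann surface is only asymptotically self-similar. The prenormalized hypothesis---three leading terms of $f$ matching those of the formal normal form, with the uniform constant $C$ of \eqref{eq:fuin}---is used in an essential way here, as it guarantees that the right-hand side of the homological equation for $R_j^\pm$ is bounded by a fixed multiple of a universal sectorial norm, independent of the level $j$.
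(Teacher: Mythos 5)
The heart of your argument is the claim that the uniform bound
\[
|G_0^j(z)|\leq c\,\bigl|e^{-2\pi i\Psi_+^{j-1}(z)}\bigr|,\qquad z\in V_0^j,
\]
``follows automatically from its $f$-invariance combined with its prescribed growth along orbits.'' This is where the proof breaks down, because this bound is essentially the conclusion restated on the $z$-side: pushing it forward through $t=e^{-2\pi i\Psi_+^{j-1}(z)}$ gives exactly $|g_0^j(t)|\leq c|t|$, so you have assumed what you want to prove. What the ``standard sectorial analysis'' actually delivers — and what the paper proves in its Lemma~\ref{lem:modest1}, using the explicit form \eqref{eq:abo} of $\Psi_{\mathrm{nf}}$ together with an iteration of the Abel equation — is the uniform-in-$j$ estimate $|R_j^\pm(z)|\leq C\,\boldsymbol\ell(|z|)$, hence $|G_0^j(z)|\leq 2C\,\boldsymbol\ell(|z|)$. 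That is a far weaker rate than exponential flatness. Indeed, since $|t|\sim e^{-2\pi/(|z|\,|\boldsymbol\ell|^{m})}$ on $V_0^j$, one only gets $\boldsymbol\ell(|z|)\sim 1/\log\log|t|^{-1}$, so the transported bound on $g_0^j$ is of the order $1/\log\log|t|^{-1}$, which is very far from $O(|t|)$. The $f$-invariance of $G_0^j$ guarantees that $G_0^j$ descends to an analytic germ $g_0^j$ in the $t$-variable vanishing at $0$ — i.e., qualitative exponential flatness — but says nothing about the uniformity in $j$ of the linear coefficient, which is the whole content of the proposition.

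The paper therefore does not attempt to bound $G_0^j$ pointwise by $|e^{-2\pi i\Psi_+^{j-1}}|$. Instead, after proving the uniform estimate $|\Psi_j^\pm-\Psi_{\mathrm{nf}}|\leq C\,\boldsymbol\ell(|z|)$ on subsectors, it passes to the composed coordinates: $\Psi_+^{j-1}\circ(\Psi_-^j)^{-1}(w)=w+o(1)$ with an $o(1)$ uniform in $j$, whence $h_0^j(t)=t\bigl(1+o(1)\bigr)$ uniformly. The desired quadratic bound is then extracted on the $t$-side from the fact that each $h_0^j$ is an analytic parabolic germ, so $h_0^j(t)-t$ has at least a double zero at $0$ and the supremum $\sup_{|t|<\delta}|h_0^j(t)-t|/|t|^2$ can be controlled using the uniformity of the $o(t)$ estimate. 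In other words, the paper's route never needs the quantitative exponential-flatness bound on the cocycle that your proof takes as given; it uses the analyticity and parabolicity of the horn maps to compensate. Your transport step, Cauchy estimate for the derivative, and the equivalence of \eqref{prv} and \eqref{dru} are all fine once the uniform linear bound on $g_0^j$ is in hand — but you have not established that bound.

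A smaller remark: the map $z\mapsto e^{-2\pi i\Psi_+^{j-1}(z)}$ does not send $V_0^j$ \emph{onto} a punctured disk; it is an infinite covering of the punctured disk of radius $\sigma_j$. To push a pointwise bound on $G_0^j$ down to one on $g_0^j$ one needs the bound to hold for every $z$ in the fiber over a given $t$, which is why the paper phrases the estimate in terms of $h_0^j$ or $g_0^j$ in the $t$-coordinate from the start.
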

The proof, which is a consequence of uniform asymptotics \eqref{eq:fuin}, is in the Appendix. 

\noindent Note that parabolic prenormalized generalized Dulac germs, due to \eqref{eq:uniest}, satisfy  assumption \eqref{eq:fuin}, so the sequences of pairs of their horn maps satisfy uniform bounds \eqref{dru}.

\bigskip

We now state two realization theorems, Theorem~A and Theorem~B. They are both dealing with the following realization problem: \emph{given a formal class $(2,m,\rho)$, $m\in\mathbb Z,\ \rho\in\mathbb R$, and a sequence of pairs of analytic germs of diffeomorphisms $(h_0^j,h_\infty^j)_{j\in\mathbb Z}$ fixing the origin, symmetric with respect to $\mathbb R_+$, with radii of convergence $\sigma_j$ satisfying a lower bound of the type \eqref{eq:fall} and satisfying bounds \eqref{dru}, does there exist a parabolic generalized Dulac germ belonging to formal class $(2,m,\rho)$ and realizing this sequence as its sequence of horn maps?} This result can be considered as a generalization of the realization result for regular (i.e. holomorphic) parabolic germs in \cite{voronin}.
\smallskip

First, in Theorem~A, we answer the realization question positively in the class of prenormalized germs of the form:
\begin{equation}\label{eq:poc}
f(z)=z-z^2\boldsymbol\ell^m+\rho z^3\boldsymbol\ell^{2m+1}+o(z^3\boldsymbol\ell^{2m+1}),\ z\in\mathcal R_C,
\end{equation}
leaving $\mathbb R_+$ invariant and analytic on a  standard quadratic domain. However, we do not claim the uniqueness of the transserial asymptotic expansion of $f$ in $\widehat{\mathcal L}(\mathbb R)$ after the first three terms given in \eqref{eq:fuin}. In particular, we do not claim that the constructed germ is a parabolic generalized Dulac germ: we are unable to prove that it admits the generalized Dulac asymptotic expansion as defined in Definiton~\ref{def:gD}, with sufficiently strong $\log$-Gevrey bounds at limit ordinal steps, see Remark~\ref{rem:sq}.
\smallskip

In Theorem~B, we realize any sequence of pairs satisfying bounds \eqref{dru} by \emph{parabolic generalized Dulac germs} of the form \eqref{eq:poc} belonging to the formal class $(2,m,\rho)$, but on a smaller \emph{standard linear domain}. Note that such germs admit a well-defined unique \emph{generalized Dulac asymptotic expansion}. On smaller standard linear domains the map $f\mapsto \widehat f$, for parabolic generalized Dulac germs $f$, is well-defined, but the domain is too small to apply Phragmen-Lindel\" of \cite{ilya} and get injectivity. 

Note that in \cite[Theorem B]{prvi} we construct the moduli of parabolic generalized Dulac germs defined on standard quadratic domains. However, the result can be deduced in the same way for parabolic generalized Dulac germs on smaller standard linear domains, with the only difference that the rate of decrease of moduli follows the rule \eqref{eq:velradlin} instead of \eqref{eq:fall}.

To conclude, we prove in Theorem~B that, on a standard \emph{linear} domain, there is a bijective correspondence between analytic classes of parabolic prenormalized generalized Dulac germs belonging to the same formal class and all sequences of pairs of analytic germs of diffeomorphisms satisfying bounds \eqref{eq:velradlin} and \eqref{dru}, with appropriate identifications on both sides.
\smallskip

\begin{theo1}[Realization by parabolic germs on a standard quadratic domain]\label{real:first}
Let $\rho\in\mathbb R$, $m\in\mathbb Z$. Let $(h_{0}^j,h_{\infty}^{j};\,\sigma_j)_{j\in\mathbb Z}$ be a sequence of pairs of analytic germs from $\mathrm{Diff}(\mathbb C,0)$, symmetric with respect to $\mathbb R_+$ as in \eqref{eq:sim}, and with maximal radii of convergence $\sigma_j$ bounded from below by $$\sigma_j\geq K_1 e^{-Ke^{C\sqrt{|j|}}}, \ j\in\mathbb Z,$$ for some $C,\ K,\ K_1>0$. Let the elements of the sequence on their respective domains of definition satisfy the \emph{uniform} bound \eqref{dru}. Then there exists a germ \begin{equation}\label{eq:forma}f(z)=z-z^2\boldsymbol\ell^m+\rho z^3\boldsymbol\ell^{2m+1}+o(z^3\boldsymbol\ell^{2m+1}),\end{equation} analytic on a standard quadratic domain, leaving $\mathbb R_+$ invariant and satisfying \eqref{eq:fuin},  that realizes this sequence as its horn maps\footnote{To be able to define \emph{horn maps} of such germ, recall from \cite[Theorem A]{prvi} that a germ $f$ analytic on a standard quadratic domain and satisfying uniform estimate \eqref{eq:fuin} admits petalwise dynamics and the existence of petalwise analytic Fatou coordinates along the standard quadratic domain, as described in Theorem~A in \cite{prvi} and recalled here in Figure~\ref{fig:dyn}. The same can be deduced for standard linear domains.}, up to identifications \eqref{eq:ekvij}.
\end{theo1}

\begin{theo2}[Realization by parabolic generalized Dulac germs on a standard linear domain]\label{real:scd}
Let $\rho\in\mathbb R$, $m\in\mathbb Z$. Let $(h_{0}^j,h_{\infty}^{j};\ \sigma_j)_{j\in\mathbb Z}$ be a sequence of pairs of analytic germs from $\mathrm{Diff}(\mathbb C,0)$, symmetric with respect to $\mathbb R_+$ as in \eqref{eq:sim}, and with maximal radii of convergence $\sigma_j$ bounded from below by $$\sigma_j\geq K_1 e^{-Ke^{C|j|}}, \ j\in\mathbb Z,$$ for some $C,\ K,\ K_1>0$. Let the elements of the sequence on their respective domains of definition satisfy the \emph{uniform} bound \eqref{dru}. Then there exists a prenormalized parabolic generalized Dulac germ $$g(z)=z-z^2\boldsymbol\ell^m+\rho z^3\boldsymbol\ell^{2m+1}+o(z^3\boldsymbol\ell^{2m+1}),$$ analytic on a standard linear domain and satisfying \eqref{eq:fuin},  that realizes this sequence as its horn maps, up to identifications \eqref{eq:ekvij}. In particular, $g$ admits a unique generalized Dulac asymptotic expansion, as $z\to 0$.
\end{theo2}
Note that on a standard linear domain we realize any sequence of moduli by a prenormalized parabolic generalized Dulac germ belonging to any formal class $(2,m,\rho)$, $m\in\mathbb Z$, $\rho\in\mathbb R$.
\begin{obs} 
Note the difference between Theorem A and Theorem B. In Theorem A, we realize a sequence of pairs of diffeomorphisms as moduli of a parabolic diffeomorphism $f$ on a bigger (quadratic) domain, but we do not claim that $f$ admits the generalized Dulac asymptotic expansion. In Theorem B, the constructed parabolic diffeomorphism  $g$ realizing the moduli has the required asymptotic expansion, but is defined on a smaller (linear) domain. 

In the course of proof of Theorems~A and B in Sections~\ref{subsec:prvi}-\ref{subsec:treci}, it can be seen that the parabolic generalized Dulac germ $f$ constructed in Theorem~B is \emph{not just the restriction} to a linear domain $\mathcal R_{a,b}\subset \mathcal R_C$ of a germ $g$ constructed in Theorem~A for the same sequence of pairs of  horn maps on a bigger quadratic domain $\mathcal R_C$, see Remarks~\ref{rem:linqua} and \ref{rem:final}. Therefore, \emph{we have not proven that the parabolic generalized Dulac germ constructed on a linear domain and realizing the given sequence of pairs can be extended as an analytic germ to a standard quadratic domain}. As far as we know, nothing can be directly concluded about Gevrey nature and uniqueness of the asymptotic expansion after the first three terms of the germ constructed in Theorem~A on a standard quadratic domain and realizing the given sequence of pairs of horn maps, or of any other representative of the same analytic class on a standard quadratic domain. This prevents extending the realization result in the class of parabolic generalized Dulac germs from linear to a bigger, standard quadratic domain, which remains an open question. However, we can deduce the following Corollary~\ref{cor:zaklj}. 
\end{obs}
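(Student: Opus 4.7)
The final passage is a \emph{Remark} (\texttt{obs}) rather than a theorem, but it makes three substantive claims that warrant justification: (i) the germ $g$ produced in Theorem~B is not the restriction of the germ $f$ produced in Theorem~A for the same sequence of horn maps; (ii) the Cauchy--Heine bounds obtained on $\mathcal R_C$ do not upgrade to log-Gevrey bounds of order $>\tfrac{\alpha-1}{2}=\tfrac12$ as required by Definition~\ref{def:gD}; (iii) consequently, the realization result in the parabolic generalized Dulac class cannot, by our method, be transferred from linear to quadratic domains. The plan is to make each of (i)--(iii) precise on the level of the Cauchy--Heine formula that underlies both Theorems~A and B.

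For (i), the plan is to write out both constructed germs via their Fatou coordinates and compare. In each case the Fatou coordinate is produced by a Cauchy--Heine sum $\Psi^{\pm}_j(z)=\Psi_{\mathrm{nf}}(z)+\sum_k \frac{1}{2\pi i}\int_{\gamma_k}\frac{G^k_{0,\infty}(w)}{w-z}\,dw$, where the integration curves $\gamma_k$ are the straight or quasi-straight rays selected by \eqref{eq:imam} (quadratic case) and \eqref{eq:imam1} (linear case). Since the two families of curves agree only near the accumulation point and then diverge along the domain, I would estimate the difference of the two Fatou coordinates by completing a closed contour with the relevant arcs at infinity: this difference is a boundary term that decays only exponentially (not identically), and by the cocycle identity it produces a nontrivial perturbation of the time-$1$ map. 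Thus the two germs differ by a nonzero exponentially small term, which I would track by a direct residue computation on the difference contour.

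For (ii), the plan is to revisit the estimates that yield \eqref{dru} and compare them with Definition~\ref{def:logg}. On a standard quadratic domain the horn maps' radii only decay like $e^{-Ke^{C\sqrt{|j|}}}$, and the corresponding cocycles $G^j_{0,\infty}(z)$ inherit only the bound $|G^j_{0,\infty}(z)|\leq C e^{-2\pi\,\mathrm{Im}\,\Psi^{\pm}_j(z)}$ on the strip geometry that scales like $\sqrt{|j|}$. Feeding these bounds into the Cauchy--Heine remainder after $n$ terms, I expect to recover control only of Gevrey type in $z$, but \emph{not} of $\log$-Gevrey type in $\boldsymbol\ell$ of order $>\tfrac12$: the logarithmic width of the quadratic domain is simply too narrow, so one cannot apply Watson's lemma (\cite[Corollary 4.4]{prvi}) to pin down a unique transserial expansion past the three explicit terms. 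The expected obstacle here is that the map $f\mapsto \widehat f$ is only known to be injective under Phragm\'en--Lindel\"of conditions which fail on $\mathcal R_{a,b}$ and whose analogue on $\mathcal R_C$ would require precisely the log-Gevrey bounds we cannot produce.

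Claim (iii) then follows by contradiction: if we could extend the Theorem~B germ to a standard quadratic domain while preserving the generalized Dulac class, we would in particular obtain log-Gevrey bounds of order $>\tfrac12$ for the cocycles on the enlarged domain, and hence (by Watson's lemma) the uniqueness of a generalized Dulac asymptotic expansion. The main obstacle I anticipate is not in proving (iii) given (ii), but in making (i) fully quantitative, since the difference of the two Cauchy--Heine integrals must be computed to an order strictly beyond the three explicit terms of \eqref{eq:poc}: this is where the choice of integration rays in \eqref{eq:imam} versus \eqref{eq:imam1} genuinely matters, and where a careful residue analysis on the strip intersections $V_0^j,V_\infty^j$ will be required.
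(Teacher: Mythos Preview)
You have misread what the Remark actually asserts, and your proposal tries to prove strictly stronger statements than the paper claims.

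For claim (i), the paper does \emph{not} assert that the germs $f$ and $g$ are genuinely different analytic functions; it only asserts that the \emph{construction} of $g$ on $\mathcal R_{a,b}$ is not the restriction of the construction of $f$ on $\mathcal R_C$. The justification in the paper (Remarks~\ref{rem:linqua} and~\ref{rem:final}) is simply that the Cauchy--Heine integrals \eqref{eq:ch} are taken along different half-lines $\mathcal C_{0,\infty}^k$: the endpoints satisfy $-\log r_k\sim C\sqrt{|k|}$ in the quadratic case \eqref{eq:imam} versus $-\log r_k\sim C|k|$ in the linear case \eqref{eq:imam1}, so the resulting $R_j^{\pm}$ differ by nontrivial boundary integrals over the arcs between these endpoints. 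That is all. Your plan to show the difference is ``nonzero'' by a residue computation is both unnecessary and unlikely to work: there is no pole enclosed between the two contours, only an arc integral $\int_{\mathcal S}\frac{\tilde G(w)}{w-\zeta}\,dw$, and proving it nonvanishing would require information you do not have. Indeed, by Corollary~\ref{cor:zaklj} the two germs are analytically conjugate on $\mathcal R_{a,b}$, so they may well coincide after a change of variables---the paper explicitly leaves open whether $g$ extends analytically to $\mathcal R_C$.

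For claim (iii), the same overclaiming occurs. The Remark states that the question of extension to quadratic domains ``remains an open question''; it does \emph{not} claim extension is impossible. Your proposed proof by contradiction would establish impossibility, which is neither asserted nor, as far as the paper knows, true. The honest content of (iii) is just: our estimates on $\mathcal R_C$ are too weak to invoke the log-Gevrey Watson lemma, so \emph{our method} does not yield a generalized Dulac expansion there. For (ii), your intuition is in the right direction, but the precise obstruction (Remark~\ref{rem:sq}) is sharper than what you wrote: on a quadratic domain one has $\mathrm{Re}(w)^2\gtrsim |k|\sim\mathrm{Im}(w)$ along $\mathcal C_0^k$, so the bound \eqref{eq:dai} becomes $e^{-Me^{m\xi}}\xi^{2n}$ rather than $e^{-Me^{m\xi}}\xi^{n}$, yielding a remainder of size $m^{-2n}e^{-2n/\log(2n)}\log^{2n}(2n)\,|\zeta|^{-n}$, which is weaker than log-Gevrey of \emph{any} positive order.
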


\begin{cory}\label{cor:zaklj} Let $(h_{0}^j,h_\infty^{j})_{j\in\mathbb Z}$ be a sequence of pairs of analytic diffeomorphisms, symmetric with respect to $\mathbb R_+$ as in \eqref{eq:sim}, and satisfying \eqref{dru}. Let $m\in\mathbb Z$ and $\rho\in\mathbb R$. Let $f(z)$ be the germ defined on a standard quadratic domain $\mathcal R_C$ of the form

$$
f(z)=z-z^2\boldsymbol\ell^m+\rho z^3\boldsymbol\ell^{2m+1}+o(z^3\boldsymbol\ell^{2m+1}),
$$
that by Theorem~A realizes the above sequence of pairs as its horn maps. Let moreover $g(z)$ be the parabolic generalized Dulac germ of the same form defined on a standard linear domain $\mathcal R_{a,b}\subset\mathcal R_C$ that by Theorem~B realizes the above sequence of pairs as its horn maps. Then, there exists an analytic diffeomorphism $\varphi(z)=z+o(z)$ on $\mathcal R_{a,b}$, such that $\varphi^{-1}\circ g\circ \varphi$ can be extended from $\mathcal R_{a,b}$ analytically to the germ $f$ on $\mathcal R_C$.
\end{cory}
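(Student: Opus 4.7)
The strategy is to conjugate $g$ to $f$ analytically on the smaller linear domain $\mathcal R_{a,b}$ by gluing petalwise conjugations built from the sectorial Fatou coordinates. Since $f$ is already analytic on $\mathcal R_C$, the extension claim is then automatic.

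On $\mathcal R_{a,b}\subset\mathcal R_C$ both $f$ and $g$ share the same petal decomposition $\{V_j^\pm\}_{j\in\mathbb Z}$ and both admit petalwise analytic Fatou coordinates by \cite[Theorem~A]{prvi}; denote them $\Psi_j^\pm$ for $f$ and $\tilde\Psi_j^\pm$ for $g$. Here one uses only that $f$ satisfies the uniform bound \eqref{eq:fuin}, which holds by its construction in Theorem~A; the generalized Dulac property of $g$ is not needed at this point. On each petal set
$$
\varphi_j^\pm:=(\Psi_j^\pm)^{-1}\circ\tilde\Psi_j^\pm,
$$
a local analytic diffeomorphism satisfying $\varphi_j^\pm\circ g=f\circ\varphi_j^\pm$ on $V_j^\pm\cap\mathcal R_{a,b}$. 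By Proposition~\ref{prop:pgd}, both Fatou coordinates have the same leading part $\Psi_{\mathrm{nf}}$ coming from the common formal normal form \eqref{eq:mod}, with $o(1)$ corrections on their respective petals, so a short computation gives $\varphi_j^\pm(z)=z+o(z)$ along each petal as $z\to 0$.

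Next I would glue the pieces $\varphi_j^\pm$ across the overlaps $V_0^j,V_\infty^j$ into a single-valued analytic diffeomorphism on $\mathcal R_{a,b}$. Written in the Fatou charts, the gluing condition on consecutive petals reduces to the equality of the horn maps of $f$ and $g$. These are, by the hypothesis of the corollary, equivalent in the sense of \eqref{eq:ekvij} via multiplicative constants $(\beta_j,\gamma_j)$ subject to the symmetry constraints recalled after \eqref{eq:hhb}. Since each Fatou coordinate $\tilde\Psi_j^\pm$ is defined only up to an additive constant, and a shift of $\tilde\Psi_\pm^j$ by $c$ multiplies the corresponding horn map by $e^{\pm 2\pi i c}$, I would absorb all the factors $(\beta_j,\gamma_j)$ by a simultaneous renormalization of the family $\{\tilde\Psi_j^\pm\}_{j\in\mathbb Z}$ compatible with the $\mathbb R_+$-symmetry. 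After this adjustment the local pieces $\varphi_j^\pm$ match on every overlap and assemble into an analytic diffeomorphism $\varphi(z)=z+o(z)$ on $\mathcal R_{a,b}$ with $\varphi^{-1}\circ g\circ\varphi=f$ on $\mathcal R_{a,b}$. Since $f$ is analytic on the larger domain $\mathcal R_C$, this exhibits $f$ as the analytic extension of $\varphi^{-1}\circ g\circ\varphi$ from $\mathcal R_{a,b}$ to $\mathcal R_C$, as required.

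The main obstacle I expect is the consistency of this gluing step: one has to check that the three cases for $(\beta_j,\gamma_j)$ listed after \eqref{eq:hhb} permit a coherent simultaneous choice of the additive normalization constants of the full doubly infinite family $\{\tilde\Psi_j^\pm\}_{j\in\mathbb Z}$, so that the resulting $\varphi$ is at once single-valued on $\mathcal R_{a,b}$, real on $\mathcal R_{a,b}\cap\mathbb R_+$, and tangent to the identity. This is essentially the same matching argument used in the construction of moduli in \cite[Theorem~B]{prvi}, now run in reverse to produce a conjugacy instead of the moduli themselves.
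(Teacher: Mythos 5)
Your proof takes essentially the same route as the paper's, which consists of two sentences: by the proof of \cite[Theorem~B]{prvi}, equality of horn maps on $\mathcal R_{a,b}$ yields an analytic conjugacy $\varphi(z)=z+o(z)$ between $f$ and $g$ there, and the extension statement then follows from uniqueness of analytic continuation from $\mathcal R_{a,b}$ to $\mathcal R_C$. What you have written is a correct inline unpacking of that cited argument (petalwise Fatou conjugacies $\varphi_j^\pm=(\Psi_j^\pm)^{-1}\circ\tilde\Psi_j^\pm$, tangency to the identity via the common $\Psi_{\mathrm{nf}}$, and absorption of the $(\beta_j,\gamma_j)$ constants by renormalizing additive Fatou constants), with the gluing-consistency point you flag being exactly the step that \cite[Theorem~B]{prvi} already carries out.
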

\begin{proof} From the equality of horn maps of $f$ and $g$ on $\mathcal R_{a,b}\subset \mathcal R_C$, by the proof of \cite[Theorem B]{prvi} it follows that $f$ and $g$ are analytically conjugated on $\mathcal R_{a,b}$ by $\varphi(z)=z+o(z)$. The statement follows by uniqueness of analytic continuation from $\mathcal R_{a,b}$ to $\mathcal R_C$. 
\end{proof}
However, since $f$ is not in general parabolic generalized Dulac, we cannot deduce anything about the nature and uniqueness of the power-logarithmic asymptotic expansion of the conjugacy $\varphi$ from Corollary~\ref{cor:zaklj}.

\section{Realization of infinite cocycles on standard linear and standard quadratic domains}\label{subsec:prvi}\ 

In this section, we prove Propositions~\ref{lem:cocyc} and \ref{lem:cocyc1} which are \emph{realization propositions for exponentially small cocycles} on standard quadratic domains $\mathcal R_C\subset \mathcal R$, or standard linear domains $\mathcal R_{a,b}\subset \mathcal R$ respectively. Here, $\mathcal R$ is the Riemann surface of the logarithm. We adapt the construction from \cite{loray} for realization of a cocycle in $\mathbb C$, using \emph{Cauchy-Heine integrals}. Propositions~\ref{lem:cocyc} and \ref{lem:cocyc1} are prerequisites for proving Theorems~A and B. 

In Section~\ref{subsec:drugi}, we prove Theorem~A. Motivated by \cite{teyssier} and realization of analytic moduli for saddle-node vector fields, we find a (prenormalized) parabolic germ $f$ in any formal class $(2,m,\rho)$, $m\in\mathbb Z$, $\rho\in\mathbb R$, analytic on a standard quadratic domain, such that its differences of sectorial Fatou coordinates realize a given cocycle on intersections of its petals. We use Proposition~\ref{lem:cocyc} at each step of the iterative construction of the Fatou coordinate, starting the construction with the Fatou coordinate of the formal normal form and then improving the approximation at each step. Note that $f$ is just \emph{analytic on a standard quadratic domain}; we do not claim any asymptotic expansion in $\widehat{\mathcal L}(\mathbb R)$ of $f$ after the three initial terms.

In Section~\ref{subsec:treci}, we prove Theorem~B. Using Proposition~\ref{lem:cocyc1}, we prove that, if we perform the construction from Section~\ref{subsec:drugi} on a smaller \emph{standard linear domain}, we get that $f$ additionally admits a \emph{generalized Dulac asymptotic expansion}. In this proof, for standard quadratic domains instead of standard linear, a $\log$-Gevrey property of sufficient order on limit ordinal steps of the expansion does not seem to hold, as shown in Remark~\ref{rem:sq}. For standard quadratic domains there is a technical problem of \emph{too long} lines of integration in Cauchy - Heine integrals. This results in insufficient Gevrey-type estimates which prevent canonical summability on limit ordinal steps, and gives non-uniqueness of asymptotic expansion in $\widehat{\mathcal L}(\mathbb R)$. 

\medskip

Classically (see e.g. \cite{loray}), we say that a function $h$ defined and holomorphic on an open sector $V$ is \emph{exponentially flat of order $m>0$ at $0$ in $V$} if, for every subsector $V'\subset V$, there exist constants $C>0$ and $M>0$, such that \begin{equation}\label{eq:flaty}|h(z)|\leq Ce^{-\frac{M}{|z|^m}},\ z\in V'.\end{equation}

\begin{prop}[Realization of infinite cocycles on standard quadratic domains]\label{lem:cocyc} Let $V_{0}^j$ resp. $V_{\infty}^j$, $j\in\mathbb Z$, denote open petals of opening $\pi$ centered at directions $(4j-3)\frac{\pi}{2}$ resp. $(4j-1)\frac{\pi}{2}$, $j\in\mathbb Z,$ along a standard quadratic domain. That is, if we denote by $r_j$ the radii of $V_0^j$ and $V_\infty^j$ at their central directions, then there exist constants $C> 0,\ K>0$ such that:
\begin{equation}\label{radii}
r_j\geq Ke^{-C\sqrt{|j|}},\ j\in\mathbb Z.
\end{equation}
Let $V_j^{+}$ resp. $V_j^-$, $j\in\mathbb Z$, denote the open cover\footnote{It means that the standard quadratic domain is \emph{covered} by open petals as in Figure~\ref{fig:dyn}. The petals $V_0^j$ and $V_\infty^j$, $j\in\mathbb Z,$ are the intersection petals of pairs of consecutive petals.} of the standard quadratic domain by petals of opening $2\pi$ centered at directions $2j\pi$ resp. $(2j-1)\pi$, 
such that
\begin{equation}\label{eq:pres}
V_0^{j+1}=V_{j+1}^-\cap V_j^+,\ V_\infty^{j}=V_{j}^-\cap V_j^+,
\end{equation}
are their intersection petals.

\noindent Let $(G_0^j,G_\infty^j)_{j\in\mathbb Z}$ be pairs of holomorphic functions on $V_{0}^j$ and $V_\infty^j$, $j\in\mathbb Z$, not identically equal to zero and \emph{uniformly} flat of order $m>0$ at $0$. That is, for subsectors $U_{0}^j\subset V_{0}^j$ and $U_{\infty}^j\subset V_{\infty}^j$, centered at central lines of $V_{0}^j$ and $V_\infty^j$, and of \emph{uniform opening} in $j\in\mathbb Z$, there exist $C>0$ and $M>0$ independent of $j$, such that:
\begin{equation}\label{foot}
|G_{0,\infty}^j(z)|\leq Ce^{-\frac{M}{|z|^m}}, \ z\in U_{0,\infty}^j,\ j\in\mathbb Z.
\end{equation}
Then, there exist analytic functions $R_j^{\pm}(z)=o(1)$, as $z\to 0$, defined on petals $V_j^{\pm}$, $j\in\mathbb Z$, such that: 
\begin{align}\begin{split}\label{e:p}
&R_+^{j-1}(z)-R_-^j(z)=G_0^j(z),\ z\in V_0^j,\\
&R_-^j(z)-R_+^j(z)=G_\infty^j(z),\ z\in V_\infty^j,\ \ j\in\mathbb Z.\end{split}
\end{align}
Moreover, for subsectors $S_j^\pm\subset V_j^\pm$ centered at central lines of $V_j^\pm$ and of \emph{uniform opening} in $j$, there exists a uniform $($in $j)$ constant $C>0$ such that:
\begin{equation}\label{eq:ar}
|R_j^{\pm}(z)|\leq C|\boldsymbol\ell|,\ z\in S_j^{\pm},\ j\in\mathbb Z.
\end{equation}
Here, $\boldsymbol\ell:=-\frac{1}{\log z}.$
\end{prop}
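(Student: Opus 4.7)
The plan is to realize the cocycle data via an infinite superposition of Cauchy-Heine integrals, adapting the finite-sectorial construction of \cite{loray, teyssier}. In each intersection petal fix an integration arc emanating from the origin along the bisecting ray and terminating near the boundary of $\mathcal R_C$; call these $\gamma_0^k\subset V_0^k$ and $\gamma_\infty^k\subset V_\infty^k$. Then set, for $z\in V_j^\pm$,
\begin{equation*}
R_j^\pm(z) \;:=\; \frac{1}{2\pi i}\sum_{k\in\mathbb Z}\int_{\gamma_0^k}\frac{G_0^k(w)}{w-z}\,dw \;+\; \frac{1}{2\pi i}\sum_{k\in\mathbb Z}\int_{\gamma_\infty^k}\frac{G_\infty^k(w)}{w-z}\,dw,
\end{equation*}
with orientations fixed so that, by the Sokhotski-Plemelj jump formula, crossing $\gamma_0^j$ from $V_-^j$ into $V_+^{j-1}$ produces exactly $G_0^j(z)$, and crossing $\gamma_\infty^j$ from $V_+^j$ into $V_-^j$ produces exactly $G_\infty^j(z)$. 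All other summands are single-valued analytic across those two arcs, so the jump relations \eqref{e:p} hold by construction.

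The substantive work is in the estimates. The first step is the standard single-contour lemma: if $G$ is analytic and exponentially flat of order $m$ on $V_*^k$ and $\gamma_*^k$ is its bisecting arc, then the corresponding Cauchy-Heine integral is analytic off $\gamma_*^k$, admits $\widehat 0$ as its asymptotic expansion at $0$, and on any proper subsector avoiding $\gamma_*^k$ satisfies a Laplace-type bound of order $O\bigl(e^{-M'/|z|^m}\bigr)$, because the flatness of $G_*^k$ beats the $1/|w-z|$ singularity of the kernel. This controls the two boundary arcs of $V_j^\pm$, i.e.\ the indices $k\in\{j,j+1\}$, and their combined contribution is $O(|\boldsymbol\ell|)$ on $S_j^\pm$.

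The main obstacle is convergence of the tail $|k-j|\ge 2$ and its uniform-in-$j$ bound. I would pass to the logarithmic chart $\zeta=-\log z$, where the petals $\widetilde V_j^\pm$ become horizontal strips of width $2\pi$ and the contours $\widetilde\gamma_*^k$ become horizontal half-rays whose imaginary parts differ by multiples of $\pi$. For $\zeta\in\widetilde S_j^\pm$ and $|k-j|\ge 2$, the vertical separation gives $\mathrm{dist}(\zeta,\widetilde\gamma_*^k)\ge c|k-j|$, while the uniform flatness \eqref{foot} transferred to the $\zeta$-variable reads $|G_*^k(e^{-w})|\le C\exp(-Me^{m\,\mathrm{Re}(w)})$. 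The outer endpoint of $\widetilde\gamma_*^k$ sits at $\mathrm{Re}(w)\sim C\sqrt{|k|}$ by \eqref{radii}, and the double-exponential decay of $G_*^k$ dominates any polynomial-in-$k$ growth coming from the contour length. After one integration by parts in the Cauchy kernel to exploit the rapid decay of $G_*^k$ at infinity in $\zeta$, each far integral is bounded by $C'/|k-j|^2$ times a constant independent of $k$, the sum converges absolutely, and its total is majorized by $C/|\zeta|=C|\boldsymbol\ell|$, yielding \eqref{eq:ar}.

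Uniformity of the constants in $j\in\mathbb Z$ is inherited from the translation invariance of $\widetilde{\mathcal R}_C$ under $\zeta\mapsto \zeta+2\pi i$, together with the uniform hypotheses \eqref{radii} and \eqref{foot}. The most delicate point I anticipate is the bookkeeping of exponents between the rate $r_k\ge Ke^{-C\sqrt{|k|}}$ (which pushes the outer endpoints of the contours deeper and deeper into $\widetilde{\mathcal R}_C$ as $|k|\to\infty$) and the double-exponential decay of $G_*^k$ along those contours: one must check that after pairing with the $1/\mathrm{dist}(\zeta,\widetilde\gamma_*^k)$ factor the full tail is still absolutely summable with bound of order $|\boldsymbol\ell|$. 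This is exactly the step that fails to give genuinely Gevrey-type bounds and, as indicated in the Introduction, forces the shorter lines of integration used in the linear version Proposition~\ref{lem:cocyc1}.
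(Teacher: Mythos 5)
Your proposal follows essentially the same route as the paper: pass to the logarithmic chart, build $R_j^\pm$ as an infinite superposition of Cauchy--Heine integrals over the bisecting lines of the intersection petals, obtain the jump relations \eqref{e:p} from the Sokhotski--Plemelj/residue mechanism, and then use the uniform double-exponential flatness \eqref{foot} together with the starting points $\mathrm{Re}(w)\sim\sqrt{|k|}$ dictated by \eqref{radii} to control the infinite sum and extract the $O(|\boldsymbol\ell|)$ bound by factoring out $\zeta^{-1}$ from the Cauchy kernel; you also correctly identify that it is precisely this bookkeeping that degrades to non-Gevrey bounds on the quadratic domain, in agreement with the paper's Remark~\ref{rem:sq}. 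The only imprecision is that your displayed formula for $R_j^\pm$ does not make explicit which analytic branch of each Cauchy--Heine integral is taken when the integration arcs $\gamma_0^{j+1}$ and $\gamma_\infty^j$ lie inside the petal $V_j^\pm$ -- the paper's formula \eqref{eq:FJ} spells out the $\pm$ choices for each index $k$ relative to $j$, and the extension across these internal arcs by path deformation (Lemma~\ref{prop:cah}(2)) is needed to make your ``single-valued across those two arcs'' remark rigorous.
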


\begin{prop}[Realization of infinite cocycles on standard linear domains]\label{lem:cocyc1} 
Let all assumptions and notations as in Proposition~\ref{lem:cocyc} hold, except that \eqref{radii} is replaced by
\begin{equation}\label{radii1}
r_j\geq Ke^{-C|j|},\ j\in\mathbb Z,\ C,\,K>0.
\end{equation}
Let $\{V_j^+,V_j^-\}_{j\in\mathbb Z}$ be an open cover of a \emph{standard linear domain} by petals of opening $2\pi$ centered at directions $2j\pi$ resp. $(2j-1)\pi$, and let $V_0^j$ and $V_\infty^j$ be the intersections of consecutive petals as in \eqref{eq:pres}, $j\in\mathbb Z$. 
Then there exist analytic functions $R_j^{\pm}(z)=o(1)$, as $z\to 0$, defined on petals $V_j^{\pm}$, $j\in\mathbb Z$, such that \eqref{e:p} and \eqref{eq:ar} holds. Moreover, if we put $\boldsymbol\ell:=-\frac{1}{\log z}$ and 
$$
\breve R_j^{\pm}(\boldsymbol\ell):=R_j^{\pm}(z),\ \boldsymbol\ell\in\boldsymbol\ell(V_j^{\pm}),\ j\in\mathbb Z,
$$
then there exists  $\widehat R(\boldsymbol\ell)\in \mathbb C[[\boldsymbol\ell]]$ the common $\log$-Gevrey asymptotic expansion of order $m$ of any $\breve R_j^{\pm}(\boldsymbol\ell)$,  $j\in\mathbb Z$, as $\boldsymbol\ell\to 0$ in $\boldsymbol\ell$-cusp $\boldsymbol\ell(V_j^{\pm})$.
\end{prop}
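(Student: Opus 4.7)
I plan to construct the $R_j^{\pm}$ by a Cauchy--Heine integral carried out in the variable $\eta := -1/\log\xi$ (the image of $\xi$ under the same map that sends $z$ to $\boldsymbol\ell$); this change of chart is what produces the log-Gevrey expansion. For each $k \in \mathbb Z$, I choose paths $\tilde\gamma_0^k \subset \boldsymbol\ell(V_0^k)$ and $\tilde\gamma_\infty^k \subset \boldsymbol\ell(V_\infty^k)$ emanating from $\eta = 0$ along the central direction of the corresponding $\eta$-cusp. On a standard linear domain these $\eta$-paths can be taken as straight radial segments, with endpoints of moduli decaying only linearly in $|k|$ by \eqref{radii1}. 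I then set, for $z \in V_j^{\pm}$ and $\boldsymbol\ell = -1/\log z$,
\begin{equation*}
R_j^{\pm}(z) := \frac{1}{2\pi i}\sum_{k \in \mathbb Z}\Big(\int_{\tilde\gamma_0^k}\frac{G_0^k(\xi(\eta))}{\eta-\boldsymbol\ell}\,d\eta + \int_{\tilde\gamma_\infty^k}\frac{G_\infty^k(\xi(\eta))}{\eta-\boldsymbol\ell}\,d\eta\Big),
\end{equation*}
choosing sides so that $\boldsymbol\ell$ lies on the appropriate side of each $\tilde\gamma_\bullet^k$. Absolute convergence follows from \eqref{radii1} together with the uniform flatness \eqref{foot}, which after the change of variable reads as the \emph{double-exponential} bound $|G_\bullet^k(\xi(\eta))| \le C\,e^{-M\,e^{m\,\mathrm{Re}(1/\eta)}}$. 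The Cauchy jump of $1/(\eta-\boldsymbol\ell)$ across $\tilde\gamma_0^j$ resp.~$\tilde\gamma_\infty^j$ is exactly $G_0^j$ resp.~$G_\infty^j$, so the cocycle identities \eqref{e:p} hold; the uniform $O(|\boldsymbol\ell|)$ bound \eqref{eq:ar} is then obtained by splitting the sum into nearby and far-away indices $k$, as in the proof of Proposition~\ref{lem:cocyc}.

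For the log-Gevrey expansion, I substitute the geometric series $\tfrac{1}{\eta-\boldsymbol\ell} = \sum_{p \ge 0}\boldsymbol\ell^p/\eta^{p+1} + \boldsymbol\ell^n/(\eta^n(\eta-\boldsymbol\ell))$ into each integral. This identifies the candidate coefficients
\begin{equation*}
a_p := \frac{1}{2\pi i}\sum_{k\in\mathbb Z}\Big(\int_{\tilde\gamma_0^k}\frac{G_0^k(\xi(\eta))}{\eta^{p+1}}\,d\eta + \int_{\tilde\gamma_\infty^k}\frac{G_\infty^k(\xi(\eta))}{\eta^{p+1}}\,d\eta\Big),
\end{equation*}
and these are the same for every $R_j^{\pm}$: on each overlap, consecutive $R_j^{\pm}$ differ by $G_{0,\infty}^j$, which is double-exponentially flat in $\boldsymbol\ell$ and contributes nothing to a log-Gevrey-order-$m$ coefficient. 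The growth of $|a_p|$ and the size of the remainder are both dominated by the Laplace-type integral $\int e^{-M\,e^{m/|\eta|}}/|\eta|^{p+1}\,d|\eta|$; the substitution $s = e^{m/|\eta|}$ converts this into $m^{-p}\int e^{-Ms}(\log s)^{p-1}/s\,ds$, and the saddle point at $s \sim p/\log p$ yields precisely the log-Gevrey-$m$ rate $|a_p| \lesssim m^{-p}(\log p)^p\,e^{-p/\log p}$. The same saddle-point argument bounds the remainder $\breve R_j^{\pm}(\boldsymbol\ell) - \sum_{p<n}a_p\boldsymbol\ell^p$ by the quantity $C_{S'}\,m^{-n}(\log n)^n\,e^{-n/\log n}\,|\boldsymbol\ell|^n$ required by Definition~\ref{def:logg}.

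The main obstacle is producing \emph{uniform} constants $C_{S'}$ independent of the petal index $j$. The linear-domain assumption \eqref{radii1} is precisely what makes this possible: the $\eta$-paths $\tilde\gamma_\bullet^k$ are straight, and their lengths and aperture angles at $0$ vary only mildly with $k$ thanks to the linear decay of $r_k$, so the saddle-point estimates are uniform in both $k$ and in the truncation order $p$. On a standard quadratic domain the integration paths in the $\zeta$-chart must follow the parabolic boundary, so they grow much longer (lengths $\sim \sqrt{|k|}$) and the corresponding $\eta$-paths degenerate, destroying the uniformity — exactly the obstruction flagged in Remark~\ref{rem:sq} as \emph{too long lines of integration}, and the structural reason the log-Gevrey conclusion is absent from Proposition~\ref{lem:cocyc}.
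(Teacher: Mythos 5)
Your construction is the paper's Cauchy--Heine argument pushed forward to the $\boldsymbol\ell$-chart (your $\eta$) rather than written in $\zeta=-\log z$: the saddle point in $s=e^{m/|\eta|}$ reproduces exactly the maximization of $e^{-Me^{m\xi}}\xi^{n}$ over $\xi=\mathrm{Re}(w)$ that the paper carries out in Step~3 of the proof of Lemma~\ref{prop:asylin}, and your diagnosis of why the estimate fails on a quadratic domain matches Remark~\ref{rem:sq}. One small geometric point: after inversion the $\eta$-images of the horizontal half-lines $\mathcal C_{0,\infty}^{k}$ are arcs of circles tangent to $\mathbb R$ at $0$, and the cusps $\boldsymbol\ell(V_{0,\infty}^{k})$ are crescents between two such tangent circles; a genuinely \emph{straight radial segment emanating from $0$} would leave the cusp, so the paths must curve, though this does not affect the estimate.

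There is, however, a genuine bug in the kernel. The form $d\eta/(\eta-\boldsymbol\ell)$ is not the pushforward of $dw/(w-\zeta)$ under $\eta=1/w$, $\boldsymbol\ell=1/\zeta$; rather
\[
\frac{d\eta}{\eta-\boldsymbol\ell}=\Big(\frac{1}{w-\zeta}-\frac{1}{w}\Big)\,dw,
\]
so each summand in your formula equals the corresponding $\tilde F_{0/\infty,k}^{\pm}$ of \eqref{eq:ch} \emph{minus} the nonzero constant $\frac{1}{2\pi i}\int_{\mathcal C_{0/\infty}^{k}}\frac{\tilde G_{0/\infty}^{k}(w)}{w}\,dw$. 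Indeed your own candidate coefficient $a_{0}$ is precisely the sum of these constants, and it has no reason to vanish; consequently your $R_{j}^{\pm}(z)$ tend to $a_{0}\neq 0$ as $z\to 0$, contradicting both the required $R_{j}^{\pm}=o(1)$ and the bound \eqref{eq:ar}. Only the cocycle relations \eqref{e:p} survive unchanged, since the constant cancels in the differences. The cure is to integrate against the corrected kernel $\frac{\boldsymbol\ell\,d\eta}{\eta(\eta-\boldsymbol\ell)}$, which is the true pushforward of $dw/(w-\zeta)$ and carries an explicit factor $\boldsymbol\ell$ forcing the vanishing at $0$; equivalently, subtract $a_{0}$ from your series. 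The paper's kernel has this built in automatically, as one sees from the identity \eqref{eq:d}, whose expansion $\sum_{p\ge 0}a_{p}^{k}\zeta^{-p-1}$ begins at $\zeta^{-1}=\boldsymbol\ell$ with no constant term.
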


We will say that functions $(R_j^{\pm}(z))_{j\in\mathbb Z}$ or transseries $\widehat R(\boldsymbol\ell)\in \mathbb C[[\boldsymbol\ell]]$ constructed in Propositions~\ref{lem:cocyc} and \ref{lem:cocyc1} \emph{realize} the given cocycle $(G_0^j,G_\infty^j)_{j\in\mathbb Z}$ on a standard quadratic resp. standard linear domain.
\bigskip

We prove Propositions~\ref{lem:cocyc} and \ref{lem:cocyc1} simultaneously. The proof is based on the following Lemmas~\ref{prop:cah}-\ref{prop:asylin}. 

For simplicity, we work in the logarithmic chart $\zeta=-\log z$. Put: $$\tilde G_{0,\infty}^j(\zeta):=G_{0,\infty}^j\big(e^{-\zeta}\big),\ j\in\mathbb Z.$$ Then $\tilde G_{0,\infty}^j$ are defined and analytic on \emph{petals}\footnote{in the $\zeta$-variable: open sets tangential, as $\mathrm{Re}(\zeta)\to\infty,$ to horizontal strips of a given width, that corresponds to the opening of the petal in the $z$-variable.} in the logarithmic chart $\tilde V_{0,\infty}^j=-\log(V_{0,\infty}^j)$. The petals $\tilde V_{0,\infty}^j$ in the logarithmic chart are bisected by the lines ending at $\mathrm{Re}(\zeta)=\infty$: \begin{align}\begin{split}\label{eg:lint}& \mathcal C_0^j\ldots \Big[-\log r_j + i(4j-3)\frac{\pi}{2},+\infty+i(4j-3)\frac{\pi}{2}\Big],\\
& \mathcal C_\infty^j\ldots \Big[-\log r_j + i(4j-1)\frac{\pi}{2},+\infty+i(4j-1)\frac{\pi}{2}\Big],\end{split}
\end{align} corresponding to the central rays $\big[0,r_j e^{i(4j-3)\frac{\pi}{2}}\big]$ of $V_{0}^j$, i.e. $\big[0,r_j e^{i(4j-1)\frac{\pi}{2}}\big]$ of $V_\infty^j$ in the original $z$-chart. Note that \eqref{radii} gives:
\begin{equation}\label{eq:imam}
-\log r_j\leq C\sqrt{|j|},\ j\in\mathbb Z,
\end{equation}
for a standard quadratic domain from Proposition~\ref{lem:cocyc}, and \eqref{radii1} gives:
\begin{equation}\label{eq:imam1}
-\log r_j\leq C|j|,\ j\in\mathbb Z,
\end{equation}
for a standard linear domain from Proposition~\ref{lem:cocyc1}.
\medskip

 In $\zeta$-chart, \eqref{foot} becomes: for substrips $\tilde U_{0,\infty}^j\subset \tilde V_{0,\infty}^j$ bisected by $\mathcal C_{0,\infty}^j$ and of uniform opening in $j$, there exist $M,\ C>0$ such that \begin{equation}\label{eq:b}|\tilde G_{0,\infty}^j(\zeta)|\leq Ce^{-Me^{m\text{Re}(\zeta)}} , \ \zeta\in\tilde{U}_{0,\infty}^j,\ j\in\mathbb Z.\end{equation} That is, $\tilde G_{0,\infty}^j$, $j\in\mathbb Z$, are uniformly (in $j\in\mathbb Z$) \emph{superexponential} of order $m>0$, as $\text{Re}(\zeta)\to\infty$ in $\tilde V_{0,\infty}^j$. 
\bigskip


\begin{lem}[Cauchy-Heine integrals]\label{prop:cah} Let $\widetilde{\mathcal R}_{0,j}^+$ resp. $\widetilde{\mathcal R}_{\infty,j}^+$, $j\in\mathbb Z$, be the parts of the standard quadratic domain $\widetilde{\mathcal R}_C$ in the logarithmic chart containing $\tilde V_0^j$ resp. $\tilde V_\infty^j$ and all points of the domain above $\tilde V_0^j$ resp. $\tilde V_\infty^j$. Equivalently, let $\widetilde{\mathcal R}_{0,j}^-$ resp. $\widetilde{\mathcal R}_{\infty,j}^-$ be the parts containing $\tilde V_0^j$ resp. $\tilde V_\infty^j$ and all points of the domain below them, see Figure~\ref{fig:indi}. Let $(\tilde G_0^j,\tilde G_\infty^j)_{j\in\mathbb Z}$ defined on $(\tilde V_0^j,\tilde V_\infty^j)_{j\in\mathbb Z}$, be an infinite cocycle, uniformly\footnote{The statement of this lemma holds even without the existence of the uniform constant in $j\in\mathbb Z$ in bound \eqref{foot}.} flat of order $m>0$, as in \eqref{foot}. 
\begin{enumerate}
\item Let the functions $\tilde F_{0,j}^{\pm}$ and $\tilde F_{\infty,j}^{\pm}$, $j\in\mathbb Z$, be defined as the \emph{Cauchy-Heine integrals} of $\tilde G_{0,j},\ \tilde G_{\infty,j}$ along lines $\mathcal C_{0,\infty}^j$:
\begin{align}\begin{split}\label{eq:ch}                                                                                 
&\tilde F_{0,j}^\pm(\zeta):=\frac{1}{2\pi i}\int_{\mathcal C_0^j}\frac{\tilde G_0^j(w)}{w-\zeta} dw=\frac{1}{2\pi i}\int_{-\log r_j+i(4j-3)\frac{\pi}{2}}^{+\infty+i(4j-3)\frac{\pi}{2}}\frac{\tilde G_0^j(w)}{w-\zeta} dw,\\
&\tilde F_{\infty,j}^\pm(\zeta):=\frac{1}{2\pi i}\int_{\mathcal C_\infty^j}\frac{\tilde G_\infty^j(w)}{w-\zeta} dw=\frac{1}{2\pi i}\int_{-\log r_j+i(4j-1)\frac{\pi}{2}}^{+\infty+i(4j-1)\frac{\pi}{2}}\frac{\tilde G_\infty^j(w)}{w-\zeta} dw.\end{split}
\end{align}
They are well-defined and analytic on the standard quadratic domain $\widetilde{\mathcal R}_C$ strictly above $(+)$ resp. below $(-)$ the integration line. 
\smallskip

\item By varying the integration paths inside the petals $\tilde V_{0,\infty}^j$, $\tilde F_{0,j}^\pm$ resp. $\tilde F_{\infty,j}^\pm$ may be extended analytically to the whole domains $\widetilde{\mathcal R}_{0,j}^{\pm}$ resp. $\widetilde{\mathcal R}_{\infty,j}^{\pm}$. 
\smallskip

\item It holds that:
\begin{align}\begin{split}\label{eq:ahhh}
&\tilde F_{0,j}^+(\zeta)-\tilde F_{0,j}^-(\zeta)=\tilde G_{0}^j(\zeta), \ \zeta\in\tilde V_{0}^j,\\
&\tilde F_{\infty,j}^+(\zeta)-\tilde F_{\infty,j}^-(\zeta)=\tilde G_{\infty}^j(\zeta), \ \zeta\in\tilde V_{\infty}^j.\end{split}
\end{align}
\end{enumerate}
\end{lem}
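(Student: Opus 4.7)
The plan is to handle the three claims in turn, treating $\tilde F_{0,j}^{\pm}$ (the argument for $\tilde F_{\infty,j}^{\pm}$ being identical after a vertical translation by $\pi$).

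\textbf{Part (1).} First I would check absolute convergence of the integral defining $\tilde F_{0,j}^{\pm}(\zeta)$. On the integration line $\mathcal C_0^j$, write $w=t+i(4j-3)\pi/2$ for $t\in[-\log r_j,+\infty)$. The uniform super-exponential estimate \eqref{eq:b} gives $|\tilde G_0^j(w)|\le C e^{-M e^{m t}}$, which decays far faster than $1/|w-\zeta|$ grows, so the integrand is absolutely integrable for any $\zeta$ not on $\mathcal C_0^j$. For such $\zeta$ and a small disc around it disjoint from $\mathcal C_0^j$, $1/(w-\zeta)$ is holomorphic in $\zeta$ and jointly continuous, so differentiation under the integral sign (equivalently Morera combined with Fubini) yields analyticity of $\tilde F_{0,j}^{\pm}$ on each of the two open components of $\widetilde{\mathcal R}_C\setminus\mathcal C_0^j$, namely the region strictly above ($+$) and the region strictly below ($-$) the line.

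\textbf{Part (2).} The petal $\tilde V_0^j$ is, asymptotically, a horizontal strip of width $\pi$ bisected by $\mathcal C_0^j$, and $\tilde G_0^j$ is holomorphic and super-exponentially flat there. Given any $\zeta\in\widetilde{\mathcal R}_{0,j}^{+}$, I would pick a path $\mathcal C_0^{j,+}\subset\tilde V_0^j$ obtained from $\mathcal C_0^j$ by pushing it down inside the petal so that $\zeta$ ends up strictly above $\mathcal C_0^{j,+}$; symmetrically for $\widetilde{\mathcal R}_{0,j}^{-}$, push $\mathcal C_0^j$ upward to $\mathcal C_0^{j,-}$. Because $\tilde G_0^j$ is holomorphic in the region swept by the homotopy and because of the uniform super-exponential decay \eqref{eq:b} (which kills the contribution of the connecting arc at $\mathrm{Re}(w)=+\infty$ and of the short vertical segment at $\mathrm{Re}(w)=-\log r_j$, provided the deformed path stays in a subsector of uniform opening), Cauchy's theorem gives that the Cauchy-Heine integral taken along $\mathcal C_0^{j,\pm}$ equals the original one wherever both are defined. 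This identity furnishes the required analytic extensions of $\tilde F_{0,j}^{\pm}$ from their initial half-domains to all of $\widetilde{\mathcal R}_{0,j}^{\pm}$, and the extensions are independent of the chosen deformation by another application of Cauchy's theorem.

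\textbf{Part (3).} For $\zeta\in\tilde V_0^j$, I compare $\tilde F_{0,j}^{+}(\zeta)$ and $\tilde F_{0,j}^{-}(\zeta)$ by representing each along a deformed path $\mathcal C_0^{j,+}$ below $\zeta$ and $\mathcal C_0^{j,-}$ above $\zeta$ respectively, both lying in $\tilde V_0^j$ and joined at their right and left endpoints so as to form a closed loop encircling $\zeta$ counterclockwise. On the right, the closing arc contributes nothing in the limit by the super-exponential decay of $\tilde G_0^j$; on the left, both paths share the same starting point at $-\log r_j+i(4j-3)\pi/2$, so no closing segment is needed. Cauchy's residue theorem then gives
\[
\tilde F_{0,j}^{+}(\zeta)-\tilde F_{0,j}^{-}(\zeta)=\frac{1}{2\pi i}\oint\frac{\tilde G_0^j(w)}{w-\zeta}\,dw=\tilde G_0^j(\zeta),
\]
which is \eqref{eq:ahhh}.

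\textbf{Main obstacle.} The only genuinely delicate point is the contour deformation in Part (2): one must verify that the deformed paths $\mathcal C_0^{j,\pm}$ remain in a subsector of $\tilde V_0^j$ on which \eqref{eq:b} applies, and that the connecting arc at $\mathrm{Re}(w)\to+\infty$ contributes nothing. Both are immediate consequences of the uniform super-exponential bound, but checking that the deformation can be carried out on each petal $\tilde V_0^j$ of opening $\pi$ (and that the resulting extension covers all of $\widetilde{\mathcal R}_{0,j}^{\pm}$ as described in Figure~\ref{fig:indi}) is the technical step that uses the specific geometry of the cover $(\tilde V_j^{\pm})_{j\in\mathbb Z}$ recorded in \eqref{eq:pres}.
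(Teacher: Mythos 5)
There is a genuine gap in your Part~(2), which then propagates to Part~(3). You claim that the super-exponential bound \eqref{eq:b} ``kills the contribution of the connecting arc at $\mathrm{Re}(w)=+\infty$ \emph{and of the short vertical segment at $\mathrm{Re}(w)=-\log r_j$},'' and from this you infer that the Cauchy--Heine integral along the deformed path equals the original one. The first part is correct: the arc at $\mathrm{Re}(w)=+\infty$ vanishes in the limit. But the left-hand closing piece does \emph{not} vanish. First, it is not a vertical segment: the boundary of a standard quadratic domain in the $\zeta$-chart is curved, so horizontal lines $\mathcal C_0^j$ and $(\mathcal C_0^j)'$ at different heights have left endpoints at \emph{different} boundary points, joined by an arc $\mathcal S_0^j$ of the petal boundary. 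Second, on $\mathcal S_0^j$ the integrand $\tilde G_0^j(w)/(w-\zeta)$ is bounded but not zero, and the arc has positive length, so $\int_{\mathcal S_0^j}\frac{\tilde G_0^j(w)}{w-\zeta}\,dw$ is small (superexponentially small in $-\log r_j$) but nonzero. Consequently Cauchy's theorem gives
$$\frac{1}{2\pi i}\int_{\mathcal C_0^j}\frac{\tilde G_0^j(w)}{w-\zeta}\,dw=\frac{1}{2\pi i}\int_{(\mathcal C_0^j)'}\frac{\tilde G_0^j(w)}{w-\zeta}\,dw+\frac{1}{2\pi i}\int_{\mathcal S_0^j}\frac{\tilde G_0^j(w)}{w-\zeta}\,dw,$$
not the plain equality you assert. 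The integral along the deformed line alone, without the boundary arc, is a \emph{different} analytic function and is not the analytic continuation of $\tilde F_{0,j}^{\pm}$.

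The paper's proof repairs exactly this point: it takes the deformed path to be $(\mathcal C_0^j)'\cup\mathcal S_0^j$, retains the boundary-arc contribution (calling it $\tilde\chi_0^j(\zeta)$), and observes that $\tilde\chi_0^j$ is holomorphic (indeed analytic at $\zeta=\infty$), so the sum still furnishes a holomorphic extension. For Part~(3) your subsequent claim that ``both paths share the same starting point at $-\log r_j+i(4j-3)\pi/2$, so no closing segment is needed'' is false for the same geometric reason; one must close the contour with the piece of petal boundary between the heights $\theta_1$ and $\theta_2$, after which the residue theorem yields $\tilde G_0^j(\zeta)$ as the paper does. Your Part~(1) is fine, and the overall strategy (path deformation plus residue theorem) is the same as the paper's, but you must carry the boundary-arc term through the argument rather than discarding it.
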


\begin{figure}[h!]
\includegraphics[scale=0.2]{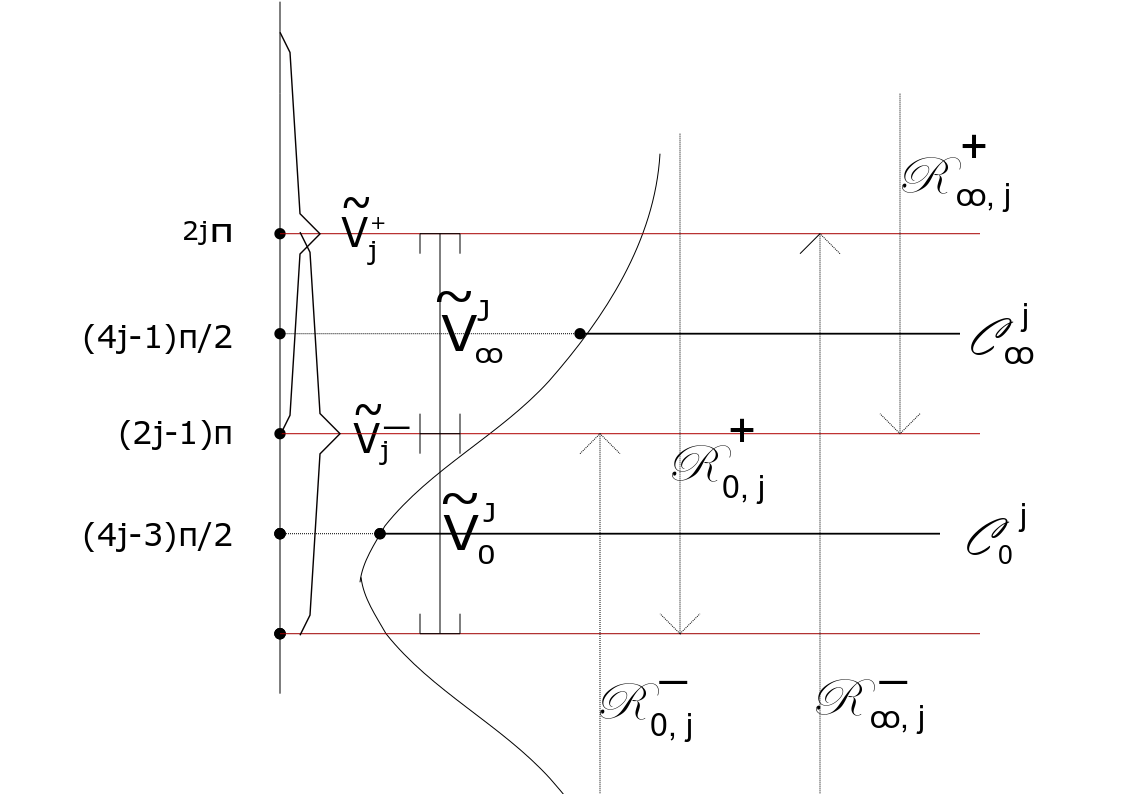}
\caption{The outline of position of petals $\tilde V_j^{\pm}$ and $\tilde V_{0,\infty}^j$, $j\in\mathbb Z$, on a standard quadratic domain $\widetilde{\mathcal R}_C$ in the logarithmic chart.}\label{fig:indi}
\end{figure}

\begin{proof}
We use Cauchy-Heine's construction based on the classical Cauchy's residue theorem. For more details on Cauchy-Heine construction in $\mathbb C$ that we adapt here for standard quadratic (linear) domains, see e.g. \cite{Mloday} or \cite{loray2}.
\smallskip

$(1)$ Obvious.
\smallskip

$(2)$ Suppose that we wish to extend $\tilde F_{0,j}^-$ above the central line $\mathcal C_0^j$ of petal $\tilde V_0^j$. We replace the integration path $\mathcal C_0^j$ in the Cauchy-Heine integral by the union of a horizontal line $\big(\mathcal C_0^j\big)'$ above $\mathcal C_0^j$ in $\tilde V_0^j$ and the portion of the boundary of the petal $\tilde V_0^j$ between the two lines, denoted by $\mathcal S_0^j$, see Figure~\ref{fig:indi} and Figure~\ref{fig:CH}. Here, $\big(\mathcal C_0^j\big)'$ is a horizontal line at some height $\theta\in\big((4j-3)\frac{\pi}{2},(2j-1)\pi\big)$ in the standard quadratic domain in the $\zeta$-variable. It corresponds, in the $z$-variable, to the ray at angle $\theta$ inside the petal $V_0^j$.  For simplicity, we are notationally imprecise, as we do not stress the dependence of $\big(\mathcal C_0^j\big)'$ and $\mathcal S_0^j$ on the height $\theta$.  Let $\gamma_{\theta}:=(\mathcal C_0^j)'\cup \mathcal S_0^j$ be this new integration path. Then, for any $\zeta$ below $\mathcal C_0^j$, the Cauchy-Heine integral along $\gamma_\theta$ is, by the Cauchy's integral theorem, equal to $\tilde F_{0,j}^-$. That is, for $\zeta\in\widetilde{\mathcal R}_C$ below $\mathcal C_0^j$, we get:
\begin{align*}
\tilde F_{0,j}^-(\zeta):=\frac{1}{2\pi i}\int_{\mathcal C_{0}^j}\frac{\tilde G_0^j(w)}{w-\zeta} dw&=\frac{1}{2\pi i}\int_{\gamma_\theta}\frac{\tilde G_0^j(w)}{w-\zeta} d\zeta\\
&=\frac{1}{2\pi i}\int_{(\mathcal C_0^j)'}\frac{\tilde G_0^j(w)}{w-\zeta} dw+\frac{1}{2\pi i}\int_{\mathcal S_0^j} \frac{\tilde G_0^j(w)}{w-\zeta} dw,\nonumber
\end{align*}
see Figure~\ref{fig:CH}.
\begin{figure}[h!]
\includegraphics[scale=0.1]{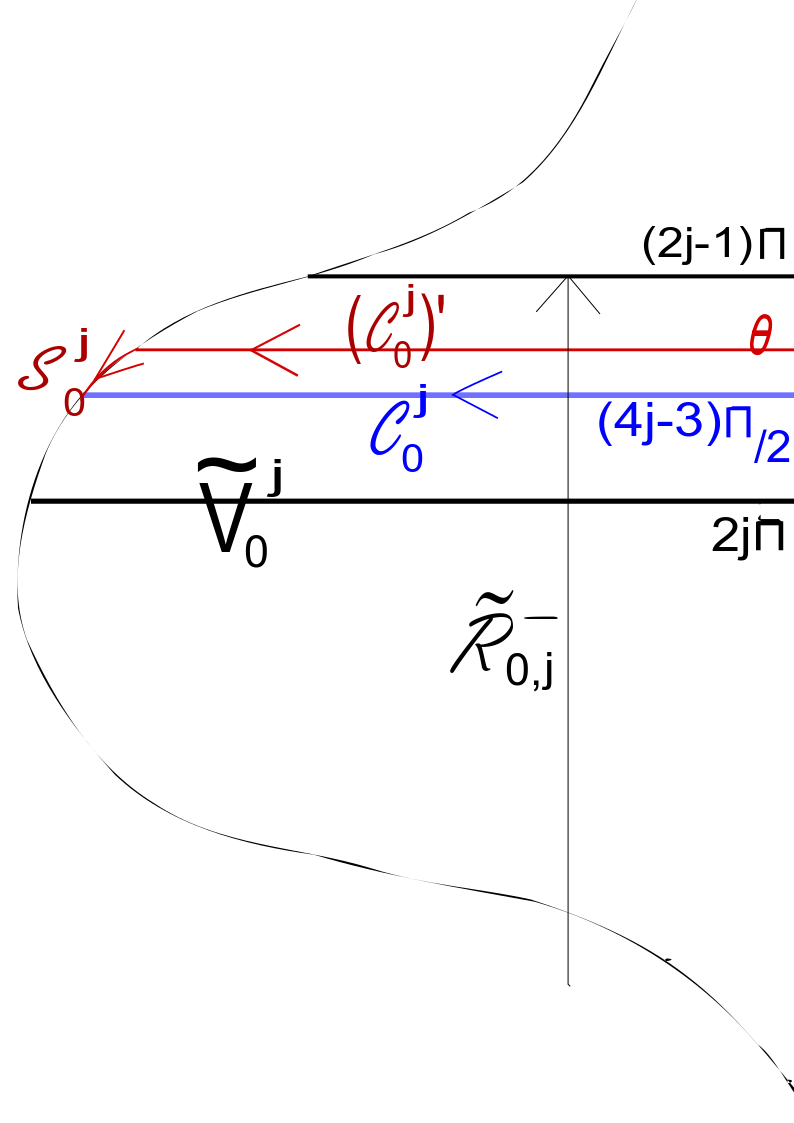}
\caption{The change of integration path in the $\zeta$-variable and Cauchy's integral theorem in the proof of Lemma~\ref{prop:cah} $(2)$.}\label{fig:CH}
\end{figure}

\noindent Therefore, the new integral $\int_{\gamma_\theta}\frac{\tilde G_0^j(w)}{w-\zeta} dw$ along $\gamma_\theta$ is the analytic extension of $\tilde F_{0,j}^-$ up to the line $(\mathcal C_0^j)'$. By varying the line $(\mathcal C_0^j)'$ above the central line $\mathcal C_0^j$ inside the petal $\tilde V_0^j$, we get the desired analytic extension up to the line at height $(2j-1)\pi$. In this way, $\tilde F_{0,j}^-(\zeta)$ given by formula \eqref{eq:ch} can be extended analytically to whole $\widetilde{\mathcal R}_{0,j}^-$. The same can be done for $\tilde F_{0,j}^+(\zeta)$ on $\widetilde{\mathcal R}_{0,j}^+$ and for $\tilde F_{\infty,j}^\pm(\zeta)$ on $\widetilde{\mathcal R}_{\infty,j}^\pm$, $j\in\mathbb Z$.

If we now denote by
$$
\tilde \chi_0^j(\zeta):=\frac{1}{2\pi i}\int_{\mathcal S_0^j} \frac{\tilde G_0^j(w)}{w-\zeta} dw,
$$
we notice that $\tilde \chi_0^j(\zeta)$ is an analytic germ at $\zeta=\infty$ (in the sense that $\xi\mapsto \tilde \chi_0^j(\frac{1}{\xi})$ is analytic at $\xi=0$), that is, that there exists $M_j>0$ such that $\tilde \chi_0^j(\zeta)$ is analytic for $\zeta\in\mathbb C,\ |\zeta|>M_j$. Consequently, it admits a Taylor asymptotic expansion in $\zeta^{-1}$, as $|\zeta|\to\infty$. This will be important for later proofs. 

We stress once again that here $(\mathcal C_0^j)'$ and $\mathcal S_0^j$, and therefore also $\tilde \chi_0^j(\zeta)$ and $M_j$, \emph{depend on the height $\theta$} of the line $(\mathcal C_0^j)'$ up to which we extend. They are not dependent only on  the petal, but also on the height in the petal up to which we extend. Here and in the sequel, we omit this dependence in the notation for simplicity.

\smallskip

(3) Since $\tilde V_{0}^j=\widetilde{\mathcal R}_{0,j}^+\cap \widetilde{\mathcal R}_{0,j}^{-},\ \tilde V_{\infty}^j=\widetilde{\mathcal R}_{\infty,j}^+\cap \widetilde{\mathcal R}_{\infty,j}^{-}$, \eqref{eq:ahhh} follows directly by the residue theorem after analytic extensions of $\tilde F_{0,\infty}^{\pm}(\zeta)$ to $\widetilde{\mathcal R}_{0,\infty,j}^\pm$ described in $(2)$. To illustrate, let us prove the first line of \eqref{eq:ahhh}. Take any $\zeta\in\tilde V_0^j$. Take any two lines inside petal $\tilde V_0^j$ such that $\zeta$ is strictly between them. Denote them by $\mathcal C_{\theta_1}$ and $\mathcal C_{\theta_2}$, at heights $\theta_1>\theta_2$. Now, by part $(2)$, we have:
\begin{align*}
\tilde F_{0,j}^+(\zeta)&=\frac{1}{2\pi i}\int_{\mathcal C_{\theta_2}}\frac{\tilde G_0^j(w)}{w-\zeta} dw+\frac{1}{2\pi i}\int_{\mathcal S_{\theta_2}}\frac{\tilde G_0^j(w)}{w-\zeta} dw,\\
\tilde F_{0,j}^-(\zeta)&=\frac{1}{2\pi i}\int_{\mathcal C_{\theta_1}}\frac{\tilde G_0^j(w)}{w-\zeta} dw+\frac{1}{2\pi i}\int_{\mathcal S_{\theta_1}}\frac{\tilde G_0^j(w)}{w-\zeta} dw,
\end{align*}
where $\mathcal S_{\theta_1}$ resp. $\mathcal S_{\theta_2}$ are the portions of the boundary of $\tilde V_0^j$ between the lines $\mathcal C_0^j$ and $\mathcal C_{\theta_1}$ and between $\mathcal C_0^j$ and $\mathcal C_{\theta_2}$ respectively.
Subtracting $\tilde F_{0,j}^+(\zeta)-\tilde F_{0,j}^-(\zeta)$, the statement follows by the residue theorem. See Figure~\ref{fig:residue}.

\begin{figure}[h!]
\includegraphics[scale=0.1]{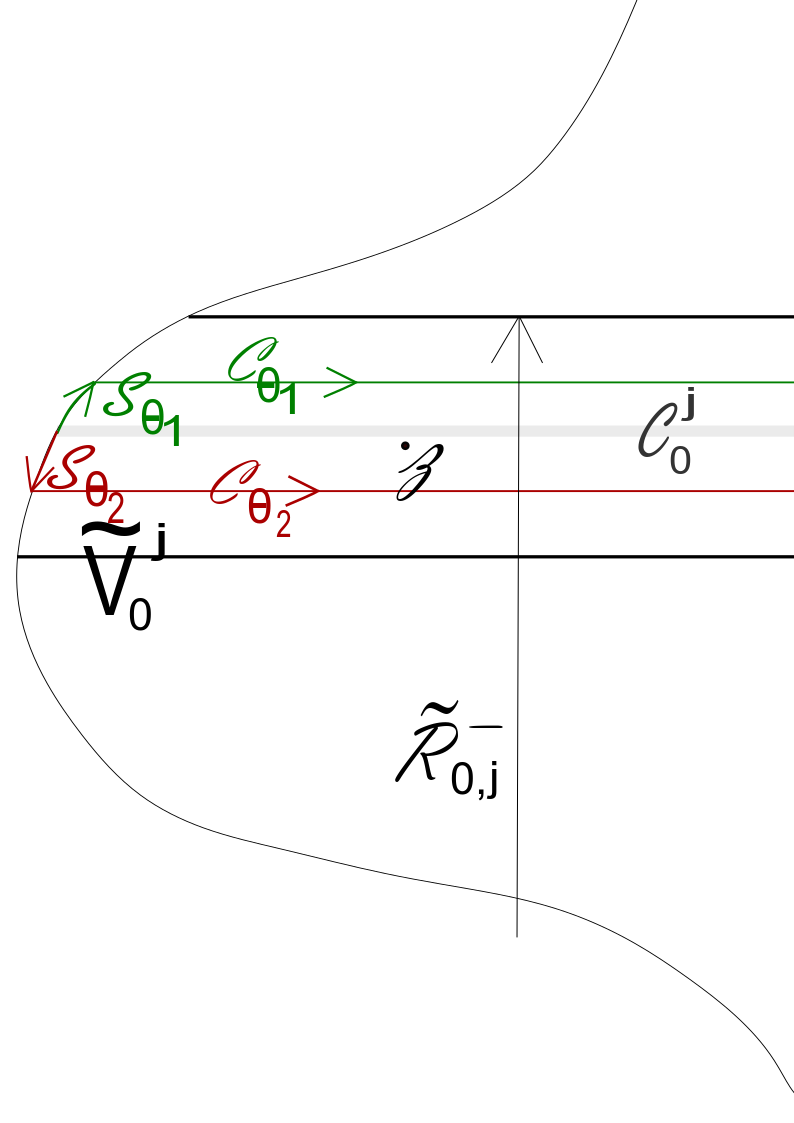}
\caption{The residue theorem in the proof of Lemma~\ref{prop:cah} $(3)$.}\label{fig:residue}
\end{figure}
\end{proof}

\begin{lem}\label{prop:conv} Let $(\tilde G_0^j,\tilde G_\infty^j)_{j\in\mathbb Z}$ be an infinite cocycle as described in Propositions~\ref{lem:cocyc} or \ref{lem:cocyc1}. Let $\tilde F_{0,j}^{\pm},\ \tilde F_{\infty,j}^{\pm}$ and their corresponding domains $\widetilde{\mathcal R}_{0,j}^{\pm},\ \widetilde{\mathcal R}_{\infty,j}^\pm$ be as defined in Lemma~\ref{prop:cah}. Let
\begin{align}\label{eq:FJ}
&\tilde R_j^+:=\Big(\big(\sum_{k=-\infty}^{j}\tilde F_{0,k}^+ + \sum_{k=-\infty}^{j}\tilde F_{\infty,k}^+\big) +\big(\sum_{k=j+1}^{+\infty}\tilde F_{0,k}^- + \sum_{k=j+1}^{+\infty}\tilde F_{\infty,k}^-\big)\Big) \Big|_{\tilde V_j^+}\Big. , \ j \in\mathbb Z,\\
&\tilde R_j^-:=\Big(\big(\sum_{k=-\infty}^{j}\tilde F_{0,k}^+ + \sum_{k=-\infty}^{j-1}\tilde F_{\infty,k}^+\big) +\big(\sum_{k=j+1}^{+\infty}\tilde F_{0,k}^- + \sum_{k=j}^{+\infty}\tilde F_{\infty,k}^-\big)\Big) \Big|_{\tilde V_j^-}\Big. , \ j \in\mathbb Z.\nonumber
\end{align}
Then $\tilde R_j^{\pm}$ are well-defined analytic functions on petals $\tilde V_j^{\pm}$, $j\in\mathbb Z$.
\smallskip

\noindent Moreover, the functions $\tilde R_j^{\pm}$ \emph{realize} the cocycle $(\tilde G_0^j,\tilde G_\infty^j)_{j\in\mathbb Z}$:
\begin{align}\begin{split}\label{eq:dobi}&\tilde R_{j-1}^+(\zeta)-\tilde R_j^-(\zeta)=\tilde G_0^j(\zeta),\ \zeta\in \tilde V_0^j,\\
&\tilde R_j^-(\zeta)-\tilde R_j^+(\zeta)=\tilde G_\infty^j(\zeta),\ \zeta\in \tilde V_\infty^j,\ \ j\in\mathbb Z.
\end{split}\end{align}
\end{lem}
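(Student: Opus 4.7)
The plan is to verify, in turn, (i) that each of the four series in \eqref{eq:FJ} defining $\tilde R_j^{\pm}$ converges locally uniformly on the target petal $\tilde V_j^{\pm}$, so that $\tilde R_j^{\pm}$ is well-defined and holomorphic by Weierstrass' theorem, and (ii) that the jump relations \eqref{eq:dobi} follow from a telescoping argument combined with Lemma~\ref{prop:cah}\,(3). Before starting, I observe that every summand in \eqref{eq:FJ} is actually defined on the target petal: the choice of the $\pm$ superscripts in \eqref{eq:FJ} is matched precisely so that the extended domains $\widetilde{\mathcal R}_{0,k}^{\pm}$ and $\widetilde{\mathcal R}_{\infty,k}^{\pm}$ furnished by Lemma~\ref{prop:cah}\,(2) cover $\tilde V_j^{\pm}$.

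For convergence, I fix a compact $K\Subset\tilde V_j^{+}$ (the argument on $\tilde V_j^{-}$ is identical) and estimate each $\tilde F_{0,k}^{\pm}(\zeta)$, $\tilde F_{\infty,k}^{\pm}(\zeta)$ for $\zeta\in K$. For all but finitely many $k$, the integration line $\mathcal C_{0,\infty}^{k}$ lies strictly outside $\tilde V_j^{+}$ and its vertical distance from $\zeta$ is bounded below by $c_1|k-j|$, so the uniform super-exponential bound \eqref{eq:b} gives
\[
|\tilde F_{0,k}^{\pm}(\zeta)|\leq\frac{C}{2\pi c_1|k-j|}\int_{-\log r_k}^{\infty}e^{-Me^{mt}}\,dt\leq\frac{C'}{|k-j|}\cdot\frac{e^{-Mr_k^{-m}}}{r_k^{-m}},
\]
after the substitution $s=e^{mt}$, and analogously for $\tilde F_{\infty,k}^{\pm}$. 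On a standard quadratic (resp.\ linear) domain the radii $r_k\to 0$, so $r_k^{-m}\to\infty$ and the right-hand side decays super-exponentially in $|k|$, producing absolute convergence of the series. For the finitely many indices $k$ near $j$ whose contours intrude into $\tilde V_j^{+}$, I use Lemma~\ref{prop:cah}\,(2) to represent $\tilde F_{0,k}^{\pm}$ and $\tilde F_{\infty,k}^{\pm}$ as integrals along deformed paths lying inside a neighbouring petal at uniform distance from $K$; the same bound \eqref{eq:b} then yields boundedness on $K$. Weierstrass' theorem gives analyticity of $\tilde R_j^{\pm}$ on $\tilde V_j^{\pm}$.

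For the jump relation on $\tilde V_0^{j}=\tilde V_{j-1}^{+}\cap \tilde V_j^{-}$, the $\tilde F_{\infty,k}^{\pm}$-terms agree in both $\tilde R_{j-1}^{+}$ and $\tilde R_j^{-}$ and cancel, and so do all $\tilde F_{0,k}^{\pm}$-terms with $k\neq j$. The only remaining contribution comes from the index $k=j$, where the superscript convention switches in going from $\tilde R_{j-1}^{+}$ to $\tilde R_j^{-}$: the former picks up $\tilde F_{0,j}^{-}$, the latter $\tilde F_{0,j}^{+}$, and by Lemma~\ref{prop:cah}\,(3) their difference equals $\tilde G_0^{j}$ (up to the sign fixed by the Plemelj--Sokhotski convention adopted in Lemma~\ref{prop:cah}). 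The identity $\tilde R_j^{-}-\tilde R_j^{+}=\tilde G_\infty^{j}$ on $\tilde V_\infty^{j}$ is obtained by the same telescoping, with the switch now occurring in the $\tilde F_{\infty,k}^{\pm}$-series at $k=j$.

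The main technical nuisance is the uniform treatment of the boundary indices $k\approx j$: here the original contours $\mathcal C_{0,\infty}^{k}$ cross the target petal $\tilde V_j^{\pm}$, and one must organize the contour deformations of Lemma~\ref{prop:cah}\,(2) coherently, so that the new paths remain inside domains on which \eqref{eq:b} still holds while staying uniformly bounded away from the compact $K$. Everything outside this finite range of indices is controlled effortlessly by the super-exponential decay coming from \eqref{eq:b} together with \eqref{radii} (resp.\ \eqref{radii1}), so the infinite parts of the four sums are, in effect, negligible tails.
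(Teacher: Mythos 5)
Your proof follows the same route as the paper's: absolute and locally uniform convergence of each of the four series via the uniform super-exponential estimate on $\tilde G_{0,\infty}^k$, together with Weierstrass' theorem, gives analyticity of $\tilde R_j^{\pm}$; and the cocycle relations \eqref{eq:dobi} reduce, after wholesale telescoping of the series, to the single jump $\tilde F_{0,j}^+-\tilde F_{0,j}^-=\tilde G_0^j$ (resp.\ $\tilde F_{\infty,j}^+-\tilde F_{\infty,j}^-=\tilde G_\infty^j$) furnished by Lemma~\ref{prop:cah}\,(3). This is exactly the paper's argument, with the telescoping spelled out more explicitly than in the terse concluding sentence of the paper's proof.

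Two points you should tighten. First, your hedge ``up to the sign fixed by the Plemelj--Sokhotski convention'' is appropriate but hides a genuine discrepancy: with the conventions of Lemma~\ref{prop:cah} exactly as stated ($\tilde F^+$ holomorphic above the line, $\tilde F^-$ below, $\tilde F^+-\tilde F^-=\tilde G$) the telescoping of \eqref{eq:FJ} yields $\tilde R_{j-1}^+-\tilde R_j^-=\tilde F_{0,j}^--\tilde F_{0,j}^+=-\tilde G_0^j$, and similarly $\tilde R_j^--\tilde R_j^+=-\tilde G_\infty^j$, i.e.\ opposite in sign to \eqref{eq:dobi}; you should either note the needed overall minus sign in \eqref{eq:FJ} or reorganize the split of the four sums. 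Second, your decay bound $e^{-Mr_k^{-m}}/r_k^{-m}$ yields super-exponential decay of the tail only if $r_k^{-m}\to\infty$, i.e.\ only if $r_k\to 0$ at least polynomially in $|k|$; the hypothesis \eqref{radii} (resp.\ \eqref{radii1}) gives a \emph{lower} bound on $r_k$, so what you actually need is the complementary upper bound $r_k\lesssim e^{-c\sqrt{|k|}}$ (resp.\ $e^{-c|k|}$), which is indeed automatic because the petals sit inside a standard quadratic (resp.\ linear) domain, but ought to be invoked explicitly since the stated hypothesis alone does not imply it.
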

As shown in Figure~\ref{fig:dom} below, to get functions $\tilde R_j^\pm$ defined by \eqref{eq:FJ} on $\tilde V_j^{\pm}$, on corresponding petal (strip) $\tilde V_j^{\pm}$ we sum all functions $\tilde F_{0,k}^{\pm}$, $\tilde F_{\infty,k}^{\pm}$, $k\in\mathbb Z$, from \eqref{eq:ch} which are well-defined on $\tilde V_j^{\pm}$.
\smallskip

The proof of Lemma~\ref{prop:conv} is in the Appendix. We prove that, for every $j\in\mathbb Z$, the series in \eqref{eq:FJ} converges uniformly on compacts in $\tilde V_j^\pm$, thus defining analytic functions $\tilde R_j^\pm$ on $\tilde V_j^\pm$ by the Weierstrass theorem.
\vspace{-0.2cm}

\begin{figure}[h!]
\includegraphics[scale=0.17]{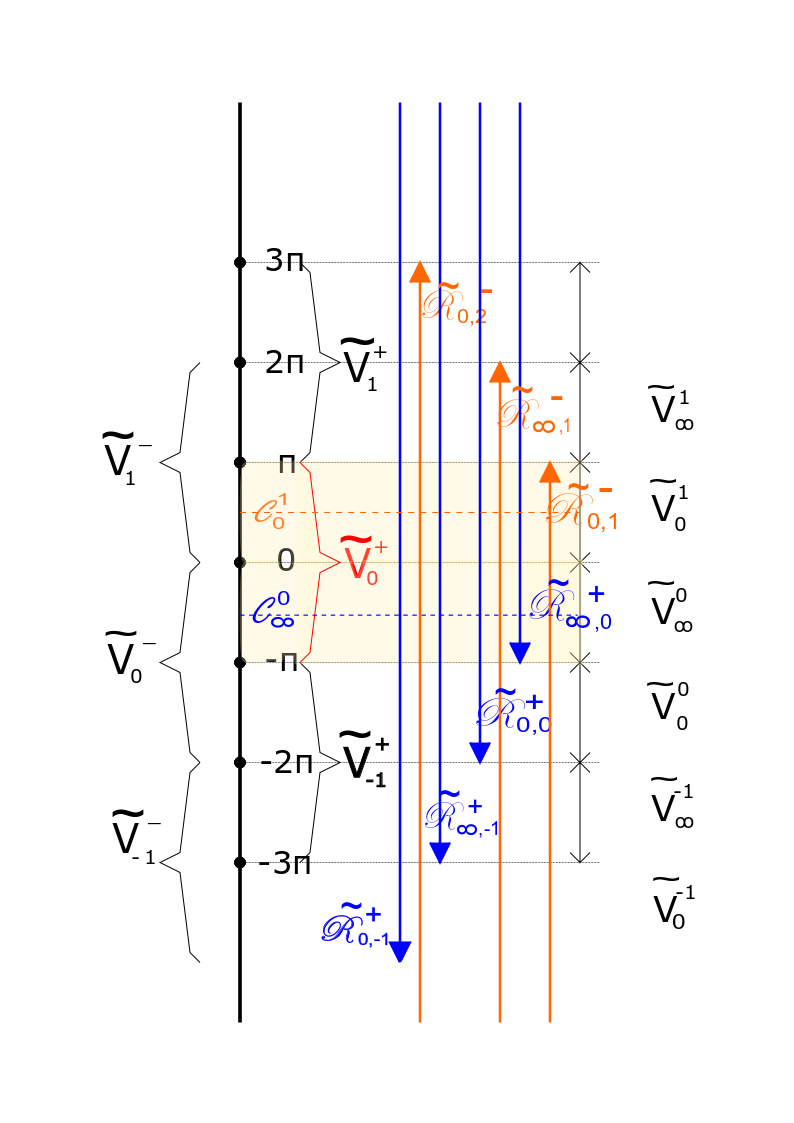}
\vspace{-0.4cm}
\caption{Ilustration of formula \eqref{eq:FJ} for $\tilde R_0^+$ on $\tilde V_0^+$. The figure illustrates in which domains $\widetilde{\mathcal R}_{0/\infty,j}^\pm$ the petal $\tilde V_0^+$ is fully contained. To get $\tilde R_0^+$, we sum the corresponding functions $\tilde F_{0/\infty,j}^\pm$ from \eqref{eq:ch}.}\label{fig:dom}
\end{figure}

\bigskip

We prove in Lemma~\ref{prop:asylin} below the asymptotics for $\tilde R_j^{\pm}$ constructed on $\tilde V_j^{\pm}$ in Lemma~\ref{prop:conv}. It holds that $\tilde R_j^{\pm}(\zeta)=o(1)$, as $\text{Re}(\zeta)\to\infty$ in $\tilde V_j^{\pm}$, moreover \emph{uniformly in $j\in\mathbb Z$}. Also, for standard
linear domains we show additionaly the complete \emph{$\log$-Gevrey asymptotic expansion} of $\tilde R_j^{\pm}(\zeta)$ in $\mathbb C[[\zeta^{-1}]]$, as $\text{Re}(\zeta)\to\infty$ on $\tilde V_j^{\pm}$. 

\begin{lem}[$\log$-Gevrey asymptotic expansion of $\tilde R_j^{\pm}(\zeta),\ j\in\mathbb Z$]\label{prop:asylin} Let $\tilde R_j^{\pm},\ j\in\mathbb Z$, be constructed as in Lemma~\ref{prop:conv} on petals $\tilde V_j^{\pm}$ on  a standard quadratic or a standard linear domain. Then:
\begin{enumerate}
\item On both domains $($standard linear and standard quadratic$)$, there exist subdomains $($linear resp. quadratic$)$ $\widetilde{\mathcal R}_{C'}\subset\widetilde{\mathcal R}_C$ such that, for substrips $\tilde U_j\subset\tilde V_j^{\pm}\cap \widetilde{\mathcal R}_{C'}$ centered at center lines of $\tilde V_j^{\pm}$ and of width $0<\theta<2\pi$ independent of $j\in\mathbb Z$, there exists a \emph{uniform} in $j\in\mathbb Z$ constant $C_\theta>0$ such that: 
$$|\tilde R_j^{\pm}(\zeta)|\leq C_\theta|\zeta|^{-1},\ \zeta\in\tilde U_j.$$ 

\item If $\tilde R_j^\pm$ are constructed on a standard linear domain, then there exists a formal series $\widehat R\in\mathbb C[[\zeta^{-1}]]$, such that any $\tilde R_j^\pm(\zeta),\ j\in\mathbb Z,$ admits $\widehat R$ as its $\log$-Gevrey asymptotic expansion of order $m$, as $\text{Re}(\zeta)\to+\infty$ in $\tilde V_j^{\pm}$. Here, $m>0$ is given in \eqref{foot}.
\end{enumerate}
\end{lem}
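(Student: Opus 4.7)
The plan is to work in the $\zeta$-chart and analyze each Cauchy-Heine integral appearing in \eqref{eq:FJ} separately via the elementary identity
\[
\frac{1}{w-\zeta}=-\sum_{n=0}^{N-1}\frac{w^{n}}{\zeta^{n+1}}+\frac{w^{N}}{\zeta^{N}(w-\zeta)},
\]
which splits any single $\tilde F(\zeta)=(2\pi i)^{-1}\int_{\mathcal C}\tilde G(w)/(w-\zeta)\,dw$ as a polynomial in $\boldsymbol\ell=\zeta^{-1}$ with moment coefficients $a_{n}=(2\pi i)^{-1}\int_{\mathcal C}w^{n}\tilde G(w)\,dw$, plus a remainder $R_{N}(\zeta)/\zeta^{N}$. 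For part (1) I would take $N=1$: the zero-order moment $|a_{0}^{k,0/\infty}|$ is uniformly bounded, via \eqref{eq:b}, by $C'\int e^{-Me^{ms}}\,ds$, and the remainder is controlled by $|R_{1}(\zeta)|\leq(2\pi\delta_{k,j})^{-1}\int|w|\cdot|\tilde G(w)|\,|dw|$, where $\delta_{k,j}$ is the imaginary distance from $\zeta\in\tilde U_{j}$ to the line $\mathcal C_{0/\infty}^{k}$. Since consecutive integration lines have imaginary spacing bounded below by a fixed multiple of $\pi$, one has $\delta_{k,j}\geq c\max(1,|k-j|)$ on sub-strips $\tilde U_j$ of uniform width bounded away from all lines. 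Combined with the observation that for large $|k|$ the bound $|\tilde G_{0/\infty}^{k}(w)|\leq Ce^{-Mr_{k}^{-m}}$ (with $-\log r_k$ growing in $|k|$ by \eqref{radii} or \eqref{radii1}) makes the integrand itself doubly-exponentially small on the entire line, the sum \eqref{eq:FJ} is bounded in absolute value by $C_\theta |\zeta|^{-1}$ uniformly in $j$, on a shrunken subdomain $\widetilde{\mathcal R}_{C'}\subset\widetilde{\mathcal R}_{C}$ chosen so that the sub-strips $\tilde U_j$ stay separated from every integration line.

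For part (2) I apply the identity with general $N$. The key single-line estimate is the $\log$-Gevrey bound
\[
\int_{-\log r_k}^{\infty}s^{N}e^{-Me^{ms}}\,ds\leq C\,m^{-N}\log^{N}N\cdot e^{-N/\log N},
\]
proved by a saddle-point argument: the integrand peaks at $s^{\star}\approx m^{-1}\log N$, where $Me^{ms^{\star}}\approx N/\log N$. This, together with the decomposition $|w|^{N}\leq 2^{N/2}(s^{N}+|\theta_k|^{N})$, yields a $\log$-Gevrey-of-order-$m$ asymptotic expansion of each single $\tilde F_{0/\infty,k}^{\pm}$ in $\boldsymbol\ell$ with constants uniform in $k$, provided the $|\theta_k|^{N}$ contribution is absorbed by the super-small factor $\int_{-\log r_k}^{\infty} e^{-Me^{ms}}\,ds\sim e^{-Me^{mL_k}}$ with $L_k=-\log r_k$. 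On a standard \emph{linear} domain, bound \eqref{radii1} gives $L_k\leq C|k|$, so this factor decays doubly-exponentially in $|k|$, forcing $\sum_{k}|a_{n}^{k,0/\infty}|<\infty$ at each $n$. This defines the common coefficients $A_{n}:=\sum_{k\in\mathbb Z}(a_{n}^{k,0}+a_{n}^{k,\infty})$, so $\widehat R(\boldsymbol\ell):=-\sum_{n\geq 0}A_{n}\boldsymbol\ell^{n+1}\in\mathbb C[[\zeta^{-1}]]$ is well-defined and independent of $j$, and the summed remainder inherits the $\log$-Gevrey-$m$ envelope.

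The main obstacle is to keep the $\log$-Gevrey constant uniform in $j$ under the $k$-summation of remainders. The factor $|\theta_k|^{N}$ in $|w|^{N}$ grows like $(2\pi|k|)^{N}$ with $k$ and must be dominated by the doubly-exponential decay of $\tilde G_{0/\infty}^{k}$ coming from $L_k$ large. The linear bound \eqref{radii1} supplies precisely the exponential $k$-decay needed, while the $\sqrt{|k|}$-decay permitted by \eqref{radii} turns out to be insufficient: under \eqref{radii} the summed remainder no longer satisfies a $\log$-Gevrey-of-order-$m$ bound, which is why part (2) is stated only for standard linear domains and is consistent with the non-uniqueness observation in Remark~\ref{rem:sq}.
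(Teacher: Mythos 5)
Your proposal follows the same overall structure as the paper's proof: split $1/(w-\zeta)$ by the geometric-series identity to produce moment coefficients plus a remainder, establish a saddle-point estimate $\sup_{s} s^{N}e^{-Me^{ms}}\lesssim m^{-N}\log^{N}N\,e^{-N/\log N}$ for the $\log$-Gevrey envelope, and trace the failure on quadratic domains to the fact that $-\log r_k\sim\sqrt{|k|}$ yields $\xi^{2N}$ rather than $\xi^{N}$, giving the useless $\log^{2N}N$ bound recorded in Remark~\ref{rem:sq}. The point where you depart from the paper is the treatment of $|w|^{N}$ and the $k$-summation. The paper writes $\tilde G_{0,\infty}^{k}=\big(\tilde G_{0,\infty}^{k}\big)^{1/2}\big(\tilde G_{0,\infty}^{k}\big)^{1/2}$, pairs one half with $|w|^{N}$ and takes a supremum over the whole union of lines (using the linear-domain geometry $|w|\lesssim\mathrm{Re}(w)$ to make the supremum $k$-independent), and pairs the other half with $\frac{1}{|w-\zeta|}$ to get a $k$-summable, $N$-independent, uniform-in-$j$ constant; this H\"older-type split cleanly decouples the $k$-summation from the $N$-dependence. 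You instead split $|w|^{N}\leq 2^{N/2}(s^{N}+|\theta_k|^{N})$ and argue that $|\theta_k|^{N}$ is dominated by the doubly-exponential decay in $k$ coming from $L_k\sim|k|$. That is a legitimate alternative, but it couples $k$ and $N$ in a single maximization over $k$: the supremum of $|\theta_k|^{N}e^{-Me^{mL_k}}$ sits at $|k^{\star}|\sim\frac{\log N}{m}$ and produces a factor of the form $\big(\tfrac{c}{m}\big)^{N}\log^{N}N\,e^{-N/\log N}$ whose base constant $c$ depends on the slope $a$ of the linear domain (through the precise relation between $|\theta_k|$ and $L_k$); this must be checked to stay consistent with the claimed order, and the assertion ``with constants uniform in $k$'' deserves the explicit computation. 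The paper's $\tilde G^{1/2}\tilde G^{1/2}$ trick sidesteps this bookkeeping. A second, more minor omission: you take your sub-strips $\tilde U_j$ ``bounded away from all lines,'' but the lemma claims the uniform bound for every width $0<\theta<2\pi$, and for $\theta>\pi$ the sub-strip necessarily crosses the two integration lines $\mathcal C_0^{j+1}$ and $\mathcal C_\infty^j$ lying inside $\tilde V_j^{+}$; as in Lemma~\ref{prop:cah}, one must deform those two lines out of $\tilde U_j$ and pick up the boundary-arc correction terms $\tilde\chi$, which are analytic at $\zeta=\infty$ and hence harmless for the asymptotic expansion, but they need to be mentioned.
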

\noindent The proof is in the Appendix. Also, in Remark~\ref{rem:sq} in the Appendix we show a technical obstacle for proving statement $(2)$ on a standard quadratic domain.
\bigskip

\noindent \emph{Proof of Propositions~\ref{lem:cocyc} and \ref{lem:cocyc1}.} 
Let $\tilde R_j^{\pm}$ be as constructed in Lemma~\ref{prop:conv} on petals $\tilde V_j^{\pm}$ in the $\zeta$-variable, $j\in\mathbb Z$, either on a standard quadratic or a standard linear domain. Returning to the variable $z=e^{-\zeta}$, we put:
$$
R_j^{\pm}(z):=\tilde R_j^{\pm}(\zeta),\ z\in V_j^{\pm},\ j\in\mathbb Z.
$$
By Lemma~\ref{prop:conv}, $R_j^{\pm}(z)$ are analytic on $V_j^\pm$ and we have:
\begin{align}
\begin{split}\label{eq:es}
&R_+^{j-1}(z)-R_-^j(z)=G_0^j(z),\ z\in V_0^j, \\
&R_-^j(z)-R_+^j(z)=G_\infty^j(z),\ z\in V_\infty^j,\ \ j\in\mathbb Z.
\end{split}
\end{align}
Moreover, putting $\boldsymbol\ell:=\zeta^{-1}$, from Lemma~\ref{prop:asylin} we get that the functions $\breve R_j^{\pm}(\boldsymbol\ell):=R_j^{\pm}(z)$ constructed on a standard linear domain on $\boldsymbol\ell$-cusps $\boldsymbol\ell(V_j^{\pm})$, $j\in\mathbb Z$, admit a $\log$-Gevrey power asymptotic expansion of order $m$. By exponentially small differences \eqref{eq:es} on intersections of petals, we get that all $\breve R_j^{\pm}(\boldsymbol\ell)$ admit a common $\widehat R(\boldsymbol\ell)\in \mathbb{C}[[\boldsymbol\ell]]$ as their $\log$-Gevrey asymptotic expansion of order $m$. The uniform bound \eqref{eq:ar} for both domains (linear and quadratic) follows by statement $(1)$ of Lemma~\ref{prop:asylin}. Thus Propositions~\ref{lem:cocyc} and \ref{lem:cocyc1} are proven. 
\hfill $\Box$

\begin{obs}\label{rem:linqua} Observe that the functions $R_j^{\pm}(z)$ constructed in the proof of Proposition~\ref{lem:cocyc1} by Cauchy-Heine integrals on petals along standard linear domain are \emph{not petalwise restrictions} of $R_j^{\pm}(z)$ constructed along standard quadratic domain in the proof of Proposition~\ref{lem:cocyc}. 

Indeed, the lines of integration $\mathcal C_{0,\infty}^k$ are changed (asymptotically \emph{shorter} for standard linear domains). Therefore, we cannot claim that $R_j^{\pm}$ defined on petals of a standard linear domain can be analytically extended to petals of a standard quadratic domain. Therefore, we do not claim in Proposition~\ref{lem:cocyc} that there exist $\breve R_j^{\pm}(\boldsymbol\ell)$ defined on $\boldsymbol\ell$-images of petals of a larger standard quadratic domain which admit a $\log$-Gevrey asymptotic expansion, as $\boldsymbol\ell\to 0$. 
\end{obs}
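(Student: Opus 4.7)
My plan is to justify the remark in two steps: first exhibiting directly that the linear and quadratic Cauchy--Heine constructions yield different analytic objects, then explaining why this forbids any automatic analytic extension of the linear realization to the full quadratic domain.

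For the first claim, I would fix a level $j\in\mathbb Z$ and compare the two elementary building blocks $\tilde F_{0,j}^\pm$ of the sums in \eqref{eq:FJ}. In the quadratic setting the integration ray $\mathcal C_0^j$ starts at $-\log r_j^{\text{quad}}+i(4j-3)\pi/2$ with $-\log r_j^{\text{quad}}$ of order $\sqrt{|j|}$, whereas in the linear setting it starts at $-\log r_j^{\text{lin}}+i(4j-3)\pi/2$ with $-\log r_j^{\text{lin}}$ of order $|j|$; in particular, for $|j|$ large, the linear ray is a strict subset of the quadratic ray. Subtracting the two Cauchy--Heine integrals then produces $\frac{1}{2\pi i}\int_{I_j}\frac{\tilde G_0^j(w)}{w-\zeta}\,dw$, where $I_j$ is the horizontal segment between the two starting points. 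Since $\tilde G_0^j$ is assumed not identically zero on $\tilde V_0^j$, this segment integral is a nonzero analytic function of $\zeta$, so the two versions of $\tilde F_{0,j}^{\pm}$ genuinely differ on their common domain. Summing over $j$ and over the $0/\infty$ contributions as in \eqref{eq:FJ} therefore yields $\tilde R_j^{\pm}$ that disagree on the linear subdomain of the quadratic one, proving that the linear realization is not a petalwise restriction of the quadratic one.

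For the second claim, the non-extendability of the linear-constructed $\tilde R_j^{\pm}$ to the larger quadratic petals, I would argue using Lemma~\ref{prop:cah}(2). Analytic continuation of a Cauchy--Heine integral across its integration ray is effected by deforming the ray within the region on which the integrand is analytic and exponentially flat. In Proposition~\ref{lem:cocyc1} the cocycle $\tilde G_0^j$ is supplied only on the linear petal $\tilde V_0^j$, whose extent in $\mathrm{Re}(\zeta)$ stops at $-\log r_j^{\text{lin}}$. To extend the linear-constructed integral as an analytic function up to the boundary of the \emph{quadratic} petal one would need the deformation to reach values of $\mathrm{Re}(\zeta)$ as small as $-\log r_j^{\text{quad}}$, but the cocycle is simply not defined there under the hypotheses of Proposition~\ref{lem:cocyc1}. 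Hence the path-deformation machinery of Lemma~\ref{prop:cah}(2) has no input beyond the linear petal, and the functions $\tilde R_j^{\pm}$ constructed in Proposition~\ref{lem:cocyc1} have no canonical analytic extension to the petals of a standard quadratic domain.

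The main subtle point, and the one on which the whole remark hinges, is the dependence of the $\log$-Gevrey bounds on the length of the integration ray. Even if one could somehow analytically extend $\tilde R_j^\pm$ by invoking additional data on a larger domain, the Cauchy--Heine estimates producing the $\log$-Gevrey asymptotic expansion in Lemma~\ref{prop:asylin}(2) rely on integrating over a ray whose specific starting geometry governs the constants; enlarging the ray as in the quadratic case yields only weaker bounds (as flagged in Remark~\ref{rem:sq} of the Appendix), insufficient for canonical $\log$-Gevrey summation of order strictly bigger than $(\alpha-1)/2$ at limit ordinal steps. This is precisely the obstruction that prevents us from concluding a unique transserial asymptotic expansion in Proposition~\ref{lem:cocyc}, even though a realization exists.
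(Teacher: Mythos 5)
Your argument is in the right direction and, in fact, more ambitious than what the paper provides: the remark has no separate formal proof, and the two sentences that follow the statement are the paper's own informal justification, namely that the integration rays $\mathcal C_{0,\infty}^k$ differ between the two constructions. The gap is in your first paragraph, at the step where you pass from ``each block $\tilde F_{0,j}^\pm$ differs between the two settings by a nonzero Cauchy integral over the segment $I_j$'' to ``therefore the totals $\tilde R_j^\pm$ disagree.'' The functions $\tilde R_j^\pm$ in \eqref{eq:FJ} are \emph{infinite} sums of such blocks, and in principle the nonzero segment contributions could cancel. To close this gap one should make a Plemelj-type observation: the total difference $\Delta := \tilde R_j^{\pm,\mathrm{quad}}-\tilde R_j^{\pm,\mathrm{lin}}$ extends analytically to the complement of the pairwise disjoint compact segments $\bigcup_k I_k$ (the segments sit at distinct heights $(4k-3)\frac{\pi}{2}$, $(4k-1)\frac{\pi}{2}$), its jump across $I_k$ is $\pm\tilde G_{0,\infty}^k\big|_{I_k}$, and the complement of $\bigcup_k I_k$ is connected; so if $\Delta$ vanished on a linear petal it would vanish identically, forcing each cocycle component to vanish on its segment and hence, by the identity theorem, identically — contradicting the hypothesis $\tilde G_{0,\infty}^k\not\equiv 0$.

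There is also a silent change of scenario between your first and second paragraphs: the segment-integral comparison only makes sense when the cocycle is supplied on the \emph{quadratic} petals (so that $I_j$ lies in the integrand's domain), whereas in your second paragraph you assume the cocycle is given only on the \emph{linear} petals. Both readings are legitimate — they correspond to the remark's two distinct assertions (``not a restriction'' versus ``cannot claim analytic extension'') — but the dependence on the available cocycle data should be made explicit. Your final paragraph, pointing to the loss of $\log$-Gevrey bounds when the ray is lengthened to the quadratic case, correctly identifies the deeper obstruction underlying the remark's last sentence and matches Remark~\ref{rem:sq}.
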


\medskip

\section{Proof of Theorem~A}\label{subsec:drugi}

\noindent The proof is very involved, so we first give an outline of the proof. We then state necessary lemmas, and prove Theorem~A at the end of the section.

\smallskip

\noindent \emph{Outline of the proof of Theorem~A.} Let $(h_{0}^j,h_{\infty}^j; \sigma_j)_{j\in\mathbb Z}$ be a symmetric sequence \eqref{eq:sim} of analytic germs of diffeomorphisms from $\mathrm{Diff}(\mathbb C,0)$, satisfying the uniform bound \eqref{dru}. Let $\rho\in\mathbb R$ and $m\in\mathbb Z$. Here we construct a parabolic germ $f$, defined on a standard quadratic domain, of the prenormalized form 
$$
f(z)=z-z^2\boldsymbol\ell^m+\rho z^3\boldsymbol\ell^{2m+1}+o(z^3\boldsymbol\ell^{2m+1}),\ z\in\mathcal R_C,
$$
whose sectorial Fatou coordinates realize the given sequence as its horn maps. Let $V_j^+$ resp. $V_j^{-}$, $j\in\mathbb Z$, be petals covering a standard quadratic domain of opening $2\pi$, centered at $2j\pi$ resp. $(2j-1)\pi$, and let $V_0^j:=V_{j-1}^+\cap V_j^-$, $V_\infty^j:=V_j^+\cap V_j^-$ be their intersecting petals of opening $\pi$, as shown in Figure~\ref{fig:dyn}. We construct $f$ by constructing its sectorial Fatou coordinates $\Psi_{\pm}^j$ on $V_j^\pm,\ j\in\mathbb Z$, in an iterative construction described below, which satisfy:
\begin{align}\begin{split}\label{eq:fatouu}
&\Psi_{j-1}^+(z)-\Psi_j^-(z)=g_0^j(e^{-2\pi i \Psi_{j-1}^+(z)}),\ z\in V_0^j, \\
&\Psi_j^-(z)-\Psi_j^+(z)=g_\infty^j(e^{2\pi i\Psi_j^+(z)}),\ z\in V_\infty^j,\ \ j\in\mathbb Z.\end{split}
\end{align}
Here, $g_{0}^j,\ g_\infty^j,\ j\in\mathbb Z$, are analytic germs at $t\approx 0$, related to given $h_0^j,\ h_\infty^j$, $j\in\mathbb Z$, by: \begin{equation}\label{eq:relationg}(h_0^j)^{-1}(t)=te^{2\pi i g_0^j(t)},\ h_\infty^j(t)=te^{2\pi i g_\infty^j(t)},\ t\approx 0.\end{equation}
Then, due to \eqref{eq:fatouu} and \eqref{eq:relationg}, $f$ realizes the sequence of pairs of diffeomorphisms $(h_0^j,h_\infty^j)_{j\in\mathbb Z}$ as its horn maps. Indeed, \eqref{eq:fatouu} is an equivalent formulation of this statement, see Subsection~\ref{subsec:mt} for more details.

\smallskip

The idea of \emph{successive approximations} is taken from \cite{teyssier} for realizing the moduli of analytic classification for saddle-node vector fields. We will use the cocycle realization Proposition~\ref{lem:cocyc} and, by Lemma~\ref{lema:konver} (1), iteratively realize the cocycles $({}^n G_0^j,{}^n G_{\infty}^j)_{j\in\mathbb Z}$, $n\in\mathbb N_0$, where
\begin{align*}
{}^n  G_0^j(z):&=g_0^j(e^{-2\pi i \Psi_{j-1,+}^n(z)}),\ z\in V_0^j,\\
{}^n G_\infty^j(z):&=g_\infty^j(e^{2\pi i \Psi_{j,+}^n(z)}),\ z\in V_\infty^j.
\end{align*}

\noindent Here, $(\Psi_{j,\pm}^n)_{n\in\mathbb N_0}$ on $V_{\pm}^j$ are \emph{successive approximations} of the final Fatou coordinate $\Psi_j^\pm$, $j\in\mathbb Z$, starting with the Fatou coordinate of the $(2,m,\rho)$-formal normal form $\Psi_{j,\pm}^0:=\Psi_\mathrm{nf}$ on $V_{\pm}^j$. More precisely, we construct them as follows:
$$
\Psi_{j,\pm}^n(z):=\Psi_\mathrm{nf}(z)+R_{j,\pm}^n(z),\ z\in V_{\pm}^j,\ n\in\mathbb N,
$$
where 
\begin{align}\begin{split}\label{eq:lim}
&R_{j,\pm}^0(z):=0,\ z\in V_{\pm}^j, \text{ and}\\
&R_{j-1,+}^n(z)-R_{j,-}^n(z)=g_{0}^j(e^{-2\pi i \Psi_{j-1,+}^{n-1}(z)}):={}^{n-1} G_{0}^j(z),\ z\in V_{0}^j,\\
&R_{j,-}^n(z)-R_{j,+}^n(z)=g_{\infty}^j(e^{2\pi i \Psi_{j,+}^{n-1}(z)}):={}^{n-1} G_{\infty}^j(z),\ z\in V_{\infty}^j,\ n\in\mathbb N.\end{split}
\end{align}
At each step $n$, the functions $R_{j,\pm}^n(z)=o(1)$, $z\to 0$, are obtained using Proposition~\ref{lem:cocyc} for the realization of the previous cocycle $({}^{n-1} G_0^j,{}^{n-1} G_\infty^j)_{j\in\mathbb Z}$. The cocycle itself is obtained by applying $g_0^j,\ g_\infty^j$ to the exponentials of the Fatou coordinates from the previous step. In this manner, we make \emph{corrections} of the Fatou coordinate at each step, starting from the natural initial choice $\Psi_\mathrm{nf}$, the Fatou coordinate of the fomal normal form.
\smallskip

We then prove, in Lemma~\ref{lema:konver} (2), the uniform convergence of the Fatou coordinates $\Psi_{j,\pm}^n$ (that is, of $R_{j,\pm}^n$), as $n\to\infty$, on compact subsectors of petals $V_{\pm}^j$. Thus, as limits, we get analytic Fatou coordinates, which we denote by $\Psi_j^\pm:=\Psi_\mathrm{nf}+R_j^{\pm}$, on petals $V_{j}^\pm$. By taking pointwise limit, as $n\to\infty$, to \eqref{eq:lim}, we get that $\Psi_j^{\pm}$ satisfy \eqref{eq:fatouu} and thus \emph{realize} the given sequence of pairs of horn maps $(h_0^j,h_\infty^j)_{j\in\mathbb Z}$. 
\medskip

Finally, we recover the germ $f$ from its sectorial Fatou coordinates, using the Abel equation. On each petal, $f(z):=(\Psi_j^\pm)^{-1}\big(1+\Psi_j^\pm\big(z)\big),\ z\in V_j^{\pm}$. We show that $f$ \emph{glues} to an analytic function on a standard quadratic domain. It is of the prenormalized form \eqref{eq:forma} due to the form of $\Psi_j^{\pm}:=\Psi_{\mathrm{nf}}+R_j^{\pm},\ R_j^{\pm}=o(1)$, as $z\to 0$ on $V_j^{\pm}$, and Proposition~\ref{prop:pgd} in the Appendix. The uniform bound \eqref{eq:fuin} is proven by Lemma~\ref{lema:konver} (3). To prove Lemma~\ref{lema:konver} (3), we prove that the uniform bound \eqref{eq:ar} from Proposition~\ref{lem:cocyc} holds with the same constant for $R_{j,\pm}^n$ in each iterative step $n\in\mathbb N$.

We prove in Lemma~\ref{lem:simi} that symmetry of horn maps \eqref{eq:sim} implies that $\mathbb R_+$ is invariant by $f$. 
\bigskip

\subsection{The main lemmas}
\begin{lem}\label{lema:konver} Let $(h_{0}^j,h_{\infty}^j; \sigma_j)_{j\in\mathbb Z}$, where
$$
\sigma_j\geq K_1 e^{-Ke^{C\sqrt{|j|}}}, \ |j|\to\infty,\text{ for some $C,\ K,\ K_1>0$},
$$
be a symmetric sequence \eqref{eq:sim} of pairs of analytic germs from $\mathrm{Diff}(\mathbb C,0)$, satisfying the uniform bound \eqref{dru}. Let the sequence of pairs of analytic germs of diffeomorphisms $(g_0^j,g_\infty^j; \sigma_j)_{j\in\mathbb Z}$ be defined from $(h_0^j,h_\infty^j; \sigma_j)_{j\in\mathbb Z}$ by \eqref{eq:relationg}. Let $\rho\in\mathbb R$ and $m\in\mathbb Z$, and let $\Psi_\mathrm{nf}$ be the Fatou coordinate of the $(2,\rho,m)$-model $f_0$ from \eqref{eq:mod}. Let $\{V_j^{\pm}\}_{j\in\mathbb Z}$ be a collection of petals of opening $2\pi$, centered at $j\pi$, along a standard quadratic domain.

\begin{enumerate}
\item The following sequence of analytic maps $\Psi_{j,\pm}^n$, $n\in\mathbb N_0$, on petals $V_{j}^\pm$, is well-defined by the following iterative procedure:
$$
\Psi_{j,\pm}^n(z):=\Psi_\mathrm{nf}(z)+R_{j,\pm}^n(z),\ z\in V_{\pm}^j,\ n\in\mathbb N_0,
$$
where 
\begin{align}\begin{split}\label{e:pp}
&R_{j,\pm}^0(z):=0,\ z\in V_j^\pm, \text{ and}\\
&R_{j-1,+}^n(z)-R_{j,-}^n(z)=g_{0}^j(e^{-2\pi i \Psi_{j-1,+}^{n-1}(z)})=:{}^{n-1} G_{0}^j(z),\ z\in V_{0}^j,\\
&R_{j,-}^n(z)-R_{j,+}^n(z)=g_{\infty}^j(e^{2\pi i \Psi_{j,+}^{n-1}(z)})=:{}^{n-1} G_{\infty}^j(z),\ z\in V_{\infty}^j,\ n\in\mathbb N.\end{split}
\end{align}
Here, for every $n\in\mathbb N$, $\big(^{n-1} G_0^j(z),^{n-1} G_\infty^j(z)\big)_{j\in\mathbb Z}$ is an infinite cocycle satisfying all assumptions of Proposition~\ref{lem:cocyc}, and $R_{j,\pm}^n,\ n\in\mathbb N,$ are analytic germs on petals $V_j^{\pm}$, $j\in\mathbb Z$, that realize this cocycle, given by Proposition~\ref{lem:cocyc}.
\smallskip

\item For every $j\in\mathbb Z$, the sequence $(\Psi_{j,\pm}^n)_{n\in\mathbb N}$ converges uniformly on compact subsectors of $V_j^\pm$, thus defining analytic functions $\Psi_{j}^{\pm}$ on petals $V_j^{\pm}$ at the limit. Moreover, $\Psi_{j}^\pm$, $j\in\mathbb Z$, satisfy:
\begin{align}\begin{split}\label{eq:pt}
&\Psi_{j-1}^+(z)-\Psi_j^-(z)=g_0^j(e^{-2\pi i \Psi_{j-1}^+(z)}),\ z\in V_0^j, \\
&\Psi_j^-(z)-\Psi_j^+(z)=g_\infty^j(e^{2\pi i\Psi_j^+(z)}),\ z\in V_\infty^j,\ \ j\in\mathbb Z.\end{split}
\end{align}

\smallskip

\item For the petalwise limits $R_{j}^\pm$, $j\in\mathbb Z$, the following uniform bound holds. For every collection of subsectors $S_j\subset V_j^{\pm}$ centered at $j\pi$ and of opening strictly less than $2\pi$ independent of $j\in\mathbb Z$, there exists a uniform constant $C>0$ $($independent of $j)$, such that:
\begin{equation}\label{eq:a}
|R_{j}^\pm(z)|\leq C|\boldsymbol\ell|,\ z\in S_j,\ j\in\mathbb Z.
\end{equation}
\end{enumerate}
\end{lem}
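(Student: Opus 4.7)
I will prove the three statements in a single simultaneous induction on $n$, with Proposition~\ref{lem:cocyc} as the engine at every step. The backbone is to establish an iteration-invariant uniform bound $|R_{j,\pm}^n(z)|\le K_0|\boldsymbol\ell|$ on fixed subsectors of uniform opening, together with a uniform exponential flatness of the cocycles $({}^nG_0^j,{}^nG_\infty^j)_{j\in\mathbb Z}$, so that Proposition~\ref{lem:cocyc} can be applied at step $n{+}1$ producing $R_{j,\pm}^{n+1}$ of the same size on the same subsectors. Convergence of the sequence is then obtained by showing that successive differences satisfy geometrically smaller cocycles on compacts.

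\textbf{Step 1 (part (1), by induction).} For the base case $n=1$ note that, by the explicit form \eqref{eq:mod} of the formal model, $\Psi_\mathrm{nf}(z)$ behaves like $\frac{1}{z\boldsymbol\ell^{m}}\bigl(1+o(1)\bigr)$ so that on $V_0^j$ (resp. $V_\infty^j$) the function $e^{-2\pi i \Psi_\mathrm{nf}}$ (resp. $e^{2\pi i \Psi_\mathrm{nf}}$) is uniformly flat of some order $m'\in (0,1)$ on subsectors of uniform opening (independent of $j$). Combined with $|g_{0,\infty}^j(t)|\le c_1|t|$ from \eqref{prv}, this means $({}^0G_0^j,{}^0G_\infty^j)$ satisfies the hypothesis \eqref{foot} of Proposition~\ref{lem:cocyc} uniformly in $j$; applying it yields $R_{j,\pm}^1$ on $V_j^{\pm}$ with $|R_{j,\pm}^1(z)|\le K_0|\boldsymbol\ell|$ by \eqref{eq:ar}, for some $K_0>0$. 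For the inductive step assume $|R_{j,\pm}^n|\le K_0|\boldsymbol\ell|$ on uniform subsectors. Writing $\Psi_{j,\pm}^n=\Psi_{\mathrm{nf}}+R_{j,\pm}^n$, we have on $V_0^j$ (analogously on $V_\infty^j$)
\begin{equation*}
|{}^nG_0^j(z)|\le c_1\,|e^{-2\pi i\Psi_{j-1,+}^n(z)}|\le c_1\,e^{2\pi K_0|\boldsymbol\ell|}\,|e^{-2\pi i\Psi_\mathrm{nf}(z)}|,
\end{equation*}
and since $e^{2\pi K_0|\boldsymbol\ell|}\to 1$ as $z\to 0$, this gives the \emph{same} uniform exponential flatness (order $m'$, perhaps with constant doubled) on the same subsectors. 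Proposition~\ref{lem:cocyc} then produces $R_{j,\pm}^{n+1}$ with $|R_{j,\pm}^{n+1}|\le K_0|\boldsymbol\ell|$, closing the induction.

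\textbf{Step 2 (part (2), convergence).} Set $\delta_{j,\pm}^n:=R_{j,\pm}^n-R_{j,\pm}^{n-1}$. Subtracting consecutive cocycle relations \eqref{e:pp} and applying the mean value theorem with $|(g_{0,\infty}^j)'|\le c_2$ from \eqref{prv},
\begin{equation*}
|{}^nG_0^j(z)-{}^{n-1}G_0^j(z)|\le 2\pi c_2\,e^{2\pi K_0|\boldsymbol\ell|}\,|e^{-2\pi i\Psi_\mathrm{nf}(z)}|\cdot|\delta_{j-1,+}^n(z)|,
\end{equation*}
and symmetrically for the $G_\infty$ side. On any fixed compact subsector $K$ of $V_{0,\infty}^j$ one has $|e^{\mp 2\pi i\Psi_\mathrm{nf}(z)}|\le q_K<1$, while $|\delta^n(z)|\le 2K_0|\boldsymbol\ell|$ from Step~1. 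Hence the $(n{+}1)$-st difference cocycle is bounded on $K$ by $q_K$ times the $n$-th one, and applying Proposition~\ref{lem:cocyc} to the difference cocycles\footnote{The realizing functions of difference cocycles are exactly $\delta_{j,\pm}^{n+1}$ by linearity of \eqref{e:pp}.} yields a geometric contraction $|\delta_{j,\pm}^{n+1}|\le q_K\,|\delta_{j,\pm}^n|$ on $K$, after adjusting constants. Thus $R_{j,\pm}^n=\sum_{k=1}^n\delta_{j,\pm}^k$ converges uniformly on compact subsectors of $V_j^{\pm}$ to analytic limits $R_j^{\pm}$ by the Weierstrass theorem. The relations \eqref{eq:pt} follow by passing to the limit in \eqref{e:pp}, using continuity of $g_{0,\infty}^j$ and uniform convergence on compacts of $V_{0,\infty}^j$.

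\textbf{Step 3 (part (3)) and main obstacle.} The uniform bound $|R_{j,\pm}^n|\le K_0|\boldsymbol\ell|$ from Step~1 is invariant under passage to the pointwise limit, so \eqref{eq:a} follows. The hard part, by a clear margin, is Step~1: one must choose once and for all the family of uniform subsectors and the flatness order $m'>0$ of $e^{\mp 2\pi i\Psi_\mathrm{nf}}$, and verify that Proposition~\ref{lem:cocyc} can be re-applied with the \emph{same} constant $K_0$ at every iteration—this requires that the ``doubling'' factor $e^{2\pi K_0|\boldsymbol\ell|}$ is absorbed into a strictly smaller exponential from $e^{\mp 2\pi i\Psi_\mathrm{nf}}$ (possible since $|\boldsymbol\ell|\to 0$), and that the dependence of $C$ in \eqref{eq:ar} on the constants $c_1$, $M$ is explicit enough to give a fixed $K_0$. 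The geometric contraction argument in Step~2 is then an essentially routine consequence, as in analogous Banach fixed point schemes used in \cite{teyssier}.
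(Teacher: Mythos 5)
Your overall architecture — successive approximations via Cauchy–Heine, an iteration-invariant bound from \eqref{eq:ar}, a contraction for the differences — is the same route the paper takes, but there are two genuine gaps, both concentrated exactly where you identified the difficulty, and both omitting the paper's central technical device.

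\textbf{Gap in Step 1 (logarithmic singularities).} You propagate the bound $|R_{j,\pm}^n|\le K_0|\boldsymbol\ell|$ and use it to conclude $|e^{-2\pi i R_{j-1,+}^n(z)}|\le e^{2\pi K_0|\boldsymbol\ell|}$ on $V_0^j$. This fails: each $R_{j,\pm}^n$ produced by the Cauchy–Heine construction has logarithmic singularities at the finite endpoints $s_0^{j+1}$, $s_\infty^j$ of the critical lines of integration, and these points lie inside the petal (at the boundary of the domain). In particular $s_0^j$ sits on the central line of $V_0^j$, so $R_{j-1,+}^n$ blows up logarithmically precisely on the intersection petal where you need to evaluate ${}^nG_0^j$; the estimate $|R_{j-1,+}^n|\le K_0|\boldsymbol\ell|$ from Proposition~\ref{lem:cocyc}\,(\ref{eq:ar}) holds only on proper substrips and on a shrunk quadratic subdomain, not up to $s_0^j$. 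The paper's Lemma~\ref{lema:use} is devoted precisely to quantifying these singularities ($|\tilde R_{j,+}^n(\zeta)|\le K\log\frac{|\zeta-s_0^{j+1}|}{|\zeta|}$ near the critical lines) and to showing that, nonetheless, the combination $e^{\mp 2\pi i(\tilde\Psi_\mathrm{nf}/2+\tilde R_{j,\pm}^n)}$ stays uniformly bounded on the whole petal, because the super-exponential decay of $e^{\mp\pi i\tilde\Psi_\mathrm{nf}}$ beats the polynomial growth coming from the logarithm. That absorption into ``half'' of $\Psi_\mathrm{nf}$ — not the factor $e^{2\pi K_0|\boldsymbol\ell|}\to 1$ — is what actually lets one re-apply Proposition~\ref{lem:cocyc} at every step with uniform constants.

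\textbf{Gap in Step 2 (contraction on compacts).} You claim $|\delta_{j,\pm}^{n+1}|\le q_K|\delta_{j,\pm}^n|$ on a fixed compact $K$, with $q_K:=\sup_K|e^{\mp 2\pi i\Psi_\mathrm{nf}}|<1$. But Proposition~\ref{lem:cocyc}, via Lemma~\ref{prop:conv}, constructs $\delta_{j,\pm}^{n+1}$ from Cauchy–Heine integrals over the infinite critical lines $\mathcal C_{0,\infty}^k$ for all $k$; its value on $K$ depends on the difference cocycle on all these lines, not only on $K$. So a local $q_K$ does not give a contraction ``after adjusting constants''. The paper obtains a genuine contraction constant $q<1$ by shrinking the standard quadratic domain (shifting $\mathrm{Re}(\zeta)$ to the right) so that $\|e^{\pm\pi i\tilde\Psi_\mathrm{nf}}\|$ on the whole petal becomes arbitrarily small uniformly in $j,n$. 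Even then, because of the logarithmic singularities noted above, the sequence $\tilde R_{j,\pm}^n$ itself is not uniformly Cauchy on the petals; the paper instead proves (Lemma~\ref{lem:uses}) that $e^{2\pi i\tilde R_{j,\pm}^n}$ is uniformly Cauchy on the full petal, and only then recovers uniform convergence of $\tilde R_{j,\pm}^n$ on compacts by writing $\tilde R=\frac{1}{2\pi i}\log(e^{2\pi i\tilde R})$. This passage to exponentials is forced by the singularities and is not a dispensable detail.

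Your footnote about linearity of the Cauchy–Heine operator (so that $\delta^{n+1}$ realizes the difference cocycle) is correct and is used in the paper. Step 3 as stated would follow from Step 1 by passing to the limit; the issue is only that the Step 1 claim it rests on does not hold as written.
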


For simplicity, in the proof of Lemma~\ref{lema:konver}, we pass to the logarithmic chart. We denote by $\tilde V_j^{\pm}$ the petals $V_j^\pm$ in the logarithmic chart. Let \begin{equation}\label{eq:chart}\tilde \Psi_{j,\pm}^n(\zeta):=\Psi_{j,\pm}^n(e^{-\zeta}),\ \tilde R_{j,\pm}^n(\zeta):=R_{j,\pm}^n(e^{-\zeta}),\ \zeta\in\tilde V_j^{\pm},\ j\in\mathbb Z,\ n\in\mathbb N.\end{equation}

In the proof of statement $(2)$ in Lemma~\ref{lema:konver}, we use the following auxiliary lemma, whose proof is in the Subsection~\ref{subsec:ap} of the Appendix. Due to a technical detail in the Cauchy-Heine construction (the presence of a logarithmic singularity at the border of the standard quadratic domain), we are unable to prove uniform convergence of $(\tilde R_{j,\pm}^n)_n$ on $\tilde V_{j}^{\pm}$, as $n\to\infty$. Instead, we prove uniform convergence of their exponentials on petals, which then implies uniform convergence \emph{on compact subsets} for the initial sequence.
\begin{lem}\label{lem:uses} Let the assumptions of Lemma~\ref{lema:konver} hold. Let $\tilde R_{j,\pm}^n$, $n\in\mathbb N$, be as defined in the statement $(1)$ of Lemma~\ref{lema:konver} $($in the logarithmic chart, see \eqref{eq:chart}$)$. The sequence $$\big(e^{2\pi i\tilde R_{j,\pm}^n}\big)_{n\in\mathbb N}$$ is a Cauchy sequence in the sup-norm on petal $\tilde V_j^\pm$, for every $j\in\mathbb Z$. 
\end{lem}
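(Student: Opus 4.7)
The strategy is to work multiplicatively: set $u_n^{j,\pm}(\zeta) := e^{2\pi i \tilde R_{j,\pm}^n(\zeta)}$ on $\tilde V_j^\pm$ and define
$$ a_n := \sup_{j\in\mathbb Z,\,\pm}\ \sup_{\zeta\in\tilde V_j^\pm} \bigl|u_n^{j,\pm}(\zeta) - u_{n-1}^{j,\pm}(\zeta)\bigr|. $$
The plan is to prove $a_n \to 0$ geometrically, which immediately yields the Cauchy property. First I would verify that each $u_n^{j,\pm}$ is uniformly (in $n$) bounded above and bounded below away from $0$ on $\tilde V_j^\pm$. This is delicate near the finite endpoints $-\log r_k + i(4k\mp 1)\pi/2$ of the integration lines $\mathcal C_{0/\infty}^k$, where $\tilde R_{j,\pm}^n$ may acquire logarithmic singularities from the Cauchy-Heine integrals~\eqref{eq:FJ}; but these singularities exponentiate into merely polynomial blow-up of $u_n^{j,\pm}$, whose coefficient is controlled by the cocycle values at the endpoints, and hence by the uniform bounds from Proposition~\ref{prop:modest}.

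The core estimate is a contraction-type bound. Writing the iteration \eqref{e:pp} multiplicatively and using the uniform Lipschitz bound $|(g_{0/\infty}^k)'| \leq c_2$ from \eqref{prv}, one obtains
\begin{align*}
|{}^n G_0^k(\zeta) - {}^{n-1} G_0^k(\zeta)| &\leq c_2\, |e^{-2\pi i \tilde\Psi_\mathrm{nf}(\zeta)}|\, \bigl| (u_n^{k-1,+}(\zeta))^{-1} - (u_{n-1}^{k-1,+}(\zeta))^{-1} \bigr|,\\
|{}^n G_\infty^k(\zeta) - {}^{n-1} G_\infty^k(\zeta)| &\leq c_2\, |e^{+2\pi i \tilde\Psi_\mathrm{nf}(\zeta)}|\, \bigl| u_n^{k,+}(\zeta) - u_{n-1}^{k,+}(\zeta) \bigr|,
\end{align*}
on $\tilde V_0^k$ and $\tilde V_\infty^k$ respectively. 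The prefactor $|e^{\mp 2\pi i \tilde\Psi_\mathrm{nf}(\zeta)}|$ is uniformly (in $k$) superexponentially flat on the intersection petals in the sense of \eqref{eq:b}, since $\tilde\Psi_\mathrm{nf}(\zeta) \sim e^{\zeta}\zeta^{-m}$ and $\mathrm{Im}(\tilde\Psi_\mathrm{nf})$ has the correct sign on $\tilde V_{0/\infty}^k$. Combined with the uniform lower bound on $|u_n|$ from the previous step, this shows that the cocycle difference $\Delta^n G := {}^n G - {}^{n-1} G$ satisfies the uniform-flatness hypothesis of Proposition~\ref{lem:cocyc} with an overall multiplicative prefactor proportional to $a_n$.

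Applying Proposition~\ref{lem:cocyc} to $\Delta^n G$ gives the representation of $\tilde R_{j,\pm}^{n+1} - \tilde R_{j,\pm}^n$ as a sum of Cauchy-Heine integrals as in \eqref{eq:FJ}. The superexponential prefactor together with a careful bound on the integrals near the finite endpoints of the paths $\mathcal C_{0/\infty}^k$ (where $\log$-type boundary contributions appear) yields
$$ \sup_{\zeta\in\tilde V_j^\pm} \bigl|e^{2\pi i(\tilde R_{j,\pm}^{n+1}(\zeta) - \tilde R_{j,\pm}^n(\zeta))} - 1\bigr| \leq q\cdot a_n, $$
for some $q<1$, provided the parameter $C>0$ in the definition of $\widetilde{\mathcal R}_C$ is taken large enough that the superexponential prefactors are uniformly small. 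Multiplying by the uniform upper bound on $|u_n^{j,\pm}|$ yields $a_{n+1}\leq q' a_n$ with $q'<1$, whence $a_n\leq (q')^{n-1} a_1$ and $(u_n^{j,\pm})_n$ is Cauchy in sup-norm on each $\tilde V_j^\pm$.

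The main obstacle is precisely the reason the statement is phrased in terms of exponentials rather than of $\tilde R_{j,\pm}^n$ themselves: the Cauchy-Heine integrals develop logarithmic singularities at the endpoints $-\log r_k + i(4k\mp 1)\pi/2$, which lie on the boundary of the standard quadratic domain, so that $\tilde R_{j,\pm}^n$ is a priori unbounded on $\tilde V_j^\pm$. Passing to the exponential variable $u_n = e^{2\pi i \tilde R_n}$ converts the logarithms into merely polynomial blow-up, but it remains nontrivial to show that the resulting polynomial-growth constants and the contraction ratio $q$ do not deteriorate as $n\to\infty$. This relies essentially on the uniformity in $j\in\mathbb Z$ of the bounds \eqref{prv} on the horn maps and on the uniform superexponential flatness of $|e^{\mp 2\pi i \tilde\Psi_\mathrm{nf}}|$ on the intersection petals.
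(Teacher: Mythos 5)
Your strategy is essentially the paper's: exponentiate $\tilde R^n_{j,\pm}$ to tame the logarithmic Cauchy-Heine endpoint singularities, use the uniform Lipschitz bound \eqref{prv} on $g_{0,\infty}^j$ to drive a contraction, and exploit the superexponential flatness of $e^{\mp\pi i\tilde\Psi_{\mathrm{nf}}}$ after shifting the domain to the right. Framing the difference $\tilde R^{n+1}-\tilde R^n$ as the Cauchy-Heine realization of $\Delta^n G$, rather than expanding the explicit formula \eqref{eq:FJi} as the paper does, is a clean way of saying the same thing.

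However, your first and last bullets are inconsistent with your own intermediate observation. You claim a ``uniform upper bound and lower bound away from $0$'' on $|u_n^{j,\pm}|$ over the whole petal $\tilde V_j^\pm$, and your closing step multiplies by that uniform upper bound --- but as you yourself note, the endpoint singularities at $s_0^k,\,s_\infty^k$ produce polynomial blow-up of $|u_n|$, so no such uniform bound on $|u_n|$ alone exists near those points. The paper's Lemma~\ref{lema:use} sidesteps this by never bounding $|u_n|$ in isolation: part (2) bounds the \emph{combination} $|e^{\mp2\pi i(\tilde\Psi_{\mathrm{nf}}/2+\tilde R^n)}|$, so the polynomial blow-up of $e^{2\pi|\tilde R^n|}$ in region (3) is compensated by the superexponential decay of the factor $e^{\mp\pi i\tilde\Psi_{\mathrm{nf}}}$; and the contraction step (\eqref{ah}--\eqref{eq:aha}) then bounds the product $e^{2\pi|\tilde R^{n+1}|}\cdot\bigl(e^{2\pi|\tilde R^{n+2}-\tilde R^{n+1}|}-1\bigr)$ region by region, keeping both exponents small (the fixed $K$ from Lemma~\ref{lema:use}(1) and the inductively decaying $\delta Cq^n$) by shrinking the domain. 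You should replace the separate ``uniform bound on $|u_n|$'' step by this combined, region-by-region estimate; with that modification your argument is the paper's.
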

\medskip

\noindent \emph{Proof of Lemma~\ref{lema:konver}}.  

\emph{Proof of statement $(1)$}. We check that, in every step of the construction, all assumptions of Proposition~\ref{lem:cocyc} are satisfied. The basis of the induction is obvious by putting $\tilde R_{j,\pm}^0\equiv 0,\ \tilde \Psi_{j,\pm}^0:=\tilde \Psi_\mathrm{nf}$ on $\tilde V_{\pm}^j$. Suppose that $\tilde \Psi_{j,\pm}^k$ are constructed and analytic for $0\leq k<n$. By Remark~\ref{rem:use} in the Appendix and the uniform bound \eqref{prv} on $g_0^j$, we get that there exist constants $c>0$ and $C_1>0$ independent of $j\in\mathbb Z$, such that:
\begin{align}\label{eq:psii}
\big|{}^{n-1} \tilde G_0^j(\zeta)\big|=\Big|g_0^j\big(e^{-2\pi i \tilde\Psi_{j-1,+}^{n-1}(e^{-\zeta})}\big)\Big|\leq c\big|e^{-2\pi i \Psi_{j-1,+}^{n-1}(e^{-\zeta})}\big|&\leq C_1 \big|e^{-\pi i \tilde \Psi_\mathrm{nf}(\zeta)}\big|\\
&=C_1 e^{\pi \text{Im}(\tilde \Psi_\mathrm{nf}(\zeta))},\ \zeta\in \tilde V_0^j.\nonumber
\end{align}
Now, for every collection of central \emph{substrips}\footnote{Important for the bound \eqref{eq:bo} below, since, for every collection of substrips $\tilde U_j\subset \tilde V_0^j$ of width $0<\theta<2\pi$ independent of $j$, there exists a constant $c_\theta>0$ such that $-\mathrm{Im}(e^{-\zeta})>c_\theta\cdot |\mathrm{Re}(e^{-\zeta})|,\ \zeta\in\tilde U_j$.} $\tilde U_j\subset \tilde V_0^j$ of width independent of $j\in\mathbb Z$ and for every $\delta>0$, there exist constants $C,\,D>0$ independent of $j\in\mathbb Z$ such that:
\begin{equation}\label{eq:bo}\big|\mathrm{Im}(\tilde \Psi_\mathrm{nf}(\zeta))\big|=- \mathrm{Im}(\tilde \Psi_\mathrm{nf}(\zeta))\geq C|\tilde \Psi_\mathrm{nf}(\zeta)|\geq D e^{(1-\delta)\mathrm{Re}(\zeta)},\ \zeta\in\tilde U_j.\end{equation}
The last inequality is obtained using the exact form of $\Psi_{\mathrm{nf}}(z)$ given in \eqref{eq:abo} and the fact that, for a standard quadratic domain $\widetilde{\mathcal R}_C$, there exists $d>0$ such that $\mathrm{Im}(\zeta)\leq d\cdot \mathrm{Re}^2(\zeta)$, $\zeta\in\widetilde{\mathcal R}_C$.

\noindent Therefore, combining \eqref{eq:psii} and \eqref{eq:bo}, for a collection of substrips $\tilde U_j\subset \tilde V_0^j$ of a given width $0<\theta<2\pi$ independent of $j\in\mathbb Z$, there exist constants $C,\ M>0$ \emph{independent of $j\in\mathbb Z$ and of the step $n\in\mathbb N$}, such that:
\begin{equation}\label{eq:ge}
\big|{}^{n-1} \tilde G_0^j(\zeta)\big|\leq C e^{-Me^{(1-\delta)\mathrm{Re}(\zeta)}},\ \zeta\in\tilde U_j,\ j\in\mathbb Z,\ n\in\mathbb N.
\end{equation}
A similar analysis is done for ${}^{n-1}\tilde G_\infty^j(\zeta)$ on $\tilde V_\infty^j$, $j\in\mathbb Z$. 
Therefore, assumption~\eqref{foot} of Proposition~\ref{lem:cocyc} is satisfied in every step with $m=1-\delta$, for every $\delta>0$. The existence and analyticity of $\tilde R_{j,\pm}^n$ on $\tilde V_{j}^{\pm}$ then follows directly by Proposition~\ref{lem:cocyc}. Also, \eqref{e:pp} follows directly from \eqref{e:p} in Proposition~\ref{lem:cocyc}.
\smallskip

Precisely, for later use, by Lemma~\ref{prop:conv} in the proof of Proposition~\ref{lem:cocyc}, $\tilde R_{j,\pm}^n$ on petals $\tilde V_j^{\pm}$, $j\in\mathbb Z$, $n\in\mathbb N$, are given as the sum of the Cauchy-Heine integrals as follows:
\begin{align}\label{eq:FJi}
&\tilde R_{j,+}^n:=\Big(\big(\sum_{k=-\infty}^{j} {}^n \tilde F_{0,k}^{+} + \sum_{k=-\infty}^{j} {}^n \tilde F_{\infty,k}^+\big) +\big(\sum_{k=j+1}^{+\infty}{}^n \tilde F_{0,k}^- + \sum_{k=j+1}^{+\infty}{}^n \tilde F_{\infty,k}^-\big)\Big) \Big|_{\tilde V_j^+}\Big.,\\
&\tilde R_{j,-}^n:=\Big(\big(\sum_{k=-\infty}^{j} {}^n \tilde F_{0,k}^+ + \sum_{k=-\infty}^{j-1} {}^n \tilde F_{\infty,k}^+\big) +\big(\sum_{k=j+1}^{+\infty} {}^n \tilde F_{0,k}^{-} + \sum_{k=j}^{+\infty} {}^n \tilde F_{\infty,k}^-\big)\Big) \Big|_{\tilde V_j^-}\Big. , \ j \in\mathbb Z,\nonumber
\end{align}
where: 
\begin{align}\begin{split}\label{eq:redi}
&\sum_{k=j+1}^{+\infty} {}^n \tilde F_{0,k}^-(\zeta)=\\
&\begin{cases}
=\!\frac{1}{2\pi i}\int_{\mathcal C_{0}^{j+1}}\frac{g_{0}^{j+1}\big(e^{-2\pi i(\tilde\Psi_\mathrm{nf}(w)+\tilde R_{j,+}^{n-1}(w))}\big)}{w-\zeta} dw+\\[0.2cm]
\quad +\frac{1}{2\pi i}\sum_{k=j+2}^{+\infty}\int_{\mathcal C_{0}^k}\frac{g_{0}^k\big(e^{-2\pi i(\tilde\Psi_\mathrm{nf}(w)+\tilde R_{k-1
,+}^{n-1}(w))}\big)}{w-\zeta} dw,\\[0.2cm]
\qquad\qquad\ \zeta\in \tilde V_j^+,\ \text{Im}(\zeta)\leq (4j+1)\frac{\pi}{2}-\varepsilon,\quad \text{(region $(1)$)}\\[0.5cm]
=\!\frac{1}{2\pi i}\int_{\mathcal C_{0}^{j+1}}\frac{g_{0}^{j+1}\big(e^{-2\pi i(\tilde\Psi_\mathrm{nf}(w)+\tilde R_{j,+}^{n-1}(w))}\big)}{w-\zeta} dw+g_{0}^{j+1}\big(e^{-2\pi i(\tilde\Psi_\mathrm{nf}(\zeta)+\tilde R_{j,+}^{n-1}(\zeta))}\big)+\\[0.2cm]
\ \ \ +\frac{1}{2\pi i}\sum_{k=j+2}^{+\infty}\int_{\mathcal C_{0}^k}\frac{g_{0}^k\big(e^{-2\pi i(\tilde\Psi_{\mathrm{nf}}(w)+\tilde R_{k-1
,+}^{n-1}(w))}\big)}{w-\zeta} dw,\\[0.2cm]
\qquad\qquad\ \zeta\in \tilde V_j^+,\ \text{Im}(\zeta)\geq (4j+1)\frac{\pi}{2}+\varepsilon,\quad \text{(region $(2)$)}\\[0.5cm]
=\!\frac{1}{2\pi i}\int_{\mathcal C_{0,+2\varepsilon}^{j+1}}\frac{g_{0}^{j+1}\big(e^{-2\pi i(\tilde\Psi_\mathrm{nf}(w)+\tilde R_{j,+}^{n-1}(w))}\big)}{w-\zeta} dw+\frac{1}{2\pi i}\int_{\mathcal S_{0,+2\varepsilon}^{j+1}}\frac{g_{0}^{j+1}\big(e^{-2\pi i(\tilde\Psi_\mathrm{nf}(w)+\tilde R_{j,+}^{n-1}(w))}\big)}{w-\zeta} dw+\\[0.2cm]
\quad +\frac{1}{2\pi i}\sum_{k=j+2}^{+\infty}\int_{\mathcal C_{0}^k}\frac{g_{0}^k\big(e^{-2\pi i(\tilde\Psi_\mathrm{nf}(w)+\tilde R_{k-1,+}^{n-1}(w))}\big)}{w-\zeta} dw,\\[0.2cm]
\quad\qquad\quad\ \zeta\in \tilde V_j^+,\ (4j+1)\frac{\pi}{2}-\varepsilon<\text{Im}(\zeta)< (4j+1)\frac{\pi}{2}+\varepsilon. \ \text{\ (region $(3)$)}
\end{cases}
\end{split}
\end{align}
\smallskip

\noindent The other three sums in $\tilde R_{j,+}^n$ and the sums in $\tilde R_{j,-}^n$ in \eqref{eq:FJi} can be written analogously. Here, $\varepsilon>0$ is sufficiently small. Recall that $\mathcal C_{0}^{j+1}=\{\zeta\in\widetilde{\mathcal R}_C:\,\mathrm{Im}(\zeta)=(4j+1)\frac{\pi}{2}\}$ is the central line of the petal $\tilde V_{0}^{j+1}$. The line $\mathcal C_{0,+2\varepsilon}^{j+1}$ is the line $\mathcal C_{0}^{j+1}$ shifted upwards by  $+2\varepsilon$ in $\tilde V_0^{j+1}$, and $\mathcal S_{0,+2\varepsilon}^{j+1}$ is the boundary arc of $\tilde V_{0}^{j+1}$ between the lines $\mathcal C_{0}^{j+1}$ and $\mathcal C_{0,+2\varepsilon}^{j+1}$,  independent of $n\in\mathbb N$. Note that 
$$
\int_{\mathcal S_{0,+2\varepsilon}^{j+1}}\frac{g_{0}^{j+1}\big(e^{-2\pi i(\tilde\Psi_\mathrm{nf}(w)+\tilde R_{j,+}^{n-1}(w))}\big)}{w-\zeta} dw
$$
is, as in the proof of Lemma~\ref{prop:cah}, an analytic function at $\zeta=\infty$. It depends on $j\in\mathbb Z$ and on $n\in\mathbb N$. 

Regions (1) -- (3) in \eqref{eq:redi} are regions where Cauchy-Heine formulas differ due to \emph{critical} line of integration $\mathcal C_0^{j+1}$ lying inside the petal $V_j^+$. To simplify calculations, we assume that there is only one \emph{critical} line of integration inside $V_j^+$, while in reality there is another, $\mathcal C_{\infty}^{j}$, the central line of $\tilde V_{\infty}^j$. No new phenomena are generated if we add another line, just more regions and longer expressions in \eqref{eq:redi}, so we simplify without real loss of generality. The regions are shown in Figure~\ref{fig:regions}. More details are given in Remark~\ref{rem:regions} below.

\begin{figure}[h!]
\includegraphics[scale=0.13]{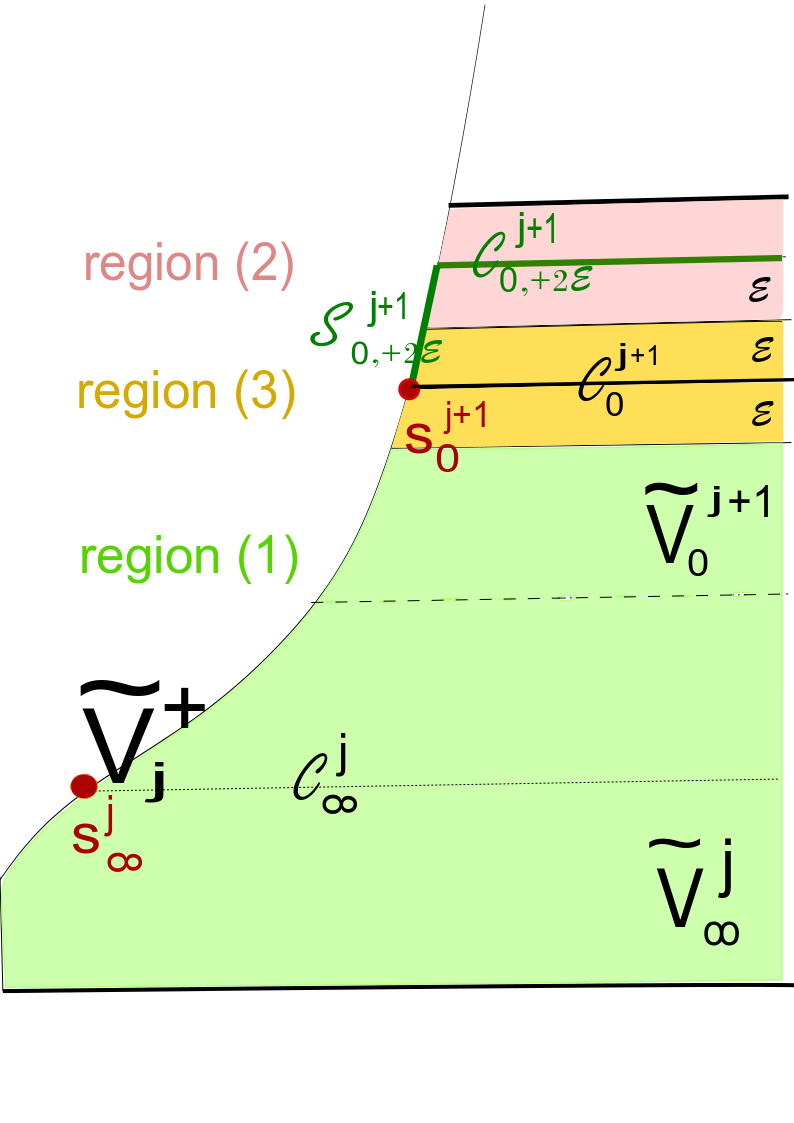}
\vspace{-0.4cm}
\caption{The three regions of $\tilde V_j^+$ with respect to the \emph{critical} line $\mathcal C_0^{j+1}$ of integration, and the \emph{critical} points $s_0^{j+1},\,s_\infty^j\in\tilde V_j^+$ generating logarithmic singularities in the proof of Lemma~\ref{lema:use}.}\label{fig:regions}
\end{figure}

\begin{obs}[Regions $(1)$, $(2)$, $(3)$ introduced in \eqref{eq:redi}]\label{rem:regions} The functions $\tilde R_{j,+}^n,\ n\in\mathbb N,$ in our iterative process are defined as infinite sums of Cauchy-Heine integrals on corresponding petals $V_j^{\pm}$, similarly as in \eqref{eq:ch} and \eqref{eq:FJ}. In every step we use another exponentially small cocycle defined from functions obtained in the previous step. Note that functions $\tilde R_j^{\pm}$ in \eqref{eq:FJ} cannot be expressed  by the same formula throughout the whole petal $\tilde V_j^{\pm}$, since integrals are not well-defined along two \emph{critical} lines of integration that fall inside each petal. Recall that, standardly, in the Cauchy-Heine construction, to extend the function analytically beyond the line of integration, we change the paths of integration, as in the proof of Lemma~\ref{prop:cah}. 

Each petal $\tilde V_j^\pm$ in $\zeta$-chart is divided into \emph{horizontal strip-like} regions (\emph{sectors} in $z$-variable). In each region, we have an explicit, but different integral formula. 

We take $\varepsilon>0$ small. Take petal $V_j^+$. The \emph{region $(3)$} is the open $\varepsilon$-neighborhood of two \emph{critical} lines $\mathcal C_0^{j+1}$ and $\mathcal C_\infty^j$. These two lines are among the lines of integration in \eqref{eq:FJ} for $V_j^+$, and analogously later in the iterative construction given by \eqref{eq:FJi}. At the same time, they lie inside $\tilde V_j^+$. The problem in this region is that, although we may exchange the line of integration with a line outside the region and a part of the boundary (here: $\mathcal C_{0,+2\varepsilon}^{j+1}$ and $\mathcal S_{0,+2\varepsilon}^{j+1}$), we cannot bound the variable $\zeta\in \tilde V_j^+$ away from the part of the boundary, and logarithmic singularities appear in iterations at $s_0^{j+1}$ and $s_\infty^j$, see Figure~\ref{fig:regions}. This prevents an easy proof of convergence in our iterative process. The other strips of $\tilde V_j^+$ constitute \emph{regions $(1)$ and $(2)$}, which are simpler to analyze, as there are no logarithmic singularities. In region $(3)$, the bounds that we need for convergence of iterates in the proof of Lemma~\ref{lema:use} will be significantly more complicated.  
\end{obs}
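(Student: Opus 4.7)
The plan is to justify the three claims of the remark: (a) the Cauchy-Heine representation of $\tilde R_{j,+}^n$ cannot be given by a single formula on all of $\tilde V_j^+$; (b) a natural division arises into three horizontal strip-like regions determined by the critical lines lying inside $\tilde V_j^+$; and (c) only in the $\varepsilon$-neighborhood of these critical lines do logarithmic singularities appear in the iterated Cauchy-Heine representations.

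First, I would locate the critical lines inside $\tilde V_j^+$. This petal is tangent to a strip of width $2\pi$ centered at height $2j\pi$. Among all the integration paths $\mathcal C_0^k,\mathcal C_\infty^k,\ k\in\mathbb Z,$ appearing in \eqref{eq:FJi}, exactly two, namely $\mathcal C_\infty^j$ at height $(4j-1)\pi/2$ and $\mathcal C_0^{j+1}$ at height $(4j+1)\pi/2$, lie strictly inside $\tilde V_j^+$. By Lemma~\ref{prop:cah}(3), the two branches $\tilde F_{0,j+1}^\pm$ differ by the cocycle jump across $\mathcal C_0^{j+1}$, so no single choice of branches is holomorphic on the whole petal. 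To glue the branches into an analytic function on $\tilde V_j^+$ one must use different formulas above, below, and on the two critical lines, which forces the trichotomy.

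For regions $(1)$ and $(2)$, where $\zeta$ has distance at least $\varepsilon$ from both critical lines, I would use the original paths $\mathcal C_0^k,\mathcal C_\infty^k$ with the correct $\pm$-choice of Cauchy-Heine branch, which is unambiguous since $\zeta$ lies on exactly one side of each critical line. The two representations differ precisely by the cocycle residue $g_0^{j+1}(e^{-2\pi i(\tilde\Psi_\mathrm{nf}+\tilde R_{j,+}^{n-1})})$ appearing in region $(2)$ of \eqref{eq:redi}, again by Lemma~\ref{prop:cah}(3). Since $|w-\zeta|\geq\varepsilon$ uniformly on the integration contour, combined with the uniform superexponential bound \eqref{eq:ge}, each summand is analytic and superexponentially small, with no singularity of the kernel $1/(w-\zeta)$ being activated.

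The main obstacle is region $(3)$, the $\varepsilon$-strip around a critical line (say $\mathcal C_0^{j+1}$). Here neither $\tilde F_{0,j+1}^+$ nor $\tilde F_{0,j+1}^-$ is directly defined at $\zeta$, so I would deform the contour exactly as in Lemma~\ref{prop:cah}(2), replacing $\mathcal C_0^{j+1}$ by $\mathcal C_{0,+2\varepsilon}^{j+1}\cup\mathcal S_{0,+2\varepsilon}^{j+1}$; Cauchy's theorem then yields the analytic extension of the branch across $\mathcal C_0^{j+1}$ and produces exactly the formula written in region $(3)$ of \eqref{eq:redi}. The subtlety, and source of the logarithmic singularity, is that the endpoint $s_0^{j+1}$ of the boundary arc $\mathcal S_{0,+2\varepsilon}^{j+1}$ lies at a finite, non-uniform distance from $\zeta\in\tilde V_j^+$: the integration of $1/(w-\zeta)$ over $\mathcal S_{0,+2\varepsilon}^{j+1}$ therefore produces a $\log(s_0^{j+1}-\zeta)$ contribution. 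At the next iteration this logarithm enters through $g_0^{j+1}\circ\exp$ and generates a logarithmic singularity at $s_0^{j+1}$ in ${}^{n}\tilde G_0^{j+1}$, which is the precise mechanism obstructing a naive convergence estimate in region $(3)$; the symmetric argument at $\mathcal C_\infty^j$ gives the singularity at $s_\infty^j$. Finally, the simplification in the remark to a single critical line is harmless, because for $\varepsilon$ small enough the two $\varepsilon$-neighborhoods are disjoint and the deformations around $\mathcal C_0^{j+1}$ and $\mathcal C_\infty^j$ can be carried out independently, producing merely more regions and longer formulas of the same type.
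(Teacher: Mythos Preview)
Your justification is essentially correct and mirrors the paper's own reasoning; since this is an explanatory remark rather than a theorem, the paper does not give a separate proof, and your elaboration captures exactly the mechanism behind the trichotomy and the role of the boundary arc integral in producing the $\log(\zeta-s_0^{j+1})$ term.

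One small inaccuracy worth correcting: you write that at the next iteration the logarithm ``enters through $g_0^{j+1}\circ\exp$ and generates a logarithmic singularity at $s_0^{j+1}$ in ${}^{n}\tilde G_0^{j+1}$''. In fact the opposite happens: because $\tilde R_{j,+}^n$ appears only inside the exponential $e^{-2\pi i(\tilde\Psi_\mathrm{nf}+\tilde R_{j,+}^n)}$, the logarithmic singularity is \emph{killed} at this stage (the exponential of a logarithm is a power, hence bounded near $s_0^{j+1}$), and the cocycle ${}^n\tilde G_0^{j+1}$ is regular there. The logarithmic singularity in $\tilde R_{j,+}^{n+1}$ is produced \emph{afresh} by the new Cauchy--Heine integration over $\mathcal S_{0,+2\varepsilon}^{j+1}$ at step $n{+}1$, not by propagation from the previous iterate. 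This non-accumulation is precisely what makes the iteration tractable, and is the key observation exploited in the proof of Lemma~\ref{lema:use}.
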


\emph{Proof of statement $(2)$}. At each step of the iterative Cauchy-Heine construction, two logarithmic singularities  appear at points $s_{0}^{j+1}$ and $s_{\infty}^j$ at the boundary of each petal $V_j^+$ in region $(3)$, $j\in\mathbb Z$. Precisely, they appear at  endpoints of $\mathcal C_{0}^{j+1}$ and $\mathcal C_{\infty}^j$ at the boundary of the domain. Therefore, we will not be able to prove that the sequence of iterates $\big(\tilde R_{j,+}^{n-1}(\zeta)\big)_n$ is \emph{uniformly} Cauchy on the whole petal $\tilde V_{+}^j$. More details about the nature of the singularities can be found in Subection~\ref{subsec:ap} in the Appendix. However, by Lemma~\ref{lem:uses}, the sequence
\begin{equation}\label{eq:C}
\big(e^{2\pi i \tilde R_{j,+}^{n}(\zeta)}\big)_n
\end{equation}
is uniformly Cauchy on petals $\tilde V_+^j$, $j\in\mathbb Z$. By taking the exponential, we have \emph{eliminated} the logarithmic singularities. It follows from \eqref{eq:C} that $\big(\tilde R_{j,+}^{n}(\zeta)\big)_n$ is uniformly Cauchy on all compact subsets of the petal $\tilde V_+^j$, away from singular points $s_{0}^{j+1}$ and $s_{\infty}^j$ with logarithmic singularities, which lie at the boundary of the petal $V_j^+$. Indeed, note that $e^{2\pi i \tilde R_{j,+}^{n}(\zeta)}$ does not vanish in any point $\zeta\in V_{+}^j$. 
By the mean value theorem, writing $\tilde R_{j,+}^n=\frac{1}{2\pi i} \log (e^{2\pi i \tilde R_{j,+}^n})$, we have:
\begin{align*}
&|\tilde R_{j,+}^{n}(\zeta)-\tilde R_{j,+}^{n+1}(\zeta)|\leq \\
&\leq\frac{1}{\sqrt 2\pi}\sup_{t\in[0,1]}\frac{1}{\big|t e^{2\pi i\tilde R_{j,+}^{n}(\zeta)}+(1-t)e^{2\pi i\tilde R_{j,+}^{n+1}(\zeta)}\big|}\!\cdot\! |e^{2\pi i \tilde R_{j,+}^{n}(\zeta)}-e^{2\pi i \tilde R_{j,+}^{n+1}(\zeta)}|,\, \zeta\in\tilde V_{j}^+.
\end{align*}  
By Lemma~\ref{lema:use} (1), we get that $\zeta\mapsto \sup_{t\in[0,1]}\frac{1}{\big|t e^{2\pi i\tilde R_{j,+}^{n}(\zeta)}+(1-t)e^{2\pi i\tilde R_{j,+}^{n+1}(\zeta)}\big|}$ is uniformly bounded on every compact in the petal  $\tilde V_{j}^+$ away from singular points $s_{0}^{j+1}$ and $s_\infty^j$. We conclude that the sequence $(\tilde R_{j,+}^n)_n$ is \emph{uniformly Cauchy on every compact} in the petal $\tilde V_j^+$. Therefore, by the Weierstrass theorem, it converges to an analytic function $\tilde R_{j}^+$ on the petal $\tilde V_{j}^+$, $j\in\mathbb Z$. The same can be concluded for $\tilde R_j^-$ on petals $\tilde V_j^-$, $j\in\mathbb Z.$ 
\smallskip

\noindent Let us now denote the pointwise limits by $\tilde R_{j}^\pm$:
$$
\tilde R_{j}^{\pm}(\zeta):=\lim_{n\to\infty} \tilde R_{j,\pm}^n (\zeta),\ \tilde\Psi_{j}^\pm(\zeta):=\tilde \Psi_\mathrm{nf}(\zeta)+\tilde R_{j,\pm}(\zeta),\ \zeta\in\tilde V_{\pm}^j.
$$
That is, returning from the $\zeta$-variable to the original variable $z$, we put:
$$
\breve R_{j}^{\pm} (\boldsymbol\ell):=\tilde R_{j}^\pm(\boldsymbol\ell^{-1}),\ \Psi_{j}^{\pm}(z):=\Psi_\mathrm{nf}(z)+\breve R_{j}^{\pm}(\boldsymbol\ell),\ z\in V_{\pm}^j,\ j\in\mathbb Z.
$$
Here, $\Psi_\mathrm{nf}(z)$ i.e. $\tilde \Psi_\mathrm{nf}(\zeta)$ are the Fatou coordinates of $(2,m,\rho)$-model, analytic on whole $\mathcal R_C$, and given explicitely in \eqref{eq:abo}. All functions defined above are analytic on respective petals.
Now, passing to the limit in \eqref{e:pp}, we see that $R_{j}^\pm(z)$ and thus also $\Psi_j^{\pm}(z)$ (since $\Psi_\mathrm{nf}(z)$ is analytic on the standard quadratic domain) realize the requested sequence of pairs $(g_{0}^j,g_\infty^j)_{j\in\mathbb Z}$ at intersections of petals, as in \eqref{eq:pt}.
\smallskip

\emph{Proof of statement $(3)$.} We use the uniform estimate \eqref{eq:ge} for ${}^n \tilde G_{0,\infty}^j(\zeta)$ by $n\in\mathbb N$, deduced in the proof of statement $(1)$, and repeat the proof of \eqref{eq:ar} in Proposition~\ref{lem:cocyc} (see the proof of Lemma~\ref{prop:asylin} in the Appendix), but with this uniform estimate. We get that there exists a \emph{uniform} (in $j$) constant $C>0$ such that, for substrips $\tilde S_j\subset \tilde V_j^{\pm}$ centered at line $\{\mathrm{Im}(\zeta)=j\pi\}$ and of the same opening for all $j\in\mathbb Z$, the following estimate holds:
\begin{equation}\label{eq:aaa}
|\tilde R_{j,\pm}^n(\zeta)|\leq C|\zeta|^{-1},\ \zeta\in \tilde S_j,\ j\in\mathbb Z,\ n\in\mathbb N.
\end{equation}Passing to the limit as $n\to\infty$ in \eqref{eq:aaa}, and returning to the original variable $z=e^{-\zeta}$, statement $(3)$ is proven.
\hfill $\Box$
\bigskip

\subsection{The symmetry of the horn maps and $\mathbb R_+$-invariance}\ 

We have proven in \cite[Proposition 9.2]{prvi} that, for a parabolic generalized Dulac germ $f$, the fact that $f(\mathbb R_+\cap \mathcal R_C)\subset \mathbb R_+\cap\mathcal R_C$ implies the \emph{symmetry} \eqref{eq:sim} of its analytic moduli. Here, by abuse, $\mathbb R_+:=\{z\in\mathcal R:\,\mathrm{Arg}(z)=0\}$. In general, the converse of \cite[Proposition 9.2]{prvi} does not hold. That is, the symmetry of horn maps of $f$ does not imply $\mathbb R_+$-invariance of $f$ in general, as Example~\ref{ex:nosim} below shows. Instead, Lemma~\ref{lema:simi} provides a characterization of analytic germs on standard quadratic domains having symmetric sequences of horn maps.

\begin{example}\label{ex:nosim}
Take $f(z)=z-z^2$ on $\mathcal R_C$. Obviously, $f$ is a simple parabolic generalized Dulac germ and $f(\mathbb R_+)\subseteq \mathbb R_+$. By \cite[Proposition 9.2]{prvi}, since $f$ is $\mathbb R_+$-invariant, its moduli are symmetric. Take now $\varphi(z)=z+iz^3$, and define an analytic germ $f_1:=\varphi^{-1}\circ f\circ\varphi$ on $\mathcal R_C$. Since $\widehat f_1(z)=z-z^2+o(z^2)$, $f_1$ admits the same petals as $f$. By \cite[Theorem B]{prvi}, since $\varphi(z)$ is analytic on $\mathcal R_C$, $f_1$ has the same horn maps as $f$. Therefore, the horn maps of $f_1$ are symmetric, but $\mathbb R_+$ is \emph{not} $f_1$-invariant. 

We can easily generate more complicated examples by taking an $\mathbb R_+$-invariant parabolic generalized Dulac germ and by conjugating it by $\varphi(z)=z+o(z)$ which is analytic on a standard quadratic domain, and whose asymptotic expansion $\widehat\varphi$ belongs to $\widehat{\mathcal L}(\mathbb C)$, but not to $\widehat{\mathcal L}(\mathbb R)$. Thus the invariance of $\mathbb R_+$ is not preserved in general.

Indeed, analytic modulus is an invariant of analytically conjugated parabolic germs. On the other hand, having the real axis invariant is obviously not an invariant
property under complex changes of coordinates. If one of the germs has the real axis invariant, all analytically conjugated germs also have an invariant real analytic curve through the singularity, but it is not in general the real axis.
\end{example}

\begin{lem}[Symmetry of the horn maps]\label{lema:simi} Let $f$ be an analytic germ on a standard quadratic domain $\mathcal R_C$ with a sequence of horn maps\footnote{Note that, by saying that $f$ has horn maps $(h_{0}^j,h_{\infty}^j; \sigma_j)_{j\in\mathbb Z}$, we have implicitely assumed the dynamics and the existence of invariant petals $V_j^\pm\subset\mathcal R_C$, $j\in\mathbb Z$.} $(h_{0}^j,h_{\infty}^j; \sigma_j)_{j\in\mathbb Z}$, with $\sigma_j$ as in \eqref{eq:fall}. The sequence of horn maps is \emph{symmetric}, that is, 
\begin{equation}\label{eq:symi}
\overline{\big(h_{0}^{-j+1}\big)^{-1}(t)}\equiv h_{\infty}^j(\overline t),\ t\in(\mathbb C,0),\ j\in\mathbb Z,
\end{equation}
if and only if there exists an analytic germ $\varphi(z)=z+o(z)$ on $\mathcal R_C$ such that
\begin{equation}\label{eq:req}
\overline {f(\overline z)}=\varphi^{-1}\circ f\circ \varphi(z),\ z\in\mathcal R_C.
\end{equation}
\end{lem}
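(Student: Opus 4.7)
The plan is to reduce the equivalence to \cite[Theorem B]{prvi}, which states that the pair (formal class, horn maps) is a complete system of analytic invariants on a standard quadratic domain. The auxiliary object is the Schwarz reflection $f^{*}(z):=\overline{f(\overline z)}$, which is again an analytic germ on $\mathcal R_C$ (the domain being invariant under $z\mapsto\overline z$). Rewriting \eqref{eq:req} as $f^{*}=\varphi^{-1}\circ f\circ \varphi$, the lemma amounts to showing that the symmetry \eqref{eq:symi} of the horn maps of $f$ is equivalent to $f$ and $f^{*}$ being analytically conjugate on $\mathcal R_C$ by a diffeomorphism tangent to the identity.

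First I would identify the sectorial dynamics of $f^{*}$. Since complex conjugation reflects directions across $\mathbb R_+$, the attracting and repelling petals, centered respectively at $2j\pi$ and $(2j-1)\pi$, get identified as $V_j^{+}(f^{*})=\overline{V_{-j}^{+}(f)}$ and $V_j^{-}(f^{*})=\overline{V_{-j+1}^{-}(f)}$. Applying complex conjugation to the Abel equation $\Psi\circ f-\Psi\equiv 1$ shows that $z\mapsto\overline{\Psi_{-j}^{+}(\overline z)}$ (resp. $z\mapsto\overline{\Psi_{-j+1}^{-}(\overline z)}$) are sectorial Fatou coordinates of $f^{*}$ on the corresponding conjugate petals.

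Next, a direct bookkeeping calculation using the definition \eqref{eq:moduli} of the horn maps, together with the above identifications of petals and Fatou coordinates, yields
\[
(h_0^j(f^{*}))^{-1}(t)=\overline{h_\infty^{-j+1}(f)(\overline t)},\qquad h_\infty^j(f^{*})(t)=\overline{(h_0^{-j+1}(f))^{-1}(\overline t)}.
\]
Comparing with \eqref{eq:sim}, these identities express exactly the fact that $f$ has symmetric horn maps if and only if the horn maps of $f$ and $f^{*}$ coincide, up to the admissible identifications \eqref{eq:ekvij}. Moreover, since $f$ is assumed to be of the prenormalized form \eqref{eq:forma} with $\rho\in\mathbb R$, the first three terms of $\widehat{f^{*}}$ agree with those of $\widehat f$, so $f$ and $f^{*}$ share the formal class $(2,m,\rho)$.

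The two directions of the lemma then follow from \cite[Theorem B]{prvi}. For $(\Rightarrow)$, symmetry together with the agreement of formal classes gives an analytic diffeomorphism on $\mathcal R_C$ conjugating $f$ to $f^{*}$; matching the coefficient of $z$ in any hypothetical expansion $\varphi(z)=\alpha z+o(z)$ against the common second-order term $-z^2\boldsymbol\ell^m$ of $f$ and $f^{*}$ forces $\alpha=1$, so $\varphi(z)=z+o(z)$, as required. For $(\Leftarrow)$, conjugation by $\varphi(z)=z+o(z)$ automatically preserves horn maps, and the formulas above return \eqref{eq:symi}. I expect the main obstacle to be the careful index bookkeeping in the intermediate computation, and the verification that the ambiguity \eqref{eq:ekvij} in the definition of horn maps is exactly absorbed by the specific rescalings induced by the Schwarz reflection $z\mapsto\overline z$, with no extra compatibility condition arising beyond those already imposed by symmetry.
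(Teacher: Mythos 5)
Your proposal follows essentially the same route as the paper: define the Schwarz reflection $f^{*}(z)=\overline{f(\overline z)}$, relate the horn maps of $f^{*}$ to those of $f$ by the reflection identities (the paper obtains these by citing the proof of \cite[Proposition~9.2]{prvi}, while you sketch the petal/Fatou-coordinate bookkeeping yourself), observe that the symmetry hypothesis is then equivalent to $f$ and $f^{*}$ having the same horn maps, and conclude both implications from the completeness statement \cite[Theorem~B]{prvi}. One small remark: your flagged worry about absorbing the rescaling ambiguity \eqref{eq:ekvij} is not a real obstacle in the forward direction, since combining \eqref{eq:symi} with the reflection identities yields literal equality $k_{0,\infty}^j=h_{0,\infty}^j$, not just equivalence; and the explicit $\alpha=1$ argument is redundant because the conjugacy in the cited Theorem~B is already taken tangent to the identity.
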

Note that \eqref{eq:req} is trivially satisfied for germs $f$ such that $f(\mathbb R_+)\subseteq \mathbb R_+$, taking $\varphi=\mathrm{id}$, by the Schwarz reflection principle.

\begin{proof} Let $f$ be analytic on a standard quadratic domain $\mathcal R_C$. Let $f_1(z):=\overline{f(\overline z)},\ z\in\mathcal R_C$. It is an analytic function on $\mathcal R_C$ by the Cauchy-Riemann conditions. Let $(k_{0}^j,k_{\infty}^j; \sigma_j)_{j\in\mathbb Z}$ be its sequence of horn maps ($\sigma_j$ remains the same, due to symmetry of standard quadratic domains). Then, by the proof of \cite[Proposition 9.2]{prvi}, it holds that 
\begin{equation}\label{eq:simk}
\overline{\big(k_{0}^{-j+1}\big)^{-1}(t)}\equiv h_{\infty}^j(\overline t),\ \overline{(k_\infty^j)^{-1}(t)}=h_0^{-j+1}(\overline t),\ t\in(\mathbb C,0),\ j\in\mathbb Z.
\end{equation}
By \eqref{eq:simk} and symmetry \eqref{eq:symi} of the horn maps of $f$, we conclude that $f_1$ and $f$ have the \emph{same} sequence of horn maps. By \cite[Theorem B]{prvi}, there exists an analytic function $\varphi(z)=z+o(z)$ on $\mathcal R_C$ such that
$$
\overline {f(\overline z)}=\varphi^{-1}\circ f \circ \varphi(z),\ z\in\mathcal R_C.
$$
The other direction is proven similarly.
\end{proof}
However, in Lemma~\ref{lem:simi} we show that, if we take a symmetric sequence of pairs of analytic germs from $\mathrm{Diff}(\mathbb C,0)$, by Cauchy-Heine construction from Lemma~\ref{lema:konver} we realize the sequence by a \emph{representative that is indeed $\mathbb R_+$-invariant}, as its horn maps. The reason lies in the symmetry of the Cauchy-Heine construction.
\begin{lem}\label{lem:simi} Let $(h_{0}^j,h_{\infty}^j; \sigma_j)_{j\in\mathbb Z}$, with $\sigma_j$ as in \eqref{eq:fall}, be a symmetric sequence of pairs of analytic germs from $\mathrm{Diff}(\mathbb C,0)$, such that:
\begin{equation}\label{eq:sym}
\overline{\big(h_{0}^{-j+1}\big)^{-1}(t)}\equiv h_{\infty}^j(\overline t),\ t\in (\mathbb C,0),\ j\in\mathbb Z. 
\end{equation}
Let $\Psi_j^{\pm}(z):=\Psi_{\mathrm{nf}}+\breve R_j^{\pm}(\boldsymbol\ell)$, $\breve R_j^{\pm}(\boldsymbol\ell):=R_j^{\pm}(z)$, $z\in V_j^{\pm}$, be as constructed by the iterative Cauchy-Heine construction in Lemma~\ref{lema:konver}, realizing the sequence of pairs $(h_{0}^j,h_{\infty}^j; \sigma_j)_{j\in\mathbb Z}$ on intersections of petals $V_0^j,\ V_\infty^j,\ j\in\mathbb Z$, either on a standard linear or a standard quadratic domain. Then:
\begin{enumerate}
\item[(a)] $\Psi_0^+$ on $V_0^+$ is $\mathbb R_+$-invariant. That is, $$\Psi_0^+(\mathbb R_+\cap \mathcal R_C)\subseteq \mathbb R_+\cap\mathcal R_C \text{\ \ $($resp. $\mathcal R_{a,b}$$)$}.$$

\item[(b)] In the case of construction on a standard linear domain, the asymptotic expansion $\widehat R(\boldsymbol\ell)$ of $\breve R_j^{\pm}(\boldsymbol\ell)$, as $\boldsymbol\ell\to 0$ on $\boldsymbol\ell$-cusps $\boldsymbol\ell(V_j^\pm)$, belongs to $\mathbb R[[\boldsymbol\ell]]$. That is, the coefficients of the expansion are \emph{real}.
\end{enumerate}
\end{lem}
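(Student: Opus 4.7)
The plan is to derive both statements by tracking a conjugation symmetry through the iterative Cauchy-Heine construction of Lemma~\ref{lema:konver}. I first translate the symmetry \eqref{eq:sym} of the horn maps into the corresponding symmetry of the germs $g_0^j, g_\infty^j$ from \eqref{eq:relationg}: taking complex conjugates of $(h_0^{-j+1})^{-1}(t) = t e^{2\pi i g_0^{-j+1}(t)}$ and matching with $h_\infty^j(\bar t) = \bar t e^{2\pi i g_\infty^j(\bar t)}$ yields
$$\overline{g_0^j(\bar t)} = -g_\infty^{-j+1}(t), \qquad \overline{g_\infty^j(\bar t)} = -g_0^{-j+1}(t), \qquad j\in\mathbb Z.$$
Since the formal normal form \eqref{eq:mod} has real coefficients, its Fatou coordinate satisfies $\overline{\Psi_\mathrm{nf}(\bar z)} = \Psi_\mathrm{nf}(z)$ on the conjugation-symmetric domain. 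Moreover, the petal labelling from Proposition~\ref{lem:cocyc} gives, under $z\mapsto\bar z$, the pairings $V_j^+ \leftrightarrow V_{-j}^+$, $V_j^- \leftrightarrow V_{-j+1}^-$, and consequently $V_0^j \leftrightarrow V_\infty^{-j+1}$, $V_\infty^j \leftrightarrow V_0^{-j+1}$.

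The core of the argument will be an induction on $n\in\mathbb N_0$ showing that the iterates defined by \eqref{eq:FJi} obey
$$\overline{\tilde R_{j,+}^n(\bar\zeta)} = \tilde R_{-j,+}^n(\zeta), \qquad \overline{\tilde R_{j,-}^n(\bar\zeta)} = \tilde R_{-j+1,-}^n(\zeta), \qquad j\in\mathbb Z.$$
The base case $n=0$ is trivial. For the inductive step, the symmetries of $g_{0,\infty}^j$ combined with the hypothesis applied to $\Psi_{j,\pm}^{n-1} = \Psi_\mathrm{nf} + R_{j,\pm}^{n-1}$ yield the cocycle symmetry
$$\overline{{}^{n-1}\tilde G_0^j(\bar\zeta)} = -{}^{n-1}\tilde G_\infty^{-j+1}(\zeta), \qquad \overline{{}^{n-1}\tilde G_\infty^j(\bar\zeta)} = -{}^{n-1}\tilde G_0^{-j+1}(\zeta).$$
Parametrizing the integration line \eqref{eg:lint} and performing the change of variable $u = \bar w$, one checks that $\overline{\mathcal C_0^j} = \mathcal C_\infty^{-j+1}$ with matching orientation (using conjugation-symmetry of the standard quadratic or linear domain, so that $r_j = r_{-j+1}$), that the side labels $\pm$ flip because $\zeta$ and $\bar\zeta$ lie on opposite sides of the mirrored line, and that the minus sign from the cocycle symmetry cancels the minus sign from $\overline{1/(2\pi i)}$. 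The upshot is
$$\overline{{}^n\tilde F_{0,j}^\pm(\bar\zeta)} = {}^n\tilde F_{\infty,-j+1}^\mp(\zeta), \qquad \overline{{}^n\tilde F_{\infty,j}^\pm(\bar\zeta)} = {}^n\tilde F_{0,-j+1}^\mp(\zeta).$$
Substituting these identities into \eqref{eq:FJi} and reindexing via $k' = -k+1$ reproduces exactly the formula defining $\tilde R_{-j,+}^n$ (respectively $\tilde R_{-j+1,-}^n$), closing the induction. The same manipulation applies to the boundary-arc correction integrals appearing in region $(3)$ of \eqref{eq:redi}.

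Passing to the limit $n\to\infty$, which is uniform on compacta by Lemma~\ref{lema:konver}(2), the symmetry survives, so $\overline{R_0^+(\bar z)} = R_0^+(z)$ on $V_0^+$. Restricting to $z\in\mathbb R_+\cap V_0^+$ gives $R_0^+(z)\in\mathbb R$; since $\Psi_\mathrm{nf}$ is real on $\mathbb R_+$ with $\Psi_\mathrm{nf}(z)\to+\infty$ as $z\to 0^+$ while $R_0^+ = o(1)$, one concludes $\Psi_0^+(\mathbb R_+\cap\mathcal R_C)\subseteq\mathbb R_+\cap\mathcal R_C$ (resp.~$\mathcal R_{a,b}$), proving~(a). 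For~(b), on a standard linear domain Proposition~\ref{lem:cocyc1} supplies a common $\log$-Gevrey expansion $\widehat R(\boldsymbol\ell) = \sum_k a_k\boldsymbol\ell^k$ of all $\breve R_j^\pm$. Applied to $\breve R_0^+$, which is Schwarz self-conjugate by~(a) and hence satisfies $\overline{\breve R_0^+(\bar\boldsymbol\ell)} = \breve R_0^+(\boldsymbol\ell)$ on $\boldsymbol\ell(V_0^+)$, uniqueness of asymptotic coefficients forces $a_k = \overline{a_k}$, so $\widehat R\in\mathbb R[[\boldsymbol\ell]]$.

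The main obstacle I anticipate is the combinatorial bookkeeping in the inductive step: verifying that the petal pairings $V_0^j\leftrightarrow V_\infty^{-j+1}$, $V_\infty^j\leftrightarrow V_0^{-j+1}$ interact with the reindexing $k'=-k+1$ so that all four doubly-infinite sums of $\tilde R_{j,+}^n$ recombine correctly into those of $\tilde R_{-j,+}^n$, together with the consistent flip of the side labels $\pm$. The boundary-arc integrals in region $(3)$ of \eqref{eq:redi} must transform compatibly under conjugation as well, which is routine but requires careful accounting.
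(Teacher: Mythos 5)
Your proposal is correct and follows essentially the same approach as the paper's proof: deriving the symmetry $\overline{g_0^{-j+1}(t)}=-g_\infty^j(\bar t)$ from \eqref{eq:sym}, proving by induction on $n$ the conjugation symmetry of the iterates $\tilde R_{j,\pm}^n$ (the paper states this equivalently in terms of $\tilde\Psi_{j,\pm}^n$) via the pairing of the Cauchy--Heine integrals along $\mathcal C_0^k$ and $\mathcal C_\infty^{-k+1}$, passing to the limit $n\to\infty$, and deducing (b) from the uniqueness of the asymptotic expansion.
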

\noindent The proof is in the Appendix.

\subsection{Proof of Theorem~A}\

\begin{proof} Let $(h_{0}^j,h_{\infty}^{j};\,\sigma_j)_{j\in\mathbb Z}$ be a sequence of pairs of analytic germs of diffeomorphisms, as in Theorem~A. Let $V_j^{\pm}$ be the petals of opening $2\pi$, centered at $j\pi$, $j\in\mathbb Z$, along a standard quadratic domain, as in Figure~\ref{fig:dyn}. By Lemma~\ref{lema:konver}, we construct analytic functions $\Psi_j^{\pm}$ on $V_j^\pm$ that satisfy \eqref{eq:pt}. This is equivalent to the relation \eqref{eq:moduli} for the realization of horn maps. We now define $f$ such that $\Psi_j^\pm$ are its petalwise Fatou coordinates. We define $f$ by petals, using Abel equation, as:
\begin{equation} \label{eq:eff}
f_j^\pm(z):=(\Psi_j^\pm)^{-1}(1+\Psi_j^{\pm}(z)),\ z\in V_j^{\pm},\ j\in\mathbb Z.
\end{equation}
Now we prove that $f_j^\pm$, defined and analytic on petals $V_j^{\pm}$, \emph{glue} to an analytic function $f$ on the whole standard quadratic domain $\mathcal R_C$. That is, we prove that 
\begin{align}\begin{split}\label{eq:treba}
&f_j^+(z)=f_j^-(z),\ z\in V_\infty^j=V_+^j\cap V_-^j,\\
&f_{j-1}^+(z)=f_j^-(z),\ z\in V_0^j=V_{j-1}^+\cap V_j^-, \ j\in\mathbb Z.\end{split}
\end{align} 
Indeed, for Fatou coordinates $\Psi_{j}^{\pm}$ of two consecutive petals by \eqref{eq:pt} of Lemma~\ref{lema:konver} it holds that:
\begin{align*}
&\Psi_-^j\circ (\Psi_+^{j-1})^{-1}(w)=w-g_0^j(e^{-2\pi i w}),\ w\in\Psi_+^{j-1}(V_0^j),\\
&\Psi_-^j\circ (\Psi_+^{j})^{-1}(w)=w+g_\infty^j(e^{2\pi i w}),\ w\in\Psi_+^{j}(V_\infty^j),\ j\in\mathbb Z.
\end{align*}
This implies:
\begin{align*}
&\Psi_-^j\circ(\Psi_{+}^{j-1})^{-1}(w+1)=\Psi_-^j\circ(\Psi_{+}^{j-1})^{-1}(w)+1, \ w\in\Psi_+^{j-1}(V_0^j),\\
&\Psi_-^j\circ(\Psi_{+}^{j})^{-1}(w+1)=\Psi_-^j\circ(\Psi_{+}^{j})^{-1}(w)+1,\ w\in\Psi_+^{j}(V_\infty^j),\ j\in\mathbb Z.
\end{align*}
Composing the first equation by $\Psi_{+}^{j-1}$ from the right and by $(\Psi_-^j)^{-1}$ from the left, and the second by $\Psi_{+}^{j}$ from the right and $(\Psi_-^j)^{-1}$ from the left, by \eqref{eq:eff} we get \eqref{eq:treba}. 

The prenormalized form \eqref{eq:forma} of $f$ follows from Proposition~\ref{prop:pgd} in the Appendix and the \emph{prenormalized} form of the Fatou coordinates $\Psi_j^{\pm}=\Psi_\mathrm{nf}+R_j^{\pm}$ constructed in Lemma~\ref{lema:konver}. Here, $R_j^\pm(z)=o(1)$, as $z\to 0$ on $V_j^\pm$, and $\Psi_\mathrm{nf}$ is the Fatou coordinate of $(2,m,\rho)$-formal model.

The uniform bound $|f(z)-z+z^2\boldsymbol\ell^m-\rho z^3\boldsymbol\ell^{2m+1}|\leq C|z^3\boldsymbol\ell^{2m+2}|,\ C>0,\ z\in\mathcal R_c$, follows by Lemma~\ref{lema:konver} (3). Indeed, uniform bound \eqref{eq:a} gives that there exists $d>0$, independent of $j\in\mathbb Z$, such that $|\Psi_j^{\pm}(z)-\Psi_\mathrm{nf}(z)|\leq d|\boldsymbol\ell|,\ z\in S_j\subset V_j^{\pm}$, where $S_j^\pm$ are subsectors of $V_j^\pm$ of the same opening strictly bigger than $\pi$ for all $j\in\mathbb Z$. The same reasoning as in the proof of Proposition~\ref{prop:pgd} in the Appendix now gives the bound $|f(z)-f_0(z)|\leq e|z^3\boldsymbol\ell^{2m+2}|,\ z\in\mathcal R_c,\ e>0$. Then, using uniform bound\footnote{$f_0=\Psi_{\mathrm{nf}}^{-1}(1+\Psi_{\mathrm{nf}})$ on $\mathcal R_c$, and $\Psi_{\mathrm{nf}}$ is given on $\mathcal R_c$ explicitely by \eqref{eq:abo}.} for the model, $|f_0(z)-z+z^2\boldsymbol\ell^m-\rho z^3 \boldsymbol\ell^{2m+1}|\leq c_1|z^3\boldsymbol\ell^{2m+2}|,\ z\in\mathcal R_c$, $c_1>0$, we get the required bound for $f$.

Finally, on $V_0^+$ the germ $f$ is given by 
\begin{equation}\label{eq:invi}
f\Big|_{V_0^+}\Big.=(\Psi_0^+)^{-1}(1+\Psi_0^+),
\end{equation} and glues analytically along other petals. By Lemma~\ref{lem:simi}, $\Psi_0^+$ is $\mathbb R_+$-invariant. It is also injective on $\mathbb R_+\cap V_0^+$, so the inverse $(\Psi_0^+)^{-1}$ is $\mathbb R_+$-invariant on $\Psi_0^+(V_0^+)$. We conclude by \eqref{eq:invi} that $f$ is $\mathbb R_+$-invariant.
\end{proof}

\medskip

\section{Proof of Theorem~B}\label{subsec:treci}

The analogue of Lemma~\ref{lema:konver} holds (with the same proof) also on \emph{standard linear} domains. Given a sequence of pairs of analytic germs of diffeomorphisms $(h_0^j,h_\infty^j; \sigma_j)_{j\in\mathbb Z}$, with radii of convergence satisfying bounds \eqref{eq:velradlin}, we construct analytic functions $\Psi_j^\pm(z)$ on petals $V_j^{\pm}$ centered at directions $2j\pi$ (attracting), that is, $(2j-1)\pi$ (repelling), but along a \emph{standard linear domain}, that realize this sequence of diffeomorphisms on intersections of petals $V_{0,\infty}^j$, as in \eqref{eq:pt}. We construct them as the uniform limits $R_j^\pm$ on compact subsets of $V_j^\pm$ of iterates $R_{j,\pm}^n(z)$, as $n\to\infty$, defined inductively as in Lemma~\ref{lema:konver}. In each inductive step, we use Proposition~\ref{lem:cocyc1} for realization of cocycles on standard linear domains, instead of Proposition~\ref{lem:cocyc} for standard quadratic domains. Proposition~\ref{lem:cocyc1} additionaly gives us information on asymptotic expansion of $R_{j,\pm}^n$, $n\in\mathbb N$. Let $\breve R_{j,\pm}^n(\boldsymbol\ell):=R_{j,\pm}^n(z),\ z\in V_j^{\pm}$, where $\boldsymbol\ell:=-\frac{1}{\log z}$. Then, by Proposition~\ref{lem:cocyc1}, each $R_{j,\pm}^n(\boldsymbol\ell)$, $j\in\mathbb Z$, admits $\log$-Gevrey expansion in $\mathbb C[[\boldsymbol\ell]]$ of every order $1-\delta$, $\delta>0$, as $\boldsymbol\ell\to 0$ in $\boldsymbol\ell(V_j^{\pm})$.

We now prove that there exists $\widehat R(\boldsymbol\ell)\in\mathbb C[[\boldsymbol\ell]]$ such that the limits $$\breve R_j^{\pm}(\boldsymbol\ell):=\lim_{n\to\infty} \breve R_{j,\pm}^n(\boldsymbol\ell), \ \boldsymbol\ell\in\boldsymbol\ell (V_j^\pm),\ j\in\mathbb Z,$$ admit $\widehat R(\boldsymbol\ell)$ as their $\log$-Gevrey asymptotic expansion of order $1-\delta$, for every $\delta>0$, as $\boldsymbol\ell\to 0$ on $\boldsymbol\ell(V_j^{\pm})$. Moreover, we prove that $\widehat R(\boldsymbol\ell)\in\mathbb R[[\boldsymbol\ell]]$. 

We work again in the logarithmic chart $\zeta=-\log z$. As in the proof of Lemma~\ref{prop:asylin} in the Appendix, on standard \emph{linear} domains it follows that:
\begin{align*}
&\Big|\sum_{k=j+1}^{+\infty} {}^n \tilde F_{0,k}^- (\zeta)-\sum_{j=0}^{N} a_j^n \zeta^{-j}\Big|\leq\\
&\ \ \leq |\zeta|^{-N} \frac{1}{2\pi}\sum_{k=j+1}^{+\infty}\Big|\int_{\mathcal C_0^k}\frac{g_0^k(e^{-2\pi i (\tilde \Psi_\mathrm{nf}(w)+\tilde R_{k-1,+}^{n-1}(w))})w^N}{w-\zeta}\,dw\Big|,\\
& \qquad\qquad\qquad\qquad\qquad\qquad\qquad\qquad\qquad\qquad \zeta\in \tilde V_j^+ \text{ in the region (1)},\ N\in\mathbb N.
\end{align*}
Here we again consider, instead of the whole $\tilde R_{j,+}^n(\zeta)$ given by \eqref{eq:FJi}, only one part of the sum $\sum_{k=j+1}^{+\infty} {}^n \tilde F_{0,k}^- (\zeta),\ \zeta\in\tilde V_j^+$, see \eqref{eq:redi}. For the other three parts of the sum the conclusions follow similarly. To get the bound for $\tilde R_{j,+}^n(\zeta)$, we sum the bounds afterwards. Additionally, for $\zeta\in \tilde V_j^+$ in regions (2) and (3), the conclusion follows similarly. Finally, the same can be done for $\tilde R_{j,-}^n$ on $\tilde V_j^-$. 
Let $\delta>0$. Due to uniform bounds of $g_0^k$ from \eqref{prv} and of $\tilde R_{k,+}^n$ (see Remark~\ref{rem:use}) with respect to $n\in\mathbb N$ and $k\in\mathbb Z$, we conclude that there exist uniform constants $c>0$ and $d>0$, such that:
\begin{equation}\label{eq:twice}
\big|g_0^k(e^{-2\pi i (\tilde \Psi_\mathrm{nf}(w)+\tilde R_{k-1,+}^{n-1}(w))})\big|\!\!<\!c|e^{-2\pi i (\tilde \Psi_\mathrm{nf}(w)+\tilde R_{k-1,+}^{n-1}(w))}|\!<\!d|e^{-2\pi i(\tilde \Psi_\mathrm{nf}(w)/2)}|,\ w\in V_0^k,
\end{equation}
for every $n\in\mathbb N$ and $k\in\mathbb Z$.

\noindent Now, following the proof of Lemma~\ref{prop:asylin} in the Appendix and using \eqref{eq:twice}, we obtain Gevrey bounds which are \emph{uniform with respect to $n\in\mathbb N$}. That is, on every substrip  $\tilde W\subset \tilde V_j^+$, for every $N\in\mathbb N$, there exists a constant $C_N^{\tilde W}>0$ such that, for every $n\in\mathbb N$, it holds that:
\begin{equation} \label{eq:prvaj}
|\tilde R_{j,+}^n(\zeta)-\sum_{i=0}^{N} A_i^{j,n} \zeta^{-i}|\leq C_N^{\tilde W} (1-\delta)^{-N}e^{-\frac{N}{\log N}}\log^NN\cdot|\zeta|^{-N},\ \zeta\in \tilde W\subset \tilde V_j^+.
\end{equation}
Here, $C_N^{\tilde W}$ is uniform in the iterate $n\in\mathbb N$. Also, $A_i^{j,n}\in\mathbb C$ is given by:
\begin{equation*} \label{eq:drugaj}
A_i^{j,n}:=\sum_{k=j+1}^{\infty}\int_{\mathcal C_0^k}g_0^k(e^{-2\pi i (\tilde \Psi_\mathrm{nf}w)+\tilde R_{k-1,+}^{n-1}(w))})w^i \,dw.
\end{equation*}
As discussed before in the proof of Lemma~\ref{prop:asylin}, the above sum converges for every $n\in\mathbb N,\,j\in\mathbb Z$, so the coefficients $A_i^{j,n}\in\mathbb C$ are well-defined. To prove that, for every $j\in\mathbb Z$, $i\in\mathbb N_0$, $(A_i^{j,n})_n$ converges as $n\to\infty$, we use the \emph{dominated convergence theorem}. Indeed, by a change of variable of integration, the above integrals $\int_{\mathcal C_0^k}$ can be considered as line integrals. Now \eqref{eq:twice} and the convergence of the following integrals: 
$$
\int_{\mathcal C_0^k} |e^{-2\pi i(\tilde \Psi_\mathrm{nf}(w)/2)}|w^i\,dw,\ k\in\mathbb Z,
$$
due to the exponential flatness of $e^{-2\pi i(\tilde \Psi_\mathrm{nf}(w)/2)}$ on $\mathcal C_0^k$, $k\in\mathbb Z$, ensures all the assumptions of the dominated convergence theorem. We put:
$$
A_i^j:=\lim_{n\to\infty} A_i^{j,n}\,\in\mathbb C,\ j\in\mathbb Z,\ i\in\mathbb N_0.
$$
Now passing to the limit $\lim_{n\to\infty}$ in \eqref{eq:prvaj}, we get that $\tilde R_{j}^+(\zeta):=\lim_{n\to\infty} \tilde R_{j,+}^n(\zeta)$, $\zeta\in\tilde V_j^+,$ admits a $\log$-Gevrey asymptotic expansion of order $1-\delta$ in $\mathbb C[[\zeta^{-1}]]$, as $\mathrm{Re}(\zeta)\to \infty$.  

In addition, the asymptotic expansions of $\tilde R_j^{\pm}(\zeta)$ are \emph{the same} for every $j\in\mathbb Z$, because of exponentially small differences on intersections of petals \eqref{eq:pt}. Recall that $\tilde \Psi_j^{\pm}:=\tilde \Psi_\mathrm{nf}+\tilde R_j^{\pm}$ on $\tilde V_j^{\pm}$, where $\tilde \Psi_\mathrm{nf}$ is \emph{globally} analytic on a standard quadratic domain. We denote this expansion by $\widehat R(\zeta^{-1})\in\mathbb C[[\zeta^{-1}]]$. That is, putting $\boldsymbol\ell:=\zeta^{-1}=-\frac{1}{\log z}$, all $\breve R_j^{\pm}(\boldsymbol\ell):=\tilde R_j^\pm(\zeta)$ admit $\widehat R(\boldsymbol\ell)$, as their $\log$-Gevrey asymptotic expansion of order $1-\delta$, as $\boldsymbol\ell\to 0$  on $\boldsymbol\ell(V_j^\pm)$, $j\in\mathbb Z$. 
\medskip

Finally, we prove that $f$, expressed as in \eqref{eq:eff} from $\tilde \Psi_j^{\pm}$, and which, by the proof of Theorem~A, \emph{glues} to an analytic function on a standard linear domain $\widetilde{\mathcal R}_{a,b}$, is a parabolic generalized Dulac germ. The uniform bound \eqref{eq:fuin} and the prenormalized form of $f$ follow from Lemma~\ref{lema:konver} (3) and by Proposition~\ref{prop:pgd} in the Appendix, exactly as in the proof of Theorem~A. Also, the invariance of $\mathbb R_+$ follows by Lemma~\ref{lem:simi}, as in the proof of Theorem~A. 

We prove only the existence of the generalized Dulac expansion $\widehat f$ of $f$. It follows by  \eqref{eq:eff} and by the $\log$-Gevrey asymptotic expansions of $\breve R_j^{\pm}(\boldsymbol\ell) \text{ on } \boldsymbol\ell(V_j^{\pm}),\ j\in\mathbb Z$, proven above. We return to the original variable $z$. On each petal $V_{j}^{\pm}$, we expand \eqref{eq:eff} in Taylor expansion:
\begin{equation}\label{eq:prav}
f_{j}^{\pm}(z)=z+\frac{1}{(\Psi_{j}^{\pm})'(z)}+\frac{1}{2!}\Big(\frac{1}{(\Psi_{j}^{\pm})'(z)}\Big)'\cdot \frac{1}{(\Psi_{j}^{\pm})'(z)}+\frac{1}{3!}(\textrm{previous\ term})'\cdot\frac{1}{(\Psi_{j}^{\pm})'(z)}+\ldots
\end{equation}
In the sequel, we put $\breve R_j^{\pm}(\boldsymbol\ell):=R_j^{\pm}(z),\ z\in V_j^{\pm}$. Let $\widehat R(\boldsymbol\ell)$ denote its $\log$-Gevrey asymptotic expansion of order $1-\delta$, $\delta>0$, in $\mathbb C[[\boldsymbol\ell]]$, the same for all $j\in\mathbb Z$. It holds: \begin{equation*}\label{eq:kriv}(\Psi_{j}^{\pm})'(z)=-\frac{1}{z^2\boldsymbol\ell^m}+\frac{1}{z}+\big(\frac{m}{2}+\rho\big)\frac{\boldsymbol\ell}{z}+\frac{\boldsymbol\ell^2}{z}(\breve R_{j}^\pm)'(\boldsymbol\ell),\ \rho\in\mathbb R,\ m\in\mathbb Z,\ z\in V_{j}^{\pm}. \end{equation*} Here, the germs 
$(\breve R_{j}^\pm)'(\boldsymbol\ell)$ are analytic on $\boldsymbol\ell$-cusps $\boldsymbol\ell(V_{j}^{\pm})$, $j\in\mathbb Z$. By \cite[Proposition~4.7]{prvi}, they expand $\log$-Gevrey of order $1-\delta$, for every $\delta>0$, in their formal counterpart $\widehat R'(\boldsymbol\ell)$, as $\boldsymbol\ell\to 0$ on $\boldsymbol\ell$-cusp $\boldsymbol\ell(V_{j}^{\pm})$. By \cite[Proposition 4.7]{prvi}, $\widehat R'(\boldsymbol\ell)$ is obtained by termwise (formal) derivation of $\widehat R(\boldsymbol\ell)$. The same conclusion can be drawn for all finite derivatives $(\breve R_j^{\pm})^{(k)}(\boldsymbol\ell)$, $k\in\mathbb N$, by \cite[Proposition 4.7]{prvi}. 
Furthermore, we define analytic functions $H_{j}^{\pm}(\boldsymbol\ell)$ on $\boldsymbol\ell$-cusps $\boldsymbol\ell(V_{j}^{\pm})$, $j\in\mathbb Z$, through the following equation:
\begin{equation*}\label{eq:jjo}
\frac{1}{(\Psi_j^\pm)'(z)}=\frac{-z^2\boldsymbol\ell^m}{1-z\boldsymbol\ell^m-(\frac{m}{2}+\rho) z\boldsymbol\ell^{m+1}+z\boldsymbol\ell^{m+2}(\breve R_{j}^{\pm})'(\boldsymbol\ell)}=:\frac{-z^2\boldsymbol\ell^m}{1+zH_{j}^{\pm}(\boldsymbol\ell)}.
\end{equation*}
By \cite[Propositions 4.5-4.7]{prvi} about closedness of $\log$-Gevrey classes to algebraic operations and to differentiation, they expand $\log$-Gevrey of order $1-\delta$, for every $\delta>0$, in the common formal counterpart $\widehat H(\boldsymbol\ell)$, as $\boldsymbol\ell\to 0$ on respective $\boldsymbol\ell$-cusps $\boldsymbol\ell(V_j^{\pm})$, $j\in\mathbb Z$.
Note that
\begin{equation}\label{eq:jjj}
\frac{z^2\boldsymbol\ell^m}{1+zH_{j}^{\pm}(\boldsymbol\ell)}=z^2\boldsymbol\ell^m\sum_{k=0}^{\infty}(-1)^k  z^k (H_{j}^{\pm}(\boldsymbol\ell))^k.
\end{equation}
Putting \eqref{eq:jjj} in \eqref{eq:prav}, and regrouping the terms with the same powers of $z$, we get:
\begin{equation}\label{eq:q}
f(z)=z-z^2\boldsymbol\ell^m + \rho z^3\boldsymbol\ell^{2m+1}+\sum_{k=3}^{\infty} z^k Q_{j,k}^\pm(\boldsymbol\ell),\ z\in V_j^{\pm},\ j\in\mathbb Z.
\end{equation}
Here, $Q_{j,k}^\pm(\boldsymbol\ell)$, $k\in\mathbb N$, $k\geq 3$, are realized as finite sums of finite products of $\boldsymbol\ell$ and $H_{j}^{\pm}(\boldsymbol\ell)$ and their finite derivatives (of order at most $k-2$),  the same for all petals $j\in\mathbb Z$. Therefore, by \cite[Propositions 4.5-4.7]{prvi} about closedness of $\log$-Gevrey classes to algebraic operations and differentiation, they expand $\log$-Gevrey of order $1-\delta$, for every $\delta>0$, in their formal counterpart, denoted $\widehat Q_k(\boldsymbol\ell)$. Note that $\boldsymbol\ell$-cusps $\boldsymbol\ell(V_j^{\pm})$ are $\boldsymbol\ell$-images of sectors of opening $2\pi>\pi$. 

Finally, by Lemma~\ref{lem:simi} (b), $\widehat R(\boldsymbol\ell)\in \mathbb R[[\boldsymbol\ell]]$. Therefore, all $\widehat Q_k(\boldsymbol\ell)$, as algebraic combinations of $\widehat R(\boldsymbol\ell)$, its derivatives and powers of $\boldsymbol\ell$ with real coefficients, belong to $\mathbb R[[\boldsymbol\ell]]$. This proves the generalized Dulac expansion of $f$ from Definition~\ref{def:gD}.
\hfill $\Box$
\medskip

In Remark~\ref{rem:sq} in the Appendix we explain why the arguments giving the asymptotic expansion in Theorem~B do not work for quadratic domains in Theorem~A. 

\begin{obs}
Note that, although $f$ is analytic on the whole standard linear domain $\mathcal R_{a,b}$, the \emph{coefficient functions} $Q_{j,k}^\pm(\boldsymbol\ell)$, $k\in\mathbb N,\ k\geq 3$, in its expansion \eqref{eq:q} are analytic in general only on $\boldsymbol\ell$-cusps $\boldsymbol\ell(V_j^{\pm})$ and \emph{do not glue} (in $j$) to an analytic function on whole $\boldsymbol\ell(\mathcal R_{a,b})$. Indeed, this is obviously not true already for $Q_{j,3}^\pm(\boldsymbol\ell):=1-H_{j}^{\pm}(\boldsymbol\ell)$, by \eqref{eq:jjj}. On overlapping cusps $\boldsymbol\ell(V_j^{\pm})$, the $\boldsymbol\ell$-images of petals $V_j^\pm$, they have exponentially small differences.
\end{obs}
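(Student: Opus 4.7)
The plan is to verify both halves of the Remark: that the coefficient functions $Q_{j,k}^\pm(\boldsymbol\ell)$ for $k\geq 3$ fail to assemble into a single analytic function on all of $\boldsymbol\ell(\mathcal R_{a,b})$, and that their pairwise jumps on overlapping cusps $\boldsymbol\ell(V_0^j)$ and $\boldsymbol\ell(V_\infty^j)$ are nevertheless exponentially small in $\boldsymbol\ell$. Following the hint in the statement, it suffices to settle the case $k=3$ carefully and then propagate the same reasoning to higher $k$ via the algebraic recipe \eqref{eq:prav}--\eqref{eq:jjj}.

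First I would recall from the proof of Theorem~B that $Q_{j,3}^\pm=1-H_j^\pm$, where $H_j^\pm$ is read off the sectorial Fatou coordinate $\Psi_j^\pm=\Psi_{\mathrm{nf}}+R_j^\pm$ via $\frac{1}{(\Psi_j^\pm)'(z)}=\frac{-z^2\boldsymbol\ell^m}{1+zH_j^\pm(\boldsymbol\ell)}$. By Lemma~\ref{lema:konver}, the sectorial corrections satisfy $R_{j-1}^+-R_j^-=g_0^j(e^{-2\pi i \Psi_{j-1}^+})$ on $V_0^j$ and $R_j^--R_j^+=g_\infty^j(e^{2\pi i \Psi_j^+})$ on $V_\infty^j$; both differences are exponentially flat at $0$ but not identically zero as soon as the prescribed horn maps are non-trivial. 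Differentiating these cocycle identities shows that $(R_{j-1}^+)'-(R_j^-)'$ and $(R_j^-)'-(R_j^+)'$ are again exponentially flat and generically non-zero, since the derivative of a non-constant exponentially flat function is itself exponentially flat and does not vanish identically.

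Substituting into the geometric series defining $H_j^\pm$ then yields that $H_{j-1}^+-H_j^-$ on $\boldsymbol\ell(V_0^j)$ and $H_j^--H_j^+$ on $\boldsymbol\ell(V_\infty^j)$ are exponentially small in $\boldsymbol\ell$ but not identically zero, so $Q_{j,3}^\pm=1-H_j^\pm$ fails to glue across overlapping cusps with exponentially small overlap jumps. For $k\geq 4$ the $Q_{j,k}^\pm$ are built from $\boldsymbol\ell$, $H_j^\pm$, and finitely many of their derivatives by algebraic operations that preserve exponential flatness (cf.\ \cite[Propositions~4.5--4.7]{prvi}); hence the overlap jumps of the $Q_{j,k}^\pm$ inherit exponential smallness, while non-triviality propagates from the $k=3$ level.

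The main technical obstacle is to verify that the algebraic substitutions do not accidentally cancel the non-trivial part of the overlap differences. I would handle this by tracking the leading order: the leading coefficient of $H_{j-1}^+-H_j^-$ is proportional to $(R_{j-1}^+)'-(R_j^-)'$, whose exponential flatness and generic non-vanishing were established above, and the analogous leading-order bookkeeping at each $k\geq 4$ closes the argument. One may if necessary compute explicitly the first non-vanishing Taylor coefficient of the overlap difference, which is dictated by the first non-trivial Taylor coefficient of the prescribed $g_{0,\infty}^j$, confirming both the failure to glue and the exponentially small jumps.
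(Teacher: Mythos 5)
Your argument is correct and matches the paper's (very brief) justification in substance: you explicitly unfold what the paper calls "obvious," namely that $H_j^\pm(\boldsymbol\ell) = -\boldsymbol\ell^m - (\tfrac{m}{2}+\rho)\boldsymbol\ell^{m+1} + \boldsymbol\ell^{m+2}(\breve R_j^\pm)'(\boldsymbol\ell)$ reduces the overlap jumps of $Q_{j,3}^\pm$ to those of $(\breve R_j^\pm)'$, which by the cocycle identities are exponentially flat and generically nonzero, and that the algebraic build-up of $Q_{j,k}^\pm$ for $k\geq 4$ preserves both properties. One small slip of wording: $H_j^\pm$ is not itself given by a geometric series — the geometric series \eqref{eq:jjj} expands $1/(1+zH_j^\pm)$ — but your actual use of the relation $H_{j-1}^+ - H_j^- = \boldsymbol\ell^{m+2}\big((\breve R_{j-1}^+)'-(\breve R_j^-)'\big)$ is exactly right, so the argument goes through.
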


\begin{obs}\label{rem:final} Let the germs $f$ resp. $g$ be the germs obtained by Cauchy-Heine construction on a linear resp. quadratic domain realizing the same sequence of moduli. It is important to note that, in general, $f$ is not the restriction of the germ  $g$, since we apply Cauchy-Heine integrals along different lines, see Remark~\ref{rem:linqua}. 

Nevertheless, $f$ and the restriction of $g$ to a linear domain by construction have the same moduli on the linear domain, and are thus analytically conjugated on the linear domain. However, we are not sure if the analytic conjugacy between the two germs on the linear domain can be analytically extended to a quadratic domain, or if there is some singularity outside the smaller domain preventing the extension. If former was the case, we would have a representative of the analytic class of $g$ on a quadratic domain whose restriction to the linear domain is $f$; that is, a representative with the generalized Dulac asymptotic expansion. This would positively resolve the question of extending the realization result to parabolic Dulac germs on a standard quadratic domain, which for the moment remains open.
\end{obs}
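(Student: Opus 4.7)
The plan is to combine the petal-wise conjugacy construction from the proof of \cite[Theorem~B]{prvi} on the linear subdomain with the uniqueness of analytic continuation on the larger quadratic domain. Conceptually, the statement is a rigidity-from-below principle: if two analytic germs on $\mathcal R_C$ agree on the linear subdomain up to conjugacy by a diffeomorphism defined only on that subdomain, then the target germ $f$ on $\mathcal R_C$ automatically supplies the required analytic extension.

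First I would restrict $f$ from $\mathcal R_C$ to $\mathcal R_{a,b}\subset\mathcal R_C$. By Theorem~A, $f|_{\mathcal R_{a,b}}$ is an analytic prenormalized parabolic germ on $\mathcal R_{a,b}$ satisfying the uniform bound \eqref{eq:fuin}; by Theorem~B the same is true of $g$, which is moreover a parabolic generalized Dulac germ. Both germs therefore admit the petal-wise dynamics and the sectorially analytic Fatou coordinates described in \cite[Theorem~A]{prvi} (as stressed in Subsection~\ref{subsec:mt}, the existence of petal-wise Fatou coordinates requires only the uniform bound \eqref{eq:fuin}, not a full generalized Dulac expansion). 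By construction in Theorems~A and~B, the two germs share the same formal class $(2,m,\rho)$ and realize the same symmetric sequence $(h_0^j,h_\infty^j)_{j\in\mathbb Z}$ as their horn maps along $\mathcal R_{a,b}$, up to the identifications \eqref{eq:ekvij}.

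Next I would apply on $\mathcal R_{a,b}$ the petal-wise conjugacy construction used in the proof of \cite[Theorem~B]{prvi}. Concretely, one sets
\[
\varphi_j^\pm(z):=\bigl(\Psi_{j,f}^\pm\bigr)^{-1}\circ\Psi_{j,g}^\pm(z),\qquad z\in V_j^\pm\subset\mathcal R_{a,b},
\]
where $\Psi_{j,f}^\pm$ and $\Psi_{j,g}^\pm$ are the sectorial Fatou coordinates of $f$ and $g$ on the petals $V_j^\pm$, normalized so that their differences on the intersection petals $V_0^j$ and $V_\infty^j$ encode the horn maps through \eqref{eq:moduli}. Equality of the horn maps of $f$ and $g$ on $\mathcal R_{a,b}$ forces the local pieces $\varphi_j^\pm$ to agree on the intersection petals, so they glue to a single analytic map $\varphi$ on all of $\mathcal R_{a,b}$; the expansion $\varphi(z)=z+o(z)$ follows from the fact that the leading asymptotics of both Fatou coordinates are determined by the common formal class $(2,m,\rho)$. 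By construction, $g\circ\varphi=\varphi\circ f$ on $\mathcal R_{a,b}$, i.e.\
\[
\varphi^{-1}\circ g\circ\varphi=f\quad\text{on }\mathcal R_{a,b}.
\]

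Finally I would invoke uniqueness of analytic continuation: the left-hand side is a priori defined only on $\mathcal R_{a,b}$, but on that set it coincides with $f$, which is analytic on the strictly larger connected open set $\mathcal R_C\supset\mathcal R_{a,b}$. Hence $f$ itself serves as the (necessarily unique) analytic extension of $\varphi^{-1}\circ g\circ\varphi$ from $\mathcal R_{a,b}$ to $\mathcal R_C$, which is exactly the claim of the corollary. The main subtlety I expect is verifying that the gluing argument from the proof of \cite[Theorem~B]{prvi} carries over to $f|_{\mathcal R_{a,b}}$, whose transserial asymptotic expansion after the first three terms is not under control; but because that gluing relies only on the existence, uniqueness up to additive constants, and compatibility of the sectorial Fatou coordinates — all of which are produced by the uniform bound \eqref{eq:fuin} through \cite[Theorem~A]{prvi} — no further information about the asymptotic expansion of $f$ is needed, and the strategy goes through.
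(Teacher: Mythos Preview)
Your argument is essentially correct and mirrors the paper's own treatment: invoke the proof of \cite[Theorem~B]{prvi} to build the petal-wise conjugacy $\varphi$ from the Fatou coordinates, glue using equality of horn maps, and then use uniqueness of analytic continuation to extend the conjugated germ to $\mathcal R_C$. This is exactly the content and method of Corollary~\ref{cor:zaklj}, whose short proof in the paper reads: equality of horn maps $\Rightarrow$ analytic conjugacy on $\mathcal R_{a,b}$ by the proof of \cite[Theorem~B]{prvi}, then analytic continuation.

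One point worth flagging: you have interchanged the roles of $f$ and $g$ relative to the Remark's own notation. In Remark~\ref{rem:final}, $f$ is the germ built on the \emph{linear} domain (Theorem~B) and $g$ the one on the \emph{quadratic} domain (Theorem~A); you use the opposite convention (which happens to agree with Corollary~\ref{cor:zaklj}). This is harmless mathematically but should be corrected for consistency with the statement you are addressing. Also note that the Remark is a discussion rather than a theorem: its only provable assertion is the conjugacy-plus-extension-of-the-composite, which you handle; the question of whether $\varphi$ itself extends to $\mathcal R_C$ is explicitly left open there, and you rightly do not claim to resolve it.
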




\section{Prospects.}
The realization Theorem~B for uniformly bounded sequences of pairs of germs of analytic diffeomorphisms fixing the origin as horn maps is proven in the larger class of \emph{parabolic generalized Dulac germs} on \emph{standard linear domains}, that contains parabolic Dulac germs. The question if the construction can be extended to \emph{standard quadratic} domains remains open. Another important problem is to characterize uniformly bounded sequences of pairs of analytic diffeomorphisms which can be realized as horn maps of \emph{parabolic Dulac germs}.

\section{Appendix} 

\begin{prop}\label{prop:pgd}
Let $f$ be a parabolic generalized Dulac germ on a standard quadratic (or standard linear) domain. It is \emph{prenormalized}, that is, of the form:
\begin{equation*}
f(z)=z-z^2\boldsymbol\ell^m+\rho z^3\boldsymbol\ell^{2m+1}+o(z^2\boldsymbol\ell^m),\ m\in\mathbb Z,\ \rho\in\mathbb R,
\end{equation*}
if and only if its sectorial Fatou coordinate is of the form:
$$
\Psi_j^{\pm}=\Psi_\mathrm{nf}+R_j^{\pm}, \text { on }V_j^{\pm}, 
$$
where $R_j=o(1)$, as $z\to 0,\ z\in V_j^{\pm}$, and $\Psi_\mathrm{nf}$ is the \emph{global} Fatou coordinate of the formal normal form $f_0$ given by:
\begin{equation}\label{eq:abo}\Psi_\mathrm{nf}(z):=-\int_{z_0}^z \frac{dz}{z^2\boldsymbol\ell^m}+\log z-(\frac{m}{2}+\rho) \log(-\log z).\end{equation}
Here, $z_0$ is a freely chosen initial point in the standard quadratic $($or linear$)$ domain  $($the choice of additive constant in $\Psi_\mathrm{nf})$.
\end{prop}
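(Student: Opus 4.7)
Both directions rest on the Abel equations $\Psi_j^\pm\circ f = 1+\Psi_j^\pm$ and $\Psi_\mathrm{nf}\circ f_0 = 1+\Psi_\mathrm{nf}$ for $f$ and for the formal model \eqref{eq:mod}. Setting $R_j^\pm:=\Psi_j^\pm-\Psi_\mathrm{nf}$ and subtracting gives the cohomological identity
\begin{equation*}
R_j^\pm\circ f - R_j^\pm \;=\; \Psi_\mathrm{nf}\circ f_0 - \Psi_\mathrm{nf}\circ f,
\end{equation*}
estimated via the mean value theorem from the explicit
$\Psi_\mathrm{nf}'(z) = -\frac{1}{z^2\boldsymbol\ell^m} + \frac{1}{z} + (\frac{m}{2}+\rho)\frac{\boldsymbol\ell}{z}$. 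The plan is to combine this identity with the Taylor-type Abel-inversion formula \eqref{eq:prav}, which expresses $f-\mathrm{id}$ as an explicit series in $1/\Psi'$ and its derivatives.

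\textbf{Converse direction.} Assume $\Psi_j^\pm = \Psi_\mathrm{nf}+R_j^\pm$ on $V_j^\pm$ with $R_j^\pm = o(1)$. Feeding $\Psi_\mathrm{nf}$ alone into \eqref{eq:prav} produces, by a direct algebraic computation using the explicit $\Psi_\mathrm{nf}'$ above, exactly the three prescribed initial terms $z - z^2\boldsymbol\ell^m + \rho z^3\boldsymbol\ell^{2m+1}$ of $f_0$. Since $f$ belongs to the parabolic generalized Dulac class, $R_j^\pm$ inherits a $\log$-Gevrey transserial expansion through \cite[Theorem~A]{prvi}; the hypothesis $R_j^\pm = o(1)$ then forces this expansion to start at order at least $\boldsymbol\ell$, and closure of $\log$-Gevrey classes under differentiation \cite[Proposition~4.7]{prvi} upgrades this to $(R_j^\pm)'(z) = o(\boldsymbol\ell/z)$. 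Inserting $(\Psi_j^\pm)' = \Psi_\mathrm{nf}' + (R_j^\pm)'$ into \eqref{eq:prav} and tracking orders shows that each $R_j^\pm$-correction to $f-z$ is of order $o(z^3\boldsymbol\ell^{2m+1})$; hence $f$ is prenormalized.

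\textbf{Direct direction.} Conversely, assume $f$ is prenormalized. By Theorem~A of \cite{prvi}, the sectorial Fatou coordinates $\Psi_j^\pm$ exist on $V_j^\pm$, are unique up to additive constants, and each admits a common formal counterpart $\widehat\Psi\in\widehat{\mathcal L}(\mathbb R)$ as its generalized Dulac asymptotic expansion. The key step is to identify $\widehat\Psi - \Psi_\mathrm{nf}$ as $o(1)$ in $\widehat{\mathcal L}(\mathbb R)$: this is done by formally solving the transseries Abel equation $\widehat\Psi(\widehat f) = 1+\widehat\Psi$ order by order, where the first three coefficients of $\widehat f$, which equal those of $f_0$ by prenormalization, rigidly determine the three leading pieces $-\int dz/(z^2\boldsymbol\ell^m)$, $\log z$ and $-(\frac{m}{2}+\rho)\log(-\log z)$ of $\widehat\Psi$ assembled in $\Psi_\mathrm{nf}$. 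Passing to the sectorial asymptotic and absorbing any constant part into the additive constant of $\Psi_j^\pm$ then yields $R_j^\pm = o(1)$ on each petal.

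\textbf{Main obstacle.} The crucial technical step is the formal Fatou-coordinate calculation in $\widehat{\mathcal L}(\mathbb R)$ showing that any mismatch in the coefficient of $z^3\boldsymbol\ell^{2m+1}$ of $\widehat f$ would force an extra $\log\log$-type correction in $\widehat\Psi$, which would prevent $R_j^\pm = o(1)$. The bookkeeping of this is delicate because the coefficients of $\widehat f$ mix powers of $z$ and $\boldsymbol\ell$, and the logarithmic pieces in $\Psi_\mathrm{nf}$ arise precisely from integration of the $1/z$ and $\boldsymbol\ell/z$ contributions in $\Psi_\mathrm{nf}'$ that are produced by the second and third coefficients of $\widehat f$.
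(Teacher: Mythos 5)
Your proposal is correct in substance and lands where the paper's (extremely terse) proof does, but the two directions are carried out somewhat differently, and the comparison is instructive. For the converse direction (Fatou coordinate of the stated form implies $f$ is prenormalized), the paper's recipe is the same as yours in spirit --- substitute $f=f_0+h$ and $\Psi_j^{\pm}=\Psi_\mathrm{nf}+R_j^{\pm}$ into $f=(\Psi_j^{\pm})^{-1}(1+\Psi_j^{\pm})$ and compare initial terms, concluding $h=O(z^{3}\boldsymbol\ell^{2m+2})$ --- but you make explicit an ingredient the paper glosses over: control on $(R_j^\pm)'$. You are right that $R_j^\pm=o(1)$ plus a Cauchy estimate on a proper subsector yields only $(R_j^\pm)'=o(1/z)$, which is too coarse to isolate the $\rho z^3\boldsymbol\ell^{2m+1}$ term (it gives $f-z=-z^2\boldsymbol\ell^m+o(z^3\boldsymbol\ell^{2m})$ and nothing more); your appeal to the log-Gevrey structure of $R_j^\pm$ (available because $f$ is assumed generalized Dulac) and to \cite[Prop.~4.7]{prvi} to upgrade to $(R_j^\pm)'=o(\boldsymbol\ell/z)$ is exactly the missing refinement. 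For the direct direction, the paper's ``Taylor expansion of the Abel equation'' is --- judging from the detailed analogue in Lemma~\ref{lem:modest1} --- an analytic cohomological argument: set $\delta:=1-(\Psi_\mathrm{nf}\circ f-\Psi_\mathrm{nf})$, estimate $\delta$ via Taylor's theorem using the prenormalized form of $f$, and sum $R_j^\pm=-\sum_k\delta\circ f^{\circ k}$ to obtain $o(1)$. You instead solve the formal Abel equation $\widehat\Psi\circ\widehat f=1+\widehat\Psi$ in transseries and then pass to the sectorial asymptotics. Both routes are sound; the formal route makes the emergence of the three leading pieces of $\Psi_\mathrm{nf}$ most transparent, while the cohomological route delivers the sectorial $o(1)$ estimate directly without a detour through the formal ring.

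Two small imprecisions worth tightening: (i) $\widehat\Psi$ itself does not lie in $\widehat{\mathcal L}(\mathbb R)$ --- the $\log z$, $\log(-\log z)$ and $\int dz/(z^2\boldsymbol\ell^m)$ pieces of $\Psi_\mathrm{nf}$ are transmonomials outside that class; only the difference $\widehat\Psi-\Psi_\mathrm{nf}$ belongs to $\widehat{\mathcal L}(\mathbb R)$, which is of course what you actually need. (ii) Your step ``$R_j^\pm=o(1)$ forces the expansion to start at order at least $\boldsymbol\ell$'' tacitly assumes the expansion of $R_j^\pm$ is a pure power series in $\boldsymbol\ell$ with no $z^{\alpha}$ factors; this should be extracted from the form of the Fatou coordinate's asymptotic expansion given in \cite[Thm.~A]{prvi} rather than asserted, since a leading term like $z^{\varepsilon}\boldsymbol\ell^{-K}$ is also $o(1)$ yet would differentiate differently.
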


\begin{proof} One direction is proven by Taylor expansion of the Abel equation. For the other, putting $f=f_0+h$ and $\Psi_j^{\pm}=\Psi_\mathrm{nf}+R_j^{\pm}$ in $f=(\Psi_j^{\pm})^{-1}(1+\Psi_j^{\pm})$ and comparing initial terms, we estimate $h(z)=O(z^{3}\boldsymbol\ell^{2m+2})$, as $z\to 0$. The estimate is not necessarily uniform for all petals.
\end{proof}
\smallskip

\subsection{Proof of Proposition~\ref{prop:modest}}
\smallskip 

\noindent In the proof of Proposition~\ref{prop:modest}, we use Lemma~\ref{lem:modest1}.

\begin{lem}[Uniform bound on the Fatou coordinate of a uniformly bounded germ]\label{lem:modest1} Let $f(z)=z-z^2\boldsymbol\ell^m+\rho z^3\boldsymbol\ell^{2m+1}+o(z^3\boldsymbol\ell^{2m+1}),\ m\in\mathbb Z,\ \rho\in\mathbb R,$ be a prenormalized analytic germ on a standard quadratic or standard linear domain $\mathcal R_c$. Let $f$ satisfy the uniform bound \eqref{eq:fuin}. Let $\Psi_\mathrm{nf}(z)$, $z\in\mathcal R_c$, be the Fatou coordinate of the formal $(2,m,\rho)$-normal form $f_0$ defined in \eqref{eq:abo}. Then, for every $0<\theta<2\pi$, there exists a constant $C_\theta>0$, such that, for all subsectors $W_\theta^j\subset V_j^{\pm}$ of opening $0<\theta<2\pi$, $j\in\mathbb Z$, it holds that:
\begin{equation}\label{eq:totreba}
|\Psi_j^{\pm}(z)-\Psi_\mathrm{nf}(z)|\leq C_\theta \boldsymbol\ell(|z|),\ z\in W_\theta^j\subset V_j^{\pm}.
\end{equation}
\end{lem}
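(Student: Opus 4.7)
\emph{Sketch.} Set $R_j^\pm:=\Psi_j^\pm-\Psi_{\mathrm{nf}}$. By Proposition~\ref{prop:pgd} we already have $R_j^\pm(z)=o(1)$ as $z\to 0$ in $V_j^\pm$; my plan is to promote this to the quantitative bound $O(\boldsymbol\ell)$ by writing $R_j^\pm$ as a convergent telescoping series along the petal orbit and estimating the summands using the uniform hypothesis \eqref{eq:fuin} together with the explicit form \eqref{eq:abo} of $\Psi_{\mathrm{nf}}$.

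First I would subtract the Abel equations $\Psi_j^\pm\circ f-\Psi_j^\pm=1$ and $\Psi_{\mathrm{nf}}\circ f_0-\Psi_{\mathrm{nf}}=1$ to obtain, for $z\in V_j^\pm$,
\begin{equation*}
R_j^\pm(f(z))-R_j^\pm(z)=-E(z),\qquad E(z):=\Psi_{\mathrm{nf}}(f(z))-\Psi_{\mathrm{nf}}(f_0(z)).
\end{equation*}
A mean value estimate on the segment joining $f(z)$ and $f_0(z)$, using $\Psi_{\mathrm{nf}}'(z)=-z^{-2}\boldsymbol\ell^{-m}(1+O(\boldsymbol\ell))$ (read off directly from \eqref{eq:abo}) and the uniform control $|f(z)-f_0(z)|\le 2C|z^3\boldsymbol\ell^{2m+2}|$ coming from \eqref{eq:fuin}, yields the key uniform bound
\begin{equation*}
|E(z)|\le K\,|z|\,|\boldsymbol\ell|^{m+2},\qquad z\in\mathcal R_c,
\end{equation*}
with $K$ independent of $j$.

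Next I would telescope along the forward orbit $w_k:=f^{\circ k}(z)$ on an attracting petal (the repelling case being symmetric via $f^{-1}$, whose uniform control against $f_0^{-1}$ follows by inverting \eqref{eq:fuin}):
\begin{equation*}
R_j^+(z)=R_j^+(w_n)+\sum_{k=0}^{n-1}E(w_k).
\end{equation*}
Since $w_n\to 0$ and $R_j^+(w_n)\to 0$ by Proposition~\ref{prop:pgd}, passing to $n\to\infty$ gives the representation $R_j^+(z)=\sum_{k\ge 0}E(w_k)$. The dynamical input is the Abel identity $\Psi_j^+(w_k)=\Psi_j^+(z)+k$, which together with $\Psi_j^+=\Psi_{\mathrm{nf}}+o(1)$ produces $|\Psi_{\mathrm{nf}}(w_k)|\ge c_\theta(k+N)$ on a subsector $W_\theta^j$ of opening $\theta<2\pi$, where $N:=|\Psi_{\mathrm{nf}}(z)|$. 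Inverting the asymptotic $\Psi_{\mathrm{nf}}(w)\sim(w\boldsymbol\ell(w)^m)^{-1}$ gives $|w_k\boldsymbol\ell(w_k)^m|\lesssim(k+N)^{-1}$, and a one-step logarithmic bootstrap yields $|\boldsymbol\ell(w_k)|\lesssim 1/\log(k+N)$.

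Combining, each summand satisfies $|E(w_k)|\lesssim 1/[(k+N)\log^2(k+N)]$, and comparison with $\int_N^\infty dt/(t\log^2 t)=1/\log N$ gives
\begin{equation*}
|R_j^+(z)|\lesssim\frac{1}{\log N}\sim\frac{1}{|\log z|}=|\boldsymbol\ell(z)|,
\end{equation*}
with all constants depending only on $\theta$ and on the uniform constant $C$ in \eqref{eq:fuin}, hence uniform in $j$. The hard part will be the dynamical lower bound $|\Psi_{\mathrm{nf}}(w_k)|\ge c_\theta(k+N)$ with a constant $c_\theta$ uniform in $j$ on subsectors of opening $\theta<2\pi$: a point near the boundary of the petal may have its orbit drift tangentially before its $\Psi_j^+$-image accumulates enough real part, and the cleanest way I see to extract $c_\theta$ is to work in the logarithmic chart, where the level curves of $\mathrm{Re}\,\Psi_j^+$ are horizontal lines up to a bounded perturbation, so that the opening restriction $\theta<2\pi$ precisely rules out the problematic tangential directions.
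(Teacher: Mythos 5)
Your proposal follows essentially the same route as the paper's proof: set $R_j^\pm=\Psi_j^\pm-\Psi_{\mathrm{nf}}$, subtract the Abel equations to obtain $R_j^\pm\circ f-R_j^\pm=-E$ with $E(z)=\Psi_{\mathrm{nf}}(f(z))-\Psi_{\mathrm{nf}}(f_0(z))$ (the paper's $\delta=-E$), prove the uniform estimate $|E(z)|\lesssim|z\boldsymbol\ell^{m+2}|$ from \eqref{eq:fuin}, and then telescope along the orbit, comparing with $\int_N^\infty dt/(t\log^2 t)=1/\log N\sim\boldsymbol\ell(z)$. The main cosmetic difference is in how $|E|$ is bounded: you use the explicit derivative $\Psi_{\mathrm{nf}}'=-z^{-2}\boldsymbol\ell^{-m}(1+O(\boldsymbol\ell))$ and a mean value estimate on a segment, whereas the paper first proves $|\Psi_{\mathrm{nf}}(z)|\le C|z^{-1}\boldsymbol\ell^{-m}|$ by repeated integration by parts (its Step~1) and then uses a Taylor--remainder estimate with $M(z)=\max_{\partial B(f_0(z),\rho)}|\Psi_{\mathrm{nf}}|$; your derivative route is more direct and gives the same bound. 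The paper then delegates the uniform-in-$j$ orbit estimate to \cite[Section~8, Prop.~8.3]{prvi}, whereas you sketch it explicitly; the resulting asymptotics and integral comparison match the paper's conclusion.

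One point needs care, which you yourself flag. You deduce $|\Psi_{\mathrm{nf}}(w_k)|\ge c_\theta(k+N)$ from the exact Abel identity for $\Psi_j^+$ together with $\Psi_j^+=\Psi_{\mathrm{nf}}+o(1)$. But the uniformity of that $o(1)$ across $j$ is precisely the content of the lemma, so as stated this step is mildly circular: for a single $j$ it is fine, but the constant $c_\theta$ then risks depending on $j$. The clean repair --- and what the paper effectively does by invoking Section~8 of \cite{prvi} --- is to run the orbit estimate entirely in terms of $\Psi_{\mathrm{nf}}$ and $f$: your own bound on $E$ gives $\Psi_{\mathrm{nf}}(w_{k+1})=\Psi_{\mathrm{nf}}(w_k)+1+E(w_k)$ with $|E(w_k)|\lesssim|w_k\boldsymbol\ell(w_k)^{m+2}|$, and a short bootstrap/induction then yields $|\Psi_{\mathrm{nf}}(w_k)|\gtrsim k+N$ with constants depending only on the uniform constant in \eqref{eq:fuin} and on $\theta$, with no reference to $\Psi_j^+$ or $R_j^+$ at all. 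With that adjustment the argument closes and agrees with the paper.
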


\begin{proof}
The proof is divided in two steps. In \emph{Step 1}, we show a uniform bound on $|\Psi_\mathrm{nf}(z)|$ on a standard quadratic (linear) domain. In \emph{Step 2}, using this bound, we prove \eqref{eq:totreba}.
\smallskip

\emph{Step 1.} Using the explicit form \eqref{eq:abo} of $\Psi_\mathrm{nf}$, we prove that there exists $C>0$ such that:
\begin{equation}\label{eq:totreba1}
|\Psi_\mathrm{nf}(z)|\leq C |z^{-1}\boldsymbol\ell^{-m}|,\ z\in\mathcal R_c.
\end{equation}
In the course of the proof, we will pass to a smaller standard quadratic subdomain whenever needed, because we work with \emph{germs}. Note that, for every $(\alpha,m) \prec (\beta,k)$, there exists a constant $C$ and a sufficiently small standard quadratic domain $\mathcal R_c$ such that $|z^\beta\boldsymbol\ell^k|\leq C|z^\alpha\boldsymbol\ell^m|,\ z\in\mathcal R_c$. Note also that this is not the case for the whole Riemann surface of the logarithm of sufficiently small radius. 

\noindent By two partial integrations, we get, up to a constant term:
\begin{align}\label{eq:partial}
\big|\Psi_\mathrm{nf}&(z)-z^{-1}\boldsymbol\ell^{-m}+mz^{-1}\boldsymbol\ell^{-m+1}\big|=\\
&=\Big|-m(m-1)\int_{z_0}^{z}z^{-2}\boldsymbol\ell^{-m+2}+\log z-(\frac{m}{2}+\rho) \log(-\log z)\Big|\leq \nonumber\\
&\leq |m|\,|m-1|\cdot |G(z)-G(z_0)|+|\boldsymbol\ell^{-1}|+\Big|\frac{m}{2}+\rho\Big|\cdot \big|\log(-\log z)\big|\leq\nonumber\\
&\leq C\Big(|G(z)|+|\boldsymbol\ell^{-1}|+|\boldsymbol\ell_2^{-1}|\Big),\ z\in\mathcal R_c.\nonumber
\end{align}
Here, $z_0\in\mathcal R_c$ is fixed, and $G(z)$ denotes the primitive function, such that $G'(z)=z^{-2}\boldsymbol\ell^{-m+2}$. We prove now that there exists a constant $d>0$ such that:
\begin{equation*}\label{eq:prove}
|G(z)|\leq d\,|z^{-1}\boldsymbol\ell^{-m+2}|,\ z\in\mathcal R_c.
\end{equation*}
We pass to the logarithmic chart $\zeta=-\log z$ and put $H(\zeta):=G(e^{-\zeta})$. Then $H'(\zeta)=-e^{\zeta}\zeta^{m-2}$. Let $\zeta_0:=-\log z_0$ be fixed. We may take e.g. $\zeta_0\in\mathbb R_+$. Let $\gamma_\zeta$ be the rectangular path from $\zeta_0$ to $\zeta$, $\zeta\in\widetilde{\mathcal R}_c$, consisting of horizontal segment $[\zeta_0,\zeta_1]$ and vertical segment $[\zeta_1,\zeta]$. Then:
$$
H(\zeta)-H(\zeta_0)=\int_{\gamma_\zeta} H'(\eta) d\eta,\ \zeta\in\widetilde{\mathcal R}_c.
$$
Evidently, the integral depends only on $\zeta_0$ and $\zeta$, and not on the integration path, since $\widetilde{\mathcal R}_c$ is simply connected. We integrate partially $r-2$ times, where $r$ is such that $m-r<0$, and get:
\begin{align}\begin{split}\label{eq:taa}
&|G(z)-G(z_0)|=|H(\zeta)-H(\zeta_0)|=\\
&=\Big|\int_{\zeta_0}^{\zeta_1} e^\eta \eta^{m-r} d\eta+\int_{\zeta_1}^{\zeta}e^\eta \eta^{m-r} d\eta+ c_{m-2}e^{\zeta} \zeta^{m-2}+\dots +c_{m-r+1}e^\zeta \zeta^{m-r+1}-H(\zeta_0)\Big|\\
&\leq |H(\zeta_0)|+|c_{m-2}||e^{\zeta}||\zeta|^{m-2}+\ldots+|c_{m-r+1}||e^{\zeta}||\zeta|^{m-r+1}+\\
&\qquad\qquad\ \ +(\mathrm{sup}_{\eta\in[\zeta_0,\zeta_1]}|e^{\eta}||\eta|^{m-r})|\zeta_1-\zeta_0|+(\mathrm{sup}_{\eta\in[\zeta_1,\zeta]}|e^{\eta}||\eta|^{m-r})|\zeta_1-\zeta|.\end{split}
\end{align}
We now bound the remainder, using $m-r<0$:
\begin{align}\begin{split}\label{eq:ta}
(\mathrm{sup}_{\eta\in[\zeta_0,\zeta_1]}&|e^{\eta}||\eta|^{m-r})|\zeta_1-\zeta_0|+(\mathrm{sup}_{\eta\in[\zeta_1,\zeta]}|e^{\eta}||\eta|^{m-r})|\zeta_1-\zeta|\leq\\
&\leq (\mathrm{sup}_{\eta\in\gamma_\zeta}e^{\mathrm{Re}(\eta)}\mathrm{Re}(\eta)^{m-r})\cdot |\zeta-\zeta_0|\leq \\
&\leq Ce^{\mathrm{Re}(\zeta)}\mathrm{Re}(\zeta)^{m-r}|\zeta|\leq C|z|^{-1}\boldsymbol\ell(|z|)^{-(m-r)}|-\log z|,\\
&\leq D |z|^{-1}|\boldsymbol\ell|^{-\frac{m-r}{2}-1},\ \zeta\in\widetilde{\mathcal{R}}_{c'},\ z\in\mathcal R_{c'}.
\end{split}\end{align}
Here, $\mathcal R_{c'}$ is a standard quadratic subdomain such that $\mathrm{Re}(\zeta)>\mathrm{Re}(\zeta_0)$, $c>0,\,D>0$, and $c_{m-2},\ldots,c_{m-r+1}\in\mathbb C$. Indeed, note that $|\zeta_1-\zeta_0|\leq|\zeta-\zeta_0|$ and $|\zeta_1-\zeta|\leq|\zeta-\zeta_0|$, that $x\mapsto e^{x}x^{m-r}$ is an increasing function as for $x\in\mathbb R_+$ sufficiently big and $\mathrm{Re}(\zeta)\geq \mathrm{Re}(\eta),\ \eta\in\gamma_\zeta$, $\zeta\in\mathcal R_{c'}$.

The last inequality follows from the fact that $\mathcal R_{c'}$ is a standard quadratic domain. Therefore, for $z\in\mathcal R_{c'}$, it holds that $|\log z|^2=\log^2|z|+\mathrm{Arg}(z)^2$. Moreover, there exists $d>0$ such that $|\mathrm{Arg}(z)|\leq d\log^2 |z|$, $z\in\mathcal R_{c'}$. Therefore we get that there exists $d_1>0$ such that the following inequalities hold:
\begin{equation}\label{eq:ineq}
\boldsymbol\ell(|z|)\leq d_1 |\boldsymbol\ell|^{1/2},\ |\boldsymbol\ell|\leq \boldsymbol\ell(|z|),\ z\in\mathcal R_{c'},
\end{equation}
for some $d_1>0$. For a standard linear domain, there exists $d>0$ such that $|\mathrm{Arg}(z)|\leq d (-\log |z|),\ z\in\mathcal R_{a,b}$, and we get similar bounds as in \eqref{eq:ineq} and proceed similarly.

By \eqref{eq:taa} and \eqref{eq:ta}, for $r\in\mathbb N$ sufficiently big, such that $-\frac{m-r}{2}-1>-m+1$, there exist constants $C_1,\, D>0$ such that:
\begin{align}
&|G(z)-c_{m-2} z^{-1}\boldsymbol\ell^{-m+2}-\ldots-c_{m+r-1}z^{-1}\boldsymbol\ell^{-m+r-1}|\leq D|z|^{-1}|\boldsymbol\ell|^{-\frac{m-r}{2}-1},\nonumber\\
&|G(z)|\leq C_1 |z^{-1}\boldsymbol\ell^{-m+2}|,\ z\in\mathcal R_{c'}.\label{eq:tooo}
\end{align}
Here, the last inequality \eqref{eq:tooo}, and then \eqref{eq:totreba1} from \eqref{eq:partial} and \eqref{eq:tooo}, follow by the comment on the lexicographic order of power-logarithmic monomials on standard quadratic or standard linear  domain at the beginning of \emph{Step 1}.
\smallskip

\emph{Step 2.} We prove \eqref{eq:totreba} using \eqref{eq:totreba1} proven in \emph{Step 1.} We repeat the construction of the Fatou coordinates for $f$ on petals, described in detail in \cite{MRRZ2Fatou} and in \cite[Section 8]{prvi}, but deducing the uniform bounds. Consider the Abel equation for $f$:
$$
\Psi_j^{\pm}(f(z))-\Psi_j^{\pm}(z)=1,\ z\in V_j^{\pm}.
$$
Denote by $R_j^{\pm}=\Psi_j^{\pm}-\Psi_\mathrm{nf}$ on $V_j^{\pm}$. The Abel equation becomes:
$$
R_j^{\pm}(f(z))-R_j^{\pm}(z)=1-(\Psi_\mathrm{nf}(f(z))-\Psi_\mathrm{nf}(z)),\ z\in V_j^{\pm}.
$$
Denote by $\delta(z):=1-(\Psi_\mathrm{nf}(f(z))-\Psi_\mathrm{nf}(z))$. It is an analytic function on $\mathcal R_c$. Let $h(z)=f(z)-f_0(z)$. Then, by uniform bound \eqref{eq:fuin}, $|h(z)|=O(z^3\boldsymbol\ell^{2m+2})$, uniformly as $z\to 0$ on $\mathcal R_c$. We compute:
\begin{align*}
|\delta(z)|&=|1-(\Psi_{\mathrm{nf}}(f_0(z)+h(z)))+\Psi_\mathrm{nf}(z)|=\\
&=|1-\Psi_{\mathrm{nf}}(f_0(z))-R_1(z)+\Psi_\mathrm{nf}(z)|=|R_1(z)|.
\end{align*}
Here, by Taylor's theorem, e.g. \cite{alfors}, $\Psi_\mathrm{nf}(f_0(z)+h(z))=\Psi_\mathrm{nf}(f_0(z))+R_1(z)$,
where
$$
|R_1(z)|\leq \frac{M(z)|h(z)|}{\rho-|h(z)|},\text{ for } z\in\mathcal R_c \ \text{such that\ } |h(z)|<\frac{\rho}{2}.
$$
Here, $M(z):=\max_{\xi\in\partial B(f_0(z),\rho)} |\Psi_\mathrm{nf}(\xi)|$. For $z\in\mathcal R_c$, put $\rho(z):=\frac{|f_0(z)|}{4}>0$.  We now take $r_0>0$ such that $|z|<r_0$ implies $|h(z)|<\frac{\rho(z)}{2}$. Indeed, by the uniform bound \eqref{eq:fuin}, there exists $r>0$, such that $|h(z)|\leq C|z^3\boldsymbol\ell^{2m+2}|\leq D|z|,\ z\in\mathcal R_c,\ |z|<r$.  
As in \emph{Step 1}., $|\Psi_{\mathrm{nf}}(z)|\leq E|z^{-1}\boldsymbol\ell^{-m}|$, $E>0$, $z\in\mathcal R_c.$ By uniform bound \eqref{eq:fuin} on $f_0$, it follows\footnote{Write e.g. $\xi_\theta=|f_0(z)|e^{i\mathrm{Arg}(f_0(z))}+\frac{|f_0(z)|}{4}\cdot (\cos\theta+i\sin\theta)$, $\theta\in[0,2\pi)$, $z\in\mathcal R_c$, and $f_0(z)=|z|e^{i\mathrm{Arg}(z)}+O(z^{1+\varepsilon})\cdot (\cos\theta_1+i\sin\theta_1), \ \theta_1\in[0,2\pi)$, $\varepsilon>0$, with $O(z^{1+\varepsilon})$ uniform on $\mathcal R_{c}$.} that there exist constants $C_i>0,\ D_i>0,\ i=1,\ldots,4,$ such that, for $z\in\mathcal R_{c},\ \xi\in \partial B(f_0(z),\frac{|f_0(z)|}{4})$, it holds:
\begin{align*}
C_1&|\xi|\leq C_2|z|\leq C_3|f_0(z)|\leq C_4|\xi|,\\
D_1&\mathrm{Arg}(\xi)\leq D_2\mathrm{Arg}(z)\leq D_3\mathrm{Arg}(f_0(z))\leq D_4\mathrm{Arg}(\xi).
\end{align*}
Therefore, there exists a constant $K>0$ such that $|\xi^{-1}\boldsymbol\ell(\xi)^{-m}|\leq K|z^{-1}\boldsymbol\ell^{-m}|,$ $z\in\mathcal R_c,\ \xi\in \partial B(f_0(z),\frac{|f_0(z)|}{4})$. Hence, $M(z)\leq d|z^{-1}\boldsymbol\ell^{-m}|,\ z\in\mathcal R_c$, for some constant $d>0$. Finally,
$$
|\delta(z)|=|R_1(z)|\leq C|z\boldsymbol\ell^{m+2}|,\ z\in\mathcal R_c,\ C>0.
$$

Now, iterating the equation $R_j^+(f(z))-R_j^+(z)=\delta (z)$ on each petal $V_j^+$ (on repelling petals $V_j^-$ we consider the inverse $f^{-1}$),
we get the series:
$$
R_j^+(z)=-\sum_k \delta(f^{\circ k}(z)),\ z\in V_j^+,
$$
uniformly convergent on compact subsets of the petal (see \cite{MRRZ2Fatou}). 
Note that here $|\delta(f^{\circ k}(z))|\leq c \big|f^{\circ k}(z)\boldsymbol\ell(f^{\circ k}(z))^{m+2}\big|,\ z\in\mathcal R_c,$ holds uniformly on petals. On the other hand, directly as in \cite[Section 8]{prvi}, due to the bound \eqref{eq:fuin} of $f$, the bound on 
$
|f^{\circ k}(z)|
$ is deduced
uniformly in $j\in\mathbb Z$ on subsectors $W_\theta^j\subset V_j^+$ of the same opening $\theta\in(0,2\pi)$. Finally, applying \cite[Proposition 8.3]{prvi}, and using the existence of uniform bounds for $|f^{\circ k}(z)|$ and for $|\delta(z)|$ by levels, we get that there exists $K_\theta>0$, independent of $j\in\mathbb Z$, such that, for every subsector $W_\theta^j\subset V_j^+$ of opening $0<\theta<2\pi$, it holds:
$$
|R_j^+(z)|\leq K_\theta\cdot \boldsymbol\ell(|z|), \ z\in W_\theta^j\subset V_j^+.
$$
We repeat the procedure similarly for repelling petals $V_j^-$, $j\in\mathbb Z$, and take the maximum of the two constants.
\end{proof}

\noindent \emph{Proof of Proposition~\ref{prop:modest}}.\ 

Let $f$ be prenormalized and let the uniform bound \eqref{eq:fuin} hold. Let $\Psi_\mathrm{nf}(z)$, $z\in\mathcal R_c$, be the Fatou coordinate of the formal $(2,m,\rho)$-normal form $f_0$, defined in \eqref{eq:abo}. By Lemma~\ref{lem:modest1}, for the Fatou coordinate of $f$, the following uniform bound holds:
\begin{equation*}
|\Psi_j^{\pm}(z)-\Psi_\mathrm{nf}(z)|\leq C_\theta \boldsymbol\ell(|z|),\ z\in W_\theta\subset V_j^{\pm},
\end{equation*}
where $W_\theta\subset V_j^\pm$ are subsectors of opening $0<\theta<2\pi$, and $C_\theta>0$ is \emph{uniform} for all $j\in\mathbb Z$. On standard quadratic domains, there exists $a>0$ such that\footnote{On a standard quadratic domain, the following bound holds: $$|z^\varepsilon(\log z)^{m}|\leq |z|^\varepsilon\big(\sqrt{\log^2 |z|+\varphi^2}\big)^m\leq C |z|^{\varepsilon}\log^{2m} |z|,$$ $\varphi=\textrm{Arg}(z)$, $\varepsilon>0,\ m\in\mathbb Z,$ since $|\varphi|<\log^2 |z|$ and $|\varphi|$ cannot increase to $+\infty$ uncontrolled by $|z|$.} $|\boldsymbol\ell|\leq \boldsymbol\ell(|z|)\leq a \sqrt{|\boldsymbol\ell|}$. On standard linear domains, there exists $a>0$ such that $|\boldsymbol\ell|\leq \boldsymbol\ell(|z|)\leq a |\boldsymbol\ell|$. Therefore,
\begin{equation*}
|\Psi_j^{\pm}(z)-\Psi_\mathrm{nf}(z)|\leq C_\theta \sqrt{|\boldsymbol\ell|},\ z\in W_\theta\subset V_j^{\pm},\ j\in\mathbb Z.
\end{equation*}
Let us estimate the horn maps of $f$ from \eqref{eq:moduli}:
\begin{align*}
&h_0^j(t):=e^{-2\pi i \Psi_+^{j-1}\circ (\Psi_{-}^{j})^{-1}(-\frac{\log t}{2\pi i})},\ t\approx 0\nonumber\\
&h_\infty^j(t):=e^{2\pi i \Psi_-^j\circ (\Psi_{+}^j)^{-1}(\frac{\log t}{2\pi i})},\ t\approx 0,\ j\in\mathbb Z.
\end{align*}
By uniform bound \eqref{eq:totreba} on $\Psi_j^{\pm}$ (that is, by its prenormalized form $\Psi_j^{\pm}(z)=\Psi_\mathrm{nf}(z)+R_j^{\pm}(z)$, $R_j^\pm=o(1),\ z\to 0,\ z\in W_\theta\subset V_j^{\pm}$ uniformly in $j$), we compute:
\begin{equation}\label{eq:eqqq}
\Psi_+^{j-1}\circ (\Psi_{-}^{j})^{-1}(w)=w+o(1),
\end{equation}
where $o(1)$ is uniform in $j$ as $\mathrm{Im}(w)\to \pm\infty$ in $\Psi_\mp^j(W_\theta)$. Since the spaces of orbits of both positive and negative petal $V_+^{j-1}$ and $V_-^j$ are contained in every sector around the centerline of $V_{0}^j$, then \eqref{eq:eqqq} implies: 
\begin{equation*}\label{eq:hj}
h_0^j(t)=t\big(1+o(1)\big),\ t\to 0,
\end{equation*} 
uniformly in $j\in\mathbb Z$. Since $h_0^j$ are parabolic analytic diffeomorphisms, for $\delta>0$ and for every $j\in\mathbb Z$, there exist constants $c_j>0$, $j\in\mathbb Z$, such that
\begin{equation}\label{eq:hj1}
|h_0^j(t)-t|\leq c_j |t|^2,\ |t|<\delta.
\end{equation}
Let us take here $c_j:=\sup_{|t|<\delta}\frac{|h_0^j(t)-t|}{|t|^2}=\sup_{|t|<\delta}\frac{|o(t)|}{|t|}\frac{1}{|t|}$. Since $o(t)$ is uniform in $j$, $(c_j)_j$ is bounded from above, and from \eqref{eq:hj1} it follows that:
$$
|h_0^j(t)-t|=O(t^2),\ |t|\to 0,
$$
where $O(.)$ is uniform in $j\in\mathbb Z$. The same analysis is repeated for $h_\infty^j(t)$, $j\in\mathbb Z$. 
\hfill $\Box$

\subsection{Proof of Lemma~\ref{prop:conv}}

\begin{proof}
We prove the uniform convergence of the series \eqref{eq:FJ} in definition of $\tilde R_j^\pm$ on compacts in $\tilde V_j^{\pm}$, hence analyticity of $\tilde R_j^\pm$ on $\tilde V_j^\pm$ follows by the Weierstrass theorem.

Let us fix $j\in\mathbb Z$. Take, for example, $\tilde R_j^+$ on $\tilde V_j^+$. It suffices to show the uniform convergence on compact subsets of $\tilde V_j^+$ of $\sum_{k=j+1}^{+\infty}\tilde F_{0,k}^-$. The convergence of the other three terms in the sum for $\tilde R_j^+$ follows analogously. Let $\tilde K\subset \tilde V_j^+$ be a compact \emph{substrip} of $\tilde V_j^+$ (i.e. the image in the logarithmic chart of the closed subsector $K\subset V_j^+$ in the original $z$-chart). Let $(\mathcal C_0^{j+1})'$ be the line at height $\theta'$ in $\tilde V_0^{j+1}$ such that $\tilde K$ is completely contained in part of $\tilde V_j^+$ up to the line $(\mathcal C_0^{j+1})'$.
Let us analyze the series \eqref{eq:FJ} for $\zeta\in \tilde K$, using \eqref{eq:ch} and the fact that two Cauchy-Heine integrals along different lines $\mathcal C_0^{j+1}$ and $(\mathcal C_0^{j+1})'$ in $\tilde V_0^{j+1}$ differ by an analytic germ at $\zeta=\infty$:
\begin{align*}
\sum_{k=j+1}^{+\infty}\tilde F_{0,k}^-(\zeta)=\frac{1}{2\pi i}\int_{(\mathcal C_0^{j+1})'}\frac{\tilde G_0^{j+1}(w)}{w-\zeta} dw+\tilde \chi_0^{j+1}(\zeta)+\frac{1}{2\pi i}\sum_{k=j+2}^{+\infty}\Big(\int_{\mathcal C_0^k}    &\frac{\tilde G_0^k(w)}{w-\zeta} dw\Big),\\
& \ \zeta\in \tilde K,
\end{align*}
see Figure~\ref{fig:indi}.
Here, $$\tilde \chi_0^{j+1}(\zeta):=\int_{\mathcal S_0^{j+1}}\    \frac{\tilde G_0^{j+1}(w)}{w-\zeta} dw,\ \zeta\in \tilde K,$$ is an analytic function for $\zeta\in\tilde K$ and at $\zeta=\infty$, as explained before, which depends on the chosen height $\theta'$, that is, on $\tilde K$. Indeed, the integration is done along the boundary arc $\mathcal S_0^{j+1}$ of $\tilde V_0^{j+1}$ between heights corresponding to lines $\mathcal C_0^{j+1}$ and $(\mathcal C_0^{j+1})'$, where subintegral function has no singularities for $w\in \tilde K$. Indeed, we can always restrict to a \emph{smaller} standard quadratic domain. 

\smallskip

\noindent It suffices to show the uniform convergence on $\tilde K$ of $\sum_{k=j+2}^{+\infty}\Big(\int_{\mathcal C_0^k}    \frac{\tilde G_0^k(w)}{w-\zeta} dw\Big)$. In the following computation, we assume the lines of integration $\mathcal C_0^k$ along a standard quadratic domain; thus $\tilde V_j^{\pm}$ are covering a standard quadratic domain. Even sharper estimates for convergence can be repeated for a standard linear domain. By \eqref{eq:b}, we have the following bounds:
\begin{align}\label{eq:bd}
\Big| \int_{\mathcal C_0^k}\frac{\tilde G_0^k(w)}{w-\zeta}dw\Big|=&\Big| \int_{-\log r_k + i(4k-3)\frac{\pi}{2}}^{+\infty+i(4k-3)\frac{\pi}{2}}\frac{\tilde G_0^k(w)}{w-\zeta}dw\Big|=\Big|t=w-i(4k-3)\frac{\pi}{2}\Big|\\
&\leq \int_{\sqrt k}^{+\infty}\frac{\Big|\tilde G_0^k\big(t+i(4k-3)\frac{\pi}{2}\big)\Big|}{|w-\zeta|}dt\leq \frac{1}{b}\int_{\sqrt k}^{+\infty}\Big|\tilde G_0^k\big(t+i(4k-3)\frac{\pi}{2}\big)\Big|dt\nonumber\\
&\leq \frac{1}{b}\int_{\sqrt k}^{+\infty}Ce^{-Me^{m\big|t+i(4k-3)\frac{\pi}{2}\big|}}dt\nonumber\\
&\leq \frac{C}{b}\int_{\sqrt k}^{+\infty}e^{-Me^{mt}}dt=\frac{C}{b}\int_{\sqrt k}^{+\infty}e^{-Me^{mt}}\cdot \frac{e^{mt}}{e^{mt}}dt\nonumber\\
&\leq \frac{C}{be^{m\sqrt k}}\int_{\sqrt k}^{+\infty}e^{-Me^{mt}}\cdot e^{mt} dt= C_1 e^{-m\sqrt k}e^{-Me^{m\sqrt k}}.\nonumber
\end{align}
Indeed, for every $k>j+1$, $\mathcal C_0^k$ is on some (uniformly) bounded distance from $\tilde K$ in the logarithmic chart. That is, for every $\zeta\in \tilde K$ and every $\mathcal C_0^k$, $k>j+1$, $|\zeta-w|>b$, where $b>0$ is independent of $k$. Also note that, by \eqref{eq:imam}, we have at least $-\log r_k\sim \sqrt{k}$, $k\to\infty$. Since $t>0$, $\big|t+i(4k-3)\frac{\pi}{2}\big|\geq t$.

\noindent Now the convergence of the series
$\sum_k e^{-m\sqrt k}e^{-Me^{m\sqrt k}}$, for $m>0,\ M>0,$ proves the uniform convergence of the above series on $\tilde K$. 
\smallskip

Once we have proven that $\tilde R_j^\pm$ are analytic on $\tilde V_j^\pm$, by \eqref{eq:ahhh} and \eqref{eq:FJ} we get \eqref{eq:dobi}.
\end{proof}
\smallskip

\subsection{Proof of Lemma~\ref{prop:asylin}}\
\smallskip

The proof is an adaptation of the proof in \cite{loray2} for the simpler case of holomorphic germs. Let us fix $j\in\mathbb Z$ and $\tilde V_j^+$ ($\tilde V_j^-$ is treated analogously), and let us choose a fixed horizontal substrip $\tilde U\subset \tilde V_j^+$. By \eqref{eq:FJ}, $\tilde R_j^+$ on $\tilde U$ is a sum of countably many Cauchy-Heine integrals. If lines $\mathcal C_{\infty}^j$ and $\mathcal C_0^{j+1}$ that lie in the petal $\tilde V_j^+$ intersect the strip $\tilde U$, the integration is done along the shifted lines $(\mathcal C_{\infty}^j)',\ (\mathcal C_0^{j+1})'$ at some bounded distance from $\tilde U$, whereas error terms $\tilde \chi_\infty^j(\zeta)$, $\tilde \chi_0^{j+1}(\zeta)$ (integrals along parts of the boundary $\mathcal S_\infty^{j},\ \mathcal S_{0}^{j+1}$, as in e.g. proof of Lemma~\ref{prop:cah}) are added. They depend on  $\tilde U$, that is, on the choice of lines $(\mathcal C_{\infty}^j)',\ (\mathcal C_0^{j+1})'$. They are  analytic at infinity, so they expand in Taylor series $\widehat \chi_\infty^{j},\ \widehat \chi_0^{j+1}\in\mathbb C[[\zeta^{-1}]]$. In particular, germs analytic at $0$ admit $\log$-Gevrey asymptotic expansion of every order, see Definition~\ref{def:logg}.

\noindent We divide the proof in the following three steps. Note that Steps 1. and 2. are independent of the type of the domain (standard quadratic or standard linear).

\emph{Step 1.} We prove that each integral $\int_{\mathcal C_{0,\infty}^k}\frac{\tilde G_{0,\infty}^k(w)}{w-\zeta}dw$, $k\in\mathbb Z$, from the series \eqref{eq:FJ}, on its domain of analiticity admits an asymptotic expansion in $\mathbb C[[\zeta^{-1}]]$, as $\mathrm{Re}(\zeta)\to+\infty$.

\emph{Step 2.} It is sufficient to treat any of the eight sums in \eqref{eq:FJ}, since others are treated analogously. Therefore, we choose one of the sums: \begin{equation}\label{eq:sumi}\sum_{k\geq j+1}\tilde F_{0,k}^-(\zeta)=\sum_{k\geq j+1}\int_{\mathcal C_0^k}\frac{\tilde G_0^k(w)}{w-\zeta}dw.\end{equation} By Step (1), for every $k\geq j+1$, $\int_{\mathcal C_0^k}\frac{\tilde G_0^k(w)}{w-\zeta}dw$ admits an asymptotic expansion in $\mathbb C[[\zeta^{-1}]]$, as $\mathrm{Re}(\zeta)\to +\infty$. By appropriate bounds on partial sums of \eqref{eq:sumi}, we prove the \emph{convergence} of coefficients in front of each monomial $\zeta^{-n}$, $n\in\mathbb N$, in \eqref{eq:sumi}, and thus prove the existence of the asymptotic expansion of \eqref{eq:sumi} in $\mathbb C[[\zeta^{-1}]]$. We also prove statement $(1)$ of the lemma.

\emph{Step 3.} In the case of construction on a \emph{standard linear domain}, we prove statement (2) of the lemma: that the asymptotics of $\breve R_j^{\pm}(\boldsymbol\ell):=\tilde R_j^{\pm}(\boldsymbol\ell^{-1})$ is in addition \emph{$\log$-Gevrey of order $m$}, as $\boldsymbol\ell\to 0$ in $\boldsymbol\ell$-cusp $\boldsymbol\ell(V_j^{\pm})$. In the final Remark~\ref{rem:sq} we state the technical problem in deducing $\log$-Gevrey bounds on a standard quadratic domain.

\begin{proof}\

\emph{Step 1.} For every $k\in\mathbb Z$ and for every $n\in\mathbb N$, it holds that:
\begin{align*}
\frac{\tilde G_{0,\infty}^k(w)}{w-\zeta}&=\sum_{p=0}^{n-1}(-\tilde G_{0,\infty}^k(w)w^p)\zeta^{-p-1}+\frac{\tilde G_{0,\infty}^k(w)w^n}{w-\zeta}\zeta^{-n}.
\end{align*}
Therefore we get, for every $k\in\mathbb Z$,
\begin{equation}\label{eq:d}
\int_{\mathcal C_{0,\infty}^k}\frac{\tilde G_{0,\infty}^k(w)}{w-\zeta}dw-\sum_{p=0}^{n-1}a_p^k \zeta^{-p-1}=\zeta^{-n}\int_{\mathcal C_{0,\infty}^k}\frac{\tilde G_{0,\infty}^k(w)w^n}{w-\zeta}dw,\ n\in\mathbb N,
\end{equation}                  
where coefficients $a_p^k$ are given by:
\begin{equation}\label{eq:sec}
a_p^k=-\int_{\mathcal C_{0,\infty}^k} \tilde G_{0,\infty}^k(w)w^p dw.
\end{equation}
Due to (even superexponential) flatness of $\tilde G_{0,\infty}^k(w)$ as $\text{Re}(w)\to\infty$ given in \eqref{eq:b}, the integrals in \eqref{eq:sec} converge. The same holds for integrals $\int_{\mathcal C_{0,\infty}^k}\frac{\tilde G_{0,\infty}^k(w)w^n}{w-\zeta}dw$ for $\zeta$ on some bounded distance from the integration line.
\medskip

\emph{Step 2.} To prove convergence of partial sums of \eqref{eq:sumi}, let us take formula \eqref{eq:d} for $k\in\{j+1,\ldots,N\},\ N\in\mathbb N,\ N\geq j+1$, and make the sum of these. For $\zeta\in\tilde U$, where $\tilde U$ is a fixed horizontal substrip of $\tilde V_j^+$, there exists $b>0$ such that $|\zeta-w|>b$, $\zeta\in \mathcal C_0^k$, uniformly for every $k> j+1$. Now, very similar bounds as in \eqref{eq:bd} in the proof of Lemma~\ref{prop:conv} performed on the right-hand side of \eqref{eq:d} and on \eqref{eq:sec}  give us the convergence of $\sum_{k\geq j+1}^N a_p^k$, as $N\to\infty$,  and an uniform bound on $\tilde U$ on the remainder $\sum_{k\geq j+1}^N\int_{\mathcal C_0^k}\frac{\tilde G_0^k(w)w^n}{w-\zeta}dw$, as $N\to\infty$. Let us now denote by $a_p\in\mathbb R$, $p\in\mathbb N$, the limit $a_p:=\sum_{k\geq j+1}a_p^k$. We get the asymptotic expansion: \begin{equation}\label{eq:ekspan}\sum_{k\geq j+1}\int_{\mathcal C_0^k}\frac{\tilde G_0^k(w)}{w-\zeta}dw\sim\sum_{p=0}^{+\infty} a_p \zeta^{-p-1},\ \text{Re}(\zeta)\to\infty,\ \zeta\in \tilde U\subseteq \tilde V_j^+.\end{equation}
\smallskip
              
Let us now prove statement $(1)$ of the lemma about the uniform bound. Note that all bounds on the remainders: \begin{align}\label{eq:jj}\sum_{k\geq j+1}\int_{\mathcal C_{0,\infty}^k}\frac{\tilde G_{0,\infty}^k(w)w^n}{w-\zeta}dw+\sum_{k\leq j} \int_{\mathcal C_{0,\infty}^k}\frac{\tilde G_{0,\infty}^k(w)w^n}{w-\zeta}dw\end{align} from \eqref{eq:d} can be made uniform in $j\in\mathbb Z$ and $\zeta\in\tilde U_j\subset \tilde V_j^+$, where $\tilde U_j$ are strips of the same width for all $j\in\mathbb Z$, due to uniform estimate \eqref{eq:b} of $\tilde G_{0,\infty}^j$, $j\in\mathbb Z$. In particular, for $n=1$. We conclude here similarly as in the proof of convergence \eqref{eq:bd} in the proof of Lemma~\ref{prop:conv}. In fact, in \eqref{eq:jj}, for every $j\in\mathbb Z$ and $\tilde U\subset \tilde V_j^+$, exactly two lines of integration $\mathcal C_0^{j+1}$ and $\mathcal C_\infty^j$, are changed to shifted lines $(\mathcal C_0^{j+1})'$ and $(\mathcal C_\infty^j)'$, connected to previous ones by the boundary arcs $\mathcal S_0^{j+1}$ and $\mathcal S_\infty^j$, and at uniform (in $j$) distance from them. But \eqref{eq:d} with $n=1$ and applied to border lines $\mathcal S_0^{j+1},\ \mathcal S_\infty^j$ gives similarly:
$$
\int_{\mathcal S_0^{j+1}}\frac{\tilde G_0^{j+1}(w)}{w-\zeta}dw-b_0^{j+1} \zeta^{-1}=\zeta^{-1}\int_{\mathcal S_0^{j+1}}\frac{\tilde G_0^{j+1}(w)w}{w-\zeta}dw,\ b_0^{j+1}\in\mathbb C.
$$

\noindent Take $\varepsilon>0$ small. We find a subdomain (quadratic resp. linear) $\widetilde{\mathcal R}_{C'}\subset\widetilde{\mathcal R}_C$ such that:
\begin{equation}\label{eq:domena}
\widetilde{\mathcal{R}}_{C'}\subseteq \{\zeta\in\widetilde{\mathcal R}_C:\ d(\zeta,\partial\widetilde{\mathcal R}_C)>\varepsilon\}.
\end{equation}
For $\zeta\in\widetilde{\mathcal R}_{C'}$ it therefore holds that $|\zeta-w|>\varepsilon$, $w\in \mathcal S_{0,\infty}^j$, uniformly in $j\in\mathbb Z$. Since $\mathcal S_{0,\infty}^j$ are bounded arcs connecting at most $w=\sqrt j+ij$ and $w=\sqrt{j+1}+i(j+1)$, and $\tilde G_{0,\infty}^j$ are uniformly (in $j$) super-exponentially small, the bound on the remainder $$\Big|\int_{\mathcal S_\infty^j}\frac{\tilde G_{\infty}^{j}(w)w}{w-\zeta}dw+\int_{\mathcal S_0^{j+1}}\frac{\tilde G_{0}^{j+1}(w)w}{w-\zeta}dw\Big|$$ can be made uniform in $j\in \mathbb Z$, for $\zeta\in \tilde U_j\subset \tilde V_j^+\cap\widetilde{\mathcal R}_{C'}$. This proves the statement (1).
\medskip




\emph{Step 3.} We prove, on standard linear domains, the \emph{$\log$-Gevrey bounds of order $m$} for the expansion \eqref{eq:ekspan}. 

The lines of integration $\mathcal C_{0,\infty}^k$ in Cauchy-Heine integrals on a standard linear domain in the logarithmic chart are, by \eqref{eg:lint} and \eqref{eq:imam1}, the half-lines: 
\begin{align*}
&\mathcal C_{0}^k\ldots\,\big[\sim k+i(4k-3)\frac{\pi}{2},+\infty+i(4k-3)\frac{\pi}{2}\big),\\
&\mathcal C_{\infty}^k\ldots\,\big[\sim k+i(4k-1)\frac{\pi}{2},+\infty+i(4k-1)\frac{\pi}{2}\big),\ 
k\in\mathbb Z.
\end{align*} 
Let $j\in\mathbb Z$. On every substrip $\tilde U\subset \tilde V_j^+$ (the same analysis can be repeated for $\tilde V_j^-$), by \eqref{eq:d}, it holds that:
\begin{align}\label{beg}
\Big|&\sum_{k=j+2}^{\infty}\int_{\mathcal C_0^k}\frac{\tilde G_0^k(w)}{w-\zeta}dw-\sum_{p=0}^{n-1}a_p \zeta^{-p-1}\Big|=\Big|\zeta^{-n}\sum_{k=j+2}^{\infty}\int_{\mathcal C_0^k}\frac{\tilde G_0^k(w)w^n}{w-\zeta}dw \Big|\leq \\
&\leq |\zeta|^{-n} \Big|\sum_{k=j+2}^{\infty}\int_{\mathcal C_0^k} \frac{\tilde G_0^k(w)^{1/2}\cdot \tilde G_0^k(w)^{1/2}\cdot w^n}{w-\zeta}dw\Big|=\nonumber \\
&=\Big| \xi=\mathrm{Re}(w)\ \Rightarrow \xi=w-i(4k-3)\frac{\pi}{2} \Big|=\nonumber \\
&\leq |\zeta|^{-n} \sum_{k=j+2}^{\infty} \int_{-\log r_k\sim k}^{+\infty}\frac{|\tilde G_0^k(w)|^{1/2}}{|\zeta-w|}\cdot |\tilde G_0^k(w)|^{1/2}\big|\xi+i(4k-3)\frac{\pi}{2}\big|^n\,d\xi,\ \zeta\in\tilde U.\nonumber
\end{align}
By \eqref{eq:b}, $\tilde G_{0,\infty}^k(w)$ are superexponentially small on lines $\mathcal C_{0,\infty}^k$, and moreover uniformly in $k\in\mathbb Z$. That is, there exist constants $C,\ M>0$, independent of $k\in\mathbb Z$, such that
$$
|\tilde G_{0,\infty}^k(w)^{1/2}|\leq C e^{-M e^{m\mathrm{Re}(w)}},\ w\in\mathcal C_{0,\infty}^k. 
$$
Thus, in \eqref{beg}, by direct integration, we get that: \begin{equation}\label{eq:of}\sum_{k=j+2}^{\infty}\int_{-\log r_k\sim k}^{+\infty}\frac{|\tilde G_0^k(w)^{1/2}|}{|w-\zeta|}dw \leq D_j.\end{equation} Let us now bound
\begin{equation}\label{eq:dai}
|\tilde G_0^k(w)|^{1/2}\big|\xi+i(4k-3)\frac{\pi}{2}\big|^n\leq Ce^{-M e^{m\xi}}(\sqrt{\xi^2+k^2})^n\leq D e^{-M e^{m\xi}} \xi^{n},\ D>0,
\end{equation}
for $\xi=\mathrm{Re}(w)\in (-\log r_k,+\infty)\sim (k,+\infty)$. We sometimes omit constants for simplicity (where they don't influence the type of the final result). The last inequality is the consequence of the fact that lines $\mathcal C_0^k$ lie in a standard linear domain. Therefore, for $w\in \mathcal C_0^k$, it holds that $\xi=\mathrm{Re}(w)>\mathrm{Im}(w)\sim k$. 

Similarly, we estimate the term $$|\zeta|^n\cdot \big|\int_{\mathcal S_0^{j+1}} \frac{\tilde G_0^{j+1}(w)}{w-\zeta}dw-\sum_{p=0}^{\infty}b_p \zeta^{-p-1}\big|.$$We get similar bounds as \eqref{eq:of} and \eqref{eq:dai}, but on a subdomain $\widetilde{\mathcal R}_{a',b'}\subset\widetilde{\mathcal R}_{a,b}$, defined as in \eqref{eq:domena}.

\smallskip

Now, maximizing the function $\xi\mapsto D e^{-M e^{m\xi}} \xi^{n}$ by $\xi$, we easily get that the point of maximum is $\xi_0$ such that $e^{m\xi_0}\sim\frac{1}{M}\frac{n}{\log n}$ and $m\xi_0\sim \log n$, as $n\to\infty$. Therefore, there exists $D_1>0$ such that:
\begin{equation}\label{eq:ocj}
De^{-M e^{m\xi}}\xi^{n}\leq D_1 m^{-n} e^{-\frac{n}{\log n}}\log^{n} n ,\ \xi>0,\ n\in\mathbb N.
\end{equation}
By \eqref{eq:of},\ \eqref{eq:dai} and \eqref{eq:ocj}, from \eqref{beg} we get that there exists $c>0$ such that: 
\begin{align}\label{eq:final1}
\Big|\int_{(\mathcal C_0^{j+1})'}\frac{\tilde G_0^{j+1}(w)}{w-\zeta}dw+\sum_{k=j+2}^{\infty}&\int_{\mathcal C_0^k}\frac{\tilde G_0^k(w)}{w-\zeta}dw-\sum_{p=0}^{n-1}c_p \zeta^{-p-1}\Big|\leq \\
&\leq c m^{-n} e^{-\frac{n}{\log n}}\log^{n}n \cdot |\zeta|^{-n},\ \zeta\in \tilde U\cap \widetilde{\mathcal R}_{C'}.\nonumber
\end{align}
Here, $c_p=a_p+b_p,\ p\in\mathbb N$. The same can be concluded for the other three terms of the sum for $\tilde R_j^+$ given in \eqref{eq:FJ}. By Definition~\ref{def:logg} of $\log$-Gevrey asymptotic expansions, we conclude that $\tilde R_j^+(\zeta)$ admits $\log$-Gevrey power asymptotic expansion  of order $m>0$ in $\zeta^{-1}$, as $\mathrm{Re}(\zeta)\to+\infty$ in $\tilde V_j^+$. Thus statement $(2)$ is proven.
\end{proof}

\begin{obs}[Bounds for asymptotic expansion of $\tilde R_j^{\pm}$ on standard quadratic domains]\label{rem:sq}

On a standard quadratic domain, the lines of integration $\mathcal C_{0,\infty}^k$, $k\in\mathbb Z$, in the logarithmic chart are the half-lines:
\begin{align*}
&\mathcal C_{0}^k\ldots\,\big[\sim \sqrt k+i(4k-3)\frac{\pi}{2},+\infty+i(4k-3)\frac{\pi}{2}\big),\\
&\mathcal C_{\infty}^k\ldots\,\big[\sim \sqrt k+i(4k-1)\frac{\pi}{2},+\infty+i(4k-1)\frac{\pi}{2}\big),\ 
k\in\mathbb Z.
\end{align*} 
The other difference with respect to standard linear domains is the bound \eqref{eq:dai}. On a standard quadratic domain we have $\xi^2=\mathrm{Re}(w)^2\geq k\sim \mathrm{Im}(w)=(2k+1)\frac{\pi}{2}$, so \eqref{eq:dai} becomes:
\begin{equation*}\label{eq:da1}
|\tilde G_0^k(w)|^{1/2}\big|\xi+i(4k-3)\frac{\pi}{2}\big|^n\leq Ce^{-M e^{m\xi}}(\sqrt{\xi^2+k^2})^n\leq D e^{-M e^{m\xi}} \xi^{2n}.	
\end{equation*}
In the same way as in the proof of Lemma~\ref{prop:asylin} for standard linear domains, for a standard quadratic domain we get:
$$
De^{-M e^{m\xi}} \xi^{2n}\leq D_1 m^{-2n} e^{-\frac{2n}{\log (2n)}}\log^{2n} (2n), \ \xi>0.
$$
The final bound \eqref{eq:final1} on a standard quadratic domain is as follows:
\begin{align}\begin{split}\label{eq:final2}
\Big|\int_{(\mathcal C_0^{j+1})'}\frac{\tilde G_0^{j+1}(w)}{w-\zeta}dw+&\sum_{k=j+2}^{\infty}\int_{\mathcal C_0^k}\frac{\tilde G_0^k(w)}{w-\zeta}dw-\sum_{p=0}^{n-1}c_p \zeta^{-p-1}\Big|\leq \\
&\leq c m^{-2n} e^{-\frac{2n}{\log 2n}}\log^{2n}(2n) \cdot |\zeta|^{-n},\ \zeta\in \tilde U\cap \widetilde{\mathcal R}_{C'},\end{split}
\end{align}
where $\widetilde{\mathcal R}_{C'}\subset \widetilde{\mathcal R}_C$ is a quadratic subdomain, as in \eqref{eq:domena}, and $\tilde U\subset \tilde V_j^+$ a horizontal substrip. 

The bounds \eqref{eq:final2} obtained on standard quadratic domain are weaker than $\log$-Gevrey of order $m$, for any $m>0$. Therefore, they are too weak to attribute a unique $\log$-Gevrey sum to $\widehat R(\boldsymbol\ell)$ on $\boldsymbol\ell$-cusps $\boldsymbol\ell(V_j^\pm)$, $j\in\mathbb Z$.
\end{obs}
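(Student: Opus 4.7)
The plan is to derive the bound \eqref{eq:final2} by adapting Step~3 of the proof of Lemma~\ref{prop:asylin} to the quadratic geometry, and then to compare the resulting estimate with Definition~\ref{def:logg} in order to conclude failure of $\log$-Gevrey of any positive order. The key geometric input I would start from is that on a standard quadratic domain one has $|\mathrm{Im}(\zeta)| \lesssim \mathrm{Re}(\zeta)^2$; in particular, for $w$ on the line $\mathcal{C}_0^k$ (horizontal at height $\sim k$) with $\xi := \mathrm{Re}(w)$, the quadratic constraint forces $k \lesssim \xi^2$, so $|w|^2 = \xi^2 + O(k^2) \lesssim \xi^4$ and hence $|w|^n \lesssim \xi^{2n}$. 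This is the crucial quantitative loss compared with the linear case treated in Lemma~\ref{prop:asylin}, where the analogous relation $k \lesssim \xi$ only yields $|w|^n \lesssim \xi^n$.

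First I would substitute this new $\xi^{2n}$ factor into the analog of the estimate \eqref{eq:dai}, giving integrand bounds of the form $D e^{-M e^{m\xi}} \xi^{2n}$ on each line $\mathcal{C}_0^k$. As in Step~3 of the proof of Lemma~\ref{prop:asylin}, I would separate off a $|\tilde G_0^k|^{1/2}$ factor to produce a convergent majorant sum of the type \eqref{eq:of} (which is insensitive to the domain's boundary shape), so that the $k$-sum and the $\zeta$-dependence factor out and the work reduces to $\sup_{\xi>0} e^{-M e^{m\xi}} \xi^{2n}$. Maximizing by the routine calculus step $e^{m\xi_0} \sim 2n/(M\log(2n))$ gives a maximum value $\lesssim m^{-2n} e^{-2n/\log(2n)} \log^{2n}(2n)$, and multiplying by $|\zeta|^{-n}$ yields \eqref{eq:final2}. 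The contributions from the shifted boundary arcs $\mathcal{S}_{0,\infty}^j$ and from the finitely many ``special'' terms near $\zeta$ are handled verbatim as in Lemma~\ref{prop:asylin} by restricting to a subdomain $\widetilde{\mathcal R}_{C'}$ satisfying \eqref{eq:domena}.

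Finally, to justify the stated conclusion, I would compare the coefficient $m^{-2n}\log^{2n}(2n)\, e^{-2n/\log(2n)}$ of \eqref{eq:final2} with the $\log$-Gevrey-of-order-$m'$ demand $C(m')^{-n}\log^n n \cdot e^{-n/\log n}$ of Definition~\ref{def:logg}. The ratio is essentially $(\log n/(m^2 m'))^n$ up to subexponential corrections, which diverges super-exponentially for any fixed $m,m'>0$; hence \eqref{eq:final2} fails the $\log$-Gevrey condition for every positive order, and Watson-type uniqueness cannot be invoked to select a canonical sum of $\widehat R(\boldsymbol\ell)$ on cusps $\boldsymbol\ell(V_j^{\pm})$. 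The principal obstacle I expect is not in any individual estimate but rather the structural fact that the imaginary range of the integration lines grows quadratically in their real range — a direct consequence of the boundary relation $|\mathrm{Im}(\zeta)| \lesssim \mathrm{Re}(\zeta)^2$ — and any attempt to tame this by deforming paths inside the domain seems blocked by the same shape constraint. This is precisely the geometric reason why the Gevrey half of Theorem~B cannot, by the present method, be promoted to a standard quadratic domain.
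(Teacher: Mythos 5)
Your proposal is correct and mirrors the paper's argument: the quadratic boundary geometry forces $k\lesssim\xi^2$ (hence $|w|^n\lesssim\xi^{2n}$ rather than $\xi^n$), the maximization of $e^{-Me^{m\xi}}\xi^{2n}$ produces the coefficient $m^{-2n}e^{-2n/\log(2n)}\log^{2n}(2n)$, and the dominant $\log^n n$ growth in the ratio against the Definition~\ref{def:logg} bound rules out $\log$-Gevrey of any positive order, so Watson-type uniqueness cannot be invoked. One minor slip: the ratio should be essentially $\bigl(m'\log n/m^2\bigr)^n$ rather than $\bigl(\log n/(m^2 m')\bigr)^n$, but this does not affect the conclusion since the $\log^n n$ factor dominates in either case.
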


\medskip

\subsection{Proof of Lemma~\ref{lem:uses}}\label{subsec:ap}\
\smallskip

The following Lemma~\ref{lema:use} for uniform bounds on iterates $\tilde R_{j,\pm}^n$, $n\in\mathbb N$, is used in the proof. Lemma~\ref{lema:use} and Remark~\ref{rem:use} are also used in the proof of statement $(1)$ of Lemma~\ref{lema:konver}. In fact, in the proof of Lemma~\ref{lema:use}, we conclude inductively the bounds for every $n\in\mathbb N$, in the course of iterative construction of the sequence $\tilde R_{j,\pm}^n$ described in Lemma~\ref{lema:konver}. Therefore, the bounds in Lemma~\ref{lema:use} and Remark~\ref{rem:use} can be deduced simultaneously with the inductive construction in Lemma~\ref{lema:konver}, without apriori assuming the existence of the whole sequence.
\smallskip

Let us first introduce some notation. Let $\varepsilon>0$. As in proof of statement (1) of Lemma~\ref{lema:konver}, we denote by $\mathcal C_{0,\pm 2\varepsilon}^j$ the horizontal half-lines in the standard quadratic domain at distance $\pm 2\varepsilon$ from $\mathcal C_0^j$, and by $\mathcal C_{\infty,\pm 2\varepsilon}^j$ the horizontal half-lines in the standard quadratic domain at distance $\pm 2\varepsilon$ from $\mathcal C_\infty^j$, $j\in\mathbb Z$. By $\mathcal S_{0,\pm 2\varepsilon}^j$ resp. $\mathcal S_{\infty,\pm 2\varepsilon}^j$ we denote the portions of the boundary between $\mathcal C_{0,\pm 2\varepsilon}^j$ and $\mathcal C_0^j$ resp. between $\mathcal C_{\infty,\pm 2\varepsilon}^j$ and $\mathcal C_\infty^j$, $j\in\mathbb Z$. By $s_0^j$ we denote the endpoint of the half-line $\mathcal C_0^j$ and by $s_\infty^j$ the endpoint of the half-line $\mathcal C_\infty^j$, at the boundary of the standard quadratic domain, see Figure~\ref{fig:regions}. Then:
\begin{equation*}\label{eq:inters}
s_{0}^j:=\mathcal S_{0,\pm 2\varepsilon}^j \cap \mathcal C_{0}^j,\ s_{\infty}^j:=\mathcal S_{\infty,\pm 2\varepsilon}^j \cap \mathcal C_{\infty}^j,\ j\in\mathbb Z.
\end{equation*}

\begin{lem}\label{lema:use}
Let $\varepsilon>0$ $($arbitrarily small$)$ and let the iterates $\tilde R_{j,\pm}^n$ on petals\footnote{The \emph{shape} of the petals may be changed in the course of this proof, and the original standard quadratic domain may be changed to a \emph{smaller} one, but the petals remain petals of opening $2\pi$ (that is, of width $2\pi$ in the $\zeta$-variable), centered at directions $j\pi$, $j\in\mathbb Z$, of a standard quadratic domain.} $\tilde V_{j}^{\pm}$ of a standard quadratic domain be defined as in Lemma~\ref{lema:konver}. Let $s_{0}^j$ and $s_{\infty}^j$,$\ j\in\mathbb Z$, be the endpoints of the half-lines $\mathcal C_0^j$ and $\mathcal C_\infty^j$. Then the following bounds hold:
\begin{enumerate}
\item There exists $K>0$ such that:
\begin{itemize}
\item For $\zeta\in \tilde V_j^+$ such that $d(\zeta,\mathcal C_{0}^{j+1})<\varepsilon$ or $d(\zeta,\mathcal C_\infty^j)<\varepsilon$ $($region $(3))$,
\begin{align*}
|\tilde R_{j,+}^n(\zeta)|\leq
\begin{cases} K\log\frac{|\zeta-s_{0}^{j+1}|}{|\zeta|},&d(\zeta,\mathcal C_{0}^{j+1})<\varepsilon,\\[0.2cm]
K\log\frac{|\zeta-s_{\infty}^{j}|}{|\zeta|},&d(\zeta,\mathcal C_{\infty}^{j})<\varepsilon,\quad  j\in\mathbb Z,\ n\in\mathbb N_0.
\end{cases} 
\end{align*}

For $\zeta\in\tilde V_j^-$ such that $d(\zeta,\mathcal C_{\infty}^{j})<\varepsilon$ or $d(\zeta,\mathcal C_0^j)<\varepsilon$ $($region $(3))$,
\begin{align*}
|\tilde R_{j,-}^n(\zeta)|\leq
\begin{cases} K\log\frac{|\zeta-s_{\infty}^{j}|}{|\zeta|},&d(\zeta,\mathcal C_{\infty}^{j})<\varepsilon,\\[0.2cm]
K \log\frac{|\zeta-s_{0}^{j}|}{|\zeta|},&d(\zeta,\mathcal C_{0}^{j})<\varepsilon,\quad j\in\mathbb Z,\ n\in\mathbb N_0.
\end{cases} 
\end{align*}

\item For $\zeta\in \tilde V_j^+$ such that $d(\zeta,\mathcal C_{0}^{j+1})\geq \varepsilon$ and $d(\zeta,\mathcal C_{\infty}^j)\geq \varepsilon$, and for $\zeta\in \tilde V_j^-$ such that $d(\zeta,\mathcal C_{\infty}^{j})\geq \varepsilon$ and $d(\zeta,\mathcal C_{0}^j)\geq \varepsilon$ $($regions $(1)$ and $(2))$,
$$
|\tilde R_{j,\pm}^n(\zeta)|\leq K,\ j\in\mathbb Z,\ n\in\mathbb N_0.
$$
\end{itemize}

\item There exists $D>0$ such that:
\begin{align*}
&|e^{-2\pi i\big(\frac{\tilde \Psi_\mathrm{nf}(\zeta)}{2}+\tilde R_{j-1,+}^n(\zeta)\big)}|\leq D,\ \zeta\in  \tilde V_{0}^j,\\
&|e^{2\pi i\big(\frac{\tilde \Psi_\mathrm{nf}(\zeta)}{2}+\tilde R_{j,+}^n(\zeta)\big)}|\leq D,\ \zeta\in \tilde V_{\infty}^j,\quad j\in\mathbb Z,\ n\in\mathbb N_0.
\end{align*}
\end{enumerate}
The constants $D,\ K$ are independent of the choice of the petal $\tilde V_j^{\pm}$, $j\in\mathbb Z$, and on the iterate $n\in\mathbb N_0$. Moreover, by choosing a standard quadratic domain $\mathcal R_C$ of a sufficiently small radius\footnote{that is, with sufficiently big real parts $\text{Re}(\zeta)>D_0$, for all $\zeta\in\widetilde{\mathcal R}_C$.} as the domain of definition, the bounding constants $D$ and $K$ can be made arbitrarily small. 
\end{lem}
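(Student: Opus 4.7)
I would prove (1) and (2) simultaneously by induction on $n \in \mathbb{N}_0$, using at each step the Cauchy--Heine representation \eqref{eq:FJi}--\eqref{eq:redi} of $\tilde R_{j,\pm}^{n+1}$. For $n=0$, since $\tilde R_{j,\pm}^0\equiv 0$, part (1) is immediate. Part (2) at $n=0$ reduces to bounding $|e^{\mp \pi i \tilde\Psi_{\mathrm{nf}}(\zeta)}|$ on the intersection petals $\tilde V_0^j$ and $\tilde V_\infty^j$: by the explicit formula \eqref{eq:abo}, $\tilde\Psi_{\mathrm{nf}}(\zeta)\sim e^{\zeta}\zeta^{-m}$, so (as in \eqref{eq:bo}) $\mathrm{Im}(\tilde\Psi_{\mathrm{nf}})$ has a constant sign on each intersection petal, growing in absolute value like $e^{\mathrm{Re}(\zeta)}$, and this sign is exactly the one making $|e^{\mp \pi i \tilde\Psi_{\mathrm{nf}}}|$ exponentially decaying on the respective petal. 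Thus (2) holds with $D$ arbitrarily small, provided the initial radius $D_0$ (with $\mathrm{Re}(\zeta)>D_0$ on the domain) is chosen sufficiently large.

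For the inductive step, assume (1) and (2) at step $n$. Combining the uniform bound \eqref{prv} on $g_0^k$ with (2) gives
$$
|{}^n\tilde G_0^k(\zeta)| \;\leq\; c_1\,\bigl|e^{-2\pi i \tilde\Psi_{k-1,+}^n(\zeta)}\bigr| \;=\; c_1\,\bigl|e^{-\pi i\tilde\Psi_{\mathrm{nf}}(\zeta)}\bigr|\cdot\bigl|e^{-2\pi i(\tilde\Psi_{\mathrm{nf}}/2+\tilde R_{k-1,+}^n)}\bigr| \;\leq\; c_1 D\,\bigl|e^{-\pi i\tilde\Psi_{\mathrm{nf}}(\zeta)}\bigr|,
$$
and analogously for ${}^n\tilde G_\infty^k$. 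By \eqref{eq:bo} this yields the super-exponential flatness \eqref{eq:ge} with constants uniform in $k$ and $n$. I then estimate $\tilde R_{j,+}^{n+1}$ in the three regions. In regions (1) and (2), where $\zeta$ is at distance $\geq \varepsilon$ from both critical lines $\mathcal C_0^{j+1}, \mathcal C_\infty^j$, every line of integration is at uniform distance $\geq b>0$ from $\zeta$, and summing the bounds \eqref{eq:bd} (in the proof of Lemma~\ref{prop:conv}) over $k\in\mathbb{Z}$ produces a finite total bound $K$ that is uniform in $\zeta,j,n$ and small when $D_0$ is large. In region (3), I use the third case of \eqref{eq:redi} with the path shifted by $+2\varepsilon$: the shifted integrals $\int_{\mathcal C_{0,+2\varepsilon}^{j+1}}$ and all terms with $k\neq j+1$ are again bounded by a constant as above, while the boundary-arc term $\int_{\mathcal S_{0,+2\varepsilon}^{j+1}}\frac{{}^n\tilde G_0^{j+1}(w)}{w-\zeta}\,dw$ develops an integrable logarithmic singularity at the endpoint $s_0^{j+1}$; parametrizing $w = s_0^{j+1}+t$ and using $|{}^n\tilde G_0^{j+1}(w)|\leq C$ on the short arc gives a bound $K\bigl|\log(|\zeta-s_0^{j+1}|/|\zeta|)\bigr|$, with $|\zeta|$ in the denominator arising from the length of the arc (comparable to $|\zeta|$ at the other endpoint). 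The cases near $\mathcal C_\infty^j$ and on $\tilde V_j^-$ are symmetric.

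To pass from (1) to (2) at step $n+1$, I write, for $\zeta\in\tilde V_0^j$,
$$
\bigl|e^{-2\pi i(\tilde\Psi_{\mathrm{nf}}(\zeta)/2 + \tilde R_{j-1,+}^{n+1}(\zeta))}\bigr| \;\leq\; e^{\pi\,\mathrm{Im}\,\tilde\Psi_{\mathrm{nf}}(\zeta)}\cdot e^{2\pi|\tilde R_{j-1,+}^{n+1}(\zeta)|}.
$$
Outside region (3) the second factor is $\leq e^{2\pi K}$, absorbed by the exponential decay of the first. Inside region (3) the second factor is at most $(|\zeta|/|\zeta-s_0^j|)^{2\pi K}$, only polynomial in $1/|\zeta-s_0^j|$, while $\mathrm{Im}(\tilde\Psi_{\mathrm{nf}})$ remains large enough in absolute value throughout $\tilde V_0^j$ (including near the boundary endpoint $s_0^j$) to dominate, provided $D_0$ is taken large enough. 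The analogous computation on $\tilde V_\infty^j$ uses the opposite sign of $\mathrm{Im}(\tilde\Psi_{\mathrm{nf}})$. This closes the induction with $K, D$ uniform in $n, j$, both small when $D_0$ is large.

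\textbf{Main obstacle.} The delicate point is region (3): the change-of-path step in Cauchy--Heine unavoidably produces logarithmic singularities at the boundary endpoints $s_0^{j+1}, s_\infty^j\in\partial\widetilde{\mathcal R}_C$, and no constant bound is possible there. The induction closes only because, after exponentiation in (2), the $\log$ becomes a polynomial $|\zeta-s|^{-2\pi K}$ that is tamed by the double-exponential factor $e^{-C e^{\mathrm{Re}(\zeta)}}$ inherent to the cocycle, as long as $\mathcal R_C$ has sufficiently small radius; this is precisely why the lemma must be stated with the freedom to make $K$ and $D$ arbitrarily small by shrinking the domain.
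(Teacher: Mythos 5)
Your proposal follows essentially the same route as the paper's own proof: a simultaneous induction on $n$ for claims (1) and (2), where the inductive step uses (2) at stage $n$ to control the cocycle $\,{}^n\tilde G_{0,\infty}^k$ via the uniform bound \eqref{prv} on $g_{0,\infty}^k$ (the paper's estimate \eqref{use}), then estimates $\tilde R_{j,\pm}^{n+1}$ region by region, with the logarithmic singularity at the boundary endpoint isolated in region (3), and finally closes (2) by exponentiating the log to a polynomial that is absorbed by the superexponential flatness of $e^{\mp\pi i\tilde\Psi_{\mathrm{nf}}}$. The decomposition into regions, the role of the shifted line $\mathcal C_{0,+2\varepsilon}^{j+1}$ and boundary arc $\mathcal S_{0,+2\varepsilon}^{j+1}$, and the fact that $K, D$ can be made small by shrinking/reshaping the domain are all the same.

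Two points where your write-up is looser than the paper's and would need tightening. First, for the boundary-arc integral you simply invoke $|{}^n\tilde G_0^{j+1}(w)|\leq C$ on the short arc; the paper instead factors out the sup of the cocycle on the arc via a (real-and-imaginary-parts) integral mean value theorem and bounds the remaining $\bigl|\int_0^1\frac{\gamma'(t)}{\gamma(t)-\zeta}\,dt\bigr|$ explicitly by a logarithm — this extra care is what yields a constant in front of the log that can itself be made small by shrinking the domain, which the induction needs. Second, in closing (2) near $s_0^{j+1}$ you assert that the polynomial factor $\bigl(|\zeta|/|\zeta-s_0^{j+1}|\bigr)^{2\pi K}$ is "dominated" by $e^{\pi\,\mathrm{Im}\,\tilde\Psi_{\mathrm{nf}}(\zeta)}$ "including near the boundary endpoint"; but at a fixed boundary point the exponential factor is a fixed finite quantity while the polynomial blows up, so the statement as you phrased it does not close the loop. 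The paper's own computation produces the factor $(|\zeta-s_0^{j+1}|/|\zeta|)^{2\pi K}$ (numerator, not denominator — i.e.\ decaying as $\zeta\to s_0^{j+1}$), reflecting that in the lemma's region-(3) bound the relevant estimate controls $e^{\pm 2\pi\,\mathrm{Im}\,\tilde R^{n+1}}$ and not $e^{2\pi|\tilde R^{n+1}|}$, together with the freedom to reshape the petals. You should either track the sign of $\mathrm{Im}\,\tilde R^{n+1}$ carefully or explicitly invoke the reshaping; as written, this step is a genuine gap, although it mirrors a subtlety that is also handled tersely in the source.
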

\smallskip

\noindent In Lemma~\ref{lema:use} (1), note that $|\zeta|>D_0>0$ on a standard quadratic domain, so $\frac{|\zeta-s_{0}^{j+1}|}{|\zeta|}$ and $\frac{|\zeta-s_{\infty}^{j}|}{|\zeta|}$ are bounded as $\text{Re}(\zeta)\to +\infty$ on $\tilde V_j^+$. Therefore, $\zeta=s_{0}^{j+1}$ and $\zeta=s_\infty^j$ are the only singularities on $\tilde V_j^+$.

\medskip

\begin{obs}\label{rem:use} From (2) in Lemma~\ref{lema:use}, it immediately follows that (on a standard quadratic or a standard linear domain):
\begin{align}\begin{split}\label{eq:oh}
&|e^{-2\pi i(\tilde \Psi_\mathrm{nf}(\zeta)+\tilde R_{j-1,+}^n(\zeta))}|\leq D|e^{-2\pi i(\tilde \Psi_\mathrm{nf}(\zeta)/2)}|,\ \zeta\in \tilde V_{0}^j, \\
&|e^{2\pi i(\tilde \Psi_\mathrm{nf}(\zeta)+\tilde R_{j,+}^n(\zeta))}|\leq D|e^{2\pi i(\tilde \Psi_\mathrm{nf}(\zeta)/2)}|,\ \zeta\in \tilde V_{\infty}^j,\quad j\in\mathbb Z,\ n\in\mathbb N_0.\end{split}
\end{align}
Given the sequence of pairs of analytic germs $(g_0^j,g_\infty^j; \sigma_j)_{j\in\mathbb Z}$ as in Lemma~\ref{lema:konver}, with radii of convergence $\sigma_j$ bounded from below as in \eqref{eq:fall} resp. \eqref{eq:velradlin}, there exists a standard quadratic resp. linear domain such that $|e^{- 2\pi i(\tilde \Psi_\mathrm{nf}(\zeta)/2)}|<\sigma_j/D,\ \zeta\in\tilde V_0^j$, and $|e^{2\pi i(\tilde \Psi_\mathrm{nf}(\zeta)/2)}|<\sigma_j/D,\ \zeta\in\tilde V_\infty^j$, $j\in\mathbb Z$. Now, we conclude by \eqref{eq:oh} that $e^{-2\pi i(\tilde \Psi_\mathrm{nf}(\zeta)+\tilde R_{j-1,+}^n(\zeta))}$, $\zeta\in\tilde V_{0}^j$, remains in the domain of the definition of $g_0^j$, and that $e^{2\pi i(\tilde \Psi_\mathrm{nf}(\zeta)+\tilde R_{j,+}^n(\zeta))}$, $\zeta\in\tilde V_{\infty}^j$, remains in the domain of the definition of $g_\infty^j$, $j\in\mathbb Z$, for all $n\in\mathbb N_0$. This is important to be able to define the iterative algorithm in Lemma~\ref{lema:konver} (1).
\end{obs}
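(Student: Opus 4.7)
The plan splits the observation into its three nested claims. First, the bound \eqref{eq:oh} follows from Lemma~\ref{lema:use}(2) by a one-line algebraic manipulation: writing
\[
e^{-2\pi i(\tilde\Psi_{\mathrm{nf}}(\zeta)+\tilde R_{j-1,+}^n(\zeta))}\;=\;e^{-2\pi i\tilde\Psi_{\mathrm{nf}}(\zeta)/2}\cdot e^{-2\pi i(\tilde\Psi_{\mathrm{nf}}(\zeta)/2+\tilde R_{j-1,+}^n(\zeta))},
\]
taking moduli and bounding the second factor by the uniform constant $D$ of Lemma~\ref{lema:use}(2) yields the first line of \eqref{eq:oh}; the second line is the same with signs flipped on $\tilde V_\infty^j$.

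Second, I would quantify $|e^{\mp 2\pi i\tilde\Psi_{\mathrm{nf}}(\zeta)/2}|$ from the explicit formula \eqref{eq:abo}. Integration by parts in $\int e^w w^m\,dw$ gives $\tilde\Psi_{\mathrm{nf}}(\zeta)\sim e^\zeta\zeta^m$ at leading order. The centre of $\tilde V_0^j$ has $\mathrm{Im}(\zeta)\equiv -\pi/2\pmod{2\pi}$ (independent of $j$), so there $e^\zeta\sim -i e^{\mathrm{Re}(\zeta)}$, forcing $\mathrm{Im}(\tilde\Psi_{\mathrm{nf}})<0$ with
\[
-\mathrm{Im}(\tilde\Psi_{\mathrm{nf}}(\zeta))\;\geq\;D_1 e^{(1-\delta)\mathrm{Re}(\zeta)}
\]
on substrips of $\tilde V_0^j$ of any fixed width strictly less than $2\pi$; this is precisely the estimate \eqref{eq:bo} already established in the proof of Lemma~\ref{lema:konver}(1). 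The symmetric argument on $\tilde V_\infty^j$, whose centre has $\mathrm{Im}(\zeta)\equiv\pi/2\pmod{2\pi}$, gives the sign-flipped version for $\mathrm{Im}(\tilde\Psi_{\mathrm{nf}})>0$. Consequently,
\[
|e^{\mp 2\pi i\tilde\Psi_{\mathrm{nf}}(\zeta)/2}|\;=\;e^{\pm\pi\mathrm{Im}(\tilde\Psi_{\mathrm{nf}}(\zeta))}\;\leq\;\exp\bigl(-\pi D_1 e^{(1-\delta)\mathrm{Re}(\zeta)}\bigr)
\]
on $\tilde V_0^j$ resp.\ $\tilde V_\infty^j$, uniformly in $j\in\mathbb Z$.

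The third step matches this super-exponential smallness against the assumed lower bound on $\sigma_j$. On a standard quadratic domain, by \eqref{eq:imam}, every $\zeta\in\tilde V_0^j\cup\tilde V_\infty^j$ satisfies $\mathrm{Re}(\zeta)\geq C_1\sqrt{|j|}$ for some constant $C_1$ of the domain, so the preceding bound gives $|e^{\mp 2\pi i\tilde\Psi_{\mathrm{nf}}(\zeta)/2}|\leq\exp(-\pi D_1 e^{(1-\delta)C_1\sqrt{|j|}})$; in the linear case \eqref{eq:imam1} replaces $\sqrt{|j|}$ by $|j|$. The constant $C_1$ scales with the shape parameter $C$ of \eqref{eq:sqd}, which is free to choose, and enlarging $R$ in \eqref{eq:sqd} shifts every petal to larger $\mathrm{Re}(\zeta)$, adding a further large positive factor to the innermost exponent. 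The plan is then to fix $\delta>0$ small, choose the shape parameter so that $(1-\delta)C_1>C$ (with $C$ now the data constant in \eqref{eq:fall}), and take $R$ large, so that for every $j\in\mathbb Z$
\[
\pi D_1 e^{(1-\delta)C_1\sqrt{|j|}}\;>\;Ke^{C\sqrt{|j|}}+\log(D/K_1),
\]
and analogously with $|j|$ in place of $\sqrt{|j|}$ in the linear case; this is exactly the required $|e^{\mp 2\pi i\tilde\Psi_{\mathrm{nf}}(\zeta)/2}|<\sigma_j/D$.

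Combining the two ingredients, \eqref{eq:oh} then yields $|e^{\mp 2\pi i(\tilde\Psi_{\mathrm{nf}}+\tilde R_{\bullet,+}^n)}|\leq D\cdot|e^{\mp 2\pi i\tilde\Psi_{\mathrm{nf}}/2}|<\sigma_j$, placing the arguments of $g_0^j$ and $g_\infty^j$ strictly inside their discs of convergence, uniformly in $n\in\mathbb N_0$ and $j\in\mathbb Z$, which is exactly what the iterative algorithm in Lemma~\ref{lema:konver}(1) requires to be well-defined. The hard part is the geometric match of the third step: the estimates on $\tilde\Psi_{\mathrm{nf}}$ are intrinsic to the formal normal form while the bounds on $\sigma_j$ are given data, and the two are compatible only because the rates $e^{C\sqrt{|j|}}$ and $e^{C|j|}$ built into \eqref{eq:fall}--\eqref{eq:velradlin} are calibrated exactly to the geometry of standard quadratic and linear domains respectively; any faster decay of $\sigma_j$ would break the match.
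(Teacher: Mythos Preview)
Your proposal is correct and follows the same route as the paper: the remark is not given a separate proof there, but the reasoning you have unpacked---the algebraic split yielding \eqref{eq:oh}, the super-exponential bound \eqref{eq:bo} on $|e^{\mp\pi i\tilde\Psi_{\mathrm{nf}}}|$, and the geometric match of $\mathrm{Re}(\zeta)\gtrsim C_1\sqrt{|j|}$ (resp.\ $C_1|j|$) against the data rate in \eqref{eq:fall} (resp.\ \eqref{eq:velradlin}) via choice of domain parameters---is exactly what is intended. One small caveat: the bound \eqref{eq:bo} holds on central substrips of fixed width strictly less than the full opening, so to get it on the whole petals $\tilde V_{0,\infty}^j$ one has to reshape them as unions of retailored substrips (as the paper does in Step~1 of the proof of Lemma~\ref{lema:use}); this is implicit in your ``enlarging $R$'' and adjusting the shape parameter, but worth noting explicitly.
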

\medskip

\noindent \emph{Proof of Lemma~\ref{lema:use}.} We prove (1) and (2) simultaneously by induction.
\smallskip

\emph{Step 1.} The \emph{induction basis} for $n=0$. Note that $\tilde R_{j,\pm}^0\equiv 0$ and that the functions $$\zeta\mapsto e^{-2\pi i(\frac{\tilde \Psi_\mathrm{nf}(\zeta)}{2})},\ \zeta\in \tilde V_0^j,\ \text{ and } \zeta\mapsto e^{2\pi i(\frac{\tilde \Psi_\mathrm{nf}(\zeta)}{2})},\ \zeta\in\tilde V_\infty^j,\ j\in\mathbb Z,$$ are \emph{uniformly exponentially flat of order $1-\delta$}, for every $\delta>0$ (see definition \eqref{eq:flaty} of exponential flatness of some order at the beginning of Section~\ref{subsec:prvi}). That is, 
for substrips $\tilde U_{0,\infty}^j\subset \tilde V_{0,\infty}^j$ bisected by $\mathcal C_{0,\infty}^j$ and of \emph{uniform opening in $j$}, there exist $M,\ C>0$ such that: 
\begin{align}\begin{split}\label{eq:unibd}
&|e^{-2\pi i(\frac{\tilde \Psi_\mathrm{nf}(\zeta)}{2})}|\leq Ce^{-Me^{(1-\delta)\text{Re}(\zeta)}} , \ \zeta\in\tilde{U}_{0}^j,\\
&|e^{2\pi i(\frac{\tilde \Psi_\mathrm{nf}(\zeta)}{2})}|\leq Ce^{-Me^{(1-\delta)\text{Re}(\zeta)}} , \ \zeta\in\tilde{U}_{\infty}^j,\ j\in\mathbb Z.\end{split}
\end{align}
This follows by the exact form of $\tilde \Psi_{\mathrm{nf}}$, in the $z$-chart is given in \eqref{eq:abo}, as in the proof of \eqref{eq:ge}.

\noindent From the above bounds, for every $D>0$, we can find a quadratic domain of sufficiently small radius (sufficiently shifted to the right in the logarithmic chart), such that:
\begin{align}\label{eq:jaoj}
&\big|e^{-2\pi i(\frac{\tilde \Psi_\mathrm{nf}(\zeta))}{2}}\big|\leq D,\ \zeta\in \tilde V_{0}^j,\quad |e^{2\pi i(\frac{\tilde \Psi_\mathrm{nf}(\zeta)}{2})}|\leq D,\ \zeta\in \tilde V_{\infty}^j,\
\end{align}
uniformly in $j\in\mathbb Z$.
Note that in \eqref{eq:jaoj} the petals $\tilde V_{0,\infty}^j$, $j\in\mathbb Z$, may have changed \emph{shape} compared to those in \eqref{eq:unibd}. Due to uniform exponential flatness \eqref{eq:unibd}, to ensure boundedness by the same $D$ in all substrips $\tilde U^j_{0,\infty}$ of openings approaching $\pi$, we may have to diminish their radii, resulting in new open petals $\tilde V_{0,\infty}^j$ of opening $\pi$, as unions of such retailored substrips. Thus $(2)$ is satisfied for $n=0$. Note that $(1)$ holds trivially for $n=0$ and for any $K>0$.

\smallskip

\emph{Step 2.} The \emph{induction step}. Suppose that $(1)$ and $(2)$ hold uniformly in $j\in\mathbb Z$ for the $n$-th iterate $\tilde R_{j,\pm}^n$. We prove $(1)$ and $(2)$ for the following iterate $\tilde R_{j,\pm}^{n+1}$ on petals $\tilde V_j^\pm$, with the same constants $D$ and $K$, independent of the induction step $n\in\mathbb N$ and of the petal $j\in\mathbb Z$. We proceed by regions in petals $\tilde V_{j}^\pm$. We prove here the induction step for $\tilde R_{j,+}^{n+1}$ on $\tilde V_{j}^+$. For the repelling petal and $\tilde R_{j,-}^{n+1}$ the same can be repeated. We will consider, as in \eqref{eq:redi}, only one term of the sum \eqref{eq:FJi} in $\tilde R_{j,+}^{n+1}$. For the other three terms the bounds follow analogously. We bound separately in each of the three \emph{regions} (horizontal strips) introduced in \eqref{eq:redi} and in Remark~\ref{rem:regions}:
\smallskip

1. \underline{Region (3)}: $\zeta\in \tilde V_j^+,\ (4j+1)\frac{\pi}{2}-\varepsilon<\text{Im}(\zeta)< (4j+1)\frac{\pi}{2}+\varepsilon$.
\begin{align}\label{eq:int}
&\Big|\sum_{k=j+1}^{+\infty}{}^{n+1} \tilde F_{0,k}^{-}(\zeta)\Big|\leq\\
&\frac{1}{2\pi}\Big|\int_{\mathcal C_{0,+2\varepsilon}^{j+1}}\!\!\!\frac{g_{0}^{j+1}\big(e^{-2\pi i(\tilde\Psi_\mathrm{nf}(w)+\tilde R_{j,+}^{n}(w))}\big)}{w-\zeta} dw\Big|\!\!+\!\!\frac{1}{2\pi}\Big|\int_{\mathcal S_0^{j+1}}\!\!\!\frac{g_{0}^{j+1}\big(e^{-2\pi i(\tilde\Psi_\mathrm{nf}(w)+\tilde R_{j,+}^{n}(w))}\big)}{w-\zeta} dw\Big|+\nonumber\\
&\qquad\qquad\qquad\qquad\qquad\qquad\qquad\quad +\frac{1}{2\pi}\sum_{k=j+2}^{+\infty}\Big|\int_{\mathcal C_{0}^k}\frac{g_{0}^k\big(e^{-2\pi i(\tilde\Psi_\mathrm{nf}(w)+\tilde R_{k-1
,+}^{n}(w))}\big)}{w-\zeta} dw\Big|.\nonumber
\end{align}
All denominators except for the one in the integral $\int_{\mathcal S_0^{j+1}} *\, dw $ can, by absolute value, be bounded away from $\zeta$ by $\varepsilon>0$, that is, $|w-\zeta|\geq \varepsilon$, since the lines of integration are more than $\varepsilon$-away from $\zeta$. In each of these integrals, we make a change of variables that transforms these integrals to integrals along real half-line, as before in \eqref{eq:bd}. 
Using the uniform bound \eqref{prv} on $g_{0,\infty}^k(t),\ k\in\mathbb Z$, we get that there exists $C>0$, such that:
\begin{align}\label{use}
&\big|g_{0}^k\big(e^{- 2\pi i(\tilde\Psi_\mathrm{nf}(\zeta)+\tilde R_{k-1,+}^{n}(\zeta)}\big)\big|\leq C |e^{- 2\pi i(\tilde\Psi_\mathrm{nf}(\zeta)+\tilde R_{k-1,+}^{n}(\zeta))}|\leq\\
&\qquad \leq  C \big|e^{- 2\pi i(\tilde\Psi_\mathrm{nf}(\zeta)/2+\tilde R_{k-1,+}^{n}(\zeta))}\big|\cdot |e^{- 2\pi i(\tilde\Psi_\mathrm{nf}(\zeta)/2)}|\leq CD|e^{- \pi i \tilde\Psi_\mathrm{nf}(\zeta)}|,\ \zeta\in\tilde{V}_{0}^k,\nonumber \\
&\big|g_{\infty}^k\big(e^{ 2\pi i(\tilde\Psi_\mathrm{nf}(\zeta)+\tilde R_{k,+}^{n}(\zeta)}\big)\big|\leq CD |e^{\pi i \tilde\Psi_\mathrm{nf}(\zeta)}|,\ \zeta\in\tilde{V}_{\infty}^k,\ \ k\in\mathbb Z.\nonumber
\end{align}
In the last equality we use the induction hypothesis. As in \eqref{eq:jaoj}, we conclude that $\big|e^{\mp \pi i\frac{\tilde\Psi_\mathrm{nf}(\zeta)}{2}}\big|$ on respective domains $\tilde V_{0,\infty}^j$ can be made smaller than any constant $E>0$ on a standard quadratic domain shifted sufficiently to the right, such that $\text{Re}(\zeta)>D_0$ for some $D_0>0$ (and, as before, with \emph{shapes} of $\tilde V_{0,\infty}^{j}$ possibly changed). 

Now a similar reasoning as in the proof of convergence of \eqref{eq:bd} leads us to conclusion that we can choose a standard quadratic domain of sufficiently small radius such that the sum of all integrals in \eqref{eq:int}, except for the integral $\int_{\mathcal S_{0}^{j+1}}*\,d\zeta$, is by absolute value smaller than any fixed number, so we take $K>0$. We note that the bounds made here do not depend on a specific petal $j\in\mathbb Z$, nor on the step of iteration $n\in\mathbb N$. 

To conclude the induction step (1), it is left to bound the following integral:
\begin{align}\label{eq:integ}
\Big|\int_{\mathcal S_0^{j+1}}\frac{g_{0}^{j+1}\big(e^{-2\pi i(\tilde\Psi_\mathrm{nf}(w)+\tilde R_{j,+}^{n}(w))}\big)}{w-\zeta} dw\Big|, \  \zeta\in {\tilde V_{j}^+},\ d(\zeta, \mathcal C_{0}^{j+1})\leq \varepsilon.
\end{align}
The problem in this region is the following: (1) at the point $s_{0}^{j+1}$ at the end of the line $\mathcal S_0^{j+1}$ of integration (that is, at the endpoint of $\mathcal C_0^{j+1}$ on the boundary of the domain), $\tilde R_{j,+}^n(\zeta)$ from the previous step has a logarithmic singularity, thus possibly preventing the mere well-definedness of this integral; and (2) $|w-\zeta|$ is unbounded as $\zeta$ approaches $s_0^{j+1}$, thus generating a new logarithmic singularity at the point $s_{0}^{j+1}$ in the next iterate $\tilde R_{j,+}^{n+1}$. First, the fact that the integral at each step is well-defined is verified by the induction hypothesis (2) or estimate \eqref{use}. We note that a logarithmic singularity at $s_{0}^{j+1}$ is generated in each iterate, but they are\emph{ not accumulating} in iteration, due to the fact that $\tilde R_{j,+}^n$ enters the next step of integration only as the argument of an exponential that is bounded and does not possess a logarithmic singularity any more. To solve problem (2), let $\gamma(t):[0,1]\to\mathcal S_{0}^{j+1}$ be a (smooth) parametrization of $\mathcal S_{0}^{j+1}$, and denote the endpoints by $s_{0}^{j+1}:=\gamma(0)$ and $v_0^{j+1}:=\gamma(1)$. Recall that $$s_{0}^{j+1}=\mathcal S_{0}^{j+1}\cap\mathcal C_{0}^{j+1},\ v_0^{j+1}=\mathcal S_{0}^{j+1}\cap \mathcal C_{0,+2\varepsilon}^{j+1}.$$ We now bound, using the complex mean value theorem for integrals (treating the real and the imaginary part separately, and applying the integral mean value theorem\footnote{For $f,g:[0,1]\to \mathbb C$ bounded, by integral mean value theorem for real functions of a real variable there exist $s_1,s_2,s_3,s_4\in[0,1]$ such that $|\int_0^1 f(t)g(t)dt|\leq |\int_0^1 \text{Re}(f(t))\text{Re}(g(t))dt|+|\int_0^1 \text{Re}(f(t))\text{Im}(g(t))dt|+|\int_0^1 \text{Im}(f(t))\text{Re}(g(t))dt|+|\int_0^1 \text{Im}(f(t))\text{Im}(g(t))dt|=|\text{Re}(f(s_1))|\cdot|\int_0^1 \text{Re}(g(t))dt|+|\text{Re}(f(s_2))|\cdot |\int_0^1 \text{Im}(g(t))dt|+|\text{Im}(f(s_3))|\cdot|\int_0^1 \text{Re}(g(t))dt|+|\text{Im}(f(s_4))|\cdot|\int_0^1 \text{Im}(g(t))dt|\leq 4||f||_{L^{\infty}[0,1]}\big(|\text{Re} (\int_0^1 g(t) dt)|+|\text{Im} (\int_0^1 g(t) dt)|\big)\leq 8||f||_{L^{\infty}[0,1]} \big|\int_0^1 g(t) dt\big|.$}), \eqref{use} and the fact that $|\gamma'(t)|$ is bounded (suppose by $1$) since $\gamma$ is smooth (the boundary of a standard quadratic domain):
\begin{align*}
&\Big|\int_{\mathcal S_0^{j+1}}\!\!\frac{g_{0}^{j+1}\big(e^{-2\pi i(\tilde\Psi_\mathrm{nf}(w)+\tilde R_{j,+}^{n}(w))}\big)}{w-\zeta} dw\Big|\!\!=\!\!\Big|\int_{0}^1\frac{g_{0}^{j+1}\big(e^{-2\pi i(\tilde\Psi_\mathrm{nf}(\gamma(t))+\tilde R_{j,+}^{n}(\gamma(t)))}\big)}{\gamma(t)-\zeta} \gamma'(t) dt\Big|\!\leq\\
&\qquad \quad\qquad \leq 8||g_{0}^{j+1}\big(e^{-2\pi i(\tilde\Psi_\mathrm{nf}(\gamma(t))+\tilde R_{j,+}^{n}(\gamma(t)))}\big)||_{L^\infty[0,1]}\cdot \Big|\int_0^1 \frac{\gamma'(t)}{\gamma(t)-\zeta}dt\Big|\leq\\
&\qquad \quad \qquad \leq 8 CD ||e^{-\pi i(\tilde\Psi_\mathrm{nf}(\gamma(t)))}||_{L^{\infty}[0,1]}\cdot \Big|\int_0^1 \frac{\gamma'(t)}{\gamma(t)-\zeta}dt\Big|.
\end{align*}
The norm of exponentially small $|e^{-\pi i(\tilde\Psi_\mathrm{nf}(\gamma(t))}|$ can, by shifting a standard quadratic domain to the right ($\gamma(t)$ lies in its boundary), be made arbitrarily small (independently of the step $n\in\mathbb N$). Furthermore, there exists a uniform constant $c>0$ (independent of $j\in \mathbb Z$) such that $$\Big|\int_0^1\frac{\gamma'(t)}{\gamma(t)-\zeta}dt\Big|=|\log(v_{0}^{j+1}-\zeta)-\log(s_{0}^{j+1}-\zeta)|\leq c\log \frac{|s_{0}^{j+1}-\zeta|}{|\zeta|},\ \text{$\zeta$ in region (3).}$$ Indeed, note that $v_0^{j+1}$ lies at some bounded distance from region (3), uniformly in $j$, and at $\mathrm{Re}(\zeta)=+\infty$ there is no singularity, so the only singularity is $\zeta=s_0^{j+1}$.  
Consequently, we may bound the whole integral \eqref{eq:integ} in the region (3) above by any positive constant multiplied by $\log \frac{|\zeta-s_{0}^{j+1}|}{|\zeta|}$. Take again $K>0$.

\medskip

2. \underline{Regions $(1)$ and $(2)$}: $\zeta\in \tilde V_j^+,\ \text{Im}(\zeta)\leq (4j+1)\frac{\pi}{2}-\varepsilon$ or $\text{Im}(\zeta)\geq (4j+1)\frac{\pi}{2}+\varepsilon$: The induction step is proven analogously, but easier, since the denominators in \emph{all} integrals are now bounded from below by $\varepsilon$ ($\zeta$ in these regions is at the distance bigger than $\varepsilon$ from all lines of integration), so the logarithm does not appear in bounds. Only one comment is needed. The line $\mathcal C_0^{j+1}$ indeed contains the point $s_{0}^{j+1}$ as its endpoint, but, as discussed before, the integral $\int_{\mathcal C_{0}^{j+1}}\frac{g_{0}^{j+1}\big(e^{-2\pi i(\tilde\Psi_\mathrm{nf}(w)+\tilde R_{j,+}^{n}(w))}\big)}{w-\zeta} dw$ is well-defined since the previous iterate $\tilde R_{j,+}^n(w)$ with logarithmic singularity at $s_{0}^{j+1}$ appears in the integral only as an argument of the exponential, which is bounded. To bound the integrals by any constant (take $K>0$), we use \eqref{use}.
\smallskip

Finally, once that we have proven the induction step for $(1)$ in Lemma~\ref{lema:use}, the induction step for (2) in Lemma~\ref{lema:use} follows easily. We have:
\begin{align*}
&\big|e^{- 2\pi i(\frac{\tilde \Psi_\mathrm{nf}(\zeta)}{2}+\tilde R_{j-1,+}^{n+1}(\zeta))}\big|\leq \big|e^{-2\pi i \frac{\tilde \Psi_\mathrm{nf}(\zeta)}{2}}\big|\cdot e^{2\pi|\tilde R_{j,+}^{n+1}(\zeta)|}\leq\\
&\qquad\qquad \leq \begin{cases}|e^{-\pi i \tilde \Psi_\mathrm{nf}(\zeta)}|D_0^{-2\pi K}|\zeta-s_{0}^{j+1}|^{2\pi K},& \zeta\in \tilde V_{0}^j \text{\ \ in region (3)},\\
|e^{- \pi i \tilde \Psi_\mathrm{nf}(\zeta)}|e^{2\pi K}, & \zeta\in \tilde V_{0}^j \text{\ \ in regions (1) and (2).}\end{cases}
\end{align*}
By shifting a standard quadratic domain sufficiently to the right ($\mathrm{Re}(\zeta)>D_0$), and by changing the shape of $\tilde V_0^j$, $j\in\mathbb Z$, if necessary, both can be made arbitrarily small (uniformly in $j\in\mathbb Z$ and independently of $n\in\mathbb N$), so we make them smaller than $D>0$. The same follows for $V_\infty^j$, $j\in\mathbb Z$. The induction step for $(2)$ in Lemma~\ref{lema:use} is thus proven.
\hfill $\Box$

\bigskip

\noindent \emph{Proof of Lemma~\ref{lem:uses}}. We prove that there exist $0<q<1$ and $c>0$ such that: $$\sup_{\zeta\in \tilde V_+^j}|e^{2\pi i\tilde R_{j,+}^{n+1}(\zeta)}-e^{2\pi i\tilde R_{j,+}^{n}(\zeta)}\big|\leq  c q^n,$$ for every $n\in\mathbb N_0$ and every $j\in\mathbb Z$. The proof is by induction, considering separately the three regions $(1)-(3)$ of $\tilde V_j^+$, as in \eqref{eq:redi}. 
\smallskip

Suppose that there exist $0<q<1$ and $c>0$ (independent of $j\in\mathbb Z$ and $n\in\mathbb N_0$) such that, for some $n\in\mathbb N$, it holds that
$$
\sup_{\zeta\in \tilde V_+^j}|e^{2\pi i\tilde R_{j,+}^{n+1}(\zeta)}-e^{2\pi i \tilde R_{j,+}^{n}(\zeta)}\big|\leq  c q^{n},
$$
for every $j\in\mathbb Z$.
We now prove that it implies, for $\zeta$ in each of the three regions of $\tilde V_j^+$, that:
 $$ \sup_{\zeta\in \tilde V_+^j}|e^{2\pi i \tilde R_{j,+}^{n+2}(\zeta)}-e^{2\pi i\tilde R_{j,+}^{n+1}(\zeta)}\big|\leq  c q^{n+1}.
$$
That is,
$$\sup_{\zeta\in\,\text{region $(i)$}}|e^{2\pi i\tilde R_{j,+}^{n+2}(\zeta)}-e^{2\pi i\tilde R_{j,+}^{n+1}(\zeta)}|\leq  c q^{n+1},\ i\in\{1,2,3\},\ j\in\mathbb Z.$$                                                                                                                                                                                                                                              
We will find now $0<q<1$ and $c>0$, independent of $j\in\mathbb Z$, such that the induction step and the basis of the induction holds. Note that, as before, we work for simplicity with only one term of the sum in \eqref{eq:FJi} for $\tilde R_{j,+}^n$ on $\tilde V_j^+$. For the other three terms of the sum the conclusion follows analogously. The same can be simultaneously done for $\tilde R_{j,-}^n$ on $\tilde V_j^-$, and we omit it. 
\smallskip 

1. \emph{The basis of the induction, $n=0$}. By Lemma~\ref{lema:use} (1), if we shift the standard quadratic domain sufficiently to the right (e.g. $\text{Re}(\zeta)>D_0$) and \emph{reshape} if necessary, then there exists an arbitarily small constant $K>0$ such that\footnote{By Taylor expansion, $|e^z-1|\leq e^{|z|}-1,\ z\in\mathbb C.$}:
\begin{align}\label{eq:twothree}
|e^{2\pi i\tilde R_{j,+}^1(\zeta)}-1|&\leq e^{2\pi |R_{j,+}^1(\zeta)|}-1\leq\nonumber\\
&\leq \begin{cases}
e^{2\pi K}-1,& \zeta\in \tilde V_j^{+},\ d(\zeta,\mathcal C_{0}^{j+1})\geq \varepsilon,\ d(\zeta,\mathcal C_{\infty}^{j})\geq \varepsilon,\\
\big(\frac{|\zeta-s_{0}^{j+1}|}{|\zeta|}\big)^{2\pi K}-1, & \zeta\in \tilde V_{j}^{+},\ d(\zeta,\mathcal C_{0}^{j+1})< \varepsilon,\\
\big(\frac{|\zeta-s_{\infty}^{j}|}{|\zeta|}\big)^{2\pi K}-1, & \zeta\in \tilde V_{j}^{+},\ d(\zeta,\mathcal C_{\infty}^{j})< \varepsilon.
\end{cases}
\end{align}
To conclude that $|e^{2\pi i\tilde R_{j,\pm}^1(\zeta)}-1|$ are bounded from above on the petals $\tilde V_{j}^+$ by some constant $C>0$, independent of $j\in\mathbb Z$, note that the second and the third term in \eqref{eq:twothree} are bounded at $\text{Re}(\zeta)=+\infty$ due to the division by $|\zeta|$ and $\text{Re}(\zeta)>D_0>0$. That is, there exists a constant $C$ and $0<q<1$ such that:
$$
|e^{2\pi i\tilde R_{j,+}^1(\zeta)}-e^{2\pi i\tilde R_{j,+}^0(\zeta)}|\leq Cq^0,\ \zeta\in \tilde V_{j}^{+},\ j\in\mathbb Z.
$$
In fact, we can take here any $0<q<1$, and we will determine the \emph{good} one in the induction process. This is the basis of the induction.

\smallskip

2. \emph{The induction step.} Now suppose that there exist $0<q<1$ and $C>0$ such that, for $n\in\mathbb N_0$, it holds that:
$$
|e^{2\pi i\tilde R_{j,+}^{n+1}(\zeta)}-e^{2\pi i\tilde R_{j,+}^n(\zeta)}|\leq Cq^n,\ \zeta\in \tilde V_{j}^+,\ j\in\mathbb Z.
$$
We prove the induction step $(n+1)$. 
It holds that:
\begin{align}\begin{split}\label{ah}
|e^{2\pi i\tilde R_{j,+}^{n+2}(\zeta)}-e^{2\pi i\tilde R_{j,+}^{n+1}(\zeta)}|&\leq |e^{2\pi i\tilde R_{j,+}^{n+1}(\zeta)}|\cdot \big|e^{2\pi i(\tilde R_{j,+}^{n+2}(\zeta)-\tilde R_{j,+}^{n+1}(\zeta))}-1\big|\\
&\leq e^{2\pi |\tilde R_{j,+}^{n+1}(\zeta)|}\cdot \Big(e^{2\pi|\tilde R_{j,+}^{n+2}(\zeta)-\tilde R_{j,+}^{n+1}(\zeta)|}-1\Big),\ \zeta\in \tilde V_{j}^+.\end{split}
\end{align}
We now estimate $|\tilde R_{j,+}^{n+2}(\zeta)-\tilde R_{j,+}^{n+1}(\zeta)|$ on $\tilde V_{j}^+$, using the induction hypothesis, in regions $(1)-(3)$. 
Note that the expression for the difference $\tilde R_{j,+}^{n+2}(\zeta)-\tilde R_{j,+}^{n+1}(\zeta)$ is similar as in \eqref{eq:redi}, just, instead of $g_{0}^k\big(e^{- 2\pi i(\tilde\Psi_\mathrm{nf}(w)+\tilde R_{k-1,+}^{n}(w))}\big)$, in every integral we have the difference of exponentials: $$g_{0}^k\big(e^{- 2\pi i(\tilde\Psi_\mathrm{nf}(w)+\tilde R_{k-1,+}^{n+1}(w))}\big)-g_{0}^k\big(e^{- 2\pi i(\tilde\Psi_\mathrm{nf}(w)+\tilde R_{k-1,+}^{n}(w))}\big), \ k\in\mathbb Z.$$
As in the proof of Lemma~\ref{lema:use}, we bound the difference $|\tilde R_{j,+}^{n+2}(\zeta)-\tilde R_{j,+}^{n+1}(\zeta)|$ in all regions $(1)-(3)$. By complex mean-value theorem, we first estimate:
\begin{align*}
&\big|g_{0}^k \big(e^{-2\pi i(\tilde\Psi_\mathrm{nf}(w)+\tilde R_{k-1,+}^{n+1}(w))}\big) - g_{0}^k \big( e^{- 2\pi i(\tilde\Psi_\mathrm{nf}(w)+\tilde R_{k-1,+}^{n}(w))} \big) \big|\leq\\
&\leq \sup_{t\in[0,1]} \big|(g_{0}^k)' (e^{- 2\pi i( \tilde\Psi_\mathrm{nf}(w)+(t\tilde R_{k-1,+}^{n}(w)+(1-t)\tilde R_{k-1,+}^{n+1}(w)))}) \big| \cdot |e^{- 2\pi i \tilde \Psi_\mathrm{nf}(w)}|\cdot\\
&\qquad\qquad\qquad  \cdot|e^{-2\pi i (\tilde R_{k-1,+}^n(w)+\tilde R_{k-1,+}^{n+1}(w))}|\cdot \big|e^{2\pi i \tilde R_{k-1,+}^{n+1}(w)}-e^{2\pi i \tilde R_{k-1,+}^n(w)}\big|\leq\\
&\leq d|e^{-2\pi i\tilde \Psi_\mathrm{nf}(w)}|\cdot e^{2\pi\big(|R_{k-1,+}^{n}(w)|+|R_{k-1,+}^{n+1}(w)|\big)}\cdot \big|e^{2\pi i \tilde R_{k-1,+}^{n+1}(w)}-e^{2\pi i \tilde R_{k-1,+}^n(w)}\big|\leq\\
&\leq c|e^{-2\pi i\tilde \Psi_\mathrm{nf}(w)}|\cdot |e^{2\pi i \tilde R_{k-1,+}^{n+1}(w)}-e^{2\pi i \tilde R_{k-1,+}^n(w)}\big|,\ w\in \tilde V_{0}^k,\ k\in\mathbb Z,
\end{align*}
where constants $c,\ d>0$ are uniform with respect to petal $j\in\mathbb Z$ and step $n\in\mathbb N$. 
The last two lines follow by Lemma~\ref{lema:use} (1) and by uniform bounds \eqref{prv} on $(g_0^k)'$. By the induction hypothesis, there exists $c>0$ such that:
\begin{align*}
&\big|g_{0}^k \big(e^{-2\pi i(\tilde\Psi_\mathrm{nf}(w)+\tilde R_{k-1,+}^{n+1}(w))}\big) - g_{0}^k \big( e^{- 2\pi i(\tilde\Psi_\mathrm{nf}(w)+\tilde R_{k-1,+}^{n}(w))} \big) \big|\leq c Cq^n\cdot|e^{-2\pi i\tilde \Psi_\mathrm{nf}(w)}|,\\
&\qquad \qquad \qquad \qquad \qquad \qquad \qquad \qquad \qquad \qquad \quad \qquad \quad w\in \tilde V_{0}^k,\ k\in\mathbb Z,\ n\in\mathbb N.\nonumber
\end{align*}
The same can be repeated for $\tilde V_\infty^k$, $k\in\mathbb Z$. 
\smallskip

Now, estimating as in the proof of Lemma~\ref{lema:use} (1), we get the following bounds by regions:
\begin{align*}
|&\tilde R_{j,+}^{n+2}(\zeta)-\tilde R_{j,+}^{n+1}(\zeta)|\leq \\
&\leq \!\!\begin{cases}
||e^{- \pi i \tilde \Psi_\mathrm{nf}(\zeta)}||_{\tilde V_{0}^{j+1}}\cdot (A \log\frac{|\zeta-s_{0}^{j+1}|}{|\zeta|}+B)\cdot  Cq^n ,& \zeta\in V_{j}^+,\ d(\zeta,\mathcal C_{0}^{j+1})< \varepsilon,\\
||e^{\pi i \tilde \Psi_\mathrm{nf}(\zeta)}||_{\tilde V_{\infty}^{j}}\cdot (A \log\frac{|\zeta-s_{\infty}^{j}|}{|\zeta|}+B)\cdot  Cq^n ,& \zeta\in V_{j}^+,\ d(\zeta,\mathcal C_{\infty}^{j})< \varepsilon,\\
(||e^{-\pi i \tilde \Psi_\mathrm{nf}(\zeta)}||_{\tilde V_{0}^{j+1}}+||e^{\pi i \tilde \Psi_\mathrm{nf}(\zeta)}||_{\tilde V_{\infty}^{j}}) \cdot B \cdot Cq^n ,& \zeta\in V_{j}^+, \ d(\zeta,\mathcal C_{\infty}^{j}\cup \mathcal C_{0}^{j+1})\geq \varepsilon.
\end{cases}
\end{align*}
Here, $A>0$ and $B>0$ are some positive constants, uniform in $n\in\mathbb N$ and in $j\in\mathbb Z$, obtained as sums of integrals with exponentially small numerators and bounded denominators, similarly as in the proof of Lemma~\ref{lema:use}. Note that, by shifting a whole standard quadratic domain to the right and possibly \emph{reshaping}, we can make the first norm \emph{arbitrarily small} (less than any $\delta>0$), uniformly in $n\in\mathbb N$ and in $j\in\mathbb Z$.  Now, for every $\delta>0$ there exists a standard quadratic domain $\widetilde{\mathcal R}_\delta$ such that, for $\zeta\in\widetilde{\mathcal R}_\delta$, it holds that:
\begin{align}\begin{split}\label{eq:aha}
&e^{2\pi |\tilde R_{j,+}^{n+2}(\zeta)-\tilde R_{j,+}^{n+1}(\zeta)|}\leq \\
&\!\!\!\leq\begin{cases}
\big(\frac{|\zeta-s_{0}^{j+1}|}{|\zeta|}\big)^{2\pi A\delta Cq^n} e^{2\pi B\delta Cq^k}\leq N^{2\pi A\delta Cq^n} e^{2\pi B\delta Cq^k},&\!\!\!\zeta\in \tilde V_j^+, \ d(\zeta,\mathcal C_{0}^{j+1})< \varepsilon,\\
\big(\frac{|\zeta-s_{\infty}^{j}|}{|\zeta|}\big)^{2\pi A\delta Cq^n} e^{2\pi B\delta Cq^k}\leq N^{2\pi A\delta Cq^n} e^{2\pi B\delta Cq^k},&\!\!\!\zeta\in \tilde V_j^+, \ d(\zeta,\mathcal C_{\infty}^{j})< \varepsilon,\\
e^{2\pi B \delta Cq^n},&\!\!\!\zeta\in \tilde V_{j}^+,\ d(\zeta,\mathcal C_{\infty}^{j}\cup \mathcal C_{0}^{j+1})\geq \varepsilon.
\end{cases}\end{split}
\end{align}
Here, $N>0$ is some positive constant that bounds $\frac{|\zeta-s_{0}^{j+1}|}{|\zeta|}$ in region (3), uniformly in $j\in\mathbb Z$. 
Taking $\delta>0$ sufficiently small (diminishing the domain), putting \eqref{eq:aha} in \eqref{ah}, we get:
$$
|e^{2\pi i\tilde R_{j,+}^{n+2}(\zeta)}-e^{2\pi i\tilde R_{j,+}^{n+1}(\zeta)}|\leq Cq^{n+1},\ \zeta\in \tilde V_{j}^+,\ j\in\mathbb Z.$$
All bounds are independent on the step $n\in\mathbb N$ and on the petal $j\in\mathbb Z$. The induction step is thus proven.
\hfill $\Box$

\medskip

\subsection{Proof of Lemma~\ref{lem:simi}} 

\begin{proof} $(a)$ Let $(g_0^j,g_\infty^j)_{j\in\mathbb Z}$ be as in \eqref{e:pp} and \eqref{eq:pt} from Lemma~\ref{lema:konver}. Since $$(h_0^j)^{-1}(t)=te^{2\pi i g_0^j(t)},\ h_\infty^j(t)=te^{2\pi i g_\infty^j(t)},\ t\approx 0,$$ the symmetry \eqref{eq:sym} of $(h_0^j,h_\infty^j)_j$ implies:
\begin{align*}
&\overline{te^{2\pi i g_0^{-j+1}(t)}}=\overline{t} e^{2\pi i g_\infty^j(\overline{t})},\\
&\overline{t} e^{-2\pi i \overline{g_0^{-j+1}(t)}}=\overline{t} e^{2\pi i g_\infty^j(\overline{t})}, \  t\in (\mathbb C,0),\ j\in\mathbb Z.
\end{align*}
Therefore,
\begin{equation}\label{eq:hhaha}
\overline{g_0^{-j+1}(t)}=-g_\infty^j(\overline{t}),\  t\in (\mathbb C,0),\ j\in\mathbb Z.
\end{equation}
Here, we use that $\overline{e^{-2\pi i\overline z}}=e^{2\pi i z},\ z\in\mathbb C$.
\smallskip

The remainder of the proof is done by induction on the iterates of the Fatou coordinate $\tilde {\Psi}_{j,\pm}^n(\zeta)$ (in the logarithmic chart). We prove, using symmetry \eqref{eq:hhaha}, that, for every $n\in\mathbb N_0$, the following symmetry of the iterates holds:
\begin{align}\begin{split}\label{eq:prov}
&\overline{\tilde {\Psi}_{-j+1,-}^n(\zeta)}=\tilde {\Psi}_{j,-}^n (\overline{\zeta}),\ \zeta\in \tilde {V}_+^{-j+1}, \\
&\overline{\tilde {\Psi}_{-j,+}^n(\zeta)}=\tilde {\Psi}_{j,+}^n (\overline{\zeta}),\ \zeta\in \tilde {V}_{+}^{-j},\ j\in\mathbb Z.\end{split}
\end{align}
Note that this is an analogue of $(9.6)$ in \cite[Proposition 9.2]{prvi} in the logarithmic chart (i.e. in the $\zeta$-variable). As a consequence, the same symmetry \eqref{eq:prov} holds for the limits $\tilde \Psi_{j}^{\pm}$ on $\tilde V_j^{\pm}$, $j\in\mathbb Z$, as $n\to\infty$, defined in Lemma~\ref{lema:konver} (2). In particular, for $j=0$ and for $y\in\mathbb R_+$ it holds that:
$$
\overline{\tilde {\Psi}_0^+(y)}=\tilde {\Psi}_0^+(\overline{y})=\tilde {\Psi}_0^+(y),\ y\in (0,+\infty)\cap \tilde V_0^+.$$
Finally, returning to the $z$-variable, this gives:
$$
\overline {\Psi_0^+(x)}=\Psi_0^+(x),\ x\in \mathbb R_+\cap V_0^+.
$$
That is, $\Psi_0^+(\mathbb R_+\cap V_0^+)\subseteq \mathbb R_+\cap V_0^+$, which completes the proof.
\smallskip

It is left to prove \eqref{eq:prov} by induction. For $n=0$, \eqref{eq:prov} is trivially satisfied, since $\tilde\Psi_{j,\pm}^0(\zeta)\equiv\tilde \Psi_\mathrm{nf}(\zeta),\ \zeta\in\tilde V_j^{\pm}$. Here, $\tilde \Psi_\mathrm{nf}$ is the Fatou coordinate of the $(2,m,\rho)$-normal form, $\rho\in\mathbb R$. It is analytic \emph{globally} on a standard quadratic domain and satisfies $\tilde\Psi_\mathrm{nf}(\mathbb R_+)\subseteq\mathbb R_+$, due to $\rho\in\mathbb R$. Therefore, the basis of induction follows by Schwarz's reflection principle.

\noindent We now suppose that \eqref{eq:prov} holds for all $0\leq m< n$, $n\in\mathbb N.$ We prove that it implies \eqref{eq:prov} for $n$. Take $\zeta\in \tilde V_j^+$, for some $j\in\mathbb Z$, then $\overline \zeta\in\tilde V_{-j}^{+}$. We can show the same for pairs $\zeta\in\tilde V_j^{-},\ \overline \zeta\in\tilde V_{-j+1}^{-},\ j\in\mathbb Z$. By the Cauchy-Heine construction from Lemma~\ref{lema:konver} (1), see \eqref{eq:redi}, $\tilde R_{j,+}^n(\zeta),\ \zeta\in \tilde V_j^+,$ is  a sum of terms of the form:
$$
T(\zeta):=\frac{1}{2\pi i}\int_{\mathcal C_{0}^k}\frac{g_{0}^k\big(e^{-2\pi i(\tilde\Psi_{+,k-1}^{n-1}(w))}\big)}{w-\zeta} dw,
$$
where each of them has, in the sum, its "pair" by symmetry: 
$$
P(w):=\frac{1}{2\pi i}\int_{\mathcal C_{\infty}^{-k+1}}\frac{g_{\infty}^{-k+1}\big(e^{-2\pi i(\tilde\Psi_{+,-k+1}^{n-1}(w))}\big)}{w-\zeta} dw, \ \ k\in\mathbb Z.
$$
This "pair" is obtained as the Cauchy-Heine integral along the line $$\mathcal C_{\infty}^{-k+1}=\{\zeta\in\widetilde{\mathcal R}_C: \mathrm{Im}(\zeta)=(-4k+3)\frac{\pi}{2}\},$$ which is exactly complex-conjugated to the line: $$\mathcal C_0^k=\{\zeta\in\widetilde{\mathcal R}_C: \mathrm{Im}(\zeta)=(4k-3)\frac{\pi}{2}\},$$ due to the symmetry of standard quadratic domains with respect to $\mathbb R_+$.  See Figure~\ref{fig:indi} for indexing. 

On the other hand, the same pair $T(\overline \zeta)$ and $P(\overline \zeta)$ appears also in the sum for $\tilde R_{-j,+}^n(\overline \zeta),\ \overline \zeta\in \tilde V_{-j}^+$. Therefore, to show that $\overline{\tilde R_{j,+}^n(\zeta)}=\tilde R_{-j,+}^n(\overline \zeta),\ \zeta\in \tilde V_j^+$, we show simply that, on symmetric petals with respect to $\mathbb R_+$, $P$ and $T$ \emph{exchange places} by conjugation. That is, we show that:
\begin{equation}\label{eq:uzas}\overline{T(\zeta)}=P(\overline \zeta),\ \overline{P(\zeta)}=T(\overline \zeta),\ \zeta\in \tilde V_j^+.\end{equation} 
Indeed, by the change of variables $\xi=\mathrm{Re}(w)$ in the integral, we get:
\begin{align*}
T(\zeta)&=\frac{1}{2\pi i}\int_{x_k}^{+\infty}\frac{g_{0}^{k}\big(e^{-2\pi i\tilde\Psi_{+,k-1}^{n-1}(\xi+i(4k-3)\frac{\pi}{2})}\big)}{\xi+i(4k-3)\frac{\pi}{2}-\zeta}d\xi=\\
&=\frac{1}{2\pi i}\int_{x_k}^{+\infty}\frac{-\overline{g_{\infty}^{-k+1}}\big(e^{2\pi i\cdot\overline{\tilde\Psi_{+,k-1}^{n-1}(\xi+i(4k-3)\frac{\pi}{2})}}\big)}{\xi+i(4k-3)\frac{\pi}{2}-\zeta}d\xi=\\
&=\frac{1}{2\pi i}\int_{x_k}^{+\infty}\frac{-\overline{g_{\infty}^{-k+1}}\big(e^{2\pi i\cdot\tilde\Psi_{+,-k+1}^{n-1}(\xi-i(4k-3)\frac{\pi}{2})}\big)}{\xi+i(4k-3)\frac{\pi}{2}-\zeta}d\xi.
\end{align*}
Here, $x_k>0$ is the real part of the initial point of half-lines $\mathcal C_0^k$ or $\mathcal C_\infty^{-k+1}$. It is the same for both lines, due to symmetry of standard quadratic domains with respect to $\mathbb R_+$. The second line is obtained directly using symmetry \eqref{eq:hhaha} of sequence of pairs $(g_0^j,\,g_\infty^j)_{j\in\mathbb Z}$. In the third line, we use the induction assumption \eqref{eq:prov} for the previous step $n-1$. 

\noindent Now, complex conjugation of the integral gives:                                                                       
\begin{align*}
\overline{T(\zeta)}&=-\frac{1}{2\pi i}\int_{x_k}^{+\infty}\frac{-g_{\infty}^{-k+1}\big(e^{2\pi i\cdot\tilde\Psi_{+,-k+1}^{n-1}(\xi-i(4k-3)\frac{\pi}{2})}\big)}{\xi-i(4k-3)\frac{\pi}{2}-\overline \zeta}d\xi\\
&=\frac{1}{2\pi i}\int_{\mathcal C_{\infty}^{-k+1}}\frac{g_{\infty}^{-k+1}\big(e^{-2\pi i(\tilde\Psi_{+,-k+1}^{n-1}(w))}\big)}{w-\overline \zeta} dw=P(\overline \zeta). 
\end{align*}
\smallskip

\noindent The same analysis is repeated for $\overline{P(\zeta)}$, and for all pairs of terms in the sum for $\tilde R_{j,+}^n(\zeta)$ resp. $\tilde R_{-j,+}^n(\overline \zeta)$. Thus \eqref{eq:uzas} is proven and $\overline{\tilde R_{j,+}^n(\zeta)}=\tilde R_{-j,+}^n(\overline \zeta),\ \zeta\in \tilde V_j^+$. Consequently, since $\overline {\tilde\Psi_\mathrm{nf}(\zeta)}=\tilde\Psi_\mathrm{nf}(\overline \zeta)$  on the whole standard quadratic domain (due to \emph{real} invariant $\rho\in\mathbb R$), it follows that $$\overline{\tilde \Psi_{j,+}^n(\zeta)}=\tilde \Psi_{-j,+}^n(\overline \zeta),\ \zeta\in \tilde V_j^+.$$ By induction, this holds for all $n\in\mathbb N$.
\smallskip

$(b)$ Let $R_0^+(z)$ be such that $\Psi_0^+(z)=\Psi_{\mathrm{nf}}(z)+R_0^+(z)$, $z\in V_0^+$, as constructed by iterative procedure in Lemma~\ref{lema:konver}. Let $\breve R_0^+(\boldsymbol\ell):=R_0^+(z)$, $\boldsymbol\ell\in\boldsymbol\ell(V_0^+)$. By $(1)$, since $\Psi_{\mathrm{nf}}$ is $\mathbb R_+$-invariant, \begin{equation}\label{eq:li}\overline {\breve R_0^+(u)}=\breve R_0^+(u),\ u\in\mathbb R_+\cap \boldsymbol\ell(V_0^+).\end{equation} 

Let $\widehat R\in\mathbb C[[\boldsymbol\ell]],\ \widehat R=\sum_{k\in\mathbb N} a_k\boldsymbol\ell^k$, $a_k\in\mathbb C,\ k\in\mathbb N$, be the common asymptotic expansion of $\breve R_j^\pm(\boldsymbol\ell)$, as $\boldsymbol\ell\to 0$ in $\boldsymbol\ell(V_j^{\pm})$ on standard linear domain, which was proven to exist in Section~\ref{subsec:treci} in the proof of Theorem B. Then, for every $N\in\mathbb N$, there exists $C_N\in\mathbb R$ such that:
\begin{equation}\label{eq:qiq}
\big|\breve R_0^+(u)-\sum_{k=1}^{N} a_k u^k\big|\leq C_N|u^{N+1}|,\ u\in\mathbb R_+,\ u\to 0.
\end{equation}
This implies, by \eqref{eq:li}, that:
\begin{align}\begin{split}\label{eq:qqq}
&\left|\overline{\breve R_0^+(u)-\sum_{k=1}^{N} a_k u^k}\right|\leq C_N|u^{N+1}|,\\
&\big|\breve R_0^+(u)-\sum_{k=1}^N \overline{a_k}\cdot u^k\big|\leq C_N|u^{N+1}|,\ N\in\mathbb N.\end{split}
\end{align}
Now, $\overline{a_k}=a_k,\ k\in\mathbb N$, follows by \eqref{eq:qiq} and \eqref{eq:qqq} and by the uniqueness of the asymptotic expansion of $\breve R_0^+(\boldsymbol\ell)$ in $\mathbb C[[\boldsymbol\ell]]$.
\end{proof}

\bigskip

\bigskip

\textbf{Acknowledgement.} The authors would like to warmly thank Jean-Philippe Rolin for numerous discussions on the subject, and Lo\" ic Teyssier and Daniel Panazzolo for ideas and comments that helped in the realization of this paper. We also thank the referee for his or her detailed reading and comments that have greatly helped us improve the exposition of the paper.
\bigskip

\bigskip

\emph{Address:}$\quad$$^{1}$: Institut de Math\' ematiques de Bourgogne, UMR 5584, CNRS, Universit\' e Bourgogne Franche-Comt\' e, F-21000 Dijon, France 

$^{2}$ : University of Zagreb, Faculty of Science, Department of Mathematics, Bijeni\v cka 30, 10000 Zagreb, Croatia

\end{document}